\newif\ifrenderbboxes\renderbboxestrue\newif\ifrendertodo\rendertodotrue%
\let\so\o%
\begin{document}

\title{Bernstein-Von Mises for semiparametric mixtures}
\author{Stefan Franssen \and Jeanne Nguyen \and Aad van der Vaart}
\maketitle








\section{Introduction}

Statistical modelling often involves striking a delicate balance between flexibility and simplicity. 
While parametric models offer a straightforward representation of data, they may fail to capture the intricate complexities present in real-world phenomena. 
On the other hand, nonparametric models provide remarkable flexibility, but their complexity can also be a drawback.

Semiparametric modelling is a compromise that tries to combine the strengths of parametric and nonparametric models.
(Strict) semiparametric models incorporate both a flexible nonparametric component and a simpler parametric component.
Typically, the parameter of interest is part of the parametric component, and the nuisance parameters are part of the nonparametric components.
The advantage of semiparametric models over nonparametric models is that it typically is easier to give an interpretation to the parametric part of the model.
Moreover, they typically are able to achieve at $n^{-1/2}$ rate, which is a parametric rate of learning, which is faster than most nonparametric models.
They do this while being more flexible and robust that parametric models, by allowing the nuisance part of the parameter to be modelled fully nonparametrically.
This gives the robustness against misspecification of nonparametric models while learning as fast as parametric models.

There are several distinct ways of creating estimators for a quantity of interest.
Ideally, we use the most accurate estimator possible, and have a method of quantifying its uncertainty.
Examples of estimators which are known to be asymptotically the most accurate are the one-step estimator and the maximum likelihood estimator. 
However, these methods do not provide uncertainty quantification in the method, one needs to use other methods like the bootstrap.
Bayesian methods do provide automatic uncertainty quantification in the form of credible sets.
However, these credible sets may fail to provide valid frequentist coverage, and the Bayesian posterior might not converge at an optimal rate.
One way of proving that the Bayesian posterior is optimal and has optimal and valid frequentist coverage is the Bernstein-von Mises theorem (BvM).
In this paper, we will prove such a Bernstein-von Mises theorem for semiparametric mixtures.
This means that practitioners can use this result to show that the Bayesian method has not only Bayesian but also frequentist interpretability.
\subsection{Mixture models}
Suppose we observe a random sample from a mixture density of the form
\begin{equation}
  x\mapsto p_{\q,F}(x):=\int p_\q(x\given z)\, dF(z).\label{eq:mixingdensity}
\end{equation}
The kernel $x\mapsto p_\q(x\given z)$ is the density of a random variable $X$ given 
a latent random variable $Z$, which has marginal distribution $F$. 
The kernel is indexed by a parameter  $\q\in\RR^d$ that we  wish to  estimate.

Equivalently, we can consider this as a latent variable model. 
We have some latent variable $Z \sim F$ that we do not observe, and the distribution of $X \given Z \sim p_\theta(x \given Z)$.
Because we do not observe $Z$, we might lose information for estimating $\theta$.
Because latent variables have widespread usage in statistics, semiparametric mixture models have a widespread application throughout statistics.
In this paper we will be considering two examples: exponential frailty and the errors-in-variables models.

Exponential frailty models are a particular case of frailty models.
We observe pairs of exponential distributed random variables $(X_i, Y_i)$ where $X_i |Z_i$ and $Y_i | Z_i$ are independent exponential random variables with rates $Z_i$ and $\theta Z_i$ respectively.
Here $Z_i$ is a  latent random variable which follows an unknown distribution $F$.
They are mixtures given by
\[
    p_{\theta, F}(x,y) = \int z^ 2 \theta e^{-z(x + \theta y)} \dd F(z)
\]
The parameter of interest is $\theta$, so this naturally forms a semiparametric mixture model.
We will consider this model in \Cref{sec:frailty}.
The frailty models are used in biomedical research.
Semiparametric Bayesian frailty models have been considered in~\cite{walkerHierarchicalGeneralizedLinear1997,pennellBayesianSemiparametricDynamic2006,tallaritaBayesianAutoregressiveFrailty2019}.
A frequentist analysis of the semiparametric exponential frailty model with nonparametric maximum likelihood estimator has been done in~\cite{vandervaartEfficientMaximumLikelihood1996}.
See for examples of general frailty models~\cite{balanTutorialFrailtyModels2020}.

The errors-in-variables model is the other example we will consider. 
In this model we want to run a regression of $Y$ on $Z$, however, we only observe $Y$ and a noisy version of $Z$.
This means that $Z$ is a latent random variable which we do not observe, only a noisy version of this.
The mixture is given by
\[
    p_{\alpha, \beta, \sigma, \tau, F}(x,y) = \int \phi_\sigma(x - z) \phi_\tau(y - \alpha - \beta z) \dd F(z),
\]
where $\phi$ is the density of a standard normal Gaussian distribution.
We are interested in estimating $\alpha, \beta$, while we consider $F$ as a nuisance parameter.
Thus, Errors-in-Variables is another semiparametric mixture model.
This model is more of a fundamental nature, being able to do linear regression in the situation where the covariates are observed with noise.
For more details on the model see \Cref{sec:ErrorsInVars}.
Bayesian errors-in-variables have been considered before, see for example~\cite{dellaportasBayesianAnalysisErrorsVariables1995,mallickSemiparametricErrorsvariablesModels1996,mullerBayesianSemiparametricModel1997}.

Now we want to construct estimators which are efficient.
One possible estimator is the full maximum likelihood estimator, which given a sample $X_1,\ldots, X_n$ from the distribution of $X$, maximizes the likelihood
\[
  (\q,F)\mapsto \prodin \int p_\q(X_i\given z)\,dF(z). 
\]
These estimators were shown to be consistent in some generality in~\cite{kieferConsistencyMaximumLikelihood1956}.
For a few special models the $\q$-component of the maximum likelihood estimator is known to be asymptotically normal and efficient from~\cite{vandervaartEstimatingRealParameter1988,murphyLikelihoodInferenceErrorsVariables1996,vandervaartEfficientMaximumLikelihood1996}. 

A Bayesian alternative is to equip $F$ with a \emph{species sampling prior} (See e.g.~\cite[Chapter 14]{ghosalFundamentalsNonparametricBayesian2017}), and, independently $\theta$ with a smooth prior.
Two common choices for a mixture prior would be the Dirichlet process prior or a finite discrete prior with a random number of components.
These two choices are the priors we will study in our examples.

The Dirichlet process is a stochastic process characterised by two parameters, the prior precision $M$ and the centre measure $G$.
Then a random variable $F$ is said to be Dirichlet process distributed if for any partition $A_1, \dots A_k$
\[
    (F(A_1),\dots, F(A_k)) \sim \textrm{Dir}\left(k; MG(A_1),\dots, MG(A_k)\right).
\]

As an example of a finitely discrete prior with a random number of components, pick $\lambda, \sigma > 0$ and $G$ a probability distribution.
Then we draw $K \sim \textrm{Geom}(\lambda)$, and given $K$, draw $Z_1, \dots, Z_K \overset{\textrm{i.i.d.}}\sim G$, $W \sim \textrm{Dir}(K; \sigma,\dots, \sigma)$.
Then 
\[
    F = \sum_{k = 1}^K W_k \delta_{Z_k}
\]
We will give more details on these priors in \cref{sec:priors}. 
In that section, we will also recall the facts that are needed for the purpose of this paper.

More generally, a species sampling process can be defined using the exchangeable partition probability function (EPPF) and a centre measure $G$.
There are many more examples of species sampling process priors, for example the Pitman-Yor processes, Gibbs processes or Poisson-Kingman processes.
For more details, see~\cite[Chapter 14]{ghosalFundamentalsNonparametricBayesian2017}.

\subsection{Semiparametric efficiency}

The goal of the statistician is to provide a reliable estimate for the parameter of inference $\theta$, and specify its uncertainty.
We want to learn $\theta$ as accurately as possible.
The first notion we need to guarantee is consistency, that is, the ability to find the true parameter at all.
After having established consistency, we want to understand if our method is asymptotically optimal.
\emph{Efficiency} will be how we will measure asymptotic optimality. 
Very roughly, this means that the posterior is as accurate as possible: there is no estimator which can be more accurate with the same amount of data.
To be a bit more precise, no other estimator can have lower squared estimation error, asymptotically.

In the study of optimal statistical estimation, one key notion is the Fisher information.
In semiparametric statistics, we often deal with a \emph{loss of information} and have to consider the efficient Fisher information.
We call a model differentiable in quadratic mean at $\theta_0$ if
\[
    \int \left[ \sqrt{p_\theta(x)} - \sqrt{p_{\theta_0}(x)} - \frac{1}{2} \dot\ell_{\theta_0}(x) \sqrt{p_{\theta_0}(x)} \right]^2 \dd \mu(x) = o(\| \theta - \theta_0 \|),
\]
as $\theta \rightarrow \theta_0$. 
The Fisher information then is given by $I_{\theta_0} = P_{\theta_0} \dot\ell_{\theta_0}\dot\ell_{\theta_0}^T$.
Our goal is to prove that certain estimators are optimal.
To do so, we must first establish a lower bound on how accurate estimators can be.
For this we use two ingredients,  Anderson's lemma~\cite[Lemma 8.5]{vandervaartAsymptoticStatistics1998} and The Convolution theorem~\cite[Lemma 8.8]{vandervaartAsymptoticStatistics1998}.
Let $l$ be a bowl-shaped loss-function (that is, $\{ x : l(x) \leq c \}$ are convex and symmetric around the origin, $l$ is any function to $[0, \infty)$), and $T_n$ a sequence of regular estimators.
By the convolution theorem
\[
    \sqrt{n}\left(T_n - \theta_0 \right) \rightsquigarrow N(0, I_{\theta_0}^{-1} ) \ast M_{\theta_0}.
\]
Then by Anderson's lemma
\[
    \int l \dd N(0, I_{\theta_0}) \ast M_{\theta_0} \geq \int l \dd N(0, I_{\theta_0}).
\]
This implies that for bowl-shaped loss functions, no regular estimator can be more accurate than those which achieve a mean zero Gaussian with the inverse Fisher information as variance.
We call estimators which achieve this, efficient.

Under regularity conditions, the Maximum Likelihood estimator is an efficient estimator.
For parametric Bayesian methods, we have a similar result called the (parametric) Bernstein-von Mises theorem, see for example~\cite[Lemma 10.1 and 10.2]{vandervaartAsymptoticStatistics1998}.
This result describes the asymptotic shape of the posterior.
Under regularity conditions, this shape will be a Gaussian distribution centred at an efficient estimator and with the inverse Fisher information as the variance.

The Bernstein-von Mises theorem is one tool that we can use to study the frequentist properties of Bayesian procedures.
By establishing the asymptotic shape, we can prove properties about the posterior, such as consistency, contraction rates and the validity of certain credible sets as asymptotic confidence sets.
When the asymptotic shape is a Gaussian with as mean an efficient estimator and as variance the inverse Fisher information, the posterior will be asymptotically as accurate as possible.
See for example~\cite[lemma 6.7 and 8.7]{ghosalFundamentalsNonparametricBayesian2017} and~\cite[Section 1.4.3.1]{franssenWhenSubjectiveObjective} using properties of the posterior distribution to give frequentist properties.

To study optimality of estimators in semiparametric statistics we want to use similar strategies.
In strict semiparametric models, models are index by two parameters, $\theta$ the parameter of interest and $F$ the nuisance parameter.
Because the nuisance parameter might be infinite dimensional, we can't simply take derivatives as in the Euclidean setting.
One strategy is to take finite dimensional submodels that are differentiable in quadratic mean, compute the Fisher information and find the infimum.
A submodel for which this infimum is taken is called a \emph{least favourable submodel}.
The information for the whole model is bounded from above by the information in the least favourable submodel.
The score for the least favourable submodel is taken is called the \emph{efficient score function}, and its covariance matrix is called the \emph{efficient Fisher information matrix}.

Just as in the parametric case we can prove theorems that provide lower bounds for the accuracy of semiparametric estimators.
In the same way as before, the semiparametric convolution theorem~\cite[Theorem 25.20]{vandervaartAsymptoticStatistics1998} together with Anderson's lemma reveal the best asymptotic distribution: mean zero Gaussian with the inverse efficient Fisher information as variance.
We call estimators which achieve this lower bound \emph{efficient estimators}.
\subsection{Earlier work}
The maximum likelihood estimator is known to be consistent in fair generality due to the work by Kiefer and Wolfowitz~\cite{kieferConsistencyMaximumLikelihood1956}.
However, efficiency of the maximum likelihood estimator is only known in special models. 
Van der Vaart proved~\cite{vandervaartEfficientMaximumLikelihood1996} for special models that the maximum likelihood estimator is efficient.
An extension of the Errors-in-Variables model was studied by Murphy and Van der Vaart~\cite{murphyLikelihoodInferenceErrorsVariables1996}.
They also studied profile likelihood estimators~\cite{murphyProfileLikelihood2000}.

On the Bayesian side, Castillo~\cite{castilloSemiparametricBernsteinMises2012} derived conditions for the Bernstein-von Mises theorem to hold for specific semiparametric models and Gaussian process priors.
This work has later been extended by~\cite{bickelSemiparametricBernsteinMises2012,castilloBernsteinMisesPhenomenonNonparametric2014,castilloBernsteinMisesTheorem2015,raySemiparametricBayesianCausal2020,chaeSemiParametricBernsteinMisesTheorem2019,chaeSemiparametricBernsteinMisesTheorem2015}.
In these works, the authors prove the Bernstein-von Mises result under two assumptions.
The first of these assumptions is a locally asymptotically normal (LAN) expansion of the likelihood in a perturbation.
This condition closely resembles the assumptions needed to make the MLE efficient.
The second of these assumptions is a change of measure condition.
This condition roughly states that the prior and the perturbation interact nicely.

We will bring extra attention to the works by Chae~\cite{chaeSemiParametricBernsteinMisesTheorem2019,chaeSemiparametricBernsteinMisesTheorem2015}.
In these works, they prove a Bernstein-von Mises theorem for special semiparametric mixture models.
These are symmetric location mixtures, which are special since the least favourable submodel does not involve the nuisance parameter.
This means that the change of measure condition is a lot easier to analyse compared to our situation.
\subsection{Our Results}
Our first result concerns the consistency of posterior distributions.
We call a posterior distribution consistent if for every neighbourhood $U$ of $\theta$, the posterior assigns asymptotically zero mass to the complement of $U$.
For Bayesian semiparametric mixture models, consistency, the ability to concentrate all mass near the true parameter was unknown.
In \cref{sec:consistency} we provide two tools to show that the posterior distribution is consistent in mixture models. 
The first is based on the seminal works by Kiefer and Wolfowitz~\cite{kieferConsistencyMaximumLikelihood1956} and Schwartz~\cite{schwartzBayesProcedures1965} (see also~\cite[Lemma 6.16 and 6.17]{ghosalFundamentalsNonparametricBayesian2017}).
The second technique uses the Glivenko-Cantelli theorem and Schwartz's theorem again.

To show the asymptotic optimality of the Bayesian methodology, we prove a Bernstein-von Mises theorem for $\theta$.
Our work can be seen as a variant of the proof by Castillo and Rousseau~\cite{castilloBernsteinMisesTheorem2015}.

Consistency allows us to localise our assumptions to a neighbourhood of the truth.
This allows us to tackle the LAN expansion and the change of measure condition locally.
In \cref{sec:semiparBvM} we formulate a precise theorem including the assumption on the LAN expansion and the change of measure.

To verify the LAN expansion we provide a lemma in terms of Donsker conditions in \cref{sec:LAN}.
There are advantages and disadvantages to this approach.
Donsker conditions are technical conditions that can require work to verify.
However, since the asymptotic efficiency for the MLE also relies on Donsker conditions, this work has often already been done in earlier papers.

Before this work, the main issue lay with verifying the second condition.
We had no tools to verify this for mixture models.
This was a fundamental gap in our understanding of the behaviour of Bayesian methods in these models.
The main difficulty lies in understanding how the least-favourable submodels interact with the prior on the nuisance parameter.
We impose some structural assumption on the least-favourable submodel, which allows us to analyse this interaction when the prior is a species sampling process prior.
We provide the tools to do so and verify the change of measure condition in \cref{sec:changeofmeasure}.

Finally, we verify these assumptions for two semiparametric mixture models.
In \cref{sec:frailty}, we study the frailty model, while in \cref{sec:ErrorsInVars}, we study the errors in variables model.

\section{Posterior consistency}\label{sec:consistency}
In order to apply our theorems we first need to show that the consistency of the posterior distribution in fair generality, in order to handle parameters $\gamma = (\theta, F)$.
The next results can handle any form of parameter, not necessarily of the form $(\theta, F)$.
We only require weak assumptions (See \Cref{assumption_compactness,assumption_density,assumption_continuity,assumption_identifiable,assumption_measurable_sup}).
We provide two tools for that.
The first tool is based on combining two seminal results. 
The first result on which our work is based is Schwartz theorem~\cite{schwartzBayesProcedures1965} (see also the proof of~\cite[Lemma 6.16, 6.17]{ghosalFundamentalsNonparametricBayesian2017}).
This theorem is the standard result for showing that the posterior distribution is consistent.
The second result on which our work is based is the consistency theorem by Kiefer and Wolfowitz~\cite{kieferConsistencyMaximumLikelihood1956}.
This shows that the supremum of the likelihood ratio outside some small ball is exponentially small.
We will follow the discussion for the assumptions in the style of Kiefer-Wolfowitz. 
Let $X_1, \dots, X_n \given \gamma\iid p_{\gamma}$, and $\Pi$ be a prior on $\gamma$.
In the case of semiparametric mixtures, $\gamma = (\theta, F)$.

\begin{assumption}\label{assumption_compactness}
  $\Gamma$ is a pre-compact metric space with metric $d$.
\end{assumption}

\begin{assumption}\label{assumption_density}
  For all $\gamma \in \Gamma$, $p_\gamma$ is a density with respect to a $\sigma$-finite measure $\mu$.  
\end{assumption}

\begin{assumption}\label{assumption_continuity}
  The map $\gamma \mapsto p_\gamma(x)$ is continuous for all $x$ and can be continuously extended to the compactification of $\Gamma$.
\end{assumption}

\begin{assumption}\label{assumption_identifiable}
  For all $\gamma_0, \gamma_1$ there exists $y$ such that 
  \[
    \int_{-\infty}^y p_{\gamma_1}(x) \dd \mu 
    \neq 
    \int_{-\infty}^y p_{\gamma_0}(x) \dd \mu 
  \]
\end{assumption}

\begin{assumption}\label{assumption_measurable_sup}
  The function
  \[
    w_{\gamma, \rho}(x) = \sup_{\gamma'|  d(\gamma, \gamma') < \rho } p_{\gamma'}(x)
  \]
  is measurable.
\end{assumption}

If $\gamma_0 \in \Gamma$, we denote the distribution with density $p_\gamma$ by $P_0 := P_{\gamma_0}$.

\begin{assumption}\label{assumption_integrability}
  For any $\gamma \in \Gamma$, we have
  \[
    \lim_{\rho \downarrow 0}
    \EE_{\gamma_0}\left[
      \log
      \frac{
        w_{\gamma, \rho}
      }{
        p_{\gamma_0}
      }(X)
    \right]
    < \infty
  \]
\end{assumption}

\begin{remark}\label{remark_LR_bound}
  Kiefer and Wolfowitz show in~\cite{kieferConsistencyMaximumLikelihood1956} that under~\Cref{assumption_compactness,assumption_density,assumption_continuity,assumption_identifiable,assumption_measurable_sup,assumption_integrability} for every $\rho > 0$ there exists $h(\rho)$ such that $0< h(\rho) < 1$ such that eventually $[P_{\gamma_0}^\infty]$-almost surely
  \[
    \sup_{ \gamma | d(\gamma, \gamma_0)> \rho } \prod_{i = 1}^n \frac{p_\gamma}{p_{\gamma_0}} (X_i) < h(\rho)^n
  \]
\end{remark}

\begin{remark}\label{remark_Unormalized_posterior_mass}
  Following the proof of Schwartz theorem~\cite[theorem 6.17]{ghosalFundamentalsNonparametricBayesian2017} we can prove that for any $\gamma_0$ that belongs to the Kullback-Leibler support of $\Pi$, for all $c' > 0$, eventually $[P_{\gamma_0}^\infty]$-a.s.:
  \[
    \int \prod_{i = 1}^n \frac{p_\gamma}{p_{\gamma_0}}(X_i) \dd \Pi(\gamma) \gtrsim e^{-c'n}
  \]
\end{remark}

\begin{remark}\label{remark:ConsistencyPerturbedPosterior}
    We can replace the almost sure argument from~\cite[theorem 6.17]{ghosalFundamentalsNonparametricBayesian2017} by theorem 6.26 from the same reference. 
    This can be used to prove consistency for the perturbed models if this would be needed.
\end{remark}

Using the proof of Kiefer and Wolfowitz and Schwartz theorem, we can give a new consistency proof.
\begin{lem}\label{lem:posteriorconsistency}
  Suppose that $\gamma_0$ belongs to the Kullback-Leibler support of $\Pi$ and that \Cref{assumption_compactness,assumption_density,assumption_continuity,assumption_identifiable,assumption_measurable_sup,assumption_integrability} are satisfied. Then the posterior distribution is consistent at $\gamma_0$.
\end{lem}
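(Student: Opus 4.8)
The plan is to write the posterior mass of the complement of a neighbourhood of $\gamma_0$ as the ratio of the unnormalised integrated likelihood ratio over that complement to the full unnormalised integrated likelihood ratio, and then to bound the numerator from above using the Kiefer--Wolfowitz estimate and the denominator from below using the Schwartz-type estimate. Concretely, since $d$ metrises $\Gamma$ it suffices to show that for every $\rho>0$ one has $\Pi\bigl(\{\gamma:d(\gamma,\gamma_0)\ge\rho\}\mid X_1,\dots,X_n\bigr)\to 0$, $[P_{\gamma_0}^\infty]$-almost surely. Writing this posterior probability as
\[
  \frac{\int_{d(\gamma,\gamma_0)\ge\rho}\prod_{i=1}^n\frac{p_\gamma}{p_{\gamma_0}}(X_i)\,d\Pi(\gamma)}{\int_{\Gamma}\prod_{i=1}^n\frac{p_\gamma}{p_{\gamma_0}}(X_i)\,d\Pi(\gamma)},
\]
the whole argument reduces to controlling the two integrals separately.

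For the numerator, bound the integral by $\Pi(\{d(\gamma,\gamma_0)\ge\rho\})\le 1$ times the supremum of the likelihood ratio over $\{d(\gamma,\gamma_0)\ge\rho\}\subseteq\{d(\gamma,\gamma_0)>\rho/2\}$; by \Cref{remark_LR_bound} this supremum is eventually at most $h(\rho/2)^n$ with $0<h(\rho/2)<1$, $[P_{\gamma_0}^\infty]$-almost surely. For the denominator, \Cref{remark_Unormalized_posterior_mass}, which uses that $\gamma_0$ lies in the Kullback--Leibler support of $\Pi$, gives that for every $c'>0$ the denominator is eventually $\gtrsim e^{-c'n}$, $[P_{\gamma_0}^\infty]$-almost surely. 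Now pick $c'$ with $0<c'<-\log h(\rho/2)$, so that $h(\rho/2)e^{c'}<1$; this is legitimate because \Cref{remark_Unormalized_posterior_mass} is available for \emph{every} positive $c'$ and $c'$ may be chosen after $\rho$. Intersecting the two ``eventually a.s.'' events, we conclude that eventually $[P_{\gamma_0}^\infty]$-almost surely the displayed posterior probability is $\lesssim h(\rho/2)^n/e^{-c'n}=(h(\rho/2)e^{c'})^n\to 0$. Since $\rho>0$ was arbitrary, the posterior is consistent at $\gamma_0$.

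I do not expect a genuine obstacle here, as the real content is fully delivered by the two preceding remarks; what remains is bookkeeping. The points that warrant a little care are: (i) reducing an arbitrary neighbourhood of $\gamma_0$ to a metric ball, which is immediate once $d$ metrises $\Gamma$; (ii) passing from the supremum over the open set $\{d(\gamma,\gamma_0)>\rho'\}$ in \Cref{remark_LR_bound} to the supremum over the closed set $\{d(\gamma,\gamma_0)\ge\rho\}$, which is handled by shrinking the radius as above; (iii) the order of quantifiers, i.e.\ ensuring the constant $c'$ governing the denominator can be taken small relative to $-\log h(\rho')$, which works precisely because \Cref{remark_Unormalized_posterior_mass} holds simultaneously for all $c'>0$; and (iv) noting that the finite intersection of two events of full $[P_{\gamma_0}^\infty]$-probability still has full probability, so the two ``eventually a.s.'' statements may be combined.
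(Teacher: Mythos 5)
Your proposal is correct and follows essentially the same route as the paper's own proof: both reduce the claim to a ratio of the Kiefer--Wolfowitz supremum bound (\Cref{remark_LR_bound}) over the Schwartz-type lower bound (\Cref{remark_Unormalized_posterior_mass}) and choose $c'<-\log h$ after fixing the neighbourhood. The only cosmetic difference is that you shrink the radius to $\rho/2$ to pass from $\{d>\rho\}$ to $\{d\ge\rho\}$, a point the paper glosses over.
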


\begin{proof}
Let $\mathcal{U}$ be any open neighbourhood of $\gamma_0$. 
Using \cref{remark_LR_bound} and that there exists a $\rho$ such that the ball of radius $\rho$ centred at $\gamma_0$ is contained in $\mathcal{U}$, they show that there exists an $h = h(\rho)$ such that $0 < h< 1$ and eventually $[P_{\gamma_0}^\infty]$-almost surely:
  \begin{equation}
    \sup_{\gamma \in \mathcal{U}^c} \prod_{i = 1}^n \frac{p_{\gamma}}{p_{\gamma_0}}(X_i) < h^n. \label{step_LR_bound}
  \end{equation}
  Furthermore, by \Cref{remark_Unormalized_posterior_mass}, for all $0< c < -\log(h)$ such that eventually a.s. $[P_{\gamma_0}^\infty]$:
  \begin{equation}
    \int \prod_{i = 1}^n  \frac{p_\gamma}{p_{\gamma_0}}(X_i) \dd \Pi(\gamma) \gtrsim e^{-cn}. \label{step_unormalized_posterior_mass_bound}
  \end{equation}
  By using \Cref{step_LR_bound,step_unormalized_posterior_mass_bound},
  \[
      \Pi(\mathcal{U}^c | X^{(n)})
    = 
      \frac{
        \int_{\mathcal{U}^c} \prod_{i = 1}^n \frac{p_\gamma}{p_{\gamma_0}}(X_i) \dd \Pi(\gamma)
      }{
        \int                 \prod_{i = 1}^n \frac{p_\gamma}{p_{\gamma_0}}(X_i) \dd \Pi(\gamma)
      }
      \lesssim
     e^{(\log(h) + c) n},
     \]
      then, because $\log(h) + c < 0$, it follows that $[P_{\gamma_0}^\infty]$-almost surely, 
      \[
     \lim_{n \rightarrow \infty} \Pi(\mathcal{U}^c | X^{(n)}) = 0.
     \]
\end{proof}

Another strategy for proving consistency uses Glivenko-Cantelli classes instead.
This gives a second strategy, which can be useful since in the frequentist semiparametric literature, the log-likelihood is often shown to be a Glivenko-Cantelli or Donsker class.
This means to prove consistency we only need to verify a prior mass condition.

\begin{lem}\label{lem:ConsistencyByGC}
  The posterior distribution $\Pi_n\left( \cdot \middle| X^{(n)} \right)$ in the model $X_1, \dots, X_n | \gamma \sim p_\gamma$ and $\gamma\sim \Pi$ is strongly consistent at $p_{\gamma_0}$ if all the following hold:
  \begin{itemize}
    \item for any open $U$, $\sup_{\gamma \in U^c} \text{KL}\left( p_{\gamma} | p_{\gamma} \right) > 0$;
    \item $\{ x\mapsto \log p_\gamma(x) | \gamma \in \Gamma \}$ is $P_{\gamma_0}$-Glivenko-Cantelli;
    \item $\gamma_0$ belongs to the Kullback-Leibler support of $\Pi$.
  \end{itemize}
\end{lem}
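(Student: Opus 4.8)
The plan is to follow the architecture of the proof of \Cref{lem:posteriorconsistency}, replacing the Kiefer--Wolfowitz exponential bound on $\sup_{\gamma\in\mathcal U^c}\prod_{i=1}^n(p_\gamma/p_{\gamma_0})(X_i)$ — which there rested on compactness, continuity and \Cref{assumption_integrability} — by a bound obtained directly from the Glivenko--Cantelli hypothesis. Write $K(\gamma):=\text{KL}(p_{\gamma_0}\,\|\,p_\gamma)=P_{\gamma_0}\log(p_{\gamma_0}/p_\gamma)$; I read the first bullet as asserting that $K$ is bounded away from $0$ off every open neighbourhood of $\gamma_0$, i.e.\ $\inf_{\gamma\in U^c}K(\gamma)>0$ for each open $U\ni\gamma_0$. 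As before, $\Pi(U^c\mid X^{(n)})$ is a ratio whose numerator and denominator I would bound separately.

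First I would prove a uniform law of large numbers for the log-likelihood ratio. Since $\{x\mapsto\log p_\gamma(x):\gamma\in\Gamma\}$ is $P_{\gamma_0}$-Glivenko--Cantelli, $[P_{\gamma_0}^\infty]$-almost surely
\[
  \sup_{\gamma\in\Gamma}\Bigl|\,\frac1n\sum_{i=1}^n\log p_\gamma(X_i)-P_{\gamma_0}\log p_\gamma\,\Bigr|\longrightarrow0 .
\]
The Glivenko--Cantelli property provides an $L_1(P_{\gamma_0})$-envelope for the class, so $P_{\gamma_0}|\log p_{\gamma_0}|<\infty$ and $K$ is finite throughout $\Gamma$; applying the strong law of large numbers to the single function $\log p_{\gamma_0}$ and subtracting then gives, $[P_{\gamma_0}^\infty]$-a.s.,
\[
  \sup_{\gamma\in\Gamma}\Bigl|\,\frac1n\sum_{i=1}^n\log\frac{p_\gamma}{p_{\gamma_0}}(X_i)+K(\gamma)\,\Bigr|\longrightarrow0 .
\]

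Next I would assemble the ratio exactly as in the proof of \Cref{lem:posteriorconsistency}. Fix an open neighbourhood $U$ of $\gamma_0$ and choose $\varepsilon>0$ with $K(\gamma)\ge\varepsilon$ on $U^c$. By the display above, eventually $[P_{\gamma_0}^\infty]$-a.s.\ one has $\frac1n\sum_{i=1}^n\log\frac{p_\gamma}{p_{\gamma_0}}(X_i)\le-\varepsilon/2$ for every $\gamma\in U^c$ simultaneously, so that
\[
  \int_{U^c}\prod_{i=1}^n\frac{p_\gamma}{p_{\gamma_0}}(X_i)\,\dd\Pi(\gamma)\le\Pi(U^c)\,e^{-n\varepsilon/2}\le e^{-n\varepsilon/2}.
\]
Because $\gamma_0$ lies in the Kullback--Leibler support of $\Pi$, \Cref{remark_Unormalized_posterior_mass} (Schwartz's lower bound) gives, for each $c>0$, eventually $[P_{\gamma_0}^\infty]$-a.s.\ that $\int\prod_{i=1}^n(p_\gamma/p_{\gamma_0})(X_i)\,\dd\Pi(\gamma)\gtrsim e^{-cn}$. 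Taking $c=\varepsilon/4$ and dividing yields $\Pi(U^c\mid X^{(n)})\lesssim e^{-n\varepsilon/4}\to0$ almost surely, and since $U$ was an arbitrary neighbourhood of $\gamma_0$, this is the claimed strong consistency at $p_{\gamma_0}$.

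The one delicate point — the main obstacle — is the first step: turning the Glivenko--Cantelli property of $\{\log p_\gamma\}$ into uniform control of the \emph{ratio} process. This is essentially envelope and integrability bookkeeping (ensuring no $\infty-\infty$ arises and that $K$ is everywhere finite, both consequences of an $L_1(P_{\gamma_0})$-envelope). Once this uniform law of large numbers is available, the remainder of the argument is identical to the computation in the proof of \Cref{lem:posteriorconsistency}, with the Kiefer--Wolfowitz bound \eqref{step_LR_bound} replaced by it.
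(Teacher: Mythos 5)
Your proposal is correct and follows essentially the same route as the paper: a uniform law of large numbers for the log-likelihood ratio from the Glivenko--Cantelli hypothesis (yielding the exponential bound $e^{-c'n}$ on the numerator over $U^c$), combined with the Schwartz lower bound of \Cref{remark_Unormalized_posterior_mass} on the denominator. You additionally (and correctly) repair the statement's typo by reading the first bullet as $\inf_{\gamma\in U^c}\mathrm{KL}(p_{\gamma_0}\,\|\,p_\gamma)>0$ and spell out the envelope/integrability bookkeeping that the paper's terse proof leaves implicit.
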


\begin{proof}
  Let $U$ be any open neighbourhood of $\theta_0$. 
  Denote $c = \sup_{\theta \in U^c} \text{KL}\left( p_{\theta} | p_{\theta_0} \right) > 0$. 
  Then for any $0< c' <c$, eventually almost surely $\sup_{\theta \in U^c} \left(\ell_n(\theta) - \ell_n(\theta_0)\right) \leq c'$, and therefore $\int_{U^c} \prod_{i = 1}^n \frac{p_{\theta}}{p_{\theta_0}} (X_i) \dd \Pi(\theta) \leq \int_{U^c} e^{-c'n} \dd \Pi(\theta) \leq e^{-c'n}$. 
  Now apply \Cref{remark_Unormalized_posterior_mass} and finish the proof as usual.
\end{proof}

These two results show that the posterior distribution in fair generality is consistent in semiparametric mixture models.
However, we wish to show more than mere consistency, we also want to study the asymptotic optimality of Bayesian methods.
\section{Semiparametric Bernstein-von Mises Theorem}\label{sec:semiparBvM}
Let $X_1,\ldots, X_n$ be an i.i.d.\ sample from a density $p_{\q,F}$ relative to a $\s$-finite measure on 
a measurable space $(\X,\A)$, for parameters $\q\in\Theta\subset\RR^d$ and $F$ ranging over a set $\F$, equipped
with a $\s$-field. Let $\ell_n(\q,F)=\log \prodin p_{\q,F}(X_i)$ be the log likelihood.
Given a prior $\Pi$ on $\Theta\times\F$, a posterior distribution of $(\q,F)$ given $X_1,\ldots, X_n$ is defined,
by Bayes's formula, as 
\[
\Pi_n\bigl((\q,F)\in B\given X^{(n)} bigr)=\frac{\int_B e^{\ell_n(\q,F)}\,d\Pi(\q,F)}{\int e^{\ell_n(\q,F)}\,d\Pi(\q,F)}.
\]
We are interested in proving a Bernstein-von Mises type theorem for
the marginal posterior distribution of $\q$.
This theorem gives a normal approximation with centring and covariance determined by the efficient
influence function for estimating $\q$. However, the following theorem is stated in abstract terms. We interpret
the objects involved and its conditions after the proof. The theorem can be considered a version of
Theorem~2.1 in~\cite{castilloBernsteinMisesTheorem2015} for estimating the parameter of a semiparametric model.
It measures the discrepancy between the posterior distribution and its normal approximation through the
weak topology rather than the much stronger total variation distance, as in~\cite{castilloSemiparametricBernsteinMises2012}, but
under weaker conditions on the prior distribution.

Let $\tilde\ell_0:\X\to \RR^d$ be a measurable map such that $P_0\tilde\ell_0=0$ and such that the matrix
$\tilde I_0:=\Cov_0\bigl(\tilde\ell_0(X_1)\bigr)$ exists and is nonsingular. The subscript $0$ will later be
taken to refer to the true parameter $(\q_0, F_0)$, but does not have a special meaning in the abstract theorem.
Let $\PP_n$ denote the empirical measure of  $X_1,\ldots, X_n$
and $\ep_n=\sqrt n(\PP_n-P_0)$ the empirical process with centring $P_0$.

For given $(\q,F)\in \Theta\times\F$, assume that there exists a map $t\mapsto F_t(\q,F)$ from a given neighbourhood $\mathcal{T}$ of
$0\in\RR^d$ to $\F$ such that, for given measurable subsets $\Theta_n\subset \Theta$ and $\F_n\subset\F$,
\begin{align}\label{eqLANExpansion}
&\ell_n\Bigl(\q+\frac t{\sqrt n},F_{\frac{t}{\sqrt n}}(\q,F)\Bigr)-\ell_n(\q,F)\\
&\qquad\qquad=t^T\GG_n\tilde\ell_0-t^T\bigl(\tilde I_0+R_{n,1}(\q,F)\bigr)\sqrt n(\q-\q_0)
 -\thalf t^T\tilde I_0 t+R_{n,2}(\q,F),\nonumber
\end{align}
for a matrix-valued process $R_{n,1}$ and scalar process $R_{n,2}$ such that 
\begin{align}
\label{EqRemainders}
&\sup_{\q\in\Theta_n,F\in\F_n}\bigl\| R_{n,1}(\q,F)\bigr\|+|R_{n,2}(\q,F)|\stackrel {P_0^n}\longrightarrow 0,
\end{align}
and such that
\begin{align}
\label{EqChangeOfMeasure}
\frac{\int_{\Theta_n\times \F_n} e^{\ell_n\bigl(\q-t/\sqrt n,F_{-t/\sqrt n}(\q,F)\bigr)}\,d\Pi(\q,F)}
{\int_{\Theta_n\times \F_n}  e^{\ell_n(\q,F)}\,d\Pi(\q,F)} \stackrel {P_0^n}\longrightarrow 1.
\end{align}

Let $\mathrm{BL}_1$ be the set of all functions $h: \RR^d\ra [0,1]$ such that $|h(x)-h(y)|\le \|x-y\|$, for every $x,y\in\RR^d$.

\begin{thm}\label{thm:semiparametricBvM}
If $\Pi_n(\q\in \Theta_n, F\in\F_n\given X^{(n)})\ra 1$, in $P_0^n$-probability, and~\eqref{EqRemainders}-\eqref{EqChangeOfMeasure}
hold for these choices of $\Theta_n, \mathcal{F}_n$, then
\[
\sup_{h\in \mathrm{BL}_1} \Bigl| E \left(h\bigl(\tilde I_0\sqrt n(\q-\q_0)-\ep_n\tilde\ell_0\bigr) \given X^{(n)} \right)-\int h\,dN(0,\tilde I_0)\Bigr|
\stackrel {P_0^n}\longrightarrow 0.
\]
\end{thm}

\begin{proof}
Let $\Pi_n^*\left( \cdot \given X^{(n)}\right)$ be the renormalised distribution of the posterior distribution conditioned to the set $\Theta_n \times \mathcal{F}_n$,
 that is $\Pi_n^*\left( A \given X^{(n)}\right) = \frac{ \Pi_n\left( A \cap \Theta_n \times \mathcal{F}_n \given X^{(n)} \right)}{ \Pi_n\left( \Theta_n \times \mathcal{F}_n \given X^{(n)} \right)}$.
The total variation distance between posterior $\Pi_n\left( \cdot \given X^{(n)} \right)$ and the renormalised posterior $\Pi_n^*\left( \cdot \given X^{(n)}\right)$ is bounded by 
$\Pi_n\bigl((\q,F)\notin \Theta_n\times\F_n\given X^{(n)}\bigr)$ (see e.g.\ Lemma~K.10 in~\cite{ghosalFundamentalsNonparametricBayesian2017}), and hence tends to zero in probability, by assumption.
Therefore, up to renormalisation, without loss of generality, we assume that $\Pi(\Theta_n\times\F_n)=1$.

Let $V_n(\theta, F) = \left( \tilde{I_0} + R_{n,1}( \theta, F) \right) \sqrt{n} (\theta - \theta_0) - \mathbb{G}_n\tilde\ell_0$.
We consider the Laplace transform of $V_n(\q,F)$ relative to the posterior distribution of $(\q,F)$ given $X^{(n)}$: for $t: \frac{-t}{\sqrt{n}}\in \mathcal{T}$,
\begin{align*}
\E \Bigl(e^{t^T V_n(\q,F)}\given X^{(n)}\Bigr) &=\frac{\int e^{t^T V_n(\q,F)}e^{\ell_n(\q,F)}\,d\Pi(\q,F)}{\int e^{\ell_n(\q,F)}\,d\Pi(\q,F)}\\
&=e^{\frac{1}{2}t^T\tilde I_0t}\,\frac{\int e^{-R_{n,2}(\q,F)} e^{\ell_n(\q-\frac{t}{\sqrt n},F_{\frac{-t}{\sqrt n}}(\q,F))}\,d\Pi(\q,F)}{\int e^{\ell_n(\q,F)}\,d\Pi(\q,F)},
\end{align*}
by the definitions of $R_{n,1}$ and $R_{n,2}$. 
Since $R_{n,2}(\q,F)\ra 0$, uniformly in $(\q,F)$, by~\eqref{EqRemainders}, the factor $e^{-R_{n,2}(\q,F)}$ can be taken
out of the integral at the cost of a multiplicative $e^{o_P(1)}$-term. The remaining quotient of integrals 
tends in probability to 1 by~\eqref{EqChangeOfMeasure}. Thus, 
the right side of the display tends in probability to $e^{\frac{1}{2}t^T\tilde I_0t}$, for every fixed $t$ in a neighbourhood of 0. The pointwise
convergence of Laplace transforms implies that $V_n(\q,F)\given X^{(n)} \weak N(0,\tilde I_0)$
(e.g. \cite[Chapter 1.13]{wellnerWeakConvergenceEmpirical2023}), i.e.
\[
\sup_{h\in \mathrm{BL}_1}\Bigl|\E \left(h\bigl(V_n(\q,F)\bigr)\given X^{(n)} \right)-\int h\,dN(0,\tilde I_0)\Bigr| \stackrel {P_0^n}\longrightarrow 0.
\]
On the event
$A_n=\{\sup_{\q,F}\|\tilde I_0\bigr(\tilde I_0+R_{n,1}(\q,F)\bigr)^{-1}-I\|<\e\}$, we have
\[
\bigl\|\tilde I_0\sqrt n(\q-\q_0)-\bigr(\tilde I_0+R_{n,1}(\q,F)\bigr)\sqrt n(\q-\q_0)\bigr\|
\le\e \bigl\|\bigr(\tilde I_0+R_{n,1}(\q,F)\bigr)\sqrt n(\q-\q_0)\bigr\|.
\]
Hence for every $M$ and Lipschitz $h: \RR^d\ra[0,1]$,
\begin{align*}
&\Bigl|\E 1_{A_n}\Bigl(h\bigl(\tilde I_0\sqrt n(\q-\q_0)-\ep_n\tilde\ell_0\bigr)-h\bigl(V_n(\q,F)\bigr)\given X^{(n)}\Bigr)\Bigr|\\
&\qquad\qquad\le \e M+\Pr\Bigl(\bigl\|V_n(\q,F)+\ep_n\tilde\ell_0\bigr\|>M\given X^{(n)}\Bigr).
\end{align*}
Because there exists a Lipschitz function with $1_{\|v\|>M}\le h(v)\le 1_{\|v\|>M-1}$, the bounded Lipschitz metric is
invariant under location shifts, 
the second term is bounded above by $N\bigl(\ep_n\tilde\ell_0,\tilde I_0\bigr)\{v: \|v\|>M-1\}+o_P(1)$. It follows that
\begin{align*}
&\Bigl|\E \Bigl(h\bigl(\tilde I_0\sqrt n(\q-\q_0)-\ep_n\tilde\ell_0\bigr)\given X^{(n)} \Bigr)-\int h\,dN(0,\tilde I_0)\Bigr| \\
&\qquad\le \Pr(A_n^c)+\e M+N\bigl(\ep_n\tilde\ell_0,\tilde I_0\bigr)\{v: \|v\|>M-1\}+o_P(1).
\end{align*}
The third and second terms on the right side can be made arbitrarily small by first choosing sufficiently large $M$, 
and next sufficiently small $\e$. For fixed $M$ and $\e$ the other terms tend to zero in probability. Hence,
the left side tends to zero in probability, uniformly in $h$.
\end{proof}

In a typical application we construct the submodels $t\mapsto F_t(\q,F)$ to be least favourable at $(\q,F)$ for
estimating $\q$, i.e.\ 
the score function of the induced model $t\mapsto p_{\q+t,F_t(\q,F)}$ is the efficient score function at $(\q,F)$  for
estimating $\q$:
\[
  \frac{\partial}{\partial t}_{|t=0}\log p_{\q+t,F_t(\q,F)}(x)=\tilde\ell_{\q,F}.
\]
Only the efficient score function at the true value $(\q_0,F_0)$ of the parameter enters the theorem.
This suggests some flexibility in the definition of the submodels, as has also been employed in 
proofs of semiparametric maximum likelihood (e.g.\ \cite{murphyProfileLikelihood2000}). The submodels need satisfy
the preceding display only approximately.

We can compare the condition~\eqref{EqRemainders} to the conditions appearing in earlier literature.
In earlier literature, for example~\cite{castilloBernsteinMisesTheorem2015}, required an analysis of the likelihood as well.
They do not use perturbations in least-favourable submodels, and instead analyse the full likelihood.
Our method has the advantage that if the least-favourable submodel is known, you do no longer have to analyse the full likelihood.
Instead, it suffices to see how the likelihood changes when compared to the likelihood shifted in the least-favourable submodel.
This makes our condition more suitable to study mixture models, where one parameter is a probability measure.

Condition~\eqref{EqChangeOfMeasure} then requires that the prior does not change much 
``in the direction of the least favourable direction''. It is essential that the submodel extends in every
direction $t$ in a neighbourhood of 0, as this gives the asymptotic tightness of the posterior distribution,
by way of the Laplace transform employed in the proof of the theorem.

\subsection{Verifying \texorpdfstring{\Cref{EqRemainders}}{Likelihood expansion}}\label{sec:LAN}

Condition~\eqref{EqRemainders} can be split in a random and a deterministic part: 
\begin{align}
&\ep_n\Bigr[\sqrt n\log \frac{p_{\q+n^{-1/2}t ,F_{n^{-1/2}t}(\q,F)}}{p_{\q,F}}-t^T\tilde\ell_0\Bigr]
=o_P(1),\label{EqRandomPart}\\
&n P_0\log \frac{p_{\q+n^{-1/2}t ,F_{n^{-1/2}t}(\q,F)}}{p_{\q,F}}=-t^T\bigl(\tilde I_0+o_P(1)\bigr) \sqrt n (\q-\q_0)
-\thalf t^T\tilde I_0t+o_P(1).\label{EqDeterministicPart}
\end{align}
The $o_P(1)$ terms should tend to zero in $P_{\q_0,F_0}^n$-probability, uniformly in $(\q,F)\in \Theta_n\times\F_n$.
The uniformity in~\eqref{EqRandomPart} can be proved by showing that the relevant set of functions are contained in a Donsker class, or through entropy conditions. 
The following lemma gives sufficient conditions in terms of a Donsker class.

\begin{lem}\label{Lem:LANByDonsker}
Suppose that the map $t\mapsto \ell(t;\q,F)(x):=\log p_{\q+n^{-1/2}t ,F_{n^{-1/2}t}(\q,F)}(x)$ is twice continuously differentiable in
a neighbourhood of zero, for every $(\q,F)\in \Theta_n\times\F_n$ and $x\in\X$.
\begin{itemize}
\item If the classes of functions $\{\dot\ell(t/\sqrt n;\q,F): \|t\|<1, (\q,F)\in\Theta_n\times\F_n\}$ are
contained in a given $P_0$-Donsker class and $P_0\|\dot\ell(t_n/\sqrt n;\q_n,F_n)- \tilde\ell_0\|^2\ra 0$,
for every $\|t_n\|<1$ and $(\q_n,F_n)\in\Theta_n\times\F_n$, then~\eqref{EqRandomPart} is valid. 
\item If $\|P_0\ddot\ell(t_n/\sqrt n;\q_n,F_n)- \tilde  I_0\|\ra 0$, 
for every $\|t_n\|\le 1$ and $(\q_n,F_n)\in\Theta_n\times\F_n$, then~\eqref{EqDeterministicPart} is satisfied if also
\[
  \sup_{\q\in\Theta_n,F\in\F_n} \frac{\|P_0\dot\ell(0;\q,F)+\tilde I_0(\q-\q_0)\|}
  {\|\q-\q_0\|+n^{-1/2}}\stackrel {P_0^n}\longrightarrow 0.
\]
\end{itemize}
\end{lem}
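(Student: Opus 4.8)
The plan is to treat both parts with a single Taylor expansion of the log-density along the perturbation path: permanence properties of Donsker classes yield~\eqref{EqRandomPart}, and elementary estimates built from the two stated convergences yield~\eqref{EqDeterministicPart}. Write $s\mapsto\ell(s;\q,F)(x)$ for the log-density at the $s$-perturbation of $(\q,F)$, so that $\dot\ell,\ddot\ell$ are its first two $s$-derivatives and $\ell(0;\q,F)=\log p_{\q,F}$ (the path passing through $(\q,F)$ at $s=0$, that is $F_0(\q,F)=F$); let $\ep_n f=\sqrt n\bigl(\tfrac1n\sum_{i=1}^n f(X_i)-P_0 f\bigr)$ be the empirical process of~\eqref{EqRandomPart}. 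For $n$ so large that $n^{-1/2}\|t\|$ lies in the differentiability neighbourhood, the fundamental theorem of calculus gives, pointwise in $x$,
\[
  \sqrt n\,\log\frac{p_{\q+n^{-1/2}t,\,F_{n^{-1/2}t}(\q,F)}}{p_{\q,F}}(x)\;=\;t^{T}\!\int_{0}^{1}\dot\ell\bigl(n^{-1/2}ut;\q,F\bigr)(x)\,\dd u,
\]
and the second-order Taylor expansion of $s\mapsto\ell(s;\q,F)(x)$ at $s=0$ with integral remainder involves $\dot\ell$ at $0$ and $\ddot\ell$ only at points $n^{-1/2}ut$, $u\in[0,1]$. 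The key observation is that $\dot\ell$ and $\ddot\ell$ enter these expansions only at arguments $n^{-1/2}ut$ with $\|ut\|\le\|t\|$ --- of the form $t'/\sqrt n$ with $\|t'\|<1$ once $\|t\|\le1$ --- which is exactly the regime the hypotheses control.

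\textbf{The random part~\eqref{EqRandomPart}.} Put $g_{n,t,\q,F}:=t^{T}\int_{0}^{1}\bigl[\dot\ell(n^{-1/2}ut;\q,F)-\tilde\ell_0\bigr]\,\dd u$, which by the display above equals $\sqrt n\log\bigl(p_{\q+n^{-1/2}t,\,F_{n^{-1/2}t}(\q,F)}/p_{\q,F}\bigr)-t^{T}\tilde\ell_0$, i.e.\ the argument of $\ep_n$ in~\eqref{EqRandomPart}. First I would show that all these functions --- over $\|t\|\le1$, $(\q,F)\in\Theta_n\times\F_n$ and $n\ge1$ --- lie in one fixed $P_0$-Donsker class: starting from the given Donsker class that contains $\{\dot\ell(t'/\sqrt n;\q,F):\|t'\|<1,\ (\q,F)\in\Theta_n\times\F_n\}$, apply successively the stability of the Donsker property under closed convex hulls (for $\int_0^1\cdots\,\dd u$), under the uniformly Lipschitz maps $g\mapsto t^{T}g$ with $\|t\|\le1$, and under subtracting the fixed $L_2(P_0)$ function $t^{T}\tilde\ell_0$ and forming a finite union (see \cite[Ch.~19]{vandervaartAsymptoticStatistics1998}); note that $t=0$ gives $g_{n,0,\q,F}\equiv0$, so the zero function lies in this class. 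Second, Cauchy--Schwarz and Jensen give
\[
  P_0\, g_{n,t,\q,F}^{2}\;\le\;\|t\|^{2}\!\int_{0}^{1}P_0\bigl\|\dot\ell(n^{-1/2}ut;\q,F)-\tilde\ell_0\bigr\|^{2}\,\dd u,
\]
and the first bullet's second hypothesis --- equivalently, being stated along arbitrary sequences, $\sup_{\|t'\|<1,\ (\q,F)\in\Theta_n\times\F_n}P_0\|\dot\ell(t'/\sqrt n;\q,F)-\tilde\ell_0\|^{2}\to0$ --- bounds the right-hand side by a deterministic $\omega_n\to0$, uniformly in $t$ and $(\q,F)$. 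Finally, a $P_0$-Donsker class is totally bounded in $L_2(P_0)$ and its empirical process is asymptotically equicontinuous, so $\sup\{|\ep_n g|:g$ in the class, $P_0 g^{2}\le\delta\}\to0$ in probability as $n\to\infty$ and then $\delta\downarrow0$; since $P_0 g_{n,t,\q,F}^{2}\le\omega_n\to0$, this gives $\sup_{t,\q,F}|\ep_n g_{n,t,\q,F}|\to0$ in probability, which is~\eqref{EqRandomPart}.

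\textbf{The deterministic part~\eqref{EqDeterministicPart}.} A second-order Taylor expansion with integral remainder, followed by taking $P_0$-expectation (Fubini on the $u$-integral) and multiplying by $n$, gives
\[
  n P_0\log\frac{p_{\q+n^{-1/2}t,\,F_{n^{-1/2}t}(\q,F)}}{p_{\q,F}}\;=\;\sqrt n\,t^{T}P_0\dot\ell(0;\q,F)\;+\;\int_{0}^{1}(1-u)\,t^{T}P_0\ddot\ell(n^{-1/2}ut;\q,F)\,t\,\dd u.
\]
For the linear term, the second bullet's displayed hypothesis lets me write $P_0\dot\ell(0;\q,F)=-\tilde I_0(\q-\q_0)+R_n(\q,F)$ with $\|R_n(\q,F)\|\le\alpha_n(\|\q-\q_0\|+n^{-1/2})$ and $\alpha_n\to0$, uniformly over $\Theta_n\times\F_n$; hence $\sqrt n\,t^{T}P_0\dot\ell(0;\q,F)=-t^{T}\tilde I_0\sqrt n(\q-\q_0)$ plus a remainder bounded by $\|t\|\,\alpha_n(\sqrt n\|\q-\q_0\|+1)$, which --- splitting according to whether $\|\q-\q_0\|\ge n^{-1/2}$ or not --- is of the form $-t^{T}o_P(1)\,\sqrt n(\q-\q_0)+o_P(1)$. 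For the quadratic term, the $\ddot\ell$-hypothesis --- equivalently $\sup_{\|t'\|\le1,\ (\q,F)}\|P_0\ddot\ell(t'/\sqrt n;\q,F)-\tilde I_0\|\to0$ --- gives $P_0\ddot\ell(n^{-1/2}ut;\q,F)\to\tilde I_0$ uniformly, so, the coefficient $\thalf$ being $\int_0^1(1-u)\,\dd u$, that term is $-\thalf\,t^{T}\tilde I_0 t+o(1)$ uniformly in $(\q,F)$. Together the two pieces reproduce the right-hand side of~\eqref{EqDeterministicPart}.

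\textbf{The main obstacle.} I expect the first bullet to be the crux. One must verify that the permanence theorems for Donsker classes genuinely produce a single, $n$-free Donsker class containing every $g_{n,t,\q,F}$ --- this rests on the envelope and measurability content packaged into the phrase ``$P_0$-Donsker class'' --- and then pass carefully from the two sequence-wise hypotheses to uniform $L_2(P_0)$-smallness of $g_{n,t,\q,F}$ and, via asymptotic equicontinuity, to smallness of $\ep_n g_{n,t,\q,F}$ uniform over the $n$-dependent index set $\Theta_n\times\F_n$. The Taylor bookkeeping and the estimates in the second bullet are comparatively routine.
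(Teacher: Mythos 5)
Your proposal is correct and follows essentially the same route as the paper: the fundamental theorem of calculus plus convex-hull permanence of Donsker classes and asymptotic equicontinuity for~\eqref{EqRandomPart}, and a second-order Taylor expansion with integral remainder, split into the linear term (handled by the displayed no-bias condition) and the quadratic term (handled by the $\ddot\ell$ hypothesis), for~\eqref{EqDeterministicPart}. You even spell out some steps the paper leaves implicit (the Cauchy--Schwarz bound on $P_0 g_{n,t,\q,F}^2$ and the passage from sequence-wise hypotheses to uniform smallness), and you share with the paper the same unremarked sign convention relating $P_0\ddot\ell$ to $\tilde I_0$ in the quadratic term.
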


\begin{proof}
The left side of~\eqref{EqRandomPart} is equal to 
\[
\sqrt n\ep_n\Bigl[\ell\Bigl(\frac t{\sqrt n};\q,F\Bigr)-\ell(0;\q,F)\Bigr]-t^T\ep_n\tilde\ell_0
=t^T\ep_n\int_0^1 \Bigl[\dot\ell\Bigl(\frac{st}{\sqrt n}; \q,F\Bigr)-\tilde\ell_0\Bigr]\,ds.
\] 
If the classes of functions $\{\dot\ell(t/\sqrt n;\q,F): \|t\|<1, (\q,F)\in\Theta_n\times\F_n\}$ are contained in
a fixed Donsker class, then the convex hull relative to $t$ of these classes are contained in the
convex hull of the latter Donsker class, which is also Donsker. Thus, the 
classes of functions $\int_0^1 (\dot\ell(st/\sqrt n; \q,F)-\tilde\ell_0)\,ds$ are contained in
a Donsker class. Their empirical process tends to zero, as 
$\sup_{|t\|<1}P_0\|\dot\ell(t/\sqrt n;\q_n,F_n)- \tilde\ell_0\|^2\ra 0$, by assumption.

By a second-order Taylor expansion we can write the left side of~\eqref{EqDeterministicPart} as
\begin{align*}
nP_0\Bigl[\ell\Bigl(\frac t{\sqrt n};\q,F\Bigr)-\ell(0;\q,F)\Bigr]&=\sqrt n\, t^T P_0\dot\ell(0;\q,F)\\
&+t^T\!\int_0^1 P_0\ddot\ell\Bigl(\frac{st}{\sqrt n}; \q,F\Bigr)(1-s)\,ds\, t.
\end{align*}
The first term on the right is $-t^T\bigl(\tilde I_0+o_P(1)\bigr)\sqrt n(\q-\q_0)+o_P(1)$, by assumption.
The second term is $t^T\tilde I_0 t/2+o_P(1)$, by the assumption that
$\sup_{\|t\|\le 1}\|P_0\ddot\ell(t/\sqrt n;\q_n,F_n)- \tilde  I_0\|\ra 0$.
\end{proof}

For an exact least favourable submodel $t\mapsto F_t(\q,F)$, the derivative $\dot\ell(0;\q,F)$ is equal
to the efficient score function $\tilde \ell_{\q,F}$. Then the displayed condition in the lemma is satisfied if
$\Theta_n$ shrinks to $\q_0$ and, as uniformly in $F\in\F_n$, as $\q\ra\q_0$,
\[
P_{\q_0,F_0}\tilde\ell_{\q,F}=-\tilde I_0 (\q-\q_0)+o_P\bigl(\|\q-\q_0\|+n^{-1/2}\bigr).
\]
There are two ways in which we can verify this, both starting from the fact that
$P_{\q,F}\tilde\ell_{\q,F}=0$, for every $(\q,F)$. The first is to write the left side as
\[
P_{\q_0,F_0}(\tilde\ell_{\q,F}-\tilde\ell_{\q_0,F})+P_{\q_0,F_0}\tilde\ell_{\q_0,F}
\approx P_{\q_0,F_0}\dot{\tilde\ell}_{\q_0,F_0}(\q-\q_0)+P_{\q_0,F_0}\tilde\ell_{\q_0,F}.\]
Here $P_{\q_0,F_0}\dot{\tilde\ell}_{\q_0,F_0}=-\tilde I_0$, as follows from differentiating 
across the identity $P_{\q,F}\tilde\ell_{\q,F}=0$, and the fact that $P_{\q,F}\tilde\ell_{q,F}\dot\ell_{\q,F}^T=\tilde I_{\q,F}$,
for $\dot\ell_{\q,F}$ the ordinary score function for $\q$.
The second way is to write the left side as
\[
(P_{\q_0,F_0}-P_{\q,F_0})\tilde\ell_{\q,F}+P_{\q,F_0}\tilde\ell_{\q,F}
\approx -P_{\q_0,F_0}\tilde\ell_{\q_0,F_0}\dot\ell_{\q_0,F_0}^T(\q-\q_0)+P_{\q,F_0}\tilde\ell_{\q,F}.
\]
In both ways the term on the far right will be required to be $o(n^{-1/2})$, which is version of the ``no-bias condition'', which also popped up in the analysis of the maximum likelihood estimator.
This may require a rate of convergence of $F\in\F_n$ to $F_0$ (typically $o(n^{-1/4})$, because the first derivative
with respect to $F$ should vanish in view of the orthogonality of $\tilde \ell_{\q,F}$ to score functions
for $F$), but structural properties of the model determine may help. 
In the mixture models we consider here, the term is identically zero. Indeed,
$\tilde\ell_{\q,F}$ is orthogonal to all score functions for $F$, which include the scores $p_{\q,G}/p_{\q,F}-1$. Thus,
for every $F,G$,
\[
P_{\q,G}\tilde\ell_{\q,F}
=\int \Bigl(\frac{p_{\q,G}}{p_{\q,F}}-1\Bigr)\tilde\ell_{\q,F}\,dP_{\q,F}=0.
\]

Next, we will provide a lemma that is useful for the computations. To apply~\Cref{Lem:LANByDonsker} we need to compute the gradient and the hessian matrix. The following lemma provides a strategy for computing these. 

\begin{lem}\label{lem:ExpressionsForGradiantHessian}
    Suppose that $p_{\theta}(x \given z) = e^{h_\theta(x \given z)}$, and that the perturbation in the least favourable submodel is given by $p_{\theta + t, F_t}(x) = \int p_{\theta + t} (x \given \phi_{t,\theta}(z)) \dd F(z)$. Denote $D_{t, \theta} (x \given z) := \nabla_t h_{\theta + t}( x \given \phi_{t, \theta}(z))$. Then, whenever the functions within the integrals are dominated by an $F(z)$ integrable function:
    \[
        \nabla_t p_{\theta + t, F_t}(x) = \frac{\int e^{h_{\theta+t}(x \given \phi_{t,\theta}(z))} D_{t, \theta} (x \given z) \dd F(z)}
        {
            p_{\theta + t, F_t}(x)
        }
    \]
    and
    \begin{align*}
        H_t p_{\theta + t, F_t}(x) &= 
          \frac{ 
              \int e^{h_{\theta + t}(x \given \phi_{t, \theta}(z))} \left(H_t(h_{\theta + t}(x \given \phi_{t, \theta}(z))) + D_{t, \theta}(x \given z) D_{t, \theta}^T(x\given z) \right) \dd F(z)
              }{
              p_{\theta + t, F_t}(x)
          }\\
          &+ \left(\nabla_t p_{\theta + t, F_t}(x)\right) \left( \nabla_t p_{\theta + t, F_t}(x) \right)^T.
    \end{align*}
\end{lem}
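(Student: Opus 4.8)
The plan is to differentiate the defining integral
\[
p_{\theta+t,F_t}(x)=\int e^{h_{\theta+t}(x\mid\phi_{t,\theta}(z))}\,dF(z)
\]
twice with respect to $t$, pulling the derivative inside the integral (justified by the stated domination hypothesis) and then rewriting the result in terms of $p_{\theta+t,F_t}(x)$ itself by multiplying and dividing by it. For the gradient, the chain rule applied to $e^{h_{\theta+t}(x\mid\phi_{t,\theta}(z))}$ gives $e^{h_{\theta+t}(x\mid\phi_{t,\theta}(z))}\nabla_t h_{\theta+t}(x\mid\phi_{t,\theta}(z))$; recognising the gradient factor as $D_{t,\theta}(x\mid z)$ by definition and dividing the integral by $p_{\theta+t,F_t}(x)$ yields the claimed expression. (Strictly, the formula as written in the statement equals $\nabla_t p_{\theta+t,F_t}(x)$ only after multiplying through by $p_{\theta+t,F_t}(x)$, or equivalently it is the expression for $\nabla_t\log p_{\theta+t,F_t}(x)$; I would keep the bookkeeping consistent with whichever normalisation the statement intends and note the identity $\nabla_t\log p_{\theta+t,F_t}=\nabla_t p_{\theta+t,F_t}/p_{\theta+t,F_t}$.)

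For the Hessian, I would differentiate the gradient expression once more. Writing $g(t,z):=e^{h_{\theta+t}(x\mid\phi_{t,\theta}(z))}$, we have $\nabla_t g=g\,D_{t,\theta}(x\mid z)$, so
\[
H_t g = g\bigl(H_t h_{\theta+t}(x\mid\phi_{t,\theta}(z)) + D_{t,\theta}(x\mid z)D_{t,\theta}^T(x\mid z)\bigr),
\]
using the product rule and $\nabla_t D_{t,\theta}(x\mid z)=H_t h_{\theta+t}(x\mid\phi_{t,\theta}(z))$ (the Hessian of $h$ composed with the submodel, where $H_t$ denotes differentiation through both the $\theta+t$ slot and the $\phi_{t,\theta}(z)$ argument, matching the definition of $D$). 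Integrating against $dF(z)$ gives the first term on the right of the claimed Hessian formula, after dividing by $p_{\theta+t,F_t}(x)$. The second term, $(\nabla_t p_{\theta+t,F_t})(\nabla_t p_{\theta+t,F_t})^T$, arises from differentiating the normalising factor $1/p_{\theta+t,F_t}(x)$: since $\nabla_t(1/p)=-(\nabla_t p)/p^2$, the quotient rule applied to $\nabla_t p_{\theta+t,F_t}=\bigl(\int g\,D\,dF\bigr)/p_{\theta+t,F_t}$ (if that is the normalisation in force) produces exactly this rank-one correction; I would assemble the two contributions and match signs carefully.

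The routine content is just the chain and product rules, so the only genuine obstacle is the justification of differentiation under the integral sign. Here I would invoke the stated hypothesis that the relevant integrands — namely $g$, $gD$, and $g(H_t h + DD^T)$ — are dominated by $F$-integrable functions locally in $t$, which by the standard dominated-convergence criterion for differentiating parameter integrals lets us interchange $\nabla_t$ (resp.\ $H_t$) with $\int\cdot\,dF(z)$. A secondary point worth stating explicitly is that $p_{\theta+t,F_t}(x)>0$ wherever these formulas are used, so dividing by it is legitimate; this holds on the support of $P_{\theta,F}$ and is all that is needed for the subsequent application to the score and information computations. No delicate estimates are required beyond this.
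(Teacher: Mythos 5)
Your proposal is correct and takes essentially the same route as the paper: differentiate $f(t)=\int e^{h(t,z)}\,dF(z)$ under the integral sign (justified by the domination hypothesis via the Leibniz rule), apply the chain and product rules to get $f'=\int e^h\,\nabla_t h\,dF$ and $f''=\int e^h\bigl(H_t h+(\nabla_t h)(\nabla_t h)^T\bigr)dF$, and then express the result through the log-derivative identities $\nabla_t\log f=f'/f$ and $H_t\log f=f''/f-(f'/f)(f'/f)^T$. Your side remark about the normalisation is well taken --- the paper's own proof indeed computes the gradient and Hessian of $\log p_{\theta+t,F_t}$, so the left-hand sides of the lemma should be read as $\nabla_t\log p$ and $H_t\log p$, and when you ``match signs carefully'' the rank-one correction comes out with a minus sign, as in the paper's proof, rather than the plus sign printed in the statement.
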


\begin{proof}
   Use the chain rule to compute the gradient and the Hessian of $\log(f)$. 
   This gives $\frac{f'}{f}$ and $\frac{ f''}{f} - \left( \frac{f'}{f} \right) \left( \frac{f'}{f} \right)^T$. 
   Set $f(t) = \int e^{h(t,z)} \dd F(z)$, so that 
   \begin{align*}
       f' &= \nabla_t \int e^{h(t,z)} \dd F(z),\\
       f'' &= H_t \int e^{h(t,z)} \dd F(z) = \nabla_t^T f'.
   \end{align*}
   Next we use the dominated integrability to apply the Leibniz rule and move the differentiating inside the integration sign and apply the chain rule. This gives
   \begin{align*}
       f' &= \int e^{h(t,z)} \nabla_t h(t,z) \dd F(z), \\
       f'' &= \int e^{h(t,z)} \left( \nabla_t^T \nabla_t h(t,z) + \left(\nabla_t h(t,z)\right) \left(\nabla_t h(t,z)\right)^T \right) \dd F(z).
   \end{align*}
   Filling in the definitions in as in the assumptions gives the final result.
\end{proof}

To study the asymptotic behaviour of the derivatives of the log likelihood in $P_{\theta_0, F_0}$, we can use the following lemma:
\begin{lem}\label{lem:LimitsOfExpectationsLogLik}
    Let $p_{\theta}(x \given z) = e^{h_\theta(x \given z)}$ be the mixture kernel, and the perturbation in the least favourable submodel given by $p_{\theta + t, F_t} = \int p_{\theta + t}(x \given \phi_{t, \theta}(z)) \dd F(z)$.
    Suppose that $h_{\theta}(x \given z)$ is twice continuously differentiable in $(\theta, z)$,
    and that $\phi_{t,\theta}(z)$ is twice continuously differentiable in $t, \theta$.
    Furthermore, assume that the maps $z \mapsto \nabla_t p_{\theta + t}(x \given \phi_{t, \theta}(z)) \given_{t = t_n/\sqrt{n}}$ and $z \mapsto H_t p_{\theta + t}(x \given \phi_{t, \theta}(z))\given_{t = t_n/\sqrt{n}}$ are bounded for all $x$, $| t_n| \leq 1$ and $\theta \in \Theta_n$.
    Finally, assume that $\nabla_t \log p_{\theta + t, F_t}(x) \given_{t = t_n/\sqrt{n}}$ and $H_t \log p_{\theta + t, F_t}(x) \given_{t = t_n/\sqrt{n}}$ have envelope functions that are square integrable and integrable respectively.
    Then for all $|t_n| \leq 1$, $\theta_n \rightarrow \theta_0, F_n \rightsquigarrow F_0$:
    \begin{align*}
        P_{\theta_0, F_0} \| \dot\ell( t_n/\sqrt{n} ; \theta_n, F_n) - \ell_0 \|^2 &\rightarrow 0 \\
        \|P_{\theta_0, F_0} \ddot\ell(t_n/\sqrt{n} ; \theta_n, F_n) - \tilde{I}_0 \|&\rightarrow 0.
    \end{align*}
\end{lem}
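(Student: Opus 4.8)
The plan is to reduce both limits to pointwise convergence in $x$ together with a domination argument. By \Cref{lem:ExpressionsForGradiantHessian}, the gradient $\dot\ell(s;\theta,F)(x)=\nabla_s\log p_{\theta+s,F_s}(x)$ and the Hessian $\ddot\ell(s;\theta,F)(x)=H_s\log p_{\theta+s,F_s}(x)$ are ratios whose common denominator and whose numerators are $F$-integrals of functions assembled from $h_{\theta+s}$, $\phi_{s,\theta}$ and their first two derivatives, together with, for the Hessian, an outer product of the gradient with itself. I would first prove that, for $P_{\theta_0,F_0}$-almost every $x$ and along any sequences $|t_n|\le1$, $\theta_n\to\theta_0$, $F_n\rightsquigarrow F_0$, one has $\dot\ell(t_n/\sqrt n;\theta_n,F_n)(x)\to\dot\ell(0;\theta_0,F_0)(x)$ and $\ddot\ell(t_n/\sqrt n;\theta_n,F_n)(x)\to\ddot\ell(0;\theta_0,F_0)(x)$; then transfer these into $L^2(P_{\theta_0,F_0})$ and $L^1(P_{\theta_0,F_0})$ by dominated convergence with the assumed square-integrable and integrable envelopes; and finally identify the limits, using that $\dot\ell(0;\theta_0,F_0)=\tilde\ell_{\theta_0,F_0}$ is the efficient score (the function denoted $\ell_0$ in the statement; see the discussion preceding \Cref{lem:ExpressionsForGradiantHessian}) and that $P_{\theta_0,F_0}\ddot\ell(0;\theta_0,F_0)=\tilde I_0$ by the identity relating the expected second derivative of the log-likelihood along the (differentiable-in-quadratic-mean) least favourable submodel to its efficient Fisher information.

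For the pointwise step, fix $x$ with $p_{\theta_0,F_0}(x)>0$, which holds $P_{\theta_0,F_0}$-almost surely, and write $s_n=t_n/\sqrt n\to0$. Consider a generic numerator integrand, say $g_n(z):=e^{h_{\theta_n+s_n}(x\given\phi_{s_n,\theta_n}(z))}D_{s_n,\theta_n}(x\given z)$ (this is exactly the map controlled by the boundedness hypothesis). By the assumed twice continuous differentiability of $(\theta,z)\mapsto h_\theta(x\given z)$ and of $(s,\theta)\mapsto\phi_{s,\theta}(z)$ (the submodel passing through $(\theta,F)$ at $s=0$, so $\phi_{0,\theta}(z)=z$), the $g_n$ converge to a limit $g_0$ pointwise in $z$, and uniformly on each compact set of $z$-values; the boundedness hypothesis furnishes a finite $C(x)$ dominating all $g_n$ (hence $g_0$) uniformly in $z$, so $g_0$ is bounded and continuous. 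Since $F_n\rightsquigarrow F_0$ the family $\{F_n\}$ is tight, so given $\varepsilon>0$ I may pick a compact $K$ with $\sup_n F_n(K^c)<\varepsilon$ and $F_0(K^c)<\varepsilon$. Splitting $\int g_n\,\dd F_n-\int g_0\,\dd F_0$ over $K$ and $K^c$, the part over $K$ is handled by the uniform convergence $g_n\to g_0$ there and by $F_n\rightsquigarrow F_0$ applied to the bounded continuous $g_0$, and the part over $K^c$ by the bound $C(x)$ and the choice of $K$; letting $\varepsilon\downarrow0$ gives $\int g_n\,\dd F_n\to\int g_0\,\dd F_0$. The same argument applies to the denominator, $\int e^{h_{\theta_n+s_n}(x\given\phi_{s_n,\theta_n}(z))}\,\dd F_n(z)\to p_{\theta_0,F_0}(x)>0$, and to the numerator appearing in the Hessian formula of \Cref{lem:ExpressionsForGradiantHessian}; the remaining outer-product term there is a continuous function of the gradient limit already obtained. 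Dividing and using positivity of the limiting denominator yields the claimed pointwise convergences, with the limits identified as above.

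With the pointwise convergences in hand, the domination step is routine. For the first assertion, $\|\dot\ell(t_n/\sqrt n;\theta_n,F_n)(x)-\tilde\ell_0(x)\|^2\le 2G(x)^2+2\|\tilde\ell_0(x)\|^2$, where $G$ is the square-integrable envelope of the gradients and $\tilde\ell_0\in L^2(P_{\theta_0,F_0})$ since $\tilde I_0=P_{\theta_0,F_0}\tilde\ell_0\tilde\ell_0^T$ is finite; dominated convergence then gives $P_{\theta_0,F_0}\|\dot\ell(t_n/\sqrt n;\theta_n,F_n)-\tilde\ell_0\|^2\to0$. For the second, $\|\ddot\ell(t_n/\sqrt n;\theta_n,F_n)(x)-\ddot\ell(0;\theta_0,F_0)(x)\|\le 2\max\{H(x),\|\ddot\ell(0;\theta_0,F_0)(x)\|\}$, with $H$ the integrable envelope of the Hessians and $\|\ddot\ell(0;\theta_0,F_0)\|\le H$ almost everywhere (it is a pointwise limit of Hessians each dominated by $H$); dominated convergence gives $P_{\theta_0,F_0}\ddot\ell(t_n/\sqrt n;\theta_n,F_n)\to P_{\theta_0,F_0}\ddot\ell(0;\theta_0,F_0)=\tilde I_0$, i.e.\ $\|P_{\theta_0,F_0}\ddot\ell(t_n/\sqrt n;\theta_n,F_n)-\tilde I_0\|\to0$. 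I expect the main obstacle to be the pointwise step, specifically the passage of the weak convergence $F_n\rightsquigarrow F_0$ through integrals $\int g_n\,\dd F_n$ whose integrand $g_n$ itself moves with $n$: the portmanteau lemma does not apply directly, so one must combine locally uniform convergence of the $g_n$, tightness of $\{F_n\}$, and the uniform-boundedness hypothesis, taking care that the relevant bounds and convergences are genuinely uniform over $z$ on compacta, over $|t_n|\le1$, and over $\theta_n\in\Theta_n$.
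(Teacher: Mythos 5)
Your proposal is correct and follows essentially the same route as the paper's proof: pointwise convergence of the gradient and Hessian via the explicit expressions of \Cref{lem:ExpressionsForGradiantHessian}, then dominated convergence with the assumed envelopes, then identification of the limits as $\tilde\ell_0$ and $\tilde I_0$. In fact you supply more detail than the paper does at the one genuinely delicate point — passing the weak convergence $F_n\rightsquigarrow F_0$ through integrals whose integrands also move with $n$, via tightness, locally uniform convergence, and the uniform boundedness hypothesis — which the paper compresses into a single sentence.
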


\begin{proof}
    We use the explicit expressions found in \Cref{lem:ExpressionsForGradiantHessian}.
    By the chain rule, continuity, integrability and weak convergence we can conclude that pointwise
    \begin{align*}
        \dot\ell( t_n/\sqrt{n} ; \theta_n, F_n)(x) - \dot\ell (0; \theta_0, F_0)(x) &\rightarrow 0 \\
        \ddot\ell(t_n/\sqrt{n} ; \theta_n, F_n)(x) - \ddot\ell(0; \theta_0, F_0)(x) &\rightarrow 0.
    \end{align*}
    Next, we use the pointwise limits, the envelope functions together with the dominated convergence theorem
    \begin{align*}
        P_{\theta_0, F_0} \| \dot\ell( t_n/\sqrt{n} ; \theta_n, F_n) - \dot\ell(0; \theta_0, F_0) \| &\rightarrow 0\\
        P_{\theta_0, F_0} \| \ddot\ell(t_n/\sqrt{n} ; \theta_n, F_n) - \ddot\ell(0;\theta_0, F_0) \| &\rightarrow 0
    \end{align*}
    By using that $\ell_0 = \dot\ell(0; \theta_0, F_0)$ and $P_{\theta_0, F_0} \ddot\ell(0; \theta_0, F_0) = \tilde{I}_0$, we finally get the results as claimed:
    \begin{align*}
        P_{\theta_0, F_0} \| \dot\ell( t_n/\sqrt{n} ; \theta_n, F_n) - \ell_0 \|^2 &\rightarrow 0 \\
        \|P_{\theta_0, F_0}  \ddot\ell(t_n/\sqrt{n} ; \theta_n, F_n) - \tilde{I}_0 \|&\rightarrow 0.
    \end{align*}

\end{proof}

\subsection{Verifying \texorpdfstring{\cref{EqChangeOfMeasure}}{the Change-of-Measure condition} for Species sampling processes}\label{sec:changeofmeasure}
The following lemma allows us to verify the BvM if $F_t$ is a coordinate transform of $F$, it is consistent under the coordinate changes, and it does not create too many clusters in the posterior.
We will formalise these assumptions as to be able to discuss them.

\begin{assumption}\label{ass:ChangeOfVariables}
If $F_t$ is the map as in \Cref{EqChangeOfMeasure}, we assume that $ p_{\theta +t, F_t}(X) = \int p_{\theta + t}(x \given \phi_{t, \theta}(z)) \dd F(z)$ \end{assumption}
Under this assumption $\phi_{t,\theta}$ defines a change of variables of the latent variables.
This is equivalent to applying this change of variables to the centre measure $G$ of the species sampling process. 
Denote the prior that we get after changing the coordinates by $\Pi_{t, \theta}$.

\begin{assumption}\label{ass:Smoothness}
    For $h$ the density of the prior $\pi$ and $G$ the centre measure of the species sampling process with density $g$, we assume that there exists an open set $U$ of $\theta$ and constants $C_t, C'_t > 0$ such that, for all $\theta \in U$ and all $z$
     \begin{align}
         \left\| \frac{g( \phi^{-1}_{\frac{t}{\sqrt{n}}, \theta - \frac{t}{\sqrt{n}}}(z)) \left\| \det \dot \phi^{-1}_{\frac{t}{\sqrt{n}}, \theta - \frac{t}{\sqrt{n}}}(z) \right\|}{g(z)} - 1  \right\| &\leq \frac{C_t}{\sqrt{n}};\label{ass:smoothnessinG}\\
        \left\| \frac{h( \theta + \frac{t}{\sqrt{n}})}{h(\theta)} - 1 \right\| & \leq \frac{C'_T}{\sqrt{n}}.\label{ass:smoothnessinh}
    \end{align}
\end{assumption}
The first condition is needed for a compatibility of the change of coordinates with the centre measure.
This is needed to ensure that the errors will vanish.
The second condition is fairly weak, it is implied by a local log Lipschitz condition on the prior density. See \Cref{lem:SmoothnessInG,lem:SmoothnessInH} for tools for verifying these assumptions.

For our argument to work, we need to show that there is a small number of distinct clusters in the posterior.
We can do this both for the unperturbed posterior $\pi \otimes \Pi$ and the perturbed posterior $\pi \otimes \Pi_{\frac{t}{\sqrt{n}}, \theta}$
We can rewrite the posterior using latent variables $Z_i$. 
Denote by $K_n$ the number of distinct values in $Z_i$.
We want this number to be much smaller than $\sqrt{n}$
\begin{assumption}\label{ass:SmallNumberClusters}
    We assume that for all $t$ in a neighbourhood of $0$, and for $U$ a neighbourhood of $\theta_0$, 
    \[
        \pi \otimes \Pi_{ \frac{t}{\sqrt{n}}, \theta} \left( \theta + \frac{t}{\sqrt{n}} \not\in U, K_n \geq k_n \given X^{(n)} \right) = o(1),
    \]
    for some sequence $k_n = o(\sqrt{n})$.
\end{assumption}
To verify~\Cref{ass:SmallNumberClusters} we can either use a direct argument, or use the remaining mass theorem~\cite[Theorem 8.20]{ghosalFundamentalsNonparametricBayesian2017}.
We provide a suitable tail bound on the distinct number of clusters under the Dirichlet process prior in \Cref{lem:tailbound_distinct_obs_DP}.
For a Mixture of finite mixtures, the number of clusters is given by $K$, so all that is needed is to show that $\PP(K > k_n)$ is small enough.

Finally, we need to assume the following consistency result
\begin{assumption}\label{ass:Consistency}
    Assume that for all $t$ the following limit converges to $1$.
    \[
\frac{
    \iint_{\Theta_n, \mathcal{F}_n} e^{\ell_n\left(\theta + \frac{t}{\sqrt{n}}, F_{\frac{t}{\sqrt{n}}}(\theta, F)\right)} \dd \pi(\theta) \dd \Pi(F)
    }{
        \iint e^{\ell_n\left(\theta + \frac{t}{\sqrt{n}}, F_{\frac{t}{\sqrt{n}}}(\theta, F)\right)} \dd \pi(\theta) \dd \Pi(F)
    }
    \rightsquigarrow 1.
    \] 
\end{assumption}

With these assumptions, we can prove the lemma that allows us to verify the chance of measures condition. 
\begin{lem}\label{lem:ChangeOfMeasure}
    Let $\Pi$ be a species sampling process with centre measure $G$, such that $G$ has density $g$.
    Let $\pi$ be a probability distribution on $\Theta$ with density $h$.  
    Suppose that for all $t \in T$, with $T$ an open neighbourhood of $0$ \Cref{ass:ChangeOfVariables,ass:Smoothness,ass:SmallNumberClusters} hold.
        Then
    \[
        \frac{
            \iint_{\Theta_n, \mathcal{F}_n} e^{\ell_n\left(\theta + \frac{t}{\sqrt{n}}, F_{\frac{t}{\sqrt{n}}}(\theta, F)\right)} \dd \pi(\theta) \dd \Pi(F)
        }{
            \iint_{\Theta_n, \mathcal{F}_n} e^{\ell_n\left(\theta, F\right)} \dd \pi(\theta) \dd \Pi(F)
        }
        \rightsquigarrow 1.  
    \]
\end{lem}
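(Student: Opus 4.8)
The plan is to rewrite the numerator by undoing the coordinate change $\phi_{t/\sqrt n,\theta}$, so that the likelihood in the numerator becomes a likelihood of the \emph{same} form $\ell_n(\theta+t/\sqrt n, F)$ but now integrated against a \emph{reweighted} prior, and then to show that this reweighted prior is, up to a factor tending to $1$, the original prior. Concretely, under \Cref{ass:ChangeOfVariables} we have $p_{\theta+t,F_t}(x)=\int p_{\theta+t}(x\given \phi_{t,\theta}(z))\,dF(z)$, so applying the change of variables $z\mapsto \phi_{t/\sqrt n,\theta}(z)$ to the latent variable (equivalently, to the centre measure $G$ of the species sampling process) turns the perturbed model into the unperturbed mixture $p_{\theta+t/\sqrt n,\tilde F}$ evaluated at a draw $\tilde F$ from the species sampling process with centre measure $G\circ\phi_{t/\sqrt n,\theta}^{-1}$. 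Because a species sampling process is determined by its EPPF together with its centre measure, the law $\Pi_{t,\theta}$ of the pushed-forward random measure differs from $\Pi$ only through the centre measure, and the Radon--Nikodym derivative of $G\circ\phi^{-1}$ with respect to $G$ at a point $z$ is exactly the ratio $g(\phi^{-1}_{t/\sqrt n,\theta-t/\sqrt n}(z))\,|\det\dot\phi^{-1}_{t/\sqrt n,\theta-t/\sqrt n}(z)|/g(z)$ appearing in \eqref{ass:smoothnessinG}. Similarly the shift $\theta\mapsto\theta+t/\sqrt n$ in the parametric prior contributes the ratio $h(\theta+t/\sqrt n)/h(\theta)$ from \eqref{ass:smoothnessinh}.

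Carrying this out, the numerator equals
\[
\iint_{\Theta_n,\mathcal F_n} e^{\ell_n\left(\theta+\frac t{\sqrt n},F\right)}\,\prod_{j=1}^{K_n}\frac{g(\phi^{-1}_{\frac t{\sqrt n},\theta}(Z_j))\,|\det\dot\phi^{-1}_{\frac t{\sqrt n},\theta}(Z_j)|}{g(Z_j)}\;\frac{h(\theta+\frac t{\sqrt n})}{h(\theta)}\,d\pi(\theta)\,d\Pi(F),
\]
where the product is over the $K_n$ distinct latent values carried by the representation of $F$, since the species sampling weights are untouched by the change of variables and only the atom locations are reweighted. By \Cref{ass:Smoothness}, each factor of the product lies within $1\pm C_t/\sqrt n$ and the scalar factor within $1\pm C'_t/\sqrt n$, so the whole weight lies between $(1-C_t/\sqrt n)^{K_n}(1-C'_t/\sqrt n)$ and $(1+C_t/\sqrt n)^{K_n}(1+C'_t/\sqrt n)$. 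On the event $\{K_n<k_n\}$ with $k_n=o(\sqrt n)$ these bounds are $1+o(1)$ uniformly. So I would split each of numerator and denominator according to whether $K_n<k_n$ and whether $\theta+t/\sqrt n\in U$; \Cref{ass:SmallNumberClusters} forces the posterior mass of the bad event to be $o(1)$, and on the good event the weight is sandwiched in $1+o(1)$, yielding that the numerator equals $(1+o_P(1))$ times $\iint e^{\ell_n(\theta+t/\sqrt n,F_{t/\sqrt n}(\theta,F))}d\pi\,d\Pi$ restricted to the good set. Finally I would invoke \Cref{ass:Consistency} to replace the restricted integral by the full integral (numerator and denominator both), and divide, obtaining the ratio $\rightsquigarrow 1$.

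The main obstacle is the bookkeeping of the product over distinct clusters: one must pass to the latent-variable (Blackwell--MacQueen-type) representation of the species sampling process, check that the change of variables $\phi_{t,\theta}$ acts on the posterior precisely by reweighting the conditional density of each \emph{distinct} atom location $Z_j$ by the Jacobian-corrected density ratio while leaving the EPPF and the partition structure invariant, and then control the resulting product $\prod_{j=1}^{K_n}(1+O(1/\sqrt n))$ uniformly over the posterior --- this is exactly where $k_n=o(\sqrt n)$ enters and why \Cref{ass:SmallNumberClusters} is stated for the perturbed posterior $\pi\otimes\Pi_{t/\sqrt n,\theta}$ rather than only the unperturbed one. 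A secondary subtlety is the direction of the coordinate change and the associated index shift $\theta\mapsto\theta-t/\sqrt n$ inside $\phi^{-1}$, which must be tracked carefully so that the density ratio that appears is literally the one bounded in \eqref{ass:smoothnessinG}; once that is matched, the remaining estimates are routine.
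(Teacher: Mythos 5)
Your proposal is correct and follows essentially the same route as the paper: restrict to the good event $\{\theta+t/\sqrt n\in U,\ K_n<k_n\}$ via \Cref{ass:SmallNumberClusters} and \Cref{ass:Consistency}, shift $\theta$ to pick up the ratio $h(\theta-t/\sqrt n)/h(\theta)$, pass to the partition/EPPF representation (the paper's \Cref{lem:ChineseRepSpeciesSampling}) so the latent change of variables produces exactly one Jacobian-corrected density ratio per distinct cluster, and control the product of $K_n<k_n=o(\sqrt n)$ factors each within $C_t/\sqrt n$ of $1$. The subtleties you flag (the EPPF being invariant under the coordinate change, and the index shift $\theta\mapsto\theta-t/\sqrt n$ inside $\phi^{-1}$) are precisely the points the paper's proof handles explicitly.
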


\begin{proof}
    First, by \Cref{ass:Consistency} it suffices to show
        \[
        \frac{
            \iint e^{\ell_n\left(\theta + \frac{t}{\sqrt{n}}, F_{\frac{t}{\sqrt{n}}}(\theta, F)\right)} \dd \pi(\theta) \dd \Pi(F)
        }{
            \iint e^{\ell_n\left(\theta, F\right)} \dd \pi(\theta) \dd \Pi(F)
        }
        \rightsquigarrow 1.  
    \]

    By \Cref{ass:SmallNumberClusters} we see that 
    \begin{align*}
        &\int_{U + \frac{t}{\sqrt{n}}} \int_{K \leq k_n} e^{\ell_n\left(\theta + \frac{t}{\sqrt{n}}, F_{t/\sqrt{n}}(\theta, F) \right)} \dd \Pi(F) \dd \pi(\theta) \\
        &\sim \iint e^{\ell_n\left(\theta + \frac{t}{\sqrt{n}}, F_{t/\sqrt{n}}(\theta, F) \right)} \dd \Pi(F) \dd \pi(\theta).
    \end{align*}
    By evaluating this when $t = t$ and $t = 0$, we find that
    \begin{align*}
      & \frac{
          \iint e^{\ell_n\left(\theta + \frac{t}{\sqrt{n}}, F_{t/\sqrt{n}}(\theta, F) \right)} \dd \Pi(F) \dd \pi(\theta)
      }{
        \iint e^{\ell_n\left(\theta, F \right)} \dd \Pi(F) \dd \pi(\theta)
      }\\
      &\sim \frac{
          \int_{U + \frac{t}{\sqrt{n}}} \int_{K \leq k_n} e^{\ell_n\left(\theta + \frac{t}{\sqrt{n}}, F_{t/\sqrt{n}}(\theta, F) \right)} \dd \Pi(F) \dd \pi(\theta) 
      }{
        \int_U \int_{K \leq k_n} e^{\ell_n\left(\theta, F \right)} \dd \Pi(F) \dd \pi(\theta)
    }.
    \end{align*}
    In the integral in the numerator we do a change of variables $\theta + \frac{t}{\sqrt{n}} \mapsto \theta$.
    \begin{align*}
       &\int_{U + \frac{t}{\sqrt{n}}} \int_{K \leq k_n} e^{\ell_n\left(\theta + \frac{t}{\sqrt{n}}, F_{t/\sqrt{n}}(\theta, F) \right)} \dd \Pi(F) \dd \pi(\theta) 
      \\
      & =\int_{U} \int_{K \leq k_n} e^{\ell_n\left(\theta, F_{t/\sqrt{n}}(\theta - \frac{t}{\sqrt{n}}, F) \right)} \dd \Pi(F) h(\theta - \frac{t}{\sqrt{n}}) \dd \theta 
  \end{align*}
  By \Cref{ass:smoothnessinh} in \Cref{ass:Smoothness} and \Cref{lem:MoveRatioBoundIntoSum} we find that 
  \[
      \left\|
      \frac{
          \int_{U} \int_{K \leq k_n} e^{\ell_n\left(\theta, F_{t/\sqrt{n}}(\theta - \frac{t}{\sqrt{n}}, F) \right)} \dd \Pi(F) h(\theta - \frac{t}{\sqrt{n}}) \dd \theta
      }{
          \int_U \int_{K \leq k_n} e^{\ell_n\left(\theta, F \right)} \dd \Pi(F) h(\theta) \dd \theta
      } - 1
      \right\| \leq \frac{c_t}{\sqrt{n}}
  \]
  By \Cref{ass:ChangeOfVariables} and \Cref{lem:ChineseRepSpeciesSampling}, it follows that 
    \begin{align*}
        &\int_{K \leq k_n} e^{\ell_n\left(\theta , F_{t/\sqrt{n}}(\theta - \frac{t}{\sqrt{n}}, F) \right)} \dd \Pi(F) \\
        &=  \sum_{\mathcal{S} \in S(n)} \II_{\# \mathcal{S} < k_n} \mathcal{P}(S) \prod_{i = 1}^{\#\mathcal{S}} \int \prod_{j \in S_i} p_{\theta}(X_j\given \phi_{\frac{t}{\sqrt{n}}, \theta - \frac{t}{\sqrt{n}}}(z)) \dd G(z),
    \end{align*}
    where $S(N)$ is the collection of partitions of $[n]$.
    By plugging this in the preceding expression we get
    \begin{align*}
        \frac{
            \sum_{\mathcal{S} \in S(n)} \II_{\# \mathcal{S} < k_n} \mathcal{P}(S) \int_{U} \prod_{i = 1}^{\#\mathcal{S}} \left[\int \prod_{j \in S_i} p_{\theta }(X_j\given \phi_{\frac{t}{\sqrt{n}}, \theta - \frac{t}{\sqrt{n}}}(z)) \dd G(z)\right] \dd \pi(\theta) 
      }{
          \sum_{\mathcal{S} \in S(n)} \II_{\# \mathcal{S} < k_n} \mathcal{P}(S) \int_U \prod_{i = 1}^{\#\mathcal{S}} \left[\int \prod_{j \in S_i} p_{\theta}(X_j\given z) \dd G(z) \right] \dd \pi(\theta)
    }.
    \end{align*}

    We will now apply the change of variables $\phi_{\frac{t}{\sqrt{n}}, \theta - \frac{t}{\sqrt{n}}}(z) \mapsto z$. 
    We will study the effect in steps, starting by analysing
    \begin{align*}
        &\frac{\int \prod_{j \in S_i} p_{\theta}(X_j\given \phi_{\frac{t}{\sqrt{n}}, \theta - \frac{t}{\sqrt{n}}}(z)) \dd G(z)}{\int \prod_{j \in S_i} p_{\theta}(X_j\given z) \dd G(z)} \\
        &=\frac{\int \prod_{j \in S_i} p_{\theta }(X_j\given z) g(\phi_{\frac{t}{\sqrt{n}}, \theta - \frac{t}{\sqrt{n}}}( z)) \|\det \dot \phi^{-1}_{t, \theta - \frac{t}{\sqrt{n}}} \| \dd z}{\int \prod_{j \in S_i} p_{\theta}(X_j\given z) g(z) \dd z} 
    \end{align*} 
    By using \Cref{ass:smoothnessinG} of \Cref{ass:Smoothness} and \Cref{lem:MoveRatioBoundIntoSum} we see that uniformly for all $\theta \in U$ and every partition $S \in S(n)$ and every $S_i \in S$
    \[
        \left\| \frac{\int \prod_{j \in S_i} p_{\theta }(X_j\given z) g(\phi_{\frac{t}{\sqrt{n}}, \theta - \frac{t}{\sqrt{n}}}( z)) \|\det \dot \phi^{-1}_{\frac{t}{\sqrt{n}}, \theta - \frac{t}{\sqrt{n}}} \| \dd z}{\int \prod_{j \in S_i} p_{\theta}(X_j\given z) g(z) \dd z} 
        - 1 \right\| \leq \frac{C_t}{\sqrt{n}}.
    \]
    Next we use that when $\| \frac{a_i}{b_i} - 1 \| \leq \frac{C_t}{\sqrt{n}}$, then $\| \log \left( \frac{a_i}{b_i} \right) \| = \| \log \left( 1 + \frac{ a_i - b_i}{b_i} \right) \| \leq \|\frac{a_i}{b_i} - 1\|$. Hence, $\| \log \prod_{i = 1}^{k_n} \left( \frac{a_i}{b_i} \right) \| \leq \sum_{i = 1}^{k_n} \| \log \frac{a_i}{b_i} \| \leq \frac{C_t k_n}{\sqrt{n}} \rightarrow 0$. By continuity of the exponential map, then also $\| \prod_{i = 1}^{k_n} \frac{a_i}{b_i} - 1 \| \rightarrow 0$ uniformly.
    This yields that uniformly for every partition $S \in S(n)$ with $\# S < k_n = o(\sqrt{n})$ and $\theta \in U$, 
    \[
        \left\| \frac{ \prod_{i = 1}^{\# S} \left[ \int \prod_{j \in S_i} p_{\theta }(X_j\given z) g(\phi_{\frac{t}{\sqrt{n}}, \theta - \frac{t}{\sqrt{n}}}( z)) \|\det \dot \phi^{-1}_{\frac{t}{\sqrt{n}}, \theta - \frac{t}{\sqrt{n}}} \| \dd z  \right]}{ \prod_{i = 1}^{\# S} \left[ \int \prod_{j \in S_i} p_{\theta}(X_j\given z) g(z) \dd z \right] } - 1 \right\| \rightarrow 0.
    \]
    Hence also when we sum over all such partitions in the numerator and denominator. 
    \[
        \left\|
        \frac{
            \int_{K \leq k_n} e^{\ell_{\theta, F_{\frac{t}{\sqrt{n}}}}\left( \theta, F_{\frac{t}{\sqrt{n}}}(\theta - \frac{t}{\sqrt{n}}, F) \right)} \dd \Pi(F)
        }{
            \int_{K \leq k_n} e^{\ell_{\theta, F}} \dd \Pi(F)
        } - 1
        \right\| \rightarrow 0,
    \]
    uniformly in $\theta \in U$ as $n \rightarrow \infty$.
    Finally plugging this in the previous result combined with \Cref{lem:MoveRatioBoundIntoSum} we get the final result.
\end{proof}

\textbf{Supporting Lemma}
Our first lemma is used to characterise expectations of products of functions of observations of a species sampling process.
In our paper, we plug in the likelihood and use a latent variable representation.
\begin{lem}\label{lem:ChineseRepSpeciesSampling}
    Let $\Pi$ be a species sampling prior with EPPF $\mathcal{P}$ and centre measure $G$. 
    If we sample $Z_1,\dots, Z_n \given P \sim P$ for $P \sim \pi$, we have
    \[
        \EE[ \prod_{i = 1}^n f_i(Z_i)] = \sum_{\mathcal{S} \textrm{Partition of }[n]} \mathcal{P}(S) \prod_{i = 1}^{\#\mathcal{S}} \int \prod_{j \in S_i} f_{j}(z) \dd G(z).
    \]
\end{lem}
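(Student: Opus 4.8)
The plan is to condition on the random partition of $[n]$ recorded by the ties among $Z_1,\dots,Z_n$ and invoke the defining property of a species sampling process. Recall (see~\cite[Chapter 14]{ghosalFundamentalsNonparametricBayesian2017}) that a draw $P$ from a species sampling prior with EPPF $\mathcal{P}$ and centre measure $G$ yields, after marginalising out $P$, an exchangeable sequence $(Z_i)$ that can be constructed as $Z_i = \tilde Z_{C_i}$, where $\tilde Z_1,\tilde Z_2,\dots \overset{\textrm{i.i.d.}}\sim G$ is a sequence of labels and $(C_i)$ is an independent sequence of positive integers whose induced partition $\mathcal{S}$ of $[n]$ (grouping $i,j$ iff $C_i=C_j$) satisfies $\PP(\mathcal{S}=S)=\mathcal{P}(S)$. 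In particular, given $\mathcal{S}=\{S_1,\dots,S_{\#\mathcal{S}}\}$, the labels attached to the distinct blocks, call them $\tilde Z_{(1)},\dots,\tilde Z_{(\#\mathcal{S})}$, are i.i.d.\ $\sim G$ and independent of $\mathcal{S}$.

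First I would fix a partition $S=\{S_1,\dots,S_k\}$ of $[n]$ and compute the conditional expectation. On $\{\mathcal{S}=S\}$ we have $Z_j=\tilde Z_{(i)}$ for every $j\in S_i$, hence
\[
  \prod_{i=1}^n f_i(Z_i)=\prod_{i=1}^{k}\prod_{j\in S_i} f_j\bigl(\tilde Z_{(i)}\bigr).
\]
Since $\tilde Z_{(1)},\dots,\tilde Z_{(k)}$ are i.i.d.\ $\sim G$ and independent of $\mathcal{S}$, the conditional expectation factorises over the blocks:
\[
  \EE\Bigl[\prod_{i=1}^n f_i(Z_i)\,\Big|\,\mathcal{S}=S\Bigr]
  =\prod_{i=1}^{k}\EE\Bigl[\prod_{j\in S_i} f_j\bigl(\tilde Z_{(i)}\bigr)\Bigr]
  =\prod_{i=1}^{k}\int\prod_{j\in S_i} f_j(z)\,\dd G(z).
\]
Taking the expectation over $\mathcal{S}$ with weights $\PP(\mathcal{S}=S)=\mathcal{P}(S)$ and applying the tower property then gives the claimed identity, after summing over all partitions $S$ of $[n]$. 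The interchange of expectation and the (finite) sum over partitions is trivial once we note the sum has finitely many terms; in our applications the $f_i$ are nonnegative likelihood factors, so no integrability issue arises.

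The argument is essentially bookkeeping once the construction of the species sampling sequence is in hand, so I do not anticipate a real obstacle. The only point deserving a word of care is that ties among the $Z_i$ are faithfully recorded by the EPPF partition, i.e.\ that the labels $\tilde Z_{(i)}$ on distinct blocks are genuinely distinct; this holds because the centre measure $G$ of a species sampling process is taken nonatomic here. If one wishes to dispense with that hypothesis, one may simply take the representation $Z_i=\tilde Z_{C_i}$ above as the definition of the process, and the identity then holds verbatim.
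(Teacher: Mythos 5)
Your proposal is correct and follows essentially the same route as the paper's proof: condition on the EPPF partition, factorise the conditional expectation over blocks using the i.i.d.\ $G$-distributed block labels, and apply the tower property. Your write-up is in fact slightly more careful than the paper's (correct indexing of the $f_j$ within blocks, and the remark about nonatomicity of $G$).
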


\begin{proof}
    The probability that a random partition $\mathcal{S}$ takes value $S$ is given by the EPPF $\mathcal{P}$, $\mathcal{P}(S)$.
    Given a partition $\mathcal{S} = S$, we can compute the conditional probability by splitting the data into each of the $K_n$ partitions, and then integrating out using the centre measure $G$.
    This yields
    \begin{align*}
        \EE[ \prod_{i = 1}^n f_i (Z_i)] &= \EE[ \EE[ \prod_{i = 1}^n f_i (Z_i) | \mathcal{S} = S]] \\
            &= \EE[ \EE[ \prod_{i = 1}^{K_n} \prod_{j \in S_i} f_i (\theta_i)|\mathcal{S} = s ] ]\\
            &= \EE[ \EE[ \prod_{i = 1}^{K_n} \int \prod_{j \in S_i} f_i (\theta) \dd G(\theta) | \mathcal{S} = S]]\\
            &= \EE[ \prod_{i = 1}^{K_n} \int \prod_{j \in S_i} f_i (\theta) \dd G(\theta) ] \\
            &= \sum_{S \textrm{ Partition of }[n]} \mathcal{P}(S) \prod_{i = 1}^{K_n} \int \prod_{j \in S_i} f_i (\theta) \dd G(\theta).
    \end{align*}
\end{proof}

To show \Cref{ass:Smoothness} might require a specific argument for \Cref{ass:smoothnessinG}.
In our examples, we can use the following lemma:
\begin{lem}\label{lem:SmoothnessInG}
    Suppose that there exists a neighbourhood $U$ of $\theta_0$ such that there exists constants $L, L', C, C'> 0 $ such that
    \begin{align*}
        \left\|\det \dot\phi_{\frac{t}{\sqrt{n}}, \theta - \frac{t}{\sqrt{n}}}(z)^{-1} - 1 \right\| & \leq C \frac{\|t\|}{\sqrt{n}}\\
        \left\| \frac{g\left(\phi_{\frac{t}{\sqrt{n}},\theta - \frac{t}{\sqrt{n}}}(z)\right) - g(z)}{g(z)} \right\| &\leq L' \frac{\|t\|}{\sqrt{n}}
    \end{align*}
    for all $\theta \in U$, $z$ and $x$ small enough.
    Then \Cref{ass:smoothnessinG} is satisfied.
\end{lem}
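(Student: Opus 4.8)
The plan is to read \Cref{ass:smoothnessinG} as the assertion that a product of two factors — the density ratio $g(\phi^{-1}_{t/\sqrt n,\,\theta-t/\sqrt n}(z))/g(z)$ and the Jacobian factor $\|\det\dot\phi^{-1}_{t/\sqrt n,\,\theta-t/\sqrt n}(z)\|$ — stays within $O(1/\sqrt n)$ of $1$, and to obtain this from the fact that each factor individually does, which is exactly what the two hypotheses provide. The whole lemma is thus an instance of the elementary bound: if $\|a\|\le\delta$ and $\|b\|\le\delta$ then $\|(1+a)(1+b)-1\|=\|a+b+ab\|\le 2\delta+\delta^2$, applied with $\delta$ of order $\|t\|/\sqrt n$.

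First I would reconcile the two shapes in which the transformation appears: the hypotheses are stated for the forward map $\phi_{t/\sqrt n,\,\theta-t/\sqrt n}$ and for the reciprocal Jacobian $\det\dot\phi(z)^{-1}$, whereas \Cref{ass:smoothnessinG} features the inverse map $\phi^{-1}_{t/\sqrt n,\,\theta-t/\sqrt n}$. Since $\phi_{0,\theta}=\mathrm{id}$, for each fixed $t$ the map $\phi_{t/\sqrt n,\,\theta-t/\sqrt n}$ is an $O(1/\sqrt n)$-perturbation of the identity, uniformly over $z$ and over $\theta$ in $U$ (shrinking $U$ if needed so that $\theta$ and $\theta-t/\sqrt n$ both lie in the region where the bounds are available); by the inverse function theorem its inverse is too, and the bounds for $\phi^{-1}$, for $\det\dot\phi^{-1}$, and for $g\circ\phi^{-1}$ — including the harmless change of evaluation point from $z$ to $\phi^{-1}(z)$ — follow from the assumed ones after enlarging $C,L'$. (If the family forms a flow one may instead simply apply the hypotheses at $-t$, using $\phi^{-1}_{s,\vartheta}=\phi_{-s,\vartheta+s}$, which again lies in the neighbourhood over which they hold.) No $x$ occurs in the hypotheses; the clause ``$\dots$ $x$ small enough'' is to be read as ``$n$ large enough''.

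Then, writing $a_n(z):=g(\phi^{-1}_{t/\sqrt n,\,\theta-t/\sqrt n}(z))/g(z)-1$ and $b_n(z):=\|\det\dot\phi^{-1}_{t/\sqrt n,\,\theta-t/\sqrt n}(z)\|-1$, I would expand the left-hand side of \Cref{ass:smoothnessinG} as $(1+a_n(z))(1+b_n(z))-1=a_n(z)+b_n(z)+a_n(z)b_n(z)$ and bound it by
\[
\|a_n(z)\|+\|b_n(z)\|+\|a_n(z)\|\,\|b_n(z)\|\;\le\;(C+L')\,\frac{\|t\|}{\sqrt n}+C L'\,\frac{\|t\|^2}{n}.
\]
For $n$ large (with $t$ fixed in its neighbourhood $T$ of $0$) the second term is dominated by the first, so the quantity is at most $C_t/\sqrt n$ with, say, $C_t:=(C+L'+CL')\|t\|$; as $\theta\in U$ and $z$ were arbitrary, this is \Cref{ass:smoothnessinG}.

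I do not anticipate a genuine obstacle: the lemma is a bookkeeping device that converts \emph{additive} smallness of the two pieces into \emph{multiplicative} smallness of their product. The one point really requiring care is the uniformity in the first step — that the forward-to-inverse passage and the accompanying change of evaluation point preserve the $O(1/\sqrt n)$ bounds uniformly in $\theta\in U$ and in $z$, and that $U$ is chosen small enough that every quantity involved is defined on it.
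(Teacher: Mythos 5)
Your proof is correct and is essentially the paper's argument: both reduce the claim to the elementary fact that a product of two factors, each within $O(\|t\|/\sqrt n)$ of $1$, is itself within $O(\|t\|/\sqrt n)$ of $1$ (the paper telescopes as $a(1+b)+b$ after first bounding $\|\det\dot\phi^{-1}\|\le L$, you expand as $a+b+ab$; these are the same computation). The forward-versus-inverse reconciliation you labour over is not needed in the paper's proof, which simply establishes the bound for the forward-map expression $g(\phi_{t/\sqrt n,\theta-t/\sqrt n}(z))\,\|\det\dot\phi^{-1}_{t/\sqrt n,\theta-t/\sqrt n}(z)\|/g(z)$ — the form actually used in the change-of-variables step of \Cref{lem:ChangeOfMeasure} — so the $\phi^{-1}$ appearing in the statement of \Cref{ass:smoothnessinG} is best read as a notational slip rather than something requiring an inverse-function-theorem argument.
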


The conditions of this lemma seem somewhat natural, the change of variables $\phi_t$ has to converge to the identity, cannot have unbounded derivatives and $g$ has to be locally Lipschitz in a suitable way.
\begin{proof}
    First we can use the triangle inequality to get
    \[
        \|\det \dot\phi_{\frac{t}{\sqrt{n}}, \theta - \frac{t}{\sqrt{n}}}(z)^{-1}\| \leq \|\det \dot\phi_{\frac{t}{\sqrt{n}}, \theta - \frac{t}{\sqrt{n}}}(z)^{-1} - 1\| + 1 \leq C \frac{\|t\|}{\sqrt{n}} + 1 \leq L, 
    \]
    by bounding for $n = 1$, setting $L = Ct + 1$.
    To show \Cref{ass:smoothnessinG} we will use the triangle inequality and the assumptions as follows
    \begin{align*}
        &\left\| \frac{g(\phi_{\frac{t}{\sqrt{n}}, \theta - \frac{t}{\sqrt{n}}}(z)) \det\dot\phi_{\frac{t}{\sqrt{n}}, \theta - \frac{t}{\sqrt{n}}}(z)^{-1}}{g(z)} -1 \right\| \\
        &=  \left\| \frac{g(\phi_{\frac{t}{\sqrt{n}}, \theta - \frac{t}{\sqrt{n}}}(z)) \det\dot\phi_{\frac{t}{\sqrt{n}}, \theta - \frac{t}{\sqrt{n}}}(z)^{-1} - g(z)}{g(z)} \right\|\\
        &=  \left\| \frac{g(\phi_{\frac{t}{\sqrt{n}}, \theta - \frac{t}{\sqrt{n}}}(z)) \det\dot\phi_{\frac{t}{\sqrt{n}}, \theta - \frac{t}{\sqrt{n}}}(z)^{-1} - g(z) \det\dot\phi_{\frac{t}{\sqrt{n}}, \theta - \frac{t}{\sqrt{n}}}(z)^{-1}}{g(z)}\right\|\\
        &+ \left\|\frac{g(z)\det\dot\phi_{\frac{t}{\sqrt{n}}, \theta - \frac{t}{\sqrt{n}}}(z)^{-1} - g(z)}{g(z)} \right\| \\
        &  \leq \left\|\det\dot \phi_{\frac{t}{\sqrt{n}}, \theta - \frac{t}{\sqrt{n}}}(z)^{-1} \right\| \left\| \frac{g(\phi_{\frac{t}{\sqrt{n}}, \theta - \frac{t}{\sqrt{n}}}(z)) - g(z)}{g(z)} \right\| \\
        &+ \left\|\det\dot\phi_{\frac{t}{\sqrt{n}}, \theta - \frac{t}{\sqrt{n}}}(z)^{-1} - 1 \right\| \\
    &\leq ( L L' + C) \frac{\|t\|}{\sqrt{n}}.
    \end{align*}
\end{proof}

The conditions of \Cref{lem:SmoothnessInG} are somewhat abstract.
The following lemma gives more explicit conditions that work for our examples.
It applies to most of the common densities with polynomial tails.
\begin{lem}\label{lem:SmoothnessInGSufficient}
    Suppose that $t \in \RR^d$ and $z \in \RR^k$, and that $\phi_{t, \theta}(z): \RR^k \rightarrow \RR^k$ is given by 
    \[
        \phi_{t, \theta}(z)_j = z_j + z^T \alpha_{\theta, j} t + (\beta t)_j,
    \]
    for $\alpha_{\theta} \in \mathcal{A}$, $\beta \in \mathcal{B}$. 
    Assume that $\mathcal{A}, \mathcal{B}$ are compact.
    Then the first line from the display in \Cref{lem:SmoothnessInG} is satisfied.
    If in addition, suppose that for $g$ the density from \Cref{lem:SmoothnessInG}, $\log g$ is continuously differentiable, and that, for $h = \nabla \log g$, the following hold:
    \begin{itemize}
        \item $h_i$ is uniformly bounded in $z$ for all $i$ or $\mathcal{B} = \{ 0 \}$;
        \item and the maps $z \mapsto z_i h(z)_j$ is uniformly bounded in $z$ for all $i,j$ or $\mathcal{A} = \{ 0 \}$,
    \end{itemize}
    then the second line from the display in \Cref{lem:SmoothnessInG} is satisfied.
\end{lem}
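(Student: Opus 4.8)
The plan is to verify the two displayed bounds in \Cref{lem:SmoothnessInG} by direct computation from the explicit form $\phi_{t,\theta}(z)_j = z_j + z^T\alpha_{\theta,j}t + (\beta t)_j$. First I would treat the Jacobian bound. The derivative matrix $\dot\phi_{t,\theta}(z)$ with respect to $z$ has $(j,l)$-entry $\delta_{jl} + (\alpha_{\theta,j}t)_l$, so $\dot\phi_{t,\theta}(z) = I + A_{\theta,t}$ where $A_{\theta,t}$ has rows $(\alpha_{\theta,j}t)^T$ and hence operator norm bounded by $C_1\|t\|$ uniformly over the compact set $\mathcal{A}$, with no dependence on $z$. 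Evaluating at $t/\sqrt n$ and $\theta - t/\sqrt n$, for $n$ large the matrix $I + A$ is invertible and $\det(I+A)^{-1} = 1 + O(\|A\|)$ by a first-order expansion of the determinant; uniformity of the constant follows from compactness of $\mathcal{A}$ and from $\theta$ ranging over a bounded neighbourhood $U$. This gives $\|\det\dot\phi_{t/\sqrt n,\theta-t/\sqrt n}(z)^{-1} - 1\| \le C\|t\|/\sqrt n$, the first line of \Cref{lem:SmoothnessInG}, and this part holds unconditionally on the tail behaviour of $g$.

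Next I would handle the density ratio bound $\|(g(\phi_{t/\sqrt n,\theta-t/\sqrt n}(z)) - g(z))/g(z)\| \le L'\|t\|/\sqrt n$. Writing $s = t/\sqrt n$ and abbreviating $\phi = \phi_{s,\theta-s}$, by the fundamental theorem of calculus applied to $u\mapsto \log g(z + u(\phi(z)-z))$,
\[
\log g(\phi(z)) - \log g(z) = \int_0^1 h\bigl(z + u(\phi(z)-z)\bigr)^T (\phi(z)-z)\,du,
\]
where $h = \nabla\log g$. The displacement is $\phi(z)-z$ with $j$-th coordinate $z^T\alpha_{\theta,j}s + (\beta s)_j$, so it splits into a "multiplicative" part linear in $z$ (the $\alpha$ term) and an "additive" part independent of $z$ (the $\beta$ term). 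For the additive part, $h$ uniformly bounded (the first bullet) controls $\int_0^1 h(\cdots)^T \beta s\,du$ by $C\|\beta\|\,\|s\|$; alternatively if $\mathcal{B}=\{0\}$ this term vanishes. For the multiplicative part, the integrand involves $h_l\bigl(z + u(\phi(z)-z)\bigr)$ times $(z^T\alpha_{\theta,j}s)$; here the key is that the argument of $h$ is of the form $z(1 + O(\|s\|)) + O(\|s\|)$, so $z_i h_l(\text{that argument})$ is, up to lower-order terms, comparable to $z_i h_l(z)$ times a bounded factor, and the hypothesis that $z\mapsto z_i h(z)_j$ is uniformly bounded (the second bullet) gives the needed control; alternatively $\mathcal{A}=\{0\}$ kills this term. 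Collecting, $\|\log g(\phi(z)) - \log g(z)\| \le L''\|s\| = L''\|t\|/\sqrt n$ uniformly in $z$ and $\theta\in U$, and then $\|g(\phi(z))/g(z) - 1\| \le e^{L''\|t\|/\sqrt n}L''\|t\|/\sqrt n \le L'\|t\|/\sqrt n$ for $n$ large, which is the second line. Applying \Cref{lem:SmoothnessInG} concludes the proof.

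The main obstacle is the uniform-in-$z$ control of the multiplicative displacement term: one must verify that replacing the argument $z + u(\phi(z)-z)$ of $h$ by $z$ itself introduces only a bounded multiplicative distortion, so that $z_i\,h_l(z + u(\phi(z)-z))$ remains uniformly bounded given that $z_i h_l(z)$ is. This needs the change of variables to be a small perturbation of a diagonal scaling — which is exactly why the affine-in-$z$ form of $\phi_{t,\theta}$ is assumed — together with the fact that for $\|s\|$ small the map $z\mapsto z + u(\phi(z)-z)$ is a bi-Lipschitz perturbation of the identity with constants close to $1$. A clean way to organise this is to note $z + u(\phi(z)-z) = (I + uA_{\theta,s})z + u\beta s$ and observe $(I+uA_{\theta,s})^{-1}$ exists with norm near $1$, so each coordinate of the argument is a bounded linear combination of the $z_i$ plus an $O(\|s\|)$ constant; substituting into the hypotheses on $z_i h(z)_j$ and $h_i$ then closes the estimate. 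The rest — the determinant expansion and the exponential-to-linear conversion — are routine.
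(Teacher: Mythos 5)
Your proposal is correct and follows essentially the same route as the paper: a first-order expansion of the determinant of $I+A_{\theta,t}$ for the Jacobian bound, and the mean value theorem (you use its integral form) applied to $t\mapsto\log g(\phi_{t,\theta}(z))$ together with the two boundedness hypotheses and compactness of $\mathcal{A},\mathcal{B}$, followed by exponentiation. If anything you are more careful than the paper on the one delicate point --- that $h$ is evaluated at the intermediate point $z+u(\phi(z)-z)$ rather than at $z$, which you resolve correctly via the near-identity affine invertibility of the perturbation.
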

\begin{proof}
    First note that $\dot\phi_{t, \theta}(z)_{i,j} = \II_{i = j} + \sum_{l = 1}^d \alpha_{\theta, i, j, l} t_l$.
    Using a Taylor expansion of the determinant, we find that for $t$ small enough
    \begin{align*}
        \left\| \det\left( \dot\phi_{t, \theta}(z) \right)  - 1 \right\| &\leq 2 \left\| \sum_{j = 1}^k \sum_{l = 1}^d \alpha_{\theta, j, j, l} t_l \right\|\\
        &\leq 2 k \sup_{\alpha \in \mathcal{A}} \max_{i,j, l} \|\alpha_{i,j,l}\| \| t\|.
    \end{align*}
    Here $\sup_{\alpha \in \mathcal{A}} \max_{i,j, l} \alpha_{i,j,l}$ is bounded since $\mathcal{A}$ is compact.
    Therefore, the first line from the display in \Cref{lem:SmoothnessInG} is satisfied. 
    Because $\log g$ is continuously differentiable, we can define the map $f_{\theta, z}(t) = \log g(\phi_{t, \theta}(z))$. 
    Then $f_{\theta, z}(0) = \log g(z)$ and by the chain rule
    \[
      D f = \frac{(D \phi_{t, \theta}(z))^T \nabla g}{g}.
    \]
    We use that $D \phi_{t, \theta}(z) = \alpha_{\theta}(z) + \beta$, where $\alpha(z)$ is the matrix given by $\alpha(z)_{i,l} = \sum_{j}^k \alpha_{i,j,k} z_j$. 
    From the mean value theorem, 
    \[
        \| f(t) - f(0) \| \leq \|t \| \sup_{\alpha \in \mathcal{A}, \beta \in \mathcal{B}, z} \|\frac{(Z \alpha + \beta)^T \nabla g(z)}{g(z)}\|.
    \]
    This is finite since we can bound the supremum by
    \begin{align*}
        \sup_{x \in U, \alpha \in \mathcal{A}, \beta \in \mathcal{B}, z} & \left\|\frac{(Z \alpha + \beta)^T \nabla g(z)}{g(z)}\right\|\\
        & \leq \sup_{\alpha \in \mathcal{A}, z} \left\| \frac{\alpha(z) \nabla g(z)}{g(z)} \right\| + \quad + \sup_{x \in U, \beta \in \mathcal{B}, z} \left\| \frac{ \beta \nabla g(z)}{g(z)} \right\| \\
        &\leq \sup_{\alpha \in \mathcal{A}} \left\| \max_{i,j,l} \alpha_{i,j,l} \right\| \max_{i,j} \sup_z \left\| z_i \left(\nabla \log g(z)\right)_j \right\| \\
        &\quad + \sup_{\beta \in \mathcal{B}} \left\| \max_{i,j} \beta_{i,j} \right\| \max_{i} \sup_{z} \left\| \left(\nabla \log g(z)\right)_i \right\|.
    \end{align*}
    Because $\mathcal{A}, \mathcal{B}$ we assumed to be compact, and the assumptions on the gradient, these are all bounded.
    Hence, there exists a $C > 0$ such that 
    \[
        \sup_{\alpha \in \mathcal{A}, \beta \in \mathcal{B}, z} \|\frac{(Z \alpha + \beta)^T \nabla g(z)}{g(z)}\| \leq C.
    \]
    This means that 
    \[
        \sup_{z, t \in U, \theta \in V} \left\| \log \frac{ g( \phi_{t, \theta}(z))}{g(z)}\right\| \leq \|t\| C.
    \]
    Using the first-order Taylor-expansion for $e^x$ when $\| x \| < 1$ gives 
    \[
        \sup_{z, t \in U, \theta \in V} \left\|  \exp \left( \log \frac{ g( \phi_{t, \theta}(z))}{g(z)} \right) -1 \right\| \leq 2\|t \| C.
    \]
    Hence, expanding the definition gives
    \[
        \sup_{z, t \in U, \theta \in V} \left\|\frac{ g( \phi_{t, \theta}(z))}{g(z)} -1 \right\| \leq 2\|t \| C.
    \]
    Hence, taking $L' = 2C$ verifies the second equation in the display from \Cref{lem:SmoothnessInG}.
\end{proof}

To verify \Cref{ass:smoothnessinh} we can use the following result
\begin{lem}\label{lem:SmoothnessInH}
    Suppose that the prior density $h$ is continuously differentiable and non-zero in a neighbourhood of $\theta_0$. Then there exists a neighbourhood $U$ and $C_t$ such that, for all $n$ large enough
    \[
         \| \frac{h( \theta + \frac{t}{\sqrt{n}})}{h(\theta)} - 1 \| \leq \frac{C'_T}{\sqrt{n}}
    \]
    for all $\theta \in U$.
\end{lem}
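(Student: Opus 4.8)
The plan is to reduce the multiplicative bound $\|h(\theta + t/\sqrt n)/h(\theta) - 1\| \le C'_T/\sqrt n$ to a local Lipschitz estimate on $\log h$, exactly as was done for $\log g$ at the end of the proof of Lemma \ref{lem:SmoothnessInGSufficient}. Since $h$ is continuously differentiable and non-zero at $\theta_0$, by continuity there is a compact neighbourhood $V$ of $\theta_0$ on which $h$ is bounded away from $0$ and $\infty$ and $\nabla h$ is bounded; hence $\log h$ is continuously differentiable on $V$ with $\nabla \log h = \nabla h / h$ bounded by some constant $C$ on $V$. Pick an open $U$ with $\bar U \subset \mathrm{int}\,V$ and let $\delta = \mathrm{dist}(\bar U, \partial V) > 0$; then for all $n$ with $\|t\|/\sqrt n < \delta$ the segment from $\theta$ to $\theta + t/\sqrt n$ stays in $V$ for every $\theta \in U$.

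First I would apply the mean value theorem along that segment to $\log h$, giving
\[
  \left\| \log \frac{h(\theta + t/\sqrt n)}{h(\theta)} \right\|
  \le \frac{\|t\|}{\sqrt n} \sup_{\vartheta \in V} \|\nabla \log h(\vartheta)\|
  \le \frac{C \|t\|}{\sqrt n}.
\]
Next, enlarging $n$ if necessary so that $C\|t\|/\sqrt n < 1$, I would invoke the first-order Taylor bound $\|e^x - 1\| \le 2\|x\|$ valid for $\|x\| < 1$ (the same estimate used in Lemma \ref{lem:SmoothnessInGSufficient}) with $x = \log\big(h(\theta + t/\sqrt n)/h(\theta)\big)$ to conclude
\[
  \left\| \frac{h(\theta + t/\sqrt n)}{h(\theta)} - 1 \right\|
  = \left\| \exp\!\left( \log \frac{h(\theta + t/\sqrt n)}{h(\theta)} \right) - 1 \right\|
  \le \frac{2C \|t\|}{\sqrt n},
\]
uniformly in $\theta \in U$. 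Setting $C'_T = 2C\|t\|$ (or $C'_T = 2C$ if one prefers the constant to absorb $\|t\|$, matching the $\|t\|/\sqrt n$ scaling in \Cref{ass:Smoothness}) finishes the argument.

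There is essentially no serious obstacle here; this is a routine local-Lipschitz-plus-exponential estimate. The only points requiring a little care are purely bookkeeping: choosing the neighbourhood $U$ strictly inside the region where $h$ is bounded away from zero so that the connecting segments do not leave it, and tracking that "for all $n$ large enough" is exactly what lets us apply the $\|e^x-1\|\le 2\|x\|$ bound (which needs $\|x\|<1$). One could alternatively phrase the whole thing as a direct first-order Taylor expansion of $h$ itself around $\theta$, writing $h(\theta+t/\sqrt n)/h(\theta) - 1 = \langle \nabla h(\theta), t\rangle/(h(\theta)\sqrt n) + O(n^{-1})$ and bounding the pieces using the positive lower bound on $h$ over $U$; both routes are equivalent and equally short.
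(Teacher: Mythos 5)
Your proposal is correct. The paper's own proof is exactly the ``alternative'' you mention in your last sentence: it applies the local Lipschitz bound directly to $h$ (which follows from continuous differentiability on a compact neighbourhood) and divides by $\inf_{\theta\in U}h(\theta)>0$, giving $\|h(\theta+x)/h(\theta)-1\|\le L\,\|x\|/\inf_U h$ for $x$ small. Your primary route, via the mean value theorem applied to $\log h$ followed by the $\|e^x-1\|\le 2\|x\|$ estimate, is a mild detour that costs a factor of $2$ but buys nothing here since the target is a multiplicative bound on $h$ itself rather than on a product of many such ratios (where the logarithmic form is genuinely useful, as in \Cref{lem:ChangeOfMeasure}). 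Your bookkeeping --- shrinking $U$ so the segment from $\theta$ to $\theta+t/\sqrt n$ stays where $h$ is bounded away from zero, and noting that ``$n$ large enough'' is what makes the increment small --- is in fact slightly more careful than the paper's version, which leaves these points implicit.
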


\begin{proof}
    Because $h$ is continuously differentiable in a neighbourhood of $\theta_0$, it is locally Lipschitz in a neighbourhood of $\theta_0$. Call this constant $L$. Then
    \begin{align*}
        \| \frac{ h( \theta + x)}{h(\theta)} - 1 \| &= \| \frac{ h(\theta + x) - h(\theta)}{h(\theta)} \| \\
        & = \frac{ \| h(\theta + x) - h(\theta) \|}{ h(\theta)} \\
        & \leq \frac{ L}{\inf_{\theta \in U} h(\theta)} x
    \end{align*}
    provided that $x$ is close to $0$.
    For $n$ large enough $\frac{t}{\sqrt{n}}$ will be close to $0$.
\end{proof}

We use the following lemma in the proof of \Cref{lem:ChangeOfMeasure}.
\begin{lem}\label{lem:MoveRatioBoundIntoSum}
    If $\nu$ is a measure and suppose that two sequences of non-negative functions satisfy the relation $f_n, g_n$ $\sup_x \| \frac{g_n (x)}{f_n (x)} - 1 \| \rightarrow 0$, then
    \[
        \left\|
        \frac{\int g_n (x) \dd \nu(x)}{\int f_n (x) \dd \nu(x)} - 1
        \right\| \rightarrow 0.
    \]
\end{lem}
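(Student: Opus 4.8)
The plan is to reduce the statement about ratios of integrals to a pointwise sandwich argument. Set $\eps_n := \sup_x \|\frac{g_n(x)}{f_n(x)} - 1\| \to 0$. For each $n$ with $\eps_n < 1$ (eventually true), the hypothesis gives, for every $x$ in the support of $\nu$,
\[
  (1-\eps_n)\, f_n(x) \le g_n(x) \le (1+\eps_n)\, f_n(x),
\]
using that $f_n,g_n\ge 0$ so the inequality $\|g_n/f_n - 1\|\le \eps_n$ is an honest two-sided bound wherever $f_n(x)>0$ (and where $f_n(x)=0$ the ratio hypothesis forces $g_n(x)=0$ as well, so the inequality holds trivially there too).

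Next I would integrate this chain of inequalities against $\nu$, using monotonicity and linearity of the integral, to obtain
\[
  (1-\eps_n)\int f_n\,d\nu \;\le\; \int g_n\,d\nu \;\le\; (1+\eps_n)\int f_n\,d\nu.
\]
Dividing through by $\int f_n\,d\nu$ (which is legitimate provided this integral is nonzero and finite — a harmless nondegeneracy that is implicit in writing the ratio in the statement, and which holds in all our applications since $f_n$ is an integrated likelihood) yields
\[
  1-\eps_n \;\le\; \frac{\int g_n\,d\nu}{\int f_n\,d\nu} \;\le\; 1+\eps_n,
\]
so that $\left\|\frac{\int g_n\,d\nu}{\int f_n\,d\nu}-1\right\|\le \eps_n \to 0$, which is the claim.

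There is essentially no obstacle here; the only point requiring a moment's care is the boundary behaviour where $f_n(x)=0$, which is handled by noting the ratio hypothesis makes sense only if $g_n(x)=0$ there, and the degenerate case $\int f_n\,d\nu\in\{0,\infty\}$, which does not arise in the intended uses (in \Cref{lem:ChangeOfMeasure} the $f_n$ are always strictly positive integrable densities-against-likelihood). The argument is uniform in any auxiliary parameters precisely because $\eps_n$ is defined as a supremum over $x$, which is exactly why the lemma is stated this way and why it can be applied repeatedly inside the nested sums over partitions in the proof of \Cref{lem:ChangeOfMeasure}.
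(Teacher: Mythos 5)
Your proof is correct and is essentially the same argument as the paper's: both reduce to the pointwise bound $\|g_n(x)-f_n(x)\|\le \eps_n f_n(x)$, integrate it against $\nu$, and cancel $\int f_n\,d\nu$, yielding the same final bound $\eps_n$. Your sandwich phrasing and the paper's absolute-difference phrasing are interchangeable; your explicit treatment of the degenerate cases $f_n(x)=0$ and $\int f_n\,d\nu\in\{0,\infty\}$ is a small point of extra care that the paper leaves implicit.
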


\begin{proof}
    \begin{align*}
        \left\|
        \frac{\int g_n(x) \dd \nu(x)}{\int f_n(x) \dd \nu(x)} - 1
        \right\| &=  \left\|
        \frac{\int g_n(x) - f_n(x) \dd \nu(x)}{\int f_n(x) \dd \nu(x)}
        \right\|\\
        &\leq 
        \frac{\int \| g_n(x) - f_n(x) \| \dd \nu(x)}{\int f_n(x) \dd \nu(x)}\\
        &=\frac{\int f_n(x) \| \frac{g_n(x) - f_n(x)}{f_n(x)} \| \dd \nu(x)}{\int f_n(x) \dd \nu(x)} \\
        &\leq \frac{\int f_n(x) \sup_y \left \|\frac{g_n(y)}{f_n(y)} - 1 \right\| \dd \nu(x)}{\int f_n(x) \dd \nu(x)}\\
        &=  \sup_y \left \|\frac{g_n(y)}{f_n(y)} - 1 \right\| \rightarrow 0.
    \end{align*}
\end{proof}

We have now all the tools to prove the Bernstein-von Mises theorem in the Frailty model and the Errors-in-Variables model.
We will first introduce two different priors for the mixing distribution $F$, and then study the two examples in depth.

\section{Priors}\label{sec:priors}
To provide examples for our theorems, we will need to specify priors. We will use Dirichlet process mixtures and mixtures of finite mixtures as priors.
Let $\Pi$ denote either a finite mixture or a Dirichlet process, and $\pi$ the prior on $\theta$.
Then we create the mixture prior by
\begin{align*}
  \theta &\sim \pi\\
  F \given \theta &\sim \Pi\\
  X \given \theta, F &\sim p_{\theta, F}.
\end{align*}
If $\Pi$ is the Dirichlet process we call the resulting distribution a Dirichlet process mixture and if $\Pi$ is a finite mixture, we call the result a mixture of finite mixtures.
\subsection{The Dirichlet process}
One of the most common priors within Bayesian nonparametrics is the Dirichlet process.
This prior has various equivalent definitions.
For our purposes, the following definition is probably the most useful:
\begin{definition}\label{def:DirichletProcess}
  We say that a random measure $F$ on $\mathcal{X}$ is distributed as a Dirichlet process with centre measure $G$ and prior mass $M$, if for every finite partition $ (A_1, \dots, A_k)$ of $\mathcal{X}$
  \[
    \left(F(A_1),\dots, F(A_k)\right) \sim \textrm{Dir}\left(k; MG(A_1),\dots, MG(A_k)\right).
  \]
\end{definition}
It is not immediately clear from this definition, but $F$ is almost surely discrete.
This becomes clear by~\cite{sethuramanConstructiveDefinitionDirichlet}, see also~\cite[Chapter 4]{ghosalFundamentalsNonparametricBayesian2017}.

In order to apply the remaining mass to rule out that there are at most $k_n = o (\sqrt{n})$ distinct observations in the posterior we need to have a prior tail-bound. 
The following lemma gives a prior tail-bound for distinct observations under a Dirichlet process prior:
\begin{lem}\label{lem:tailbound_distinct_obs_DP}
    Let $K_n$ be the distinct observations in the model $X_1, \dots, X_n | F \sim F$ where $F \sim \textrm{DP} (MG)$. Let $c, \beta> 0$. Then there exists a $C > 0$, which only dependents on $M, \beta$ and $c$, such that
  \[
      \PP( K_n > c n^\beta )\leq  e^{- Cn^\beta \log(n)}.
  \]
\end{lem}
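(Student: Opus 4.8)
The plan is to combine the P\'olya urn representation of the Dirichlet process with a Chernoff bound. Under the urn scheme generating $X_1,\dots,X_n\mid F\sim F$ with $F\sim\mathrm{DP}(MG)$ (see~\cite[Chapter~4]{ghosalFundamentalsNonparametricBayesian2017}), the $i$-th observation is a fresh draw from $G$ with probability $M/(M+i-1)$ and otherwise repeats an earlier observation, and these ``fresh draw'' events are independent across $i$. Writing $\widetilde K_n$ for the number of blocks of the induced exchangeable random partition of $[n]$, we have $K_n\le\widetilde K_n$ almost surely (with equality when $G$ is atomless), and $\widetilde K_n$ has the distribution of $\sum_{i=1}^n B_i$ with $B_i\sim\mathrm{Bernoulli}\!\big(M/(M+i-1)\big)$ independent. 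In particular $m_n:=\EE\widetilde K_n=\sum_{i=1}^n\frac{M}{M+i-1}\le 1+M\log\frac{M+n-1}{M}\le C_1\log n$ for $n\ge2$, with $C_1$ depending only on $M$; so it suffices to bound the upper tail of $\widetilde K_n$.

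I would then run the standard exponential-moment argument for a sum of independent Bernoulli variables. For $\lambda>0$, using $1+x\le e^{x}$ termwise,
\[
\EE e^{\lambda\widetilde K_n}=\prod_{i=1}^n\Big(1+\tfrac{M}{M+i-1}(e^{\lambda}-1)\Big)\le\exp\!\big((e^{\lambda}-1)\,m_n\big),
\]
so that $\PP(\widetilde K_n>a)\le\exp\!\big(-\lambda a+(e^{\lambda}-1)m_n\big)$ for every $\lambda>0$. Taking $\lambda=\log(a/m_n)$, which is positive once $a>m_n$, yields the Poisson-type tail bound
\[
\PP(\widetilde K_n>a)\le\exp\!\Big(-a\log\frac{a}{m_n}+a-m_n\Big)\le\exp\!\Big(-a\Big(\log\frac{a}{m_n}-1\Big)\Big).
\]

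Finally I would put $a=cn^{\beta}$ and use $m_n\le C_1\log n$, so that $a/m_n\ge cn^{\beta}/(C_1\log n)$ and hence
\[
\log\frac{a}{m_n}\ge\beta\log n-\log\log n+\log\frac{c}{C_1}\ge\tfrac{\beta}{2}\log n
\]
for all $n\ge n_0$, with $n_0$ depending only on $M,\beta,c$; enlarging $n_0$ if needed we also have $\log(a/m_n)-1\ge\tfrac{\beta}{3}\log n$ and $a>m_n$ on this range. Combining the displays,
\[
\PP(K_n>cn^{\beta})\le\PP(\widetilde K_n>cn^{\beta})\le\exp\!\Big(-\tfrac{c\beta}{3}\,n^{\beta}\log n\Big),\qquad n\ge n_0,
\]
which is the claim with $C=c\beta/3$; the finitely many $n<n_0$ play no role in the application, and where $cn^{\beta}\ge1$ they can be absorbed by further shrinking $C$, keeping its dependence on $M,\beta,c$ only. \textbf{The main point to get right}, rather than a genuine obstacle, is that the Chernoff exponent carries the extra logarithmic factor: because $a/m_n$ is of order $n^{\beta}/\log n$, the rate comes out as $n^{\beta}\log n$ and not merely $n^{\beta}$. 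The only bookkeeping is the $O(\log n)$ bound on $m_n$, which is precisely what forces the urn representation and the harmonic-sum estimate, together with tracing every constant back to $M,\beta,c$.
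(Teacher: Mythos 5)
Your proof is correct and follows essentially the same route as the paper: reduce $K_n$ to a sum of independent Bernoulli indicators via the P\'olya urn, bound its moment generating function by that of a Poisson with mean $O(\log n)$ (your termwise $1+x\le e^{x}$ is exactly the paper's stochastic domination of $\mathrm{Ber}(p)$ by $\mathrm{Poisson}(p)$), and optimize the Chernoff bound at $e^{\lambda}=a/\mu$ to pick up the extra $\log n$ in the exponent. Your version is slightly more careful on two minor points (the fresh-draw probability $M/(M+i-1)$ rather than the paper's $1/(M+i-1)$, and the distinction between blocks of the partition and distinct values when $G$ has atoms), but these do not change the argument.
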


\begin{proof}
  Note that $K_n = \sum_{i= 1}^n D_i$ where $D_i \overset{\text{ind}}\sim \textrm{Ber} (\frac{1}{M + i - 1})$. Since the probability of $0$ of a $\textrm{Ber} (p)$ is $1-p \leq e^{-p}$, it follows that $\text{Ber} (p)$ is stochastically smaller than $\textrm{Poisson} (p)$. Therefore, $K_n$ is stochastically smaller than $\textrm{Poisson} (\sum_{i = 1}^n \frac{1}{M + i - 1})$. This in turn is stochastically smaller than $\textrm{Poisson} (C \log(n))$ for some constant $C> 0$. Let $X \sim \textrm{Poisson} (\mu)$. Then $\EE[ e^{\lambda X}] = e^{\mu (e^{\lambda} - 1)}$. Hence, by Chernoff inequality we find 
  \[
    \PP(X > x) \leq e^{-\lambda x } e^{\mu( e^\lambda - 1)}.
  \]
  Optimizing over $\lambda$ finds $e^\lambda = \frac{X}{\mu}$. Hence,
  \[
    \PP(X > x) \leq e^{-x \log \frac{\mu}{x} + x - \mu}
  \]
  By picking $\mu = c' \log (n)$, $x = c n^\beta$ and using the stochastic domination we find that there exists a $C> 0$, only depending on $M$ and $c$, such that
  \[
    \PP( K_n > c n^\beta) \leq  e^{- Cn^\beta \log(n)}.
  \]
\end{proof}

\subsection{Finite mixtures}
Finite mixtures provide an alternative to the Dirichlet process.
To specify a finite mixture, we need to provide a centre measure $G$, a mass $M > 0$ and a mixing measure $H$.
\begin{definition}\label{def:FiniteMixture}
  We call a random measure $F$ on $\mathcal{X}$ a mixture of finite mixtures with centre measure $G$, mass $M > 0$ and mixing measure $H$, if it is distributed as the random variable $F$ in the following scheme:
\begin{align*}\label{scheme:FiniteMixture}
  K &\sim H,\\
 Z_1,\dots, Z_k \given K &\iid G,\\
 (w_1,\dots, w_K) \given K, Z_1, \dots, Z_k &\sim \textrm{Dir}(K; M, \dots, M),\\
 F &= \sum_{j = 1}^K w_j \delta_{z_j}.
\end{align*}
\end{definition}
This scheme provides a mixture over finite distributions.
The amount of support points is random but unbounded if $H$ has infinite support.
For the purposes of the remainder of this paper, we consider $H$ to be the geometric distribution with rate $\lambda > 0$.


%
%


\section{Exponential Frailty}\label{sec:frailty}
\subsection{Model}
We observe a sample from the distribution of $ (X,Y)^T$, where given $Z$ the variables
$X$ and $Y$ are independent and exponentially distributed with intensities $Z$ and $\theta Z$.
Hence, $F$ is a distribution on $ (0,\infty)$ and $\theta>0$.

\begin{definition}\label{def:FrailtyModel}
  A frailty model is defined by a kernel which is given by
  \[
    p_{\theta}(x, y | z) = z e^{-zx} \theta z e^{- \theta z y} = z^2 \theta e^{-z(x + \theta y)}.
  \]
  The frailty model distribution is defined to be
  \[
    p_{\theta, F} (x,y) = \int  z^2 \theta e^{-z(x + \theta y)} \dd F(z).
  \]
\end{definition}

\begin{remark}\label{lem:FrailtyLeastFavSubmodel}
    The least favourable submodel for the frailty model is given by
    \begin{align*}
        p_{\theta + t, F_t}(x, y) &= \int p_{\theta + t}(x,y \given z) \dd F_t \\
        &= \int p_{\theta + t}(x,y \given \phi_{t, \theta}(z)) \dd F(z)
    \end{align*}
    where
    \begin{align*}
        F_{t, \theta}(B) &= F\left( B \left( \frac{2 \theta}{2 \theta + t} \right) \right).\\
        \phi_{t, \theta}(Z) &= (1 + \frac{t}{2 \theta})(Z).
    \end{align*}
    For details and proofs of these results, see~\cite{vandervaartEfficientMaximumLikelihood1996} and references therein. 
\end{remark}

The score for the parametric model given by the kernel is given by
\[
    \dot l_\theta(x,y \given z) = \frac{1}{\theta} - zy.
\]
The efficient score in the mixture model is then given by
\[
    \tilde l_{\theta, F} = 
    \frac{
        \int \frac{1}{2} ( x - \theta y) z^3 \exp( - z(x + \theta y)) \dd f(z)
    }{
        \int \theta z^2 \exp( - z(x + \theta y)) \dd f(z)
    }.
\]

\subsection{Parametric model and inefficiency}
Observe that $W = \frac{Y}{X}$ is a random variable with density
\[
    f_W(w) = \frac{\theta}{ \left( w + \theta \right)^2}.
\]

This gives a score given by
\[
    \frac{\partial}{\partial \theta} \ell_{\theta}(W) = \frac{1}{b} - \frac{2}{b + W},
\]
and a Fisher information 
\[
    I_{\theta} = \frac{1}{3 \theta^2}.
\]

This transformation allows us to use a sequence $ (X_i, Y_i)$ of i.i.d.\ observations into an i.i.d.\ sequence $W_i$. 

We can model this in the usual Bayesian way by putting a prior on $\theta$ and compute the posterior.
Since the likelihood is smooth, we can create tests and apply the parametric Bernstein von Mises theorem.
This tells us that if our prior had a continuous density, the posterior will be asymptotically normal, centred at the MLE and with inverse Fischer information.
This is the asymptotic distribution of the maximum likelihood estimator.
However, this estimator can be inefficient, as can be seen by the semiparametric estimator having smaller inverse Fisher information. 

\subsection{Bernstein-von Mises}

\begin{lem}\label{lem:BvMFrailty}
  Let $G$ be a distribution which admits a density $g$ such that $g (z) \gtrsim (z + 1)^{-\alpha}$, for some $\alpha > 0$. 
  Suppose that $\pi$ is a distribution with a density $h$ which is continuously differentiable.
  Let $\Pi$ be either a Dirichlet process or Mixture of finite mixtures with centre measure $G$ and prior mass $M > 0 $.
  Let $p_{\theta, F}$ denote the frailty model (c.f.\ \ref{def:FrailtyModel}).
  In the model that $ (\theta, F) \sim \pi \otimes \Pi$ and $ (X, Y) \sim p_{\theta, F}$.
  Then the conditions of \Cref{thm:semiparametricBvM} hold for every frailty model with $\theta > 0$ and $F_0$.
\end{lem}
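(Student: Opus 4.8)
The plan is to verify the hypotheses of \Cref{thm:semiparametricBvM} one at a time: posterior consistency at $(\theta_0,F_0)$, the stochastic LAN expansion \eqref{EqRemainders}, the change-of-measure condition \eqref{EqChangeOfMeasure}, and non-degeneracy of $\tilde I_0$ (classical for this regular model, \cite{vandervaartEfficientMaximumLikelihood1996}). For consistency I would use \Cref{lem:posteriorconsistency}. Embed $\Gamma=(0,\infty)\times\F$ into a compact space by compactifying $(0,\infty)$ and taking the weak closure of the probability measures on $(0,\infty]$, which gives \Cref{assumption_compactness}. Since the kernel $z\mapsto z^2\theta e^{-z(x+\theta y)}$ is bounded and vanishes as $z\to\infty$, as $\theta\downarrow0$, and as $\theta\to\infty$ (for $x,y>0$), the map $(\theta,F)\mapsto p_{\theta,F}(x,y)$ extends continuously to this compactification (\Cref{assumption_continuity}), and the boundary limits are sub-probability or degenerate, so that together with the classical identifiability of the exponential frailty model one obtains \Cref{assumption_identifiable}. \Cref{assumption_measurable_sup,assumption_integrability} follow from the explicit formula for $p_{\theta,F}$ and dominated convergence. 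Finally $(\theta_0,F_0)$ lies in the Kullback--Leibler support of $\pi\otimes\Pi$ by the standard argument for Dirichlet-process and mixture-of-finite-mixtures priors: approximate $F_0$ weakly by finitely-supported measures in the (full) weak support of $\Pi$, using the tail lower bound $g(z)\gtrsim(z+1)^{-\alpha}$ to reach the tails of $F_0$, and the continuity and positivity of $h$ near $\theta_0$. Consistency then localises all remaining conditions to sieves $\Theta_n\downarrow\{\theta_0\}$ and a conveniently shrinking $\F_n$ (e.g.\ with uniformly controlled tails).

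\textbf{LAN expansion.} By \Cref{lem:FrailtyLeastFavSubmodel} the least favourable submodel is the scaling $\phi_{t,\theta}(z)=(1+t/(2\theta))z$, so $h_\theta(x,y\mid z)=2\log z+\log\theta-z(x+\theta y)$ and \Cref{lem:ExpressionsForGradiantHessian} supplies closed forms for $\dot\ell(t;\theta,F)$ and $\ddot\ell(t;\theta,F)$ along it. I would then apply \Cref{Lem:LANByDonsker}. The convergences $P_0\|\dot\ell(t_n/\sqrt n;\theta_n,F_n)-\tilde\ell_0\|^2\to0$ and $\|P_0\ddot\ell(t_n/\sqrt n;\theta_n,F_n)-\tilde I_0\|\to0$ follow from \Cref{lem:LimitsOfExpectationsLogLik}, provided the integrands on $\Theta_n\times\F_n$ carry square-integrable $P_{\theta_0,F_0}$-envelopes. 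The displayed ``no-bias'' condition of \Cref{Lem:LANByDonsker} holds because $\dot\ell(0;\theta,F)=\tilde\ell_{\theta,F}$ and, as explained in \cref{sec:LAN}, $P_{\theta_0,F_0}\tilde\ell_{\theta,F}=-\tilde I_0(\theta-\theta_0)+o(\|\theta-\theta_0\|)$, the bias remainder being identically zero in mixture models by orthogonality of $\tilde\ell_{\theta,F}$ to the nuisance scores. The remaining ingredient is the Donsker condition on $\{\dot\ell(t/\sqrt n;\theta,F):\|t\|<1,(\theta,F)\in\Theta_n\times\F_n\}$, which I would obtain from a bracketing-entropy bound for this family of ratios of $F$-integrals, exploiting that on the sieve they are uniformly smooth with a common envelope. \Cref{Lem:LANByDonsker} then yields \eqref{EqRandomPart}--\eqref{EqDeterministicPart}, hence \eqref{EqRemainders}.

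\textbf{Change of measure.} I would apply \Cref{lem:ChangeOfMeasure}. \Cref{ass:ChangeOfVariables} is immediate from \Cref{lem:FrailtyLeastFavSubmodel}. \Cref{ass:Smoothness} follows from \Cref{lem:SmoothnessInGSufficient}: $\phi_{t,\theta}$ is affine of the required form with $\beta=0$ and with $\alpha$ ranging over a compact set (as $\theta$ stays in a neighbourhood of $\theta_0$ bounded away from $0$), while its second hypothesis holds because the polynomial behaviour of $g$ keeps $z(\log g)'(z)$ bounded; \eqref{ass:smoothnessinh} follows from \Cref{lem:SmoothnessInH} since $h$ is $C^1$ and positive near $\theta_0$. \Cref{ass:SmallNumberClusters} uses \Cref{lem:tailbound_distinct_obs_DP} in the Dirichlet-process case ($K_n=O_P(\log n)=o(\sqrt n)$) and the geometric tail of $K$ for mixtures of finite mixtures, combined with posterior consistency (via the remaining-mass theorem) to restrict to a neighbourhood of $\theta_0$. \Cref{ass:Consistency} is posterior consistency for the perturbed model, obtained from the same argument applied to the perturbed likelihood, cf.\ \Cref{remark:ConsistencyPerturbedPosterior}. \Cref{lem:ChangeOfMeasure} then delivers \eqref{EqChangeOfMeasure}, which completes the verification of \Cref{thm:semiparametricBvM}.

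\textbf{Main obstacle.} The delicate step is the Donsker condition in the LAN part: one must control the entropy of the nuisance-indexed family of efficient scores uniformly over $\F_n$ while simultaneously securing a square-integrable $P_{\theta_0,F_0}$-envelope, which is awkward because the numerator of $\tilde\ell_{\theta,F}$ grows linearly in $x-\theta y$. This forces the choice of $\F_n$ (tail and support constraints) to be coordinated with the consistency step and with the cluster-count bound of \Cref{ass:SmallNumberClusters}, and that coordination is where most of the real work lies.
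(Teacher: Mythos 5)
Your architecture is exactly the paper's: consistency via \Cref{lem:posteriorconsistency}, the LAN expansion via \Cref{Lem:LANByDonsker} together with \Cref{lem:ExpressionsForGradiantHessian,lem:LimitsOfExpectationsLogLik}, and the change of measure via \Cref{lem:ChangeOfMeasure} with \Cref{ass:ChangeOfVariables,ass:Smoothness,ass:SmallNumberClusters,ass:Consistency} checked through \Cref{lem:FrailtyLeastFavSubmodel,lem:SmoothnessInGSufficient,lem:SmoothnessInH,lem:tailbound_distinct_obs_DP} and the remaining-mass theorem. The one place you diverge is the step you yourself flag as the main obstacle: the Donsker property and the square-integrable envelopes for $\{\dot\ell(t/\sqrt n;\theta,F)\}$. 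You propose to establish these by a bracketing-entropy computation but do not carry it out; the paper does not do this computation either — it imports both the Donsker class and the envelope bounds wholesale from Corollary 4.1 of \cite{vandervaartEfficientMaximumLikelihood1996}, which is precisely why the paper restricts to models where the frequentist efficiency theory already exists. As written, your proposal therefore leaves its hardest step open, whereas the paper closes it by citation.

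A second, smaller gap: for consistency you invoke only a qualitative Kullback--Leibler support argument, but the remaining-mass theorem you (correctly) want to use for \Cref{ass:SmallNumberClusters} needs a \emph{quantitative} lower bound on the prior mass of Kullback--Leibler neighbourhoods of radius $\epsilon_n=n^{-1/2}$, of the form $e^{-c\log(1/\epsilon_n)^3}$, and it needs this for the \emph{perturbed} priors $\Pi_{t/\sqrt n,\theta}$ uniformly in small $t$ (to get \Cref{ass:Consistency} and the cluster bound in the perturbed model). This is the content of \Cref{lem:FrailtyPriorMassBoundDP,lem:FrailtyPriorMassBoundMFM}, which occupy most of the appendix: total-variation approximation of $F_0$ by finitely supported measures with $O(\log(1/\epsilon)^2)$ atoms, a likelihood-ratio moment bound to convert total variation into $\mathrm{KL}+V_2$ control, and a uniform-in-$t$ lower bound on the shifted centre measures. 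Your sketch compresses all of this into ``the standard argument,'' which is not sufficient for the rate comparison $e^{-Cn^{1/3}\log n}/e^{-c(\log n)^3}=o(e^{-2n\epsilon_n^2})$ that makes the cluster-count step go through.
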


\begin{proof}
  Consistency can be verified directly by using \Cref{lem:posteriorconsistency} and the prior mass bounds provided by \Cref{lem:FrailtyPriorMassBoundDP,lem:FrailtyPriorMassBoundMFM}.
  
  Next we verify the LAN expansion. This is proven in \Cref{lem:FrailtyModelLAN}.

  Finally, we need to verify the change of measure condition. This is proven in \Cref{lem:FrailtyChangeOfMeasure}.

\end{proof}

The references lemmas and all the prerequisites for them can be found in \Cref{sec:AppendixFrailty}.

\section{Errors-in-variables}\label{sec:ErrorsInVars}
\subsection{Model}
Consider observing a sample from the distribution of a two-dimensional vector $X$ that
can be represented as $X=Y+\e$, for independent vectors $Y$ and $\e$,
where $Y$ has an unspecified distribution 
that is concentrated on an unspecified  line $L=\{a+\l b: \l\in \RR\}\subset \RR^2$, and
$\e$ is bivariate normally distributed  with mean zero and unknown covariance matrix $\Sigma$.
The parameter of interest is the slope of the line $L$, or, equivalently, a normalised version
of the vector $b$. We may further parameterize $Y=a+Z b$,  for a univariate variable $Z$ with 
unknown distribution. The distribution of $X$ given $Z$ is bivariate normal with mean $a+Z b$ and covariance matrix $\Sigma$.
The parameters are $ (a,b,\Sigma)$ and the distribution $F$ of $Z$. We are interested
in the slope $b_2/b_1$.

This is a version of the errors-in-variables model:  provided that $b_1\not=0$, we have $Y_2=a_2+b_2(Y_1-a_1)/b_1$ 
and
\begin{align*}
X_1&=Y_1+\e_1,\\
X_2&= a_2-\frac{a_1b_2}{b_1}+\frac{b_2}{b_1}Y_1+\e_2.
\end{align*}

So $Y_1$ is an independent variable that is observed with the error $\e_1$, and $Y_2$ is 
a linear regression on $Y_1$ with error $\e_2$. The errors are assumed bivariate normal.
The slope parameter $b_2/b_1$ was shown to be identifiable if
the distribution of $Z$ is not normal (and not degenerate) in~\cite{reiersolIdentifiabilityLinearRelation1950}.

\begin{definition}\label{def:ErrorsInVarsModel}
  The errors in variables model is defined by a kernel which is given by
  \[
    p_\theta(x,y \given z) = \phi_\sigma(x - z) \phi_\tau( y - \alpha - \beta z),
  \]
  where $\phi_s$ is the density of the mean zero normal distribution with variance $s^2$.
  The errors in variables model is defined to be
  \[
    p_{\theta, F}(x,y) = \int \phi_\sigma(x - z) \phi_\tau(y - \alpha - \beta z) \dd F(z).
  \]
\end{definition}

\begin{remark}\label{rem:LeastFavSubmodelErrorsInVars}
    The least favourable submodel for the Errors in variables model is given by
    \begin{align*}
        p_{\theta + t, F_t}(x) &= \int p_{\theta + t}(x \given z) \dd F_t \\
        &= \int p_{\theta + t}(x \given \phi_{t,\theta}(z)) \dd F(z),
    \end{align*}
    where
    \begin{align*}
        F_{t, \theta}(B) &= F( B\left( 1 - \frac{t_2 \beta}{1 + \beta^2} \right)^{-1} - \frac{t_1 \beta}{1 + \beta^2})\\
        \phi_{t, \theta}(Z) &= Z\left( 1 - \frac{t_2 \beta}{1 + \beta^2} \right) + \frac{t_1 \beta}{1 + \beta^2}.
    \end{align*}
    For details, see~\cite{murphyLikelihoodInferenceErrorsVariables1996}.
\end{remark}

Efficient estimators for the slope $b_2/b_1$ (equivalently $\q$) were constructed in~\cite{bickelEfficientEstimationErrors1987}  and~\cite{vandervaartEstimatingRealParameter1988,murphyLikelihoodInferenceErrorsVariables1996,vandervaartEfficientMaximumLikelihood1996}.

\subsection{Bernstein-von Mises}

\begin{lem}\label{lem:BvMErrorsInVars}
  Let $G$ be a distribution which admits a density $g$ such that $g$ satisfies $g(z) \gtrsim |z|^{-\delta}$ for some $\delta > 0$ and the assumptions posed in \Cref{lem:SmoothnessInGSufficient}.
  Suppose that $\pi$ is a distribution with a density $h$ which is continuously differentiable.
  Let $\Pi$ be either a Dirichlet process or Mixture of finite mixtures with centre measure $G$ and prior mass $M > 0 $.
  Let $p_{\theta, F}$ denote the errors-in-variables model (c.f.~\ref{def:ErrorsInVarsModel}).
  In the model that $ (\theta, F) \sim \pi \otimes \Pi$ and $ (X, Y) \sim p_{\theta, F}$.
  Then the conditions of \Cref{thm:semiparametricBvM} hold for every errors-in-variables model with $\theta$ and $F_0$ with $F_0(z^{7+ \delta})< \infty$ for some $\delta > 0$.
\end{lem}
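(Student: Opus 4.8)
The plan is to follow the same three-step scheme as in \Cref{lem:BvMFrailty} and apply \Cref{thm:semiparametricBvM}: first establish posterior consistency, then verify the LAN expansion~\eqref{EqRemainders}, and finally verify the change-of-measure condition~\eqref{EqChangeOfMeasure}. The differences with the frailty case are purely in the verification of the hypotheses of the auxiliary lemmas, since the errors-in-variables kernel is Gaussian and the least favourable submodel of \Cref{rem:LeastFavSubmodelErrorsInVars} is an affine reparametrisation $\phi_{t,\theta}(z)=z(1-t_2\beta/(1+\beta^2))+t_1\beta/(1+\beta^2)$ of the latent variable.

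For consistency I would invoke \Cref{lem:posteriorconsistency}. Restricting $\theta=(\alpha,\beta,\sigma,\tau)$ to a compact set with $\sigma,\tau$ bounded away from $0$ and $\infty$ makes \Cref{assumption_compactness,assumption_density,assumption_continuity,assumption_measurable_sup} routine for the Gaussian mixture. \Cref{assumption_identifiable} is exactly Reiersøl's identifiability theorem~\cite{reiersolIdentifiabilityLinearRelation1950}, using that $F_0$ is non-degenerate and non-normal, and \Cref{assumption_integrability} follows from the Gaussian tails of the kernel together with the moment assumption $F_0(z^{7+\delta})<\infty$, which controls $\EE_{\gamma_0}\log(w_{\gamma,\rho}/p_{\gamma_0})$. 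The Kullback–Leibler support property follows from the polynomial lower bound $g(z)\gtrsim|z|^{-\delta}$ and continuity of $h$ near $\theta_0$, by approximating $F_0$ with finitely supported mixing distributions, exactly as in the prior-mass computations underlying \Cref{lem:BvMFrailty}; \Cref{remark:ConsistencyPerturbedPosterior} gives the analogous statement for the perturbed posterior, which is needed later.

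For the LAN expansion I would apply \Cref{Lem:LANByDonsker}, computing $\dot\ell$ and $\ddot\ell$ through \Cref{lem:ExpressionsForGradiantHessian} with $h_\theta(x,y\mid z)=\log\phi_\sigma(x-z)+\log\phi_\tau(y-\alpha-\beta z)$ and $\phi_{t,\theta}$ as above, and then using \Cref{lem:LimitsOfExpectationsLogLik} for the pointwise convergence of the derivatives and the convergence of their expectations; the envelope integrability hypotheses of that lemma are precisely what the moment condition on $F_0$ buys. The ``no-bias'' term is identically zero by the orthogonality argument recorded just after \Cref{Lem:LANByDonsker}. The one genuinely delicate point is exhibiting a fixed $P_0$-Donsker class containing $\{\dot\ell(t/\sqrt n;\theta,F):\|t\|<1,\ (\theta,F)\in\Theta_n\times\F_n\}$: the scores are ratios of $F$-integrals of a fixed smooth function times a polynomial in the unbounded latent variable $z$, so after localising $\F_n$ to a neighbourhood of $F_0$ (available from the consistency step) one must show these ratios are uniformly Lipschitz in $(t,\theta)$ with a square-integrable envelope, whence finite bracketing entropy. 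This Donsker/entropy step, together with the bookkeeping of exactly how many moments of $F_0$ are consumed, is the main obstacle; it is the reason the hypothesis is stated as $F_0(z^{7+\delta})<\infty$.

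For the change-of-measure condition I would apply \Cref{lem:ChangeOfMeasure}. \Cref{ass:ChangeOfVariables} holds by \Cref{rem:LeastFavSubmodelErrorsInVars}. \Cref{ass:smoothnessinh} follows from \Cref{lem:SmoothnessInH} since $h$ is continuously differentiable and positive near $\theta_0$, and \Cref{ass:smoothnessinG} follows from \Cref{lem:SmoothnessInGSufficient}: the affine map $\phi_{t,\theta}$ has exactly the form required there, with $\mathcal A,\mathcal B$ compact once $\beta$ is localised, and the boundedness hypotheses on $\nabla\log g$ and on $z\mapsto z_i(\nabla\log g(z))_j$ are precisely the assumptions imposed on $g$ in the statement. \Cref{ass:SmallNumberClusters} follows from \Cref{lem:tailbound_distinct_obs_DP} in the Dirichlet process case and from the geometric tail of $K$ in the mixture-of-finite-mixtures case, combined with consistency of the perturbed posterior; and that same consistency yields \Cref{ass:Consistency}. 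Assembling these three ingredients and invoking \Cref{thm:semiparametricBvM} gives the claim for every errors-in-variables model with the stated $\theta$ and every $F_0$ with $F_0(z^{7+\delta})<\infty$.
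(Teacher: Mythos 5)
Your proposal follows exactly the paper's three-step scheme: consistency via \Cref{lem:posteriorconsistency} together with the prior mass bounds, the LAN expansion via \Cref{Lem:LANByDonsker} and \Cref{lem:LimitsOfExpectationsLogLik} (the paper delegates the Donsker step to the results of Murphy and Van der Vaart rather than re-deriving the bracketing entropy), and the change of measure via \Cref{lem:ChangeOfMeasure} with its assumptions checked through \Cref{lem:SmoothnessInH}, \Cref{lem:SmoothnessInGSufficient}, the cluster tail bounds, and the remaining-mass theorem. This matches the paper's proof and its supporting appendix lemmas in both structure and substance.
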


\begin{proof}
  Consistency can be verified directly by using \Cref{lem:posteriorconsistency} and the prior mass bounds provided by \Cref{lem:ErrorsInVarsPriorMassEpsDP,lem:ErrorsInVarsPriorMassEpsMFM}.
  
  Next we verify the LAN expansion. This is proven in \Cref{lem:LanErrorsInVars}.

  Finally, we need to verify the change of measure condition. This is proven in \Cref{lem:ErrorsInVarsChangeOfMeasure}.

\end{proof}

The references lemmas and all the prerequisites for them can be found in \Cref{sec:AppendixErrorsInVars}.

\section{Conclusion}
In this article we have seen new ways of proving consistency and asymptotic optimality of the Bayesian posterior in semiparametric mixture models.
The consistency results hold in fair generality, they should be able to cover most of the models used in practise.
For the asymptotic optimality we have proven a Bernstein-von Mises theorem.
The Bernstein-von Mises theorem lets us conclude that under its conditions, the Bayesian posterior is asymptotically optimal and yields valid uncertainty quantification.
We then provide tools for verifying the assumptions.
This requires a special structure of the least-favourable submodel, which holds for the special models we have considered.
These special models are the Frailty model and, the Errors-in-Variables model.
This means that we can reliably use semiparametric mixture models with the right species sampling process priors in inference for these models.
This enables researchers to incorporate their prior knowledge into these studies without compromising the soundness of the inference.

The results on the Frailty model and Errors-in-Variables model can be extended to extensions of these models, however, there exists different models for which this assumption is not true.
For these models, new techniques need to be developed, which we aim to do in future research.
\let\o\so%

\bibliographystyle{alpha}
\bibliography{StefansBib}

\appendix

\section{Exponential frailty}\label{sec:AppendixFrailty}
In this appendix we have collected all the results for the exponential frailty model.
\subsection{LAN expansion for the Frailty model}
To prove the LAN expansion, we will use \Cref{Lem:LANByDonsker}.
We use \Cref{lem:ExpressionsForGradiantHessian} to compute expressions for the gradient and the Hessian of the perturbed model.

The perturbed kernel is given by
\begin{align*}
    p_{\theta_t, F_t} &= \int z^2 ( \theta + t) e^{-z( x + (\theta + t) y)} \dd F_t(z) \\
       &= \int \left( z \left( 1 - \frac{t}{2 \theta} \right) \right)^2 ( \theta + t) e^{ - z \left( 1 - \frac{t}{2 \theta}\right) ( x + (\theta + t)y)} \dd F(z).
\end{align*}
Note that 
\begin{align*}
    h_\theta(x ,y \given z) &= 2 \log(z) + \log(\theta) -z(x + \theta y).\\
    \phi_{t,\theta}(z) &= z\frac{1 - t}{2 \theta}.
\end{align*}
Thus, $h_{\theta + t} (x , y\given \phi_{t,\theta} (z))$ has gradient 
\begin{align*}
  \frac{\partial}{\partial t} &\left(h_{\theta + t}(x , y \given \phi_{t, \theta}(z))\right)\\
   &= \frac{\partial}{\partial t} \left(2 \log \left(z \frac{1 - t}{2 \theta}\right) + \log(\theta + t) - z\frac{ 1 - t}{2\theta} \left(x + (\theta + t)y \right)\right)\\
   &= \frac{2}{1 - t} + \frac{1}{\theta + t} + z \frac{x + (\theta + 2t - 1)y}{2 \theta}.
\end{align*}
This means that the first derivative is given by
    \[
        f' = \dot p_{\theta_t, F_t} = \int (a_1 + a_2 xz + a_3 yz) z^2 e^{-z( b_1 x + b_2 y) } \dd F(z)
    \]
    where $a_1, a_2, a_3, b_1, b_2$ are given by:
    \begin{align*}
           a_1 &= \frac{\left(t - 2\theta\right)\left(6 t \theta\right)}{3 \theta^3},
        && a_2 = \frac{\left(t - 2 \theta\right)^2 (t + \theta)}{8 \theta^3},
        && a_3 = \frac{ \left( t - 2 \theta\right)^2\left( \theta - 2t \right)(t + \theta)}{8 \theta^3},\\
           b_1 &= \frac{(2\theta - t)}{2 \theta} = 1 - \frac{t}{2 \theta},
        && b_2= \frac{(2\theta - t)(t + \theta)}{2 \theta} = \left( 1 - \frac{t}{2 \theta} \right)\left( \theta + t \right)
    \end{align*}
    Similarly, for the second derivative, we find
    \[
        f'' = \ddot p_{\theta_t, F_t} = \int (c_1 + c_2 xz + c_3 yz + c_4 x^2z^2 + c_5 xy z^2 + c_6 y^2 z^2) z^2 e^{-z(b_1x + b_2 y)} \dd F(z)
    \]
    where $b_1, b_2$ are as before and $c_1,\dots, c_6$ are given by
    \begin{align*}
           c_1 &= \frac{24 \theta^2 \left(t - \theta\right)}{16\theta^4}, 
        && c_2 = \frac{12 \theta (t - 2 \theta) t}{16 \theta^4}, 
        && c_3 = \frac{4 \theta (t - 2 \theta)( 7 t^2 - 4 t \theta - 2 \theta^2)}{16 \theta^4},\\
           c_4 &= \frac{ \left(t - 2 \theta\right)^2 \left( t + \theta\right)}{16 \theta^4},
        && c_5 = \frac{ \left(t - 2 \theta\right)^2 \left( t - \theta\right) \left( 2t - \theta \right) }{16 \theta^4},
        && c_6 = \frac{ \left(t - 2 \theta\right)^2 \left( t - \theta\right) \left( 2t - \theta \right)^2}{ 16 \theta^4}\\
    \end{align*}
    By now plugging in these expressions we find the first and second derivative of $\log p_{\theta_t, F_t}$. The first derivative is given by 
    \begin{align*}
        \frac{f'}{f} = \frac{\partial}{\partial t} \ell(t; \theta, F) &= 
        \frac{
            \int  (a'_1 + a'_2 x z + a'_3 yz)z^2 e^{-z( b_1 x + b_2 y)} \dd F(z)
        }{
            \int z^2 e^{-z( b_1 x + b_2 y)} \dd F(z)
        }
    \end{align*}
    where $b_1$ and $b_2$ are given as before and $a'_1, a'_2, a'_3$ are given as $a'_i = \frac{4 \theta^2 a_i}{\left(\theta + t \right)\left(2 \theta - t \right)^2}$.

    While for the second derivative we first compute $\frac{f''}{f}$.
    \begin{align*}
        \frac{f''}{f}&= 
        \frac{
            \int  (c'_1 + c'_2 x z + c'_3 yz + c_4 x^2 z^2 + c_5 xyz^2 + c_6 y^2z^2)z^2 e^{-z( b_1 x + b_2 y)} \dd F(z)
        }{
            \int z^2 e^{-z( b_1 x + b_2 y)} \dd F(z)
        }
    \end{align*}
    where $b_1, b_2$ are given as before, and $c_i' = \frac{ 4 \theta^2 c_i}{ \left(\theta + t \right) \left(2 \theta - t \right)^2}$.
    We can then finally plug this in to get 
    \[
        \frac{\partial^2}{\partial t^2} \ell(t; \theta, F) = \frac{f''}{f} - \left( \frac{f'}{f} \right)^2.
    \]

\begin{lem}\label{lem:FrailtyModelLAN}
    Assume that $\theta_0 > 0$ and $F_0(z^2 + z^{-5}) < \infty$, then 
    \Cref{EqRemainders} holds for $P_{\theta_0, F_0}$.
\end{lem}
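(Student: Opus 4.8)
The plan is to derive \eqref{EqRemainders} from \Cref{Lem:LANByDonsker}, taking $\tilde\ell_0=\tilde\ell_{\theta_0,F_0}$ and $\tilde I_0$ to be the efficient score and efficient information, and $\Theta_n\times\mathcal{F}_n$ the shrinking neighbourhoods of $(\theta_0,F_0)$ furnished by the consistency analysis (\Cref{lem:posteriorconsistency}). The explicit expressions for $\ell(t;\theta,F)$, $\dot\ell(t;\theta,F)$ and $\ddot\ell(t;\theta,F)$ obtained above via \Cref{lem:ExpressionsForGradiantHessian} are rational in $(t,\theta)$ with denominators $(\theta+t)$ and $(2\theta-t)^2$ that stay bounded away from $0$ for $\|t\|<1$ and $\theta$ in a small neighbourhood of $\theta_0>0$, divided by mixture integrals $\int z^2 e^{-z(b_1 x+b_2 y)}\,dF(z)$ to which dominated differentiation under the integral sign applies; hence $t\mapsto\ell(t;\theta,F)(x,y)$ is twice continuously differentiable near $0$ for all $(\theta,F)$ and $(x,y)$, and the standing hypothesis of \Cref{Lem:LANByDonsker} holds. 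It remains to check its two bullet points.

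For the expectation limits $P_0\|\dot\ell(t_n/\sqrt n;\theta_n,F_n)-\tilde\ell_0\|^2\to 0$ and $\|P_0\ddot\ell(t_n/\sqrt n;\theta_n,F_n)-\tilde I_0\|\to 0$, I would invoke \Cref{lem:LimitsOfExpectationsLogLik}. Its hypotheses are met here: $h_\theta(x,y\mid z)=2\log z+\log\theta-z(x+\theta y)$ and $\phi_{t,\theta}(z)=z(1-t/(2\theta))$ are smooth in their arguments; the maps $z\mapsto\nabla_t p_{\theta+t}(x,y\mid\phi_{t,\theta}(z))$ and $z\mapsto H_t p_{\theta+t}(x,y\mid\phi_{t,\theta}(z))$ are bounded and continuous in $z$, so the pointwise convergence of $\dot\ell$ and $\ddot\ell$ follows from the weak convergence $F_n\rightsquigarrow F_0$ (testing against bounded continuous functions of $z$); and the envelope conditions follow from the formulas once one observes that the ratios of mixture integrals are dominated by polynomials in $(x,y)$. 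Since $X\mid Z$ and $Y\mid Z$ are exponential with rates $Z$ and $\theta_0 Z$, moments of $(X,Y)$ under $P_{\theta_0,F_0}$ are negative moments of $Z$ under $F_0$, and these polynomial envelopes are integrable (for $\ddot\ell$) and square-integrable (for $\dot\ell$) precisely under $F_0(z^2+z^{-5})<\infty$, with the $z^2$-moment absorbing the $z^2$ weight in the kernel. Finally the ``no-bias'' term in the second bullet of \Cref{Lem:LANByDonsker} is identically zero for mixtures, as noted in the discussion following that lemma: $\tilde\ell_{\theta,F}$ is orthogonal to every nuisance score $p_{\theta,G}/p_{\theta,F}-1$, so $P_{\theta,G}\tilde\ell_{\theta,F}=0$ for all $F,G$, which together with $P_{\theta_0,F_0}\dot{\tilde\ell}_{\theta_0,F_0}=-\tilde I_0$ yields the required displayed identity with vanishing remainder.

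The principal step is the Donsker condition: the classes $\{\dot\ell(t/\sqrt n;\theta,F):\|t\|<1,\ (\theta,F)\in\Theta_n\times\mathcal{F}_n\}$ must lie inside one fixed $P_0$-Donsker class. The formula above exhibits $\dot\ell(t/\sqrt n;\theta,F)(x,y)=a_1'+(a_2'x+a_3'y)\,r_F(b_1x+b_2y)$, where $r_F(u)=\int z^3 e^{-zu}\,dF(z)/\int z^2 e^{-zu}\,dF(z)$ is the mean of the tilted measure proportional to $z^2 e^{-zu}\,dF(z)$, hence a monotone function of $u$; the coefficients $a_i'$ are bounded smooth functions of $(t,\theta)$, and $(b_1,b_2)$ ranges over a compact neighbourhood of $(1,\theta_0)$ bounded away from the axes. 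So each score is an affine function of $(x,y)$ times a monotone function of the one-dimensional statistic $b_1x+b_2y$, with the dependence on $(t,\theta)$ entering through a compact Euclidean parameter; the plan is to bound the bracketing entropy of this family by combining the entropy of that Euclidean parameter with the $O(1/\varepsilon)$ bracketing entropy of the monotone maps, and to produce, using the tilting representation and the hypothesis $F_0(z^2+z^{-5})<\infty$, a single polynomial envelope in $L_2(P_0)$ dominating the whole family. This is the same structural mechanism that underlies the efficiency of the maximum likelihood estimator for this model in~\cite{vandervaartEfficientMaximumLikelihood1996}, and the analogous but easier argument — only Glivenko--Cantelli-type control of $P_0$-expectations being needed — handles $\ddot\ell$. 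I expect the main obstacle to be making the envelope and entropy bounds for the $\dot\ell$-class genuinely uniform over the infinite-dimensional nuisance $F\in\mathcal{F}_n$; granted that, \Cref{Lem:LANByDonsker} delivers \eqref{EqRemainders}.
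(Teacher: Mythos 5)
Your proposal follows essentially the same route as the paper: restrict to shrinking neighbourhoods via consistency, verify the expectation limits through \Cref{lem:LimitsOfExpectationsLogLik} (checking smoothness of $h_\theta$ and $\phi_{t,\theta}$ and the moment conditions on $F_0$), dispose of the no-bias term by the orthogonality argument in the discussion following \Cref{Lem:LANByDonsker}, and conclude via that lemma. The one step you leave as a plan --- the uniform Donsker property and the square-integrable/integrable envelopes for the classes of derivatives --- is precisely what the paper imports from \cite[Corollary 4.1]{vandervaartEfficientMaximumLikelihood1996} rather than reproving, and that corollary rests on the same monotone-tilting mechanism you sketch.
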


\begin{proof}
By consistency, we can restrict to small neighbourhoods $\theta_0 \in \Theta, F_0 \in \mathcal{F}$.
Next, by~\cite[Corolarry 4.1]{vandervaartEfficientMaximumLikelihood1996}, and consistency we see that

\[
    \{ \dot\ell(t/\sqrt{n}; \theta, F) : \theta \in \Theta, F \in \mathcal{F} \}
\]
is a Donsker class.
By~\cite[Corolarry 4.1]{vandervaartEfficientMaximumLikelihood1996} we also show that the first derivative has a square integrable envelope and the second derivative has an integrable envelope. 
Furthermore, in the notation of \Cref{lem:LimitsOfExpectationsLogLik}, $h_{\theta} (x \given z)$ is twice differentiable in $\theta$ and $z$, and $\phi_{t, \theta} (z)$ is twice differentiable in $\theta, t$. 
Moreover, the gradient and the hessian remain bounded.
Thus, we can apply \Cref{lem:LimitsOfExpectationsLogLik}.
Together with the Donsker condition and the discussion after \Cref{Lem:LANByDonsker} we can verify all the assumptions of \Cref{Lem:LANByDonsker} and conclude that \Cref{EqRemainders} holds.
\end{proof}

\subsection{Verifying the change of measure condition for the exponential frailty model}

\begin{lem}\label{lem:FrailtyChangeOfMeasure}
  Under the conditions of \Cref{lem:BvMFrailty},~\eqref{EqChangeOfMeasure} holds. 
\end{lem}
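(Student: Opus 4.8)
The plan is to deduce~\eqref{EqChangeOfMeasure} from the general change-of-measure lemma for species sampling priors, \Cref{lem:ChangeOfMeasure}. This reduces the task to verifying its hypotheses --- \Cref{ass:ChangeOfVariables,ass:Smoothness,ass:SmallNumberClusters}, together with the auxiliary \Cref{ass:Consistency} --- for the exponential frailty model with centre measure $G$ and the prior $\pi\otimes\Pi$ of \Cref{lem:BvMFrailty}.

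First, \Cref{ass:ChangeOfVariables} is read off from \Cref{lem:FrailtyLeastFavSubmodel}: the least favourable submodel acts on the latent variable through the linear scaling $\phi_{t,\theta}(z)=(1+t/(2\theta))z$, so that $p_{\theta+t,F_t}(x,y)=\int p_{\theta+t}(x,y\given\phi_{t,\theta}(z))\,dF(z)$, which is exactly the required form. For \Cref{ass:Smoothness}, the estimate~\eqref{ass:smoothnessinh} follows from \Cref{lem:SmoothnessInH} because $h$ is continuously differentiable. For~\eqref{ass:smoothnessinG} I would exploit that $\phi_{t,\theta}$ is a pure scaling: it fits the template of \Cref{lem:SmoothnessInGSufficient} with the one-dimensional slope $\alpha_\theta=1/(2\theta)$ and $\beta=0$, and over a compact neighbourhood of $\theta_0>0$ the slopes $\alpha_\theta$ form a compact set, so the determinant condition is immediate; since $\beta=\{0\}$, the only substantive requirement left is that $z\mapsto z\,(\log g)'(z)$ be bounded, which holds for polynomial-tailed densities such as $g(z)\asymp(z+1)^{-\alpha}$. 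Equivalently one may verify \Cref{lem:SmoothnessInG} by hand, using that a multiplicative perturbation of $z$ of order $n^{-1/2}$ changes $g$ by a relative amount of the same order, uniformly in $z$; either way~\eqref{ass:smoothnessinG}, and hence \Cref{ass:Smoothness}, holds.

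For \Cref{ass:SmallNumberClusters} I would argue in two steps. The condition $\theta+t/\sqrt n\notin U$ is handled by consistency of the (perturbed) posterior, available from \Cref{lem:posteriorconsistency} and \Cref{remark:ConsistencyPerturbedPosterior}, consistency for the frailty model itself having already been established in the proof of \Cref{lem:BvMFrailty}. The event $\{K_n\ge k_n\}$ is handled by a prior tail bound on the number of clusters: under the Dirichlet process prior \Cref{lem:tailbound_distinct_obs_DP} gives $\PP(K_n>cn^{\beta})\le e^{-Cn^{\beta}\log n}$, so with $k_n=n^{1/4}$ and the remaining-mass theorem~\cite[Theorem 8.20]{ghosalFundamentalsNonparametricBayesian2017} the posterior probability of many clusters is negligible; under the mixture-of-finite-mixtures prior one has $K_n\le K$ with $K$ geometric, whence $\PP(K>k_n)$ is exponentially small and the same conclusion follows. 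Finally \Cref{ass:Consistency} holds because, by consistency, the sieves $\Theta_n\times\mathcal F_n$ carry asymptotically all of the mass of both the perturbed and the unperturbed posteriors. Feeding these verifications into \Cref{lem:ChangeOfMeasure} gives~\eqref{EqChangeOfMeasure}.

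The step I expect to be the main obstacle is~\eqref{ass:smoothnessinG}: one must control the interaction of the scaling change of variables with the centre measure in the strong, uniform-in-$z$ sense required by \Cref{ass:Smoothness}, which is a genuine --- if mild --- regularity demand on $g$ rather than a consequence of the lower bound $g(z)\gtrsim(z+1)^{-\alpha}$ alone. A secondary, more bookkeeping-heavy point is \Cref{ass:SmallNumberClusters}, which must be run for the \emph{perturbed} posteriors and therefore needs the perturbed-consistency argument of \Cref{remark:ConsistencyPerturbedPosterior} in addition to the cluster tail bounds.
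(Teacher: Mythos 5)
Your proposal is correct and follows essentially the same route as the paper: reduce to \Cref{lem:ChangeOfMeasure} and verify \Cref{ass:ChangeOfVariables} via \Cref{lem:FrailtyLeastFavSubmodel}, \Cref{ass:Smoothness} via \Cref{lem:SmoothnessInH} and \Cref{lem:SmoothnessInGSufficient}, \Cref{ass:Consistency} via the perturbed-consistency argument, and \Cref{ass:SmallNumberClusters} via the cluster tail bounds and the remaining-mass theorem (the paper uses $k_n=n^{1/3}$ where you use $n^{1/4}$, which is immaterial). Your closing remark that \eqref{ass:smoothnessinG} demands boundedness of $z\mapsto z(\log g)'(z)$, which is not implied by the lower bound $g(z)\gtrsim(z+1)^{-\alpha}$ alone, is a fair and careful observation that the paper's own proof glosses over.
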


\begin{proof}
  We want to verify the assumptions of \Cref{lem:ChangeOfMeasure}.

  The least favourable submodel is given in \Cref{lem:FrailtyLeastFavSubmodel}.
  Thus, \Cref{ass:ChangeOfVariables} is verified.
  We can verify posterior consistency in the perturbed model.
  By \Cref{lem:FrailtyPriorMassBoundDP} and \Cref{lem:FrailtyPriorMassBoundMFM} each neighbourhood of the truth has positive mass under the DP prior and the MFM prior respectively.
  Hence, by \Cref{lem:posteriorconsistency} and \Cref{remark:ConsistencyPerturbedPosterior} and the consistency proof in~\cite{vandervaartEfficientMaximumLikelihood1996} we satisfy \Cref{ass:Consistency}.

  Next, we verify \Cref{ass:Smoothness}.

  First we can verify the conditions on $h$. We have assumed that $h$ continuously differentiable in a neighbourhood of $\theta_0$. 
  By applying \Cref{lem:SmoothnessInH} we verify the conditions on $h$.
  
  Now we want to verify the conditions on $G$. 
  We want to verify the assumptions of \Cref{lem:SmoothnessInGSufficient}.
  The least favourable submodel satisfies the assumption.
  Moreover, we assumed the density of the centre measure satisfy the requirements of the lemma.
  Hence, by \Cref{lem:SmoothnessInGSufficient} gives us the conditions to apply \Cref{lem:SmoothnessInG}.
  This means that \Cref{ass:smoothnessinG} is satisfied.

  Finally, we need to verify \Cref{ass:SmallNumberClusters}. For both models we will verify this assumption using the remaining mass theorem.
  \textbf{Dirichlet process case}

  Let $k_n = n^{1/3} = o (\sqrt{n})$.
  Then by \Cref{lem:tailbound_distinct_obs_DP} we have the following upper bound on the prior mass:
  \[
    \PP\left( K_n > n^{1/3} \right) \leq e^{- C n^{1/3} \log n}
  \]
  Let $\epsilon_n = \frac{1}{\sqrt{n}}$.
  By \Cref{lem:FrailtyPriorMassBoundDP} we have
  \[
    \Pi_t\left( \textrm{KL}^2 + V_2 \left( p_{\theta_0, F_0}; p_{\theta, F} \right) < \epsilon_n \right) \gtrsim e^{c \log( \frac{1}{\epsilon_n})^3}.
  \]
  In order to apply the remaining mass theorem, we have to show that
  \[
    \frac{e^{- C n^{1/3} \log n}}{e^{c \log( \frac{1}{\epsilon_n})^3}} = o(e^{-2 n \epsilon_n^2}) = o(1).
  \]
  This is the same as showing 
  \[
    e^{ \log(n)^3 - n^{1/3}\log(n)} \rightarrow 0
  \]
  Which is true. 
  Thus, by~\cite[Theorem 8.20]{ghosalFundamentalsNonparametricBayesian2017} we have verified \Cref{ass:SmallNumberClusters} in case of the Dirichlet process.

  \textbf{Mixture of finite mixtures case}

  Let $k_n = n^{1/3} = o (\sqrt{n})$.
  Because we assumed that $H$ is the geometric distribution,
  \[
    \PP\left( K \geq \floor{k_n} \right) = \lambda(1 - \lambda)^{\floor{k_n}} \lesssim e^{-c n^{1/3}}.
  \]
  Let $\epsilon_n = \frac{1}{\sqrt{n}}$.
  By \Cref{lem:FrailtyPriorMassBoundMFM} we have
  \[
    \Pi_t( \textrm{KL}^2 + V_2 \left( p_{\theta_0, F_0}; p_{\theta, F} \right) < \epsilon) \gtrsim e^{c \log( \frac{1}{\epsilon})^3}.
  \]
  In order to apply the remaining mass theorem, we have to show that
  \[
    \frac{e^{- C n^{1/3}}}{e^{c \log( \frac{1}{\epsilon_n})^3}} = o(e^{-2 n \epsilon_n^2}) = o(1).
  \]
  This is the same as showing 
  \[
    e^{ \log(n)^3 - n^{1/3}} \rightarrow 0
  \]
  Which is true. 
  Thus, by~\cite[Theorem 8.20]{ghosalFundamentalsNonparametricBayesian2017}, we have verified \Cref{ass:SmallNumberClusters} in case oof the finite mixtures.

\end{proof}

\subsubsection{Approximation in Total variation distance}
In this section we produce suitable approximations in total variation distance to the true distribution.
First we control the $\theta$ part of the approximation.
\begin{lem}\label{lem:L1boundintheta}
    For all $\theta, \theta_0 >0 $ and distributions $F$ it holds that 
    \[
        \| p_{\theta, F} - p_{\theta_0, F} \|_1 \leq 2 \frac{ \| \theta - \theta_0 \|}{\theta_0}.
    \]
\end{lem}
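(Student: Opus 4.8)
The claim is a Lipschitz bound in $\theta$ for the $L^1$-distance between two frailty densities sharing the same mixing distribution $F$. Since $p_{\theta,F}(x,y)=\int z^2\theta e^{-z(x+\theta y)}\,dF(z)$ is itself a mixture over $z$, and the $L^1$-norm is convex, the natural first move is to pull the $L^1$-norm inside the integral over $z$:
\[
\|p_{\theta,F}-p_{\theta_0,F}\|_1 \le \int \|p_\theta(\cdot,\cdot\mid z)-p_{\theta_0}(\cdot,\cdot\mid z)\|_1\,dF(z),
\]
so it suffices to bound, uniformly in $z$, the $L^1$-distance between the two conditional kernels $p_\theta(x,y\mid z)=z^2\theta e^{-z(x+\theta y)}$ and $p_{\theta_0}(x,y\mid z)$. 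Because $p_\theta(x,y\mid z)$ factorizes as (density of $\mathrm{Exp}(z)$ in $x$) times (density of $\mathrm{Exp}(\theta z)$ in $y$), and the $x$-marginal does not depend on $\theta$, the problem collapses to a one-dimensional statement: the total variation distance between $\mathrm{Exp}(\theta z)$ and $\mathrm{Exp}(\theta_0 z)$. By scaling, $\|\mathrm{Exp}(\theta z)-\mathrm{Exp}(\theta_0 z)\|_1$ does not depend on $z$ at all; it equals $\|\mathrm{Exp}(\theta)-\mathrm{Exp}(\theta_0)\|_1$ (substitute $u=zy$).

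**The one-dimensional bound.** So everything reduces to showing $\|\mathrm{Exp}(\theta)-\mathrm{Exp}(\theta_0)\|_1 \le 2\|\theta-\theta_0\|/\theta_0$. One clean route is to write the $L^1$-distance as $2$ times the total variation distance and couple: two exponential rates $\theta_0$ and $\theta$ can be coupled so they disagree with probability at most $|\theta-\theta_0|/\theta_0$ (e.g. by thinning/maximal coupling of Poisson processes, or by the inverse-CDF coupling $-\log U/\theta$ versus $-\log U/\theta_0$ and estimating the mismatch). Alternatively, and perhaps most transparently for a written proof, bound directly:
\[
\|\mathrm{Exp}(\theta)-\mathrm{Exp}(\theta_0)\|_1=\int_0^\infty |\theta e^{-\theta y}-\theta_0 e^{-\theta_0 y}|\,dy
=2\Bigl[\Bigl(\tfrac{\theta_0}{\theta}\Bigr)^{\theta_0/(\theta_0-\theta)}-\Bigl(\tfrac{\theta_0}{\theta}\Bigr)^{\theta/(\theta_0-\theta)}\Bigr],
\]
after locating the single crossing point of the two densities, and then check that this explicit expression is $\le 2|\theta-\theta_0|/\theta_0$. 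A less computational alternative: $|\theta e^{-\theta y}-\theta_0 e^{-\theta_0 y}| \le |\partial_s(s e^{-sy})|\,|\theta-\theta_0|$ for $s$ between $\theta_0$ and $\theta$; since $\partial_s(se^{-sy})=(1-sy)e^{-sy}$ and $\int_0^\infty |1-sy|e^{-sy}\,dy = 2/(es)\cdot\text{(something)}$... this gives $\int_0^\infty |1-sy|e^{-sy}\,dy$, which one computes to be $\le 2/s \le 2/\min(\theta,\theta_0)$; combined with $\min(\theta,\theta_0)\ge$ something one still needs $\theta_0$ in the denominator, so a little care is needed about whether $\theta<\theta_0$ or $\theta>\theta_0$ — but in either case one can massage the bound to $2|\theta-\theta_0|/\theta_0$.

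**Assembling and the main obstacle.** Putting the pieces together: $\|p_{\theta,F}-p_{\theta_0,F}\|_1 \le \int \|\mathrm{Exp}(\theta z)-\mathrm{Exp}(\theta_0 z)\|_1\,dF(z) = \|\mathrm{Exp}(\theta)-\mathrm{Exp}(\theta_0)\|_1 \le 2|\theta-\theta_0|/\theta_0$, which is exactly the claim; note that $F$ drops out entirely, consistent with the statement holding for all $F$. The main obstacle — really the only non-trivial point — is the sharp constant $2/\theta_0$ (rather than, say, $2/\min(\theta,\theta_0)$ or a constant times $|\theta-\theta_0|/\theta_0$): one must be slightly careful in the mean-value/crossing-point estimate to ensure the denominator is $\theta_0$ and the leading constant is exactly $2$, handling the cases $\theta\le\theta_0$ and $\theta\ge\theta_0$ symmetrically or by a single monotonicity argument. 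If a constant worse than $2$ were acceptable, the coupling argument gives it almost instantly; extracting exactly $2/\theta_0$ is where the small amount of real work lies.
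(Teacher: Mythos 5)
Your reduction is exactly the paper's: convexity of the $L^1$ norm to pull the distance inside the mixture over $z$, the product structure of the kernel with a $\theta$-free $x$-factor (the paper cites~\cite[Lemma B.8]{ghosalFundamentalsNonparametricBayesian2017} for this), and scale invariance to eliminate $z$, leaving the one-dimensional inequality $\|\mathrm{Exp}(\theta)-\mathrm{Exp}(\theta_0)\|_1\le 2|\theta-\theta_0|/\theta_0$. That last inequality is the entire content of the paper's supporting \Cref{lem:TVExponentials}, and it is the one step you do not actually finish: you sketch three routes and defer each at the decisive moment. The paper's own argument is a two-line triangle inequality: insert the cross term $\theta e^{-\theta_0 y}$, so that
\[
\|\theta e^{-\theta y}-\theta_0 e^{-\theta_0 y}\|_1\le \theta\Bigl|\tfrac1\theta-\tfrac1{\theta_0}\Bigr|+\tfrac{|\theta-\theta_0|}{\theta_0}=2\tfrac{|\theta-\theta_0|}{\theta_0},
\]
using that $e^{-\theta y}-e^{-\theta_0 y}$ has constant sign. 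You would do well to know this trick; it is what makes the constant $2/\theta_0$ come out for free.

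Of your three sketches, the crossing-point route does close, and with less work than you suggest: writing $r=\theta/\theta_0$, the explicit formula reduces to $r^{-1/(r-1)}\,|r-1|/r\le|r-1|$, i.e.\ $r^{-r/(r-1)}\le 1$, which holds for all $r>0$ since the exponent and $\log r$ have opposite signs. The mean-value route, however, has a genuine defect as stated: integrating $|\partial_s(se^{-sy})|$ gives a bound with $\min(\theta,\theta_0)$ in the denominator, and when $\theta\ll\theta_0$ this is strictly weaker than $2|\theta-\theta_0|/\theta_0$ (the target bound is tight as $\theta\to0$, where it tends to $2$), so no amount of ``massaging'' recovers the claimed constant from that route alone. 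The inverse-CDF coupling you mention also fails outright (the two variables almost surely differ). So: same decomposition as the paper, correct identification of the crux, but the crux itself is left to the reader; supply either the paper's add-and-subtract argument or the one-line verification of the crossing-point formula.
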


\begin{proof}
    It is most convenient to use
    \[
        p_{\theta, F} = \int z e^{-z x} z \theta e^{- z \theta y} \dd F(z).
    \]
    Using Jensen's inequality 
    \begin{align*}
        \|p_{\theta, F} - p_{\theta_0, F} \|_1 &= \| \int z e^{-z x} z \theta e^{- z \theta y} \dd F(z) - \int z e^{-z x} z \theta_0 e^{- z \theta_0 y} \dd F(z) \\
        & \leq \int \|  z e^{-z x} z \theta e^{- z \theta y} -  z e^{-z x} z \theta_0 e^{- z \theta_0 y} \|_1 \dd F(z) \\
    \end{align*}
    Now we can use~\cite[Lemma B.8]{ghosalFundamentalsNonparametricBayesian2017} which states that the total variation distance between products of independent random variables is less than the sum of their individual total variation distances. 
    Since the first density is the same, this yields as distance of zero.
    Moreover, we can remove the $Z$ scaling by using the scale invariance of the total variation metric.
    This leads to the following upper bound: 
    \[
        \|p_{\theta, F} - p_{\theta_0, F} \|_1  \leq \| \theta e^{-\theta y} - \theta_0 e^{- \theta_0 y} \|_1.
    \]
    
    By \Cref{lem:TVExponentials} expansion we find
    \[
        \| \theta e^{-\theta y} - \theta_0 e^{- \theta_0 y} \| \leq 2 \frac{\| \theta - \theta_0\|}{\theta_0}.
    \]

\end{proof}

First, we can cut of the tails of any $F_0$ while incurring a small error.
\begin{lem}\label{lem:FrailtycompactApprox}
    Let $F_0$ be a probability distribution on $[0, \infty)$ such that there exists a $\gamma > 0$ such that $\int z^\gamma + z^{-\gamma} \dd F_0(z) < \infty$.
    Then for every $0< \epsilon < \frac{1}{4}$ there exists a probability distribution $F^*$ supported on $[ \frac{1}{\EE[Z^{-\gamma}]^{1/\gamma}} \epsilon^{-1/\gamma},  \EE[Z^\gamma]^{1/\gamma} \epsilon^{1/\gamma}]$ such that
    \[
        \| p_{\theta_0, F_0} - p_{\theta_0, F^*} \|_1 < 4 \epsilon.
    \]
\end{lem}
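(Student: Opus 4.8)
The plan is to take $F^*$ to be $F_0$ conditioned to a suitable compact interval $[a,b]\subset(0,\infty)$. Fix $\epsilon\in(0,\tfrac14)$ and choose $a,b$ so that $F_0\bigl((0,a)\bigr)\le\epsilon$ and $F_0\bigl((b,\infty)\bigr)\le\epsilon$. Since $Z^{\gamma}$ and $Z^{-\gamma}$ are nonnegative and $F_0$-integrable (in particular $F_0(\{0\})=0$), Markov's inequality gives
\[
F_0\bigl((b,\infty)\bigr)=F_0\bigl(Z^{\gamma}>b^{\gamma}\bigr)\le\frac{\EE_{F_0}[Z^{\gamma}]}{b^{\gamma}},\qquad
F_0\bigl((0,a)\bigr)=F_0\bigl(Z^{-\gamma}>a^{-\gamma}\bigr)\le\EE_{F_0}[Z^{-\gamma}]\,a^{\gamma},
\]
so both requirements are met by taking $b=\EE_{F_0}[Z^{\gamma}]^{1/\gamma}\epsilon^{-1/\gamma}$ and $a=\epsilon^{1/\gamma}/\EE_{F_0}[Z^{-\gamma}]^{1/\gamma}$, which are the endpoints appearing in the statement; this interval is nonempty because $\epsilon<1\le\bigl(\EE_{F_0}[Z^{\gamma}]\,\EE_{F_0}[Z^{-\gamma}]\bigr)^{1/2}$ by Jensen's inequality applied to $x\mapsto 1/x$. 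I would then set $q:=F_0([a,b])\ge 1-2\epsilon\ge\tfrac12>0$ and define the probability measure $F^*(\cdot):=F_0(\,\cdot\cap[a,b])/q$, which is supported on $[a,b]$.

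Next I would bound the $L^1$ distance between the mixtures by the total variation of the mixing measures. Since $(x,y)\mapsto p_{\theta_0}(x,y\given z)$ is a probability density for every $z$, Fubini--Tonelli and the triangle inequality give
\[
\|p_{\theta_0,F_0}-p_{\theta_0,F^*}\|_1
\le\int_{(0,\infty)}\Bigl(\iint p_{\theta_0}(x,y\given z)\,\dd x\,\dd y\Bigr)\,\dd|F_0-F^*|(z)
=\int_{(0,\infty)}\dd|F_0-F^*|(z).
\]
To control the last quantity, observe that $F_0-F^*$ restricted to $[a,b]^c$ is $F_0$ itself there, of total mass $1-q$, while on $[a,b]$ it equals $(1-1/q)\,F_0$, of total mass $(1/q-1)q=1-q$. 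Hence $\int\dd|F_0-F^*|=2(1-q)\le 4\epsilon$, and combining the two displays yields the claim; the strict inequality $<4\epsilon$ is obtained by running the construction of $a,b$ with some $\epsilon'<\epsilon$ in place of $\epsilon$, which only widens the interval.

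This lemma is essentially elementary: in contrast to \Cref{lem:L1boundintheta}, the specific form of the frailty kernel plays no role beyond integrating to one in $(x,y)$. The only points requiring a little care are that the normalising constant $q$ stays bounded away from zero — which is precisely why the hypothesis $\epsilon<\tfrac14$ is imposed — and the bookkeeping that makes the constant $4$ account simultaneously for the two tail estimates and for the renormalisation loss $1-1/q$. I do not expect any genuine obstacle here.
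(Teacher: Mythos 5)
Your proof is correct and follows essentially the same route as the paper's: truncate $F_0$ to the Markov-determined interval, renormalise, and bound the $L^1$ distance of the mixtures by the total mass of $|F_0-F^*|$. Your bookkeeping is in fact slightly sharper: computing $\int \dd|F_0-F^*| = 2(1-q) \le 4\epsilon$ in one step delivers the stated constant, whereas the paper splits the error into $F_0(A_0)$ plus the renormalisation term $\frac{1}{1-F_0(A_0)}-1$ and bounds the sum by $4F_0(A_0)\le 8\epsilon$. Two cosmetic remarks: the exponents of $\epsilon$ in the interval in the statement are evidently swapped (your $a,b$ are the intended endpoints), and your device for upgrading $\le 4\epsilon$ to $<4\epsilon$ by rerunning with $\epsilon'<\epsilon$ widens the support interval and so is not literally compatible with the stated support; neither point affects the way the lemma is used.
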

\begin{proof}
    Let $A_0 = [0, \frac{1}{\EE[Z^{-\gamma}]^{1/\gamma}} \epsilon^{-1/\gamma}) \cup (\EE[Z^\gamma]^{1/\gamma} \epsilon^{1/\gamma}, \infty)$
    Define $F^* = \frac{1}{1 - F_0(A_0)} F (\II_{A_0^c})$.
    Then by the triangle inequality we get
    \[
        \| p_{\theta_0, F_0} - p_{\theta_0, F^*} \|_1 \leq \iint \int_{A_0} p_{\theta_0}(x,y | z) \dd F_0(z) \dd x \dd y + ( \frac{1}{1 - F_0(A_0)} - 1 ) \| p_{\theta_0, F^*} \|_1.
    \]
    Since $F^*$ is a probability distribution, $p_{\theta_0, F^*}$ is a probability distribution and hence has $L^1$ norm $1$.
    Furthermore, note that $ \iint \int_{A_0} p_{\theta_0} (x,y | z) \dd F_0(z) \dd x \dd y = F_0(A_0)$.
    This shows that
    \[
        \| p_{\theta_0, F_0} - p_{\theta_0, F^*} \|_1 \leq F_0(A_0) + \left( \frac{1}{1 - F_0(A_0)} - 1 \right)
    \]
    Then $F_0(A_0) = F_0 ([0, \frac{1}{\EE[Z^{-\gamma}]^{1/\gamma}} \epsilon^{-1/\gamma})) + F_0 ((\EE[Z^\gamma]^{1/\gamma} \epsilon^{1/\gamma}, \infty))$. 
    By Markov's inequality, we can bound these quantities.
    \[
        \PP(Z <  \EE[Z^\gamma]^{1/\gamma} \epsilon^{1/\gamma} ) = \PP(Z^{-\gamma} > \left( \frac{1}{\EE[Z^{-\gamma}]^{1/\gamma}} \epsilon^{-1/\gamma}\right)^{-\gamma}) \leq \frac{\EE[ Z^{-\gamma}]}{\left( \frac{1}{\EE[Z^{-\gamma}]^{1/\gamma}} \epsilon^{-1/\gamma}\right)^{-\gamma}} =  \epsilon
    \]
    and
    \[
        \PP(Z >  \EE[Z^\gamma]^{1/\gamma} \epsilon^{1/\gamma} ) = \PP(Z^\gamma > \left( \EE[Z^\gamma]^{1/\gamma} \epsilon^{1/\gamma}\right)^\gamma) \leq \frac{ \EE[ Z^\gamma]}{\left( \EE[Z^\gamma]^{1/\gamma} \epsilon^{1/\gamma}\right)^\gamma} = \epsilon.
    \]
    Thus, $F_0(A_0) < 2 \epsilon$. 
    When $F_0(A_0) < \frac{1}{2}$
    \[
        F_0(A_0) + \left( \frac{1}{1 - F_0(A_0)} - 1 \right) \leq 4 F_0(A_0).
    \]
\end{proof}
We can approximate every mixture using only a finite number of components. 
\begin{lem}\label{lem:FrailtyfiniteApproxLin}
    Let $F$ be a finite measure on $[m, M]$ with $0 < m < M < \infty$. 
    For every $0 < \epsilon < \frac{1}{2}$ there exists a discrete measure $F_\epsilon$ with $|F| = |F_\epsilon|$, with fewer than $c \frac{M}{m} \log(\frac{1}{\epsilon})$ support points, for some universal constant $C> 0$, such that
    \[
        \| p_{\theta, F} - p_{\theta, F_0} \| < |F|\epsilon.
    \]
\end{lem}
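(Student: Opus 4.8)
The plan is to replace $F$ by a finitely discrete measure obtained by \emph{moment matching on a partition}. The classical facts I would use are that a finite measure on an interval agreeing with $F$ in its first $k$ moments can be taken discrete with at most $\lceil (k+1)/2\rceil$ atoms (Gauss quadrature for the weight $F$, equivalently the extreme points of the moment polytope), and that such a measure kills the error of polynomial approximation of the kernel $z\mapsto p_\theta(x,y\mid z)$ of degree $\le k$; since this kernel is real-analytic in $z$, degree $k\asymp\log(1/\epsilon)$ will suffice on short subintervals.

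First I would partition $[m,M]$ into $N$ consecutive intervals $I_1,\dots,I_N$, each of length at most $m$. Then every $I_\ell=[a_\ell,b_\ell]$ satisfies $a_\ell\ge m$ and $b_\ell\le a_\ell+m\le 2a_\ell$, so the ratio $b_\ell/a_\ell$ is bounded by $2$ and $N\le\lceil (M-m)/m\rceil\le M/m$. On each $I_\ell$ let $F_\ell$ be the restriction of $F$ to $I_\ell$ and, using Gauss quadrature for the weight $F_\ell$, pick a discrete measure $\tilde F_\ell$ supported on at most $\lceil (k+1)/2\rceil$ points of $I_\ell$ with $|\tilde F_\ell|=|F_\ell|$ and $\int z^j\,\dd F_\ell(z)=\int z^j\,\dd \tilde F_\ell(z)$ for $j=1,\dots,k$. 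Put $F_\epsilon=\sum_{\ell=1}^N\tilde F_\ell$; then $|F_\epsilon|=|F|$ and $F_\epsilon$ has at most $N\lceil (k+1)/2\rceil$ atoms.

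To bound the error, write $\|p_{\theta,F}-p_{\theta,F_\epsilon}\|_1\le\sum_\ell\iint\bigl|\int_{I_\ell}p_\theta(x,y\mid z)\,\dd(F_\ell-\tilde F_\ell)(z)\bigr|\,\dd x\,\dd y$. Since $F_\ell-\tilde F_\ell$ annihilates every polynomial of degree $\le k$, for fixed $(x,y)$ the inner integral is at most $2|F_\ell|$ times the best degree-$k$ polynomial approximation error of $z\mapsto p_\theta(x,y\mid z)$ on $I_\ell$, which is at most $\tfrac{(b_\ell-a_\ell)^{k+1}}{2^{2k+1}(k+1)!}\sup_{z\in I_\ell}|\partial_z^{k+1}p_\theta(x,y\mid z)|$. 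A Leibniz expansion of $\partial_z^{k+1}\bigl(\theta z^2 e^{-z(x+\theta y)}\bigr)$ has only three non-zero terms (from the derivatives of $z^2$ of order $0,1,2$), and integrating its absolute value over $x,y>0$ — using $\iint e^{-a(x+\theta y)}(x+\theta y)^m\,\dd x\,\dd y=(m+1)!/(\theta a^{m+2})$ — gives $\iint\sup_{z\in I_\ell}|\partial_z^{k+1}p_\theta(x,y\mid z)|\,\dd x\,\dd y\lesssim (k+2)!/a_\ell^{k+1}\le (k+2)!/m^{k+1}$, uniformly in $\theta$ (the factors $\theta$ cancel) because $b_\ell/a_\ell\le 2$. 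Combining these bounds with $b_\ell-a_\ell\le m$, the factor $(k+1)!$ cancels and the $\ell$-th summand is at most $|F_\ell|\,c\,(k+2)\,4^{-k}$. Choosing $k$ the least integer with $c(k+2)4^{-k}<\epsilon$ makes $k\le c'\log(1/\epsilon)$ for $\epsilon<\tfrac12$; summing over $\ell$ then yields $\|p_{\theta,F}-p_{\theta,F_\epsilon}\|_1<\sum_\ell|F_\ell|\epsilon=|F|\epsilon$, while the number of atoms is at most $N\lceil (k+1)/2\rceil\le c''\,\tfrac{M}{m}\log(1/\epsilon)$, as required.

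The main obstacle is the factorial bookkeeping in the last step: one must arrange that the $(k+1)!$ produced by the polynomial interpolation bound is cancelled by the $(k+2)!$ that appears from differentiating the exponential kernel $k+1$ times, so that what survives is a genuinely geometric factor $4^{-k}$ (times a polynomial in $k$) rather than something growing in $k$. This is exactly why the partition has to be chosen with bounded ratio $b_\ell/a_\ell$ — so that $a_\ell^{-(k+1)}$ may be replaced by $m^{-(k+1)}$, which then absorbs $(b_\ell-a_\ell)^{k+1}\le m^{k+1}$ — and it is what simultaneously produces the $M/m$ count of intervals and the $\log(1/\epsilon)$ count of atoms per interval.
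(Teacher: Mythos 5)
Your proof is correct, but it takes a genuinely different route from the paper. The paper works with a single global Taylor expansion of the exponential: it matches $K$ moments of $F$ on all of $[m,M]$ at once, truncates the observation space to $\{x+\theta y\le L\}$ with $L\sim m^{-1}\log(1/\epsilon)$ so that the argument of the exponential stays below $LM$, and then needs $K\gtrsim eLM\sim \frac{M}{m}\log(1/\epsilon)$ nodes to make the Taylor remainder $(eLM/K)^K$ geometrically small. You instead localise in the latent variable: you partition $[m,M]$ into $\le M/m$ intervals of bounded ratio, match only $O(\log(1/\epsilon))$ moments on each, and control the error by the Chebyshev interpolation bound together with an exact integration of the $(k+1)$-st $z$-derivative of the kernel over all of $(x,y)$ — the key cancellation $(b_\ell-a_\ell)^{k+1}(k+2)!/\bigl(a_\ell^{k+1}(k+1)!\bigr)\lesssim (k+2)$ is what removes any need to truncate the observation space. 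The bookkeeping checks out (the three Leibniz terms, the identity $\iint e^{-a(x+\theta y)}(x+\theta y)^m\,dx\,dy=(m+1)!/(\theta a^{m+2})$, and the cancellation of $\theta$ are all right), and the atom count $N\lceil(k+1)/2\rceil\lesssim \frac{M}{m}\log(1/\epsilon)$ matches the statement. What your approach buys is that it already contains the refinement the paper only introduces in \Cref{lem:FrailtyfiniteApproxLog}: replacing your equal-length partition by the geometric partition $[e^im,e^{i+1}m]$ keeps $(b_\ell-a_\ell)/a_\ell=e-1<4$, so the same computation directly yields the sharper $\log(M/m)\log(1/\epsilon)$ bound, whereas the paper must first prove the cruder global lemma and then patch it locally. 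The paper's version is lighter on bookkeeping; yours is self-improving.
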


\begin{proof}
    We start by using a Taylor expansion for $e^{-x}$. 
    \[
        e^{-x} = \sum_{i = 1}^{K-1} \frac{(-x)^i}{i!} + R(x),
    \]
    with $R (x) = \frac{e^{-\xi_x x} x^k}{k!}$ for some $0 <\xi_x <1$. 
    So when $\int z^2 z^i \dd F (z) - \int z^2 z^i \dd G (i)$ for $i = 0,\dots, K$ we get
    \[
        | \int_m^M z^2 e^{-x z} \dd (F - G)(z) | \leq \int_m^M \frac{ (xz)^K}{K!} \dd (F + G)(z) \leq \frac{2(xM)^K}{K!}.
    \]
    We can match $K$ moments using $K$ support points by~\cite[Lemma L.1 G+V]{ghosalFundamentalsNonparametricBayesian2017}.\\
    By using that $K!> \left(\frac{e}{K} \right)^K$ we find that $2 \frac{(xM)^K}{K!} \leq 2 \left(\frac{ e xM}{K} \right)^K$.
    We can bound the $L_1$ norm as follows:
    \[
        \iint | \int_m^M p_{\theta}(x,y | z) \dd (F - G)(z) | \dd x \dd y \leq |F|\left(\iint_{x + \theta y > L} m^2 \theta e^{-m(x + \theta y)} \dd x \dd y + 2 \left( \frac{ e LM}{K} \right)^K\right).
    \]
    Note that 
    \[
        \iint_{x + \theta y > L} m^2 \theta e^{-m(x + \theta y)} \dd x \dd y \leq e^{-mL/2}
    \]
    If we pick $L = \frac{2}{m} \log(\frac{1}{\epsilon})$ we can bound this by $\epsilon$.
    Next we choose $K$ so that $2 \left(\frac{eLM}{K} \right)^K < \epsilon$. 
    For example, it suffices that $K > 2 e L M = \frac{4eM}{m} \log(\frac{1}{\epsilon})$ and $\left(\frac{1}{2} \right)^K < \epsilon$. 
    Hence, $K \sim \frac{4 e M}{m} \log\left(\frac{1}{\epsilon}\right)$ suffices.

    This shows that
     \[
        \| p_{\theta, F} - p_{\theta, F_0} \| < |F|\epsilon.
    \]
\end{proof}
We can use this Lemma to improve our approximation.
\begin{lem}\label{lem:FrailtyfiniteApproxLog}
    Let $F$ be a finite measure on $[m, M]$ with $0 < m < M < \infty$. 
    For every $0 < \epsilon < \frac{1}{2}$ there exists a discrete measure $F_\epsilon$ with $|F| = |F_\epsilon|$, with fewer than $c \log(\frac{M}{m}) \log(\frac{1}{\epsilon})$ support points, for some universal constant $C> 0$, such that
    \[
        \| p_{\theta, F} - p_{\theta, F_0} \| < |F|\epsilon.
    \]
    Moreover, in every interval $I_i = [e^i m, e^{i + 1}m]$ there are atoms unless $F (I_i) = 0$.
\end{lem}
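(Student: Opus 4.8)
The plan is to bootstrap \Cref{lem:FrailtyfiniteApproxLin} by applying it on a \emph{geometric} partition of $[m,M]$, so that the multiplicative factor $M/m$ in the support count is replaced by the additive count $\log(M/m)$. Set $k=\lceil\log(M/m)\rceil$ and $I_i=[e^i m,e^{i+1}m)$ for $i=0,\dots,k-1$ (with the last interval closed on the right), so that $I_0,\dots,I_{k-1}$ partition $[m,M]$ and each $I_i$ has endpoint ratio exactly $e$. Decompose $F=\sum_{i=0}^{k-1}F_i$ with $F_i:=F|_{I_i}=F\cdot\II_{I_i}$; since the map $F\mapsto p_{\theta,F}$ is linear, $p_{\theta,F}=\sum_i p_{\theta,F_i}$.

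First I would apply \Cref{lem:FrailtyfiniteApproxLin} to each $F_i$, viewed as a finite measure on $[m',M']$ with $m'=e^i m$, $M'=e^{i+1}m$, so that $M'/m'=e$. If $F(I_i)=0$ take $G_i=0$; otherwise the lemma produces a discrete measure $G_i$ supported in $I_i$ with $|G_i|=|F_i|=F(I_i)$, with at most $c\,e\log(1/\epsilon)$ atoms, and with $\|p_{\theta,F_i}-p_{\theta,G_i}\|<F(I_i)\,\epsilon$. Put $F_\epsilon:=\sum_{i=0}^{k-1}G_i$. Then $|F_\epsilon|=\sum_i F(I_i)=|F|$; the atoms of $F_\epsilon$ lie in the disjoint intervals $I_i$, so their total number is at most $k\cdot c\,e\log(1/\epsilon)\le c'\log(M/m)\log(1/\epsilon)$ (absorbing the ceiling, and reading $\log(M/m)$ as $\log(M/m)\vee 1$ when $M/m<e$); and by the triangle inequality in $L^1$,
\[
  \|p_{\theta,F}-p_{\theta,F_\epsilon}\|\le\sum_{i=0}^{k-1}\|p_{\theta,F_i}-p_{\theta,G_i}\|<\sum_{i=0}^{k-1}F(I_i)\,\epsilon=|F|\,\epsilon.
\]
The ``moreover'' statement is then immediate: whenever $F(I_i)>0$, the measure $G_i$ is a nonzero discrete measure concentrated on $I_i$, hence contributes at least one atom inside $I_i$.

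I do not expect a genuinely hard step here: the content is exactly that running \Cref{lem:FrailtyfiniteApproxLin} on $\lceil\log(M/m)\rceil$ constant-ratio blocks trades the factor $M/m$ for $\log(M/m)$, while the per-block relative errors aggregate to $|F|\epsilon$ by linearity of the mixture map and the triangle inequality. The only points requiring a little care are choosing a genuine partition (half-open intervals, last one closed) so the $F_i$ sum back to $F$; keeping the error level $\epsilon$ fixed across all blocks rather than dividing it among them, which is what avoids an extra $\log k$ factor in the atom count; and the harmless convention $\log(M/m)\vee 1$ in the stated bound when $M/m$ is close to $1$.
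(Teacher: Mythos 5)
Your proposal is correct and follows essentially the same route as the paper: partition $[m,M]$ into roughly $\log(M/m)$ intervals of constant endpoint ratio $e$, apply \Cref{lem:FrailtyfiniteApproxLin} on each block, and combine via linearity of $F\mapsto p_{\theta,F}$ and the triangle inequality. Your write-up is in fact more careful than the paper's (explicit handling of the atom count, the mass bookkeeping $|F_\epsilon|=|F|$, and the ``moreover'' clause), with no gaps.
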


\begin{proof}
    We partition $[m, M]$ into $\log(\frac{M}{m})$ many intervals $I_i$, where $I_i = [a_i, a_{i+1}]$, with $a_1 = m$ and $a_{i+1} = e a_i$. We then apply \Cref{lem:FrailtyfiniteApproxLin} to each of these intervals to construct our approximation and then the triangle inequality gives the final result.
\end{proof}

The next step is building an approximation of $p_{\theta_0, F_0}$ by $p_{\theta_0, F_{\epsilon}}$ using a simple enough $F_\epsilon$.
This Lemma is used to create the approximation bounds for the Dirichlet process prior.
\begin{lem}\label{lem:L1boundinF}
    Let $ (0, \infty) = \cup_{j = 0}^\infty A_j$ be a partition and $F_N = \sum_{j = 1}^N w_j \delta_{z_j}$ a probability measure with $Z_j \in A_j$ for $j = 1, \dots, N$. 
    Then
    \[
        \| p_{\theta_0, F} - p_{\theta_0, F_N} \|_1 \leq 4 \max_{1 \leq j \leq N} \frac{ \textrm{diam} A_j}{\min(a: a \in A_j)} + \sum_{j = 1}^N | F(A_j) - w_j | + F(A_0).
    \]
\end{lem}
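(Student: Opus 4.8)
The plan is to write the $L_1$ distance as an integral of the kernel against the signed measure $F-F_N$, split that integral along the partition, and on each matched block $A_j$ pass from the slice $F|_{A_j}$ to the atom $w_j\delta_{z_j}$ in two steps: first correcting the total mass, then moving the support point from a generic $z\in A_j$ to $z_j\in A_j$.

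\textbf{Kernel sensitivity.} The one genuine ingredient is a Lipschitz-type bound for $z\mapsto p_{\theta_0}(\cdot,\cdot\given z)$ in $L_1$. Since $p_{\theta_0}(x,y\given z)=\bigl(z e^{-zx}\bigr)\bigl(\theta_0 z e^{-\theta_0 z y}\bigr)$ is a product of two exponential densities with rates $z$ and $\theta_0 z$, the argument from the proof of \Cref{lem:L1boundintheta} carries over: by scale invariance of total variation, the product bound \cite[Lemma B.8]{ghosalFundamentalsNonparametricBayesian2017}, and the exponential total-variation estimate (cf.\ \Cref{lem:TVExponentials}) applied to each factor,
\[
 \bigl\| p_{\theta_0}(\cdot,\cdot\given z) - p_{\theta_0}(\cdot,\cdot\given z')\bigr\|_1 \;\le\; \frac{4\,|z-z'|}{\min(z,z')}, \qquad z,z'>0,
\]
so for $z,z'\in A_j$ the right-hand side is at most $4\,\textrm{diam}\,A_j/\min\{a:a\in A_j\}$.

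\textbf{Decomposition and summation.} Writing $p_{\theta_0,F}-p_{\theta_0,F_N}=\int p_{\theta_0}(\cdot,\cdot\given z)\,\dd(F-F_N)(z)$ and applying the triangle inequality over the blocks (with $A_0$ taken to be the complement of $A_1\cup\cdots\cup A_N$, so that $F(A_0)+\sum_{j=1}^N F(A_j)=1$),
\[
 \bigl\|p_{\theta_0,F}-p_{\theta_0,F_N}\bigr\|_1 \;\le\; F(A_0) + \sum_{j=1}^N \Bigl\| \int_{A_j} p_{\theta_0}(\cdot,\cdot\given z)\,\dd F(z) - w_j\,p_{\theta_0}(\cdot,\cdot\given z_j)\Bigr\|_1,
\]
since $\int_{A_0}\|p_{\theta_0}(\cdot,\cdot\given z)\|_1\,\dd F(z)=F(A_0)$ as every kernel slice is a probability density. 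For the $j$-th term, insert $F(A_j)\,p_{\theta_0}(\cdot,\cdot\given z_j)$: the mass-correction part contributes $|F(A_j)-w_j|$, and the location part is at most $\int_{A_j}\|p_{\theta_0}(\cdot,\cdot\given z)-p_{\theta_0}(\cdot,\cdot\given z_j)\|_1\,\dd F(z)\le 4\,\textrm{diam}\,A_j/\min\{a:a\in A_j\}\cdot F(A_j)$ by the kernel sensitivity bound and $z,z_j\in A_j$. Summing over $j$, bounding each $\textrm{diam}\,A_j/\min\{a:a\in A_j\}$ by its maximum over $j\le N$ and using $\sum_{j=1}^N F(A_j)\le 1$ collapses the location terms into $4\max_{1\le j\le N}\textrm{diam}\,A_j/\min\{a:a\in A_j\}$; adding $\sum_{j=1}^N|F(A_j)-w_j|$ and $F(A_0)$ gives the claim.

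I do not expect a real obstacle here. The argument is a routine two-level triangle-inequality split, and the only estimate requiring any care — the $L_1$ sensitivity of the kernel in the latent variable — is an immediate rerun of \Cref{lem:L1boundintheta} with the latent scale $z$ playing the role of the rate $\theta$. The one thing to state precisely is the bookkeeping that lets $A_0$ absorb all mass outside the matched blocks, which is exactly what keeps the bound free of terms indexed by $j>N$.
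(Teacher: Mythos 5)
Your proposal is correct and follows essentially the same route as the paper's proof: the same three-way split into a location term, a mass-correction term, and the $A_0$ tail, with the kernel's $L_1$ sensitivity in $z$ obtained from the product bound of \cite[Lemma B.8]{ghosalFundamentalsNonparametricBayesian2017}, scale invariance of total variation, and the exponential total-variation estimate of \Cref{lem:TVExponentials}, yielding the same constant $4$.
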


\begin{proof}
    We denote $t = x + \theta y$. 
    First step is to compute an expression for $p_{\theta_0, F_0} - p_{\theta_0, F_n}$:
    \begin{align*}
        \int \theta z^2 e^{-zt} \dd \left(F - F_n\right) (z) &= \sum_{j = 1}^N \int_{A_j} z^2 \theta e^{-zt} - z_j^2 \theta e^{-z_j t} \dd F(z) \\
                                               &+ \sum_{j = 1}^N \theta z_j^2 e^{-z t} \left( F(A_j) - w_j \right) \\
                                               &+ \int_{A_0} \theta z^2 e^{-zt} \dd F(z).
    \end{align*}
    We start with the last two terms.
    By Fubini, positivity, and $p_{\theta} (x,y | z)$ being a probability density it follows that
    \[
        \| \int_{A_0} z^2 \theta e^{-z(x + \theta y)} \dd F(z) \|_1 \leq \int_{A_0} \iint z^2 \theta e^{-z( x + \theta y)} \dd x \dd y \dd F(z) \leq F(A_0).
    \]
    Similarly, for 
    \[
        \| \theta z_j^2 e^{-z t} \left( F(a_j) - w_j \right)\|_1 \leq \left( F(a_j) - w_j \right).
    \]
    Thus, we only need to show that
    \[
        \| \sum_{j = 1}^N \int_{A_j} z^2 \theta e^{-zt} - z_j^2 \theta e^{-z_j t} \dd F(z) \|_1 \leq \max_{1 \leq j \leq N} \frac{\textrm{diam}(A_j)}{\min(a: a \in A_j)}.
    \]
    First we use the triangle inequality to get
    \[
        \| \sum_{j = 1}^N \int_{A_j} z^2 \theta e^{-zt} - z_j^2 \theta e^{-z_j t} \dd F(z) \|_1 \leq  \sum_{j = 1}^N \int_{A_j} \| z^2 \theta e^{-zt} - z_j^2 \theta e^{-z_j t} \|_1 \dd F(z). 
    \]
    By applying~\cite[Lemma B.8.i]{ghosalFundamentalsNonparametricBayesian2017}, and using the scale invariance of the total variation distance, we get that
    \[
        \| z^2 \theta e^{-zt} - z_j^2 \theta e^{-z_j t} \|_1 \leq 2 \| z e^{-z t} - z_j e^{-z_j t} \|_1.
    \]
    By \Cref{lem:TVExponentials} and we find
    \[
        \| z e^{-z t} - z_j e^{-z_j t} \|_1 \leq 2 \frac{ |z - z_j|}{z_j}.
    \]
    Combining this yields
    \[
        \| \sum_{j = 1}^N \int_{A_j} z^2 \theta e^{-zt} - z_j^2 \theta e^{-z_j t} \dd F(z) \|_1 \leq \sum_{j = 1}^N \int_{A_j} 4 \frac{ \textrm{Diam}(A_j)}{\min A_j} \dd F(z) \leq 4 \max_{1 \leq J \leq N} \frac{\textrm{Diam}(A_j)}{\min A_j }.
    \]
    Adding the final 3 bounds yields the claimed result.

\end{proof}

Next we prove a counterpart for this approximation result, but now so we can use it for mixtures of finite mixtures.

\begin{lem}\label{lem:L1boundinFMFM}
    Let $F_N = \sum_{j = 1}^N w_j \delta_{z_j}$ and $F'_{N'} = \sum_{j = 1}^{N'} w'_j \delta_{z'_j}$.
    Then for all $K \leq \min (N, N')$:
    \begin{align*}
        \| p_{\theta_0, F_N} - p_{\theta_0, F'_{N'}} \|_1 &\leq 4 \max_{1 \leq j \leq K} \frac{ | z_j - z'_j|}{z_j} \\
        & + \sum_{j = 1}^K |w_j - w'_j| + \sum_{j = K+ 1}^N w_j + \sum_{j = K + 1}^{N'} w'_j.
    \end{align*}
\end{lem}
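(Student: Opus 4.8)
The plan is to mimic the proof of \Cref{lem:L1boundinF}, but instead of grouping the atoms of $F'_{N'}$ against a partition, I pair up the first $K$ atoms of the two discrete measures directly and treat the remaining atoms as tail mass. First I would write the difference of the two mixture densities as a telescoping sum:
\[
p_{\theta_0,F_N}(x,y)-p_{\theta_0,F'_{N'}}(x,y)
=\sum_{j=1}^{K}\theta_0\bigl(w_j z_j^2 e^{-z_j t}-w'_j (z'_j)^2 e^{-z'_j t}\bigr)
+\sum_{j=K+1}^{N}\theta_0 w_j z_j^2 e^{-z_j t}
-\sum_{j=K+1}^{N'}\theta_0 w'_j (z'_j)^2 e^{-z'_j t},
\]
where $t=x+\theta_0 y$. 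The two tail sums are handled exactly as in \Cref{lem:L1boundinF}: by Fubini and the fact that each $z\mapsto z^2\theta_0 e^{-zt}$ integrates to one in $(x,y)$, their $L^1$ norms are bounded by $\sum_{j=K+1}^N w_j$ and $\sum_{j=K+1}^{N'} w'_j$ respectively.

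For the paired block, I would insert and subtract $w'_j z_j^2 e^{-z_j t}$ inside each summand, so that
\[
w_j z_j^2 e^{-z_j t}-w'_j (z'_j)^2 e^{-z'_j t}
=(w_j-w'_j)z_j^2 e^{-z_j t}+w'_j\bigl(z_j^2 e^{-z_j t}-(z'_j)^2 e^{-z'_j t}\bigr).
\]
The first piece contributes $\sum_{j=1}^K|w_j-w'_j|$ after taking $L^1$ norms (again using that $z_j^2\theta_0 e^{-z_j t}$ is a probability density). For the second piece I would invoke \cite[Lemma B.8.i]{ghosalFundamentalsNonparametricBayesian2017} together with scale invariance of the total variation distance exactly as in \Cref{lem:L1boundinF} to get $\|z_j^2\theta_0 e^{-z_j t}-(z'_j)^2\theta_0 e^{-z'_j t}\|_1\le 2\|z_j\theta_0 e^{-z_j t}-z'_j\theta_0 e^{-z'_j t}\|_1$, and then apply \Cref{lem:TVExponentials} to bound this by $4|z_j-z'_j|/z_j$; since $w'_j\le 1$, summing over $j$ and taking the maximum over the pairing gives the term $4\max_{1\le j\le K}|z_j-z'_j|/z_j$. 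Adding the four bounds via the triangle inequality yields the claim.

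The only mild subtlety — and the step I would be most careful about — is the bookkeeping for the pairing: the lemma implicitly assumes the atoms of $F_N$ and $F'_{N'}$ are indexed so that $z_j$ is matched with $z'_j$ for $j\le K$, and the bound is only as good as this matching. This is not really an obstacle, just a point to state clearly; everything else is a direct transcription of the estimates already established in \Cref{lem:L1boundinF}, \cite[Lemma B.8.i]{ghosalFundamentalsNonparametricBayesian2017}, and \Cref{lem:TVExponentials}.
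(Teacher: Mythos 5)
Your proposal is correct and follows essentially the same route as the paper's proof: the same telescoping decomposition into a paired block of the first $K$ atoms plus two tail sums, the same add-and-subtract trick inside each paired summand (you insert $w'_j p_{\theta_0}(\cdot\given z_j)$ where the paper inserts $w_j p_{\theta_0}(\cdot\given z'_j)$, an immaterial symmetry), and the same chain of \cite[Lemma B.8.i]{ghosalFundamentalsNonparametricBayesian2017}, scale invariance, and \Cref{lem:TVExponentials} to produce the factor $4|z_j-z'_j|/z_j$. The only thing to state precisely is that the final step uses $\sum_{j\le K} w'_j\le 1$ (not merely $w'_j\le 1$ termwise) to pull out the maximum, exactly as the paper does.
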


\begin{proof}
    Write $t = x + \theta_0 y$.
    \begin{align*}
        p_{\theta_0, F_n} - P_{\theta_0, F'_{N'}} &= \sum_{j = 1}^K w_j \left(p_{\theta_0}((x,y) \given Z_j) - p_{\theta_0}( (x,y) \given z'_j)\right) \\
            &+ \sum_{j = 1}^{K} \left(w'_j - w_j \right) p_{\theta_0} ( (x,y) \given z'_j) \\
            & + \sum_{j = K+1}^N w_j p_{\theta_0}( (x,y) \given z_j) + \sum_{j = K+1}^{N'} w'_j p_{\theta_0} ( (x,y) \given z'_j).
    \end{align*}
    Plugging this into the total variation norm and using the triangle inequality gives
    \begin{align*}
        \| p_{\theta_0, F_n} - p_{\theta_0, F'_{N'}} \|_1 & \leq \|\sum_{j = 1}^K w_j \left(p_{\theta_0}((x,y) \given Z_j) - p_{\theta_0}( (x,y) \given z'_j)\right) \|_1\\
            &+ \|\sum_{j = 1}^{K} \left(w'_j - w_j \right) p_{\theta_0} ( (x,y) \given z'_j)\|_1 \\
            & + \|\sum_{j = K+1}^N w_j p_{\theta_0}( (x,y) \given z_j)\|_1 + \|\sum_{j = K+1}^{N'} w'_j p_{\theta_0} ( (x,y) \given z'_j)\|_1.  
    \end{align*}
    We start with the last three terms. By Fubini, positivity and $p_{\theta} ((x,y) \given z)$ being a probability density, it follows that
    \[
        \|\sum_{j = K+1}^N w_j p_{\theta_0}( (x,y) \given z_j)\|_1 \leq \sum_{j = K + 1}^N w_j \| p_{\theta_0}( (x,y) \given z_j) = \sum_{j = K+1}^N w_j
    \]
    Similarly for the fourth term.
    For the second term,
    \[
        \|\sum_{j = 1}^{K} \left(w'_j - w_j \right) p_{\theta_0} ( (x,y) \given z'_j)\|_1 \leq \sum_{j = 1}^K |w_j - w'_j| \| p_{\theta_0}( (x,y) \given z'_j) \|_1 = \sum_{j = 1}^K | w_j - w'_j|
    \]
    So we only need to deal with the first term.
    \[
        \|\sum_{j = 1}^K w_j \left(p_{\theta_0}((x,y) \given Z_j) - p_{\theta_0}( (x,y) \given z'_j)\right) \|_1 \leq \sum_{j = 1}^k w_j  \| p_{\theta_0}( (x,y) \given z_j) - p_{\theta_0}( (x,y) \given z'_j) \|_1
    \]
    Since the $w_j$ sum to something less than or equal to $1$, it suffices to bound
    \[
        \max_{1 \leq j \leq K}  \| p_{\theta_0}( (x,y) \given z_j) - p_{\theta_0}( (x,y) \given z'_j) \|_1.
    \]
    By applying~\cite[Lemma B.8.i]{ghosalFundamentalsNonparametricBayesian2017}, and using the scale invariance of the total variation distance, we get that
    \[
      \| {z'_j}^2 \theta e^{-z'_j t} - z_j^2 \theta e^{-z_j t} \|_1 \leq 2 \| z'_j e^{-z'_j t} - z_j e^{-z_j t} \|_1.
    \]
    By \Cref{lem:TVExponentials} and we find
    \[
        \| z'_j e^{-z'_j t} - z_j e^{-z_j t} \|_1 \leq 2 \frac{ |z'_j - z_j|}{z_j}.
    \]
    Combining all the results yields the claimed result.
\end{proof}

\subsubsection{Bounding the Kullback-Leibler divergence}
In this section we prove the bounds on the likelihood ratio that we will need.

This lemma is needed to bound the KL divergence in terms to the total variation distance.
\begin{lem}\label{lem:ExpectedLR}
    Let $F_0$ be a fixed probability distribution supported on $0 \leq a \leq b \leq \infty$. 
    If $b$ is infinite we consider the support $[a, \infty)$. 
    Suppose that $0 \leq \epsilon < \frac{1}{2}$, $0 < \delta < \frac{1}{3}$ and $\int z^{2 \delta} \dd F_0(z) < \infty$.
    
    Moreover, suppose that $p_{\theta, F} \geq \frac{\epsilon^2}{2} m_i^2 \theta e^{- M_i (x + \theta y)}$. 
    We assume that $ 0 < m_1 < M_1 < e M_1 < m_2 < M_2 < e M_2$.    
    Assume that $ a > 0$ and $a = m_1$ or that $a = 0$ and $m_1 < \frac{\delta}{6 } M_2$.
    Furthermore, assume that $\| \theta - \theta_0 \| \leq \epsilon \theta_0$.
    Then 
    \[
        P_{\theta_0, F_0} \left( \frac{p_{\theta_0, F_0}}{ p_{\theta, F}} \right)^\delta \lesssim \frac{1}{\epsilon^\delta m_1^{2 \delta}}. 
    \]

\end{lem}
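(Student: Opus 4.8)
The plan is to rewrite the quantity as an ordinary integral and exploit the explicit latent-variable structure of the frailty kernel, working with the $\delta$-th power throughout (a cruder passage through a $\chi^{2}$-divergence would need that divergence to be finite, which is not available here). Writing $t_{0}:=x+\theta_{0}y$ and $t:=x+\theta y$, we have
\[
  P_{\theta_{0},F_{0}}\Bigl(\frac{p_{\theta_{0},F_{0}}}{p_{\theta,F}}\Bigr)^{\delta}
  =\iint \frac{p_{\theta_{0},F_{0}}(x,y)^{1+\delta}}{p_{\theta,F}(x,y)^{\delta}}\,\dd x\,\dd y .
\]
Conditionally on $Z=z$ under $P_{\theta_{0},F_{0}}$, the variables $X$ and $\theta_{0}Y$ are independent $\mathrm{Exp}(z)$ variables, so $T_{0}:=X+\theta_{0}Y$ has a $\mathrm{Gamma}(2,z)$ law; in particular $\EE[T_{0}^{-2\delta}\mid Z=z]=\Gamma(2-2\delta)z^{2\delta}$ and $\EE[e^{cT_{0}}\mid Z=z]=z^{2}/(z-c)^{2}$ for $c<z$. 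Since $\|\theta-\theta_{0}\|\le\epsilon\theta_{0}$, also $(1-\epsilon)T_{0}\le T\le(1+\epsilon)T_{0}$, so $t$ and $t_{0}$ are interchangeable up to the factor $1\pm\epsilon$.

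For the numerator I would sharpen the elementary sup bound by peeling off part of the exponent: for fixed $\eta\in(0,1)$ and any $z\ge a$ one has $z^{2}e^{-zt_{0}}\le\bigl(\sup_{z'>0}z'^{2}e^{-\eta z't_{0}}\bigr)e^{-(1-\eta)at_{0}}=\frac{4}{e^{2}\eta^{2}}t_{0}^{-2}e^{-(1-\eta)at_{0}}$, hence $p_{\theta_{0},F_{0}}(x,y)\le\frac{4\theta_{0}}{e^{2}\eta^{2}}t_{0}^{-2}e^{-(1-\eta)at_{0}}$. For the denominator I would split the plane by the size of $t_{0}$ at a fixed threshold $\rho$ of order $1/M_{1}$ and use the two hypothesised bounds $p_{\theta,F}\ge\frac{\epsilon^{2}}{2}m_{i}^{2}\theta e^{-M_{i}t}$: on $\{t_{0}\le\rho\}$ take $i=2$, where the prefactor $m_{2}^{2}$ is largest and $e^{-M_{2}t}$ is bounded below by a constant, so this region contributes at most a constant times $(\epsilon^{2}m_{2}^{2}\theta)^{-\delta}\theta_{0}^{\delta}\,\EE[T_{0}^{-2\delta}]=(\epsilon^{2}m_{2}^{2}\theta)^{-\delta}\theta_{0}^{\delta}\Gamma(2-2\delta)\int z^{2\delta}\,\dd F_{0}(z)$, which is finite precisely by the moment hypothesis $\int z^{2\delta}\,\dd F_{0}<\infty$ and, since $m_{2}\ge m_{1}$, of the claimed order; on $\{t_{0}>\rho\}$ take $i=1$, whose exponential decays slowest, and, using $t\le(1+\epsilon)t_{0}$ and the change of variables $\iint_{(0,\infty)^{2}}g(x+\theta_{0}y)\,\dd x\,\dd y=\theta_{0}^{-1}\int_{0}^{\infty}sg(s)\,\dd s$, bound that region by a constant times $(\epsilon^{2}m_{1}^{2}\theta)^{-\delta}\theta_{0}^{\delta-1}\int_{\rho}^{\infty}s^{-1-2\delta}e^{-\kappa s}\,\dd s$ with $\kappa=(1-\eta)(1+\delta)a-\delta M_{1}(1+\epsilon)$, an integral that converges as soon as $\kappa\ge0$. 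Collecting the explicit constants from the two regions then gives the stated bound.

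The crux is making $\kappa\ge0$, i.e.\ showing that the slowest-decaying lower bound $e^{-M_{1}t}$ available for large $t$ is still beaten, after raising to the power $1+\delta$, by the decay $e^{-(1-\eta)at_{0}}$ extracted from the numerator, uniformly over $\mathrm{supp}\,F_{0}$. This is exactly where the dichotomy in the hypotheses enters: when $a=m_{1}$, choosing $\eta$ small and using the interval relations $m_{1}<M_{1}<eM_{1}<m_{2}<M_{2}<eM_{2}$ together with $\delta<\frac13$ and $\epsilon<\frac12$ keeps $\delta M_{1}(1+\epsilon)$ below $(1+\delta)m_{1}$; when $a=0$ the numerator yields no exponential gain, so a more delicate argument is required, and the role of $m_{1}<\frac{\delta}{6}M_{2}$ is to place the crossover between the two exponential lower bounds at a scale at which the corresponding integral still converges. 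Tracking the $\epsilon$- and $m_{1}$-powers carefully through both regions, and handling the crossover of the two lower bounds, is the remaining bookkeeping.
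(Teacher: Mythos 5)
Your decomposition is genuinely different from the paper's. The paper first applies Jensen's inequality to the convex map $x\mapsto x^{1+\delta}$ to push the power inside the $F_0$-mixture, then uses Fubini and computes $\iint e^{(\delta M_i-(1+\delta)z)x+(\delta\theta M_i-(1+\delta)\theta_0 z)y}\,\dd x\,\dd y$ exactly for each fixed $z$, splitting the \emph{latent} space at $z=m_1$ (the bound with $i=2$ for $z<m_1$, with $i=1$ for $z\ge m_1$). You instead split the \emph{observation} space by the size of $t_0$ and bound the numerator uniformly in $z$ by a sup bound. Your small-$t_0$ region is sound and isolates the role of $\int z^{2\delta}\,\dd F_0<\infty$ nicely. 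But two steps do not close. First, even when $a=m_1>0$, your convergence requirement $\kappa=(1-\eta)(1+\delta)m_1-\delta M_1(1+\epsilon)\ge0$ does not follow from the hypotheses: with $\delta$ near $\frac13$, $M_1$ near $e\,m_1$ and $\epsilon$ near $\frac12$ one gets $\delta M_1(1+\epsilon)$ up to $\frac{e}{2}m_1\approx1.359\,m_1$, whereas $(1-\eta)(1+\delta)m_1<\frac43 m_1\approx1.333\,m_1$. The deficit comes from applying the factor $1+\epsilon$ to the entire exponent via $t\le(1+\epsilon)t_0$ and from the $(1-\eta)$ you sacrifice to extract $t_0^{-2}$; the margin is razor-thin to begin with, so this is not mere bookkeeping. (Relatedly, your threshold $\rho\sim1/M_1$ leaves an uncontrolled factor $e^{\delta M_2(1+\epsilon)\rho}$ in the small-$t_0$ region, since the hypotheses give no upper bound on $M_2/M_1$.)

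Second, and more seriously, the case $a=0$ is not proved, and within your framework it cannot be repaired: once the numerator is bounded uniformly over $z\ge0$ by a constant times $t_0^{-2}$ with no exponential decay, the large-$t_0$ region carries the factor $e^{\delta M_i t}$ coming from the hypothesised lower bound on $p_{\theta,F}$ and nothing to compensate it, so the tail integral diverges for every threshold and for either choice of $i$; no placement of the ``crossover'' helps. The hypothesis $m_1<\frac{\delta}{6}M_2$ is only usable after the $z$-dependence of the numerator's decay has been retained — in the paper's route each fixed $z$ contributes $e^{-(1+\delta)zt_0}$, and the split is made according to $z$, not $t_0$ — so any repair must keep that dependence rather than take a uniform sup over the support. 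Since the $a=0$ branch is exactly the reason the hypotheses are stated as a dichotomy, the proposal is incomplete precisely where the lemma is delicate.
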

Note that when $F_0$ is supported on $[a,b]$ with $0 < a < b$, $F_0$ has arbitrarily high moments.
Therefore, we can pick $\delta$ as large as needed to make $m_1 < \frac{\delta}{6} M_2$.
For example, $\delta = 6$ is a valid choice.
\begin{proof}
 
    \textbf{Step 1: Bounding using Jensen's inequality}
    Since the map $x \mapsto x^{1 + \delta}$ is convex, we get that
    \[
        \iint \frac{ 
            \left( 
                \int p_{\theta_0}(x,y | z) \dd F_0(z)
            \right)^{1 + \delta} }
        {
            \int p_{\theta}(x,y | z) \dd F(z)^\delta
        }
        \leq
        \iint
        \frac{
            \int p_{\theta_0}(x, y | z)^{1 + \delta} \dd F_0(z)
        }{
            \int p_{\theta}(x,y | z) \dd F(z)^\delta
        }
    \]
    \textbf{Step 2: Constructing suitable upper bounds}
    We use the bound from the assumption, this gives:
    
    \[
        \iint 
        \frac{ 
            \int z^{2 + 2 \delta} \theta_0^{1 + \delta} e^{-(1 + \delta)z(x + \theta_0 y)} \dd F_0(z) 
        }{
            \frac{\epsilon^{2\delta}}{2^\delta} m_i^{2 \delta} \theta^\delta e^{-\delta M_i(x + \theta y)}
         } \dd x \dd y.
    \]
    By Fubini, we find that this is equal to
    \[
        \int \frac{
            z^{2 + 2 \delta} \theta_0^{1 + \delta}
        }{
            \frac{\epsilon^{2\delta}{2^\delta}} m_i^{2 \delta} \theta^{\delta}
        }
        \iint e^{( \delta M_i - (1 + \delta)z) x + (\delta \theta M_i - (1 + \delta) \theta_0 z) y} \dd x \dd y \dd F_0(z).
    \]
    The inner integral is equal to
    \[
        \frac{1}{(1 + \delta) z - \delta M_i} \frac{1}{ (1 + \delta) \theta_0 z - \delta \theta M_i}.
    \]
    Plugging this back gives us
    \[
        \frac{
            \theta_0^{1 + \delta}
        }{
            \epsilon^{2\delta} 2^\delta m_i^{2 \delta} \theta^{\delta}
        }
        \int z^{2 + 2 \delta} \frac{1}{(1 + \delta) z - \delta M_i} \frac{1}{ (1 + \delta) \theta_0 z - \delta \theta M_i}\dd F_0(z).
    \]

    \textbf{Step 3: Removing lower tail of $F_0$}
   
    So that leaves us in the case that $a = 0$ and $\underline{z} < \frac{\delta}{9} \overline{z}$. 
    We will use $z' = z_k$ for the $k$ such that $z_k \geq z_i$ for all $i$, and set $w' = w_k$. 
    That means we want to bound
    \[
        \frac{
            \theta_0^{1 + \delta}
        }{
            \epsilon^{2\delta} 2^\delta m_2^{2 \delta} \theta^{\delta}
        }
        \int_{z < m_1} z^{2 + 2 \delta} \frac{1}{(1 + \delta) z - \delta M_2} \frac{1}{ (1 + \delta) \theta_0 z - \delta \theta M_2}\dd F_0(z).    
    \]

    Per assumption, $\delta M_2 > 6 m_1$.
    Thus, on the interval $z \in [0, m_1]$, we can bound $\frac{2 z^{2 + 2\delta} \theta_0^{1 + \delta}}{\theta^\delta \epsilon^{2\delta}}$ by $\frac{2 m_1^{2 + 2\delta} \theta_0^{1 + \delta}}{\theta^\delta \epsilon^{2\delta} m_2^{2\delta}}$.
    We can similarly bound the second term $ \frac{1}{\delta M_2 - (1 + \delta) z }$ by $ \frac{1}{6 m_1 - (1 + \delta) m_1} = \frac{1}{(5 - \delta) m_1}$ when $0 \leq z < m_1$.
    For the third term, observe that $\frac{\| \theta - \theta_0 \|}{\theta_0} < \epsilon < \frac{1}{2}$.
    Hence, $\frac{\theta}{\theta_0} \geq \frac{1}{2}$.
    Combined with $\delta M_2 > 6 m_1$, this leads to the bound
    \[
        \frac{1}{ \theta \delta M_2 - (1 + \delta) \theta_0 z} < \frac{1}{3 \theta_0 m_1 - (1 + \delta) \theta_0 z} < \frac{1}{\theta_0 m_1}.
    \]
    Combining all the terms and using that $\frac{\theta_0}{\theta} < 2 \epsilon$ yields
    \[
        \iint \frac{
            \int_{z < m_1} p_{\theta_0}(x,y|z)^{1+ \delta} \dd F_0(z)
        }{
            \int p_\theta(x,y|z) \dd F(z) ^\delta
        }
        \dd x \dd y \lesssim \frac{1}{\epsilon^{ \delta}}.
    \]
    \textbf{Step 4: Removing the upper tail}
    We now bound the integral over the $z > m_1$ region. 
    Note that since $M_1 < e m_1$, it follows that $\delta M_1 < \delta e m_1 < \delta 3 z$.
    We have taken $\delta < \frac{1}{6}$, thus $\delta M_1 < \frac{1}{2}z$.
    This shows that 
    \[
        \frac{1}{(1 + \delta)z - \delta M_1} < \frac{1}{ (\frac{1}{2} + \delta) z}.
    \]
    We use that $\frac{\theta}{\theta_0} < 1 + \epsilon$.
    Hence, $\delta \theta M_1 < (1+ \epsilon)^2 \delta \theta_0 m_1$.
    So similarly as before, we get $\delta \theta M_1 <  (1 + \epsilon)^2 \delta \theta_0 m_1 < (1 + \epsilon)^2 \delta \theta_0 z$. 
    Plugging in $\epsilon < \frac{1}{2}$ and $\delta < \frac{1}{3}$ gives $\delta \theta M_1 < \frac{3}{4} \theta z$.
    This leads to the bound
    \[
        \frac{1}{ (1 + \delta) \theta_0 z - \delta \theta M_1}< \frac{1}{(\frac{1}{4} + \delta) \theta_0 z}.
    \]
    Using these bounds in our integral, we get
    \[
        \int_{z > m_1} \frac{
            z^{2 + 2 \delta} \theta_0^{1 + \delta}
        }{
            \frac{\epsilon^{2\delta}}{2^\delta} m_1^{2 \delta} \theta^{\delta}
        }
        \frac{1}{ (\frac{1}{2} + \delta) z}
         \frac{1}{(\frac{1}{4} + \delta) \theta_0 z}
         \dd F_0(z)
        .
    \]
    Simplifying, $\frac{\theta_0}{\theta} < 2\epsilon$ yields the following upper bound
    \[
        16\frac{2^\delta}{\epsilon^{ \delta} m_1^{2 \delta}} \int z^\delta \dd F_0(z) \lesssim \frac{1}{\epsilon^\delta m_1^{2 \delta}}.
    \]
    Combining the two bounds, we find that
    \[
        P_{\theta_0, F_0} \left( \frac{p_{\theta_0, F_0}}{ p_{\theta, F}} \right)^\delta \lesssim \frac{1}{\epsilon^\delta m_1^{2 \delta}}. 
    \]

\end{proof}

\subsubsection{Prior mass bound}

\begin{lem}
    Suppose the prior $\pi$ on $\theta \in V$ where $V$ is an open subset of $\RR^d$ admits a density $\pi(\theta)$ with respect to the Lebesgue measure that is bounded from below by $c > 0$ in a neighbourhood of $\theta_0$. 
    Then there exists a constant $C > 0$ such that 
    \[
        \pi( \theta: \| \theta - \theta_0 \| < \epsilon) > C \epsilon^d.
    \]
\end{lem}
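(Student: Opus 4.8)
The plan is to reduce the claim to the elementary volume lower bound for Euclidean balls. By hypothesis there is an open neighbourhood $U\subseteq V$ of $\theta_0$ and a constant $c>0$ such that the Lebesgue density of $\pi$ satisfies $\pi(\theta)\ge c$ for all $\theta\in U$. First I would fix $\epsilon_0>0$ small enough that the open ball $B(\theta_0,\epsilon_0):=\{\theta:\|\theta-\theta_0\|<\epsilon_0\}$ is contained in $U$; this is possible because $U$ is open and contains $\theta_0$.

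Then, for every $0<\epsilon\le\epsilon_0$, I would simply estimate
\[
\pi\bigl(\theta:\|\theta-\theta_0\|<\epsilon\bigr)=\int_{B(\theta_0,\epsilon)}\pi(\theta)\,d\theta\ \ge\ c\,\mathrm{vol}\bigl(B(\theta_0,\epsilon)\bigr)=c\,V_d\,\epsilon^d,
\]
where $V_d$ denotes the volume of the unit ball in $\RR^d$. Choosing $C:=c\,V_d/2$ (or any positive constant strictly below $c\,V_d$) then yields the strict inequality $\pi(\theta:\|\theta-\theta_0\|<\epsilon)>C\epsilon^d$ for all $\epsilon\in(0,\epsilon_0]$, which is the only regime relevant for the applications (there $\epsilon=\epsilon_n\to0$). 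If one insisted on a bound valid for all $\epsilon>0$, monotonicity of $\epsilon\mapsto\pi(B(\theta_0,\epsilon))$ gives $\pi(B(\theta_0,\epsilon))\ge c\,V_d\,\epsilon_0^d$ for $\epsilon>\epsilon_0$, so a further shrinking of $C$ absorbs the bounded range $\epsilon>\epsilon_0$; I would, however, simply state the lemma for $\epsilon$ below a fixed threshold.

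There is essentially no obstacle here: the only point requiring any care is selecting $\epsilon_0$ so that the ball remains inside the region where the density lower bound holds, and noting that this fixes the range of $\epsilon$ over which the asserted inequality is meaningful.
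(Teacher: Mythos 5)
Your proof is correct and is essentially the paper's argument (the paper's entire proof reads ``Volume of a ball in $\RR^d$ times $c$''), just written out in full. Your additional remark that the bound only holds verbatim for $\epsilon$ below a threshold (or after shrinking $C$) is a fair point the paper glosses over, and is harmless since the lemma is only ever invoked with $\epsilon\to 0$.
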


\begin{proof}
    Volume of a ball in $\RR^d$ times $c$. 
\end{proof}

\begin{lem}\label{lem:FrailtyPriorMassBoundDP}
    Let $F_0$ be a probability distribution on $[0, \infty)$ and $\theta_0 \in \RR_{\geq 0}$. 
    Suppose that there exists a $\gamma > 0$ such that $\int (z^{-\gamma} + z^\gamma) \dd F_0(z) < \infty$.

    Let $\pi$ be a probability distribution which admits a density  bounded from below in a neighbourhood around $\theta_0$.
    Moreover, let $G_t$ be distribution on $[0,\infty)$ which admits a density $g_t$ such that $g_t (z) \geq c (z + 1)^{-\alpha}$.
    In addition, let $\Pi_t = \textrm{DP} (M G_t) \otimes \pi$. 
    Finally, let $ (F, \theta) \sim \Pi$ and consider $ (X_i, Y_i) | \theta, F \overset{i.i.d.} \sim P_{\theta, F}$.
    Then there exists constants $c>0 $ such that for all $\|t \| < \theta_0$
    \[
      \Pi_t\left( \textrm{KL}^2 + V_2 \left( p_{\theta_0, F_0}; p_{\theta, F} \right) < 8 \epsilon \right) \geq e^{c \log( \frac{1}{\epsilon})^3}.
    \]
\end{lem}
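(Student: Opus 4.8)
\emph{Overview and reduction to total variation.} The plan is to bound $\Pi_t$ from below on an event on which $(\theta,F)$ is close to $(\theta_0,F_0)$ in total variation \emph{and} on which $p_{\theta,F}$ admits the pointwise lower bound required by \Cref{lem:ExpectedLR}; the total-variation control is then upgraded to $\textrm{KL}^2+V_2$. Concretely, by \Cref{lem:ExpectedLR} (applied with a geometric grid of scales $m_1<M_1<\cdots<m_2<M_2$ adapted to the support of the approximating mixture, noting that $m_1/M_2$ is automatically small when $\epsilon$ is small), whenever $\|\theta-\theta_0\|\le\epsilon\theta_0$ and $F$ carries enough mass in each geometric block so that $p_{\theta,F}\ge\tfrac{\epsilon^{2}}{2}m_i^{2}\theta e^{-M_i(x+\theta y)}$, the $\delta$-th moment $P_{\theta_0,F_0}(p_{\theta_0,F_0}/p_{\theta,F})^{\delta}$ is at most a fixed negative power of $\epsilon$. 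Combined with the standard inequality bounding $\textrm{KL}$ and $V_2$ by the total-variation (or squared Hellinger) distance times a logarithmic factor when the likelihood ratio has a bounded $\delta$-moment (cf.\ \cite[Appendix B]{ghosalFundamentalsNonparametricBayesian2017}), one gets $\textrm{KL}^2+V_2(p_{\theta_0,F_0};p_{\theta,F})\lesssim\|p_{\theta_0,F_0}-p_{\theta,F}\|_1\,\log^{C}(1/\epsilon)$ on such an event. So it suffices to make $\|p_{\theta_0,F_0}-p_{\theta,F}\|_1$ smaller than a suitable $\log$-power times $\epsilon$; the logarithmic inflation only affects constants in the exponent below, and the factor $8$ in the statement absorbs the accumulation of triangle-inequality errors.

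\emph{Finite approximation of $F_0$.} Using $\int(z^{-\gamma}+z^{\gamma})\,dF_0<\infty$ and \Cref{lem:FrailtycompactApprox}, first replace $F_0$ by a compactly supported $F^{\ast}$ on an interval $[m,M]$ with $m\asymp\epsilon^{1/\gamma}$ and $M\asymp\epsilon^{-1/\gamma}$, at cost $\lesssim\epsilon$ in $\|\cdot\|_1$. Then \Cref{lem:FrailtyfiniteApproxLog} produces a discrete $F^{\ast\ast}=\sum_{j\le N}w_j\delta_{z_j}$ with
\[
N\ \lesssim\ \log(M/m)\,\log(1/\epsilon)\ \lesssim\ \log^{2}(1/\epsilon)
\]
atoms, again at cost $\lesssim\epsilon$, and with the feature that every geometric block $[e^{k}m,e^{k+1}m]$ of positive $F_0$-mass contains an atom of $F^{\ast\ast}$. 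This block structure is exactly what is needed to verify the pointwise lower bound on $p_{\theta,F}$ above once $F$ is close to $F^{\ast\ast}$.

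\emph{Dirichlet-process small-ball estimate.} Partition $[m,M]$ into cells $A_1,\dots,A_N$ with $z_j\in A_j$ and $\textrm{diam}(A_j)/\min A_j\lesssim\epsilon$ (so $\textrm{diam}(A_j)\lesssim\epsilon\,\epsilon^{1/\gamma}$), together with the complement $A_0$. Since $g_t\gtrsim(z+1)^{-\alpha}$ uniformly in $\|t\|<\theta_0$, each cell has $G_t(A_j)\gtrsim\epsilon^{\,1+(1+\alpha)/\gamma}$, i.e.\ $\log(1/G_t(A_j))\lesssim\log(1/\epsilon)$, while $G_t(A_0)$ stays bounded away from $0$. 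By the aggregation property of the Dirichlet process, $(F(A_0),F(A_1),\dots,F(A_N))\sim\textrm{Dir}(MG_t(A_0),\dots,MG_t(A_N))$, and the standard lower bound for a small $\ell_1$-neighbourhood of a point in the simplex under a Dirichlet distribution gives, uniformly in $t$,
\[
\Pi_t\Bigl(F(A_0)+\textstyle\sum_{j=1}^{N}|F(A_j)-w_j|<\epsilon\Bigr)\ \gtrsim\ \exp\Bigl(-c\,N\log(1/\epsilon)-c\textstyle\sum_{j=1}^{N}\log\tfrac{1}{G_t(A_j)}\Bigr)\ \gtrsim\ e^{-c'\log^{3}(1/\epsilon)}.
\]
On the event inside the probability, \Cref{lem:L1boundinF} bounds $\|p_{\theta_0,F}-p_{\theta_0,F^{\ast\ast}}\|_1$ by $4\max_j\textrm{diam}(A_j)/\min A_j+\sum_j|F(A_j)-w_j|+F(A_0)\lesssim\epsilon$, and the atoms of $F$ falling in each geometric block supply the required pointwise lower bound on $p_{\theta_0,F}$.

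\emph{The $\theta$-direction, conclusion, and main obstacle.} By independence of $\theta$ and $F$ under $\Pi_t$, the density lower bound on $\pi$ with \Cref{lem:L1boundintheta} gives $\pi(\|\theta-\theta_0\|<c\epsilon)\gtrsim\epsilon^{d}=e^{-d\log(1/\epsilon)}$, on which $\|p_{\theta,F}-p_{\theta_0,F}\|_1\lesssim\epsilon$. Multiplying the two lower bounds, absorbing the negligible factor $e^{-d\log(1/\epsilon)}$ into $e^{-c'\log^{3}(1/\epsilon)}$, and combining the four pieces $\|p_{\theta,F}-p_{\theta_0,F}\|_1$, $\|p_{\theta_0,F}-p_{\theta_0,F^{\ast\ast}}\|_1$, $\|p_{\theta_0,F^{\ast\ast}}-p_{\theta_0,F^{\ast}}\|_1$, $\|p_{\theta_0,F^{\ast}}-p_{\theta_0,F_0}\|_1$ (each $\lesssim\epsilon$, up to $\log$-powers) by the triangle inequality with the reduction of the first paragraph yields the claim. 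The delicate point is the bookkeeping of the $\epsilon$-dependent scales: one must choose the window $[m,M]$, the number of atoms $N$, and the cell widths so that simultaneously $N=O(\log^{2}(1/\epsilon))$ and each $\log(1/G_t(A_j))=O(\log(1/\epsilon))$ (which is what produces the $\log^{3}$ rate), while keeping enough mass in every block for \Cref{lem:ExpectedLR} to apply and all constants uniform over $\|t\|<\theta_0$.
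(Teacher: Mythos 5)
Your proposal follows essentially the same route as the paper's proof: approximate $F_0$ by a compactly supported, then finitely discrete measure with $O(\log^2(1/\epsilon))$ atoms via \Cref{lem:FrailtycompactApprox,lem:FrailtyfiniteApproxLog}, lower-bound the prior mass of the resulting $(\theta,F)$-neighbourhood through the Dirichlet small-ball estimate (the paper cites \cite[Lemma G.13]{ghosalFundamentalsNonparametricBayesian2017}) together with the uniform-in-$t$ density bound on $G_t$, and convert total-variation closeness plus the $\delta$-moment control of \Cref{lem:ExpectedLR} into a $\textrm{KL}+V_2$ bound via \cite[Lemma B.2]{ghosalFundamentalsNonparametricBayesian2017}. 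The decomposition, the key lemmas, and the source of the $\log^3(1/\epsilon)$ exponent are all the same as in the paper.
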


\begin{proof}
    Let $\theta_0, F_0$ be as given.
    \textbf{Constructing a suitable approximation}
    For all $\theta$ with $\| \theta - \theta_0 \| < \epsilon \theta_0$, we find
    \[
        \| p_{\theta_0, F_0} - p_{\theta, F_0} \| < \epsilon.
    \]
    Next, we are going to construct a suitable $F_\epsilon$.
    By \Cref{lem:FrailtycompactApprox} we can find a probability distribution $F^*$ supported on $[\underline{m}_\epsilon, \overline{m}_\epsilon] =[ \frac{1}{\EE[Z^{-\gamma}]^{1/\gamma}} \epsilon^{-1/\gamma},  \EE[Z^\gamma]^{1/\gamma} \epsilon^{1/\gamma}]$ such that
    \[
        \| p_{\theta_0, F_0} - p_{\theta_0, F^*} \| < 4\epsilon.
    \]
    Then we can find $F_\epsilon$ using \Cref{lem:FrailtyfiniteApproxLog}, this gives a distribution with fewer than $N_\epsilon \sim C \log\left(\EE[Z^\gamma]^{1/\gamma} \epsilon^{1/\gamma}\EE[Z^{-\gamma}]^{1/\gamma} \epsilon^{1/\gamma}\right) \log \left(\frac{1}{\epsilon} \right) \sim C' \log (\frac{1}{\epsilon})^2$ support points such that
    \[
        \| p_{\theta_0, F^*} - p_{\theta_0, F_\epsilon} \|_1 < \epsilon.
    \]
    Without loss of generality, we can assume these support points to be $\epsilon$ separated. 
    Let $A_i$ be a partition of $[0, \infty)$ such that $\frac{\textrm{Diam} A_i}{\underline{m}_\epsilon} = \epsilon$ for $1 \leq i \leq N_\epsilon$.
    That is, $\textrm{Diam} (A_i) = \EE[Z^{-\gamma}]^{1/\gamma} \epsilon^{1 + 1/\gamma}$.

    \textbf{Step 2: Constructing a suitable collection of $\theta, F$}
    Define 
    \[
        \Theta_{\epsilon} \times \mathcal{F}_\epsilon := \{ \theta, F : \| \theta - \theta_0 \| < \epsilon \theta_0, \sum_{i = 1}^{N_\epsilon} | F(A_i) - F_\epsilon(A_i) | < \epsilon, \min_i F(A_i) > \frac{\epsilon^2}{2} \}
    \]
    We will show that the priors assign enough prior mass and that this satisfies our KL conditions.

    \textbf{Step 3: Prior mass}
    
    Let $F \sim \textrm{DP}\left(M \G \right)$.

    We can bound $G_t (A_i)$ by our assumption on $G_t$. 
    First by \Cref{lem:perturbationinleastfavdirectiondensitybound}
    \[
        g_t(z) \gtrsim \left(\frac{1}{1 + z} \right)^\alpha
    \]
    for all $t$ with $\|t \| < \theta_0$.
    Hence, 
    \[
        G_t(A_i) \gtrsim \frac{\EE[Z^{-\gamma}]^{1/\gamma} \epsilon^{1 + 1/\gamma} }{ \left( 1 + \EE[Z^\gamma]^{1/\gamma} \epsilon^{1/\gamma} \right)^\alpha} \gtrsim \epsilon^{1 + 1/\gamma}.
    \]
Hence, we can apply~\cite[Lemma G.13]{ghosalFundamentalsNonparametricBayesian2017} which gives that there exists a $c > 0$ such that for all centre measures $G_t$
    \[
        \textrm{DP} \left(\sum_{i = 1}^{N_n} | F(A_i) - F_{\epsilon}(A_i) | < \epsilon, \min (F (A_i)) > \frac{\epsilon^2}{2} \right) \geq e^{- c N_\epsilon \log {\epsilon}}.
    \]
    Moreover, $\pi_t\left(\theta:  \| \theta - \theta_0 \| < \epsilon \theta \right) \geq c' \epsilon$ for all sufficiently small $t, \epsilon$.
    Hence, there exists a $\tilde{c} > 0$ such that for all sufficiently small $\epsilon, t$, we have
    \[
        \Pi_t\left( \Theta_\epsilon \times \mathcal{F}_\epsilon \right) \geq e^{-\tilde{c} \log( \epsilon)^3}.
    \]
    
    \textbf{Step 4: Controlling the KL divergence}
    We will now bound the $KL$ divergence for these partitions.
    Without loss of generality, assume $A_i$ are partitions in increasing order.
    Denote $F_i = F (\cdot \cap A_i)$.
    Then $p_{\theta, F_i} > \frac{\epsilon^2}{2} \min(A_i)^2 \theta e^{- \max(A_i) (x + \theta y)}$.
    Let $\| \theta - \theta_0 \|< \epsilon \theta_0$ and $\sum_{i = 1}^{N_n} |F (A_i) - F_0 (A_0) | < \epsilon$. 
    Then by the triangle inequality, \Cref{lem:L1boundintheta}, \Cref{lem:L1boundinF} and our earlier computations we find
    \[
        p_{\theta_0, F_0} - p_{\theta, F} \leq 8 \epsilon.
    \]
    Moreover, if we also demand $\min F (A_i) > \frac{\epsilon^2}{2}$ we can apply \Cref{lem:ExpectedLR}.
    This gives
    \[
        \log  P_{\theta_0, F_0} \left( \frac{p_{\theta_0, F_0}}{ p_{\theta, F}} \right)^\delta \lesssim \log(\frac{1}{\epsilon}).
    \]
    By~\cite[Lemma B.2.i]{ghosalFundamentalsNonparametricBayesian2017}, we find that there exists a constant $C > 0$ such that for these $\theta, F$, we have
    \[
        \textrm{V}_2 + \textrm{KL} (P_{\theta_0, F_0}; P_{\theta, F} ) < C \epsilon \log(\frac{1}{\epsilon}).
    \]
    Setting $\eta = C \epsilon \log (\frac{1}{\epsilon})$, we find
    \[
        \Pi( (\theta, F) : \textrm{V}_2 + \textrm{KL} (P_{\theta_0, F_0}; P_{\theta, F} ) < \eta ) \geq e^{-c \log( \eta)^3}.
    \]
\end{proof}

For a mixture of finite mixtures, we can formulate the same type of lemma.
\begin{lem}\label{lem:FrailtyPriorMassBoundMFM}
    Let $F_0$ be a probability distribution on $[0, \infty)$ and $\theta_0 \in \RR_{\geq 0}$. 
    Suppose that there exists a $\gamma > 0$ such that $\int (z^{-\gamma} + z^\gamma) \dd F_0(z) < \infty$.

    Let $\pi$ be a probability distribution which admits a density  bounded from below in a neighbourhood around $\theta_0$.
    Moreover, let $G_t$ be distribution on $[0,\infty)$ which admits a density $g_t$ such that $g_t (z) \geq c (z + 1)^{-\alpha}$.
    In addition, let $\Pi_t = \textrm{MFM} (G_t) \otimes \pi$. 
    Finally, let $ (F, \theta) \sim \Pi$ and consider $ (X_i, Y_i) | \theta, F \overset{i.i.d.} \sim P_{\theta, F}$.
    Then there exists constants $c>0 $ such that for all $\|t \| < \theta_0$
    \[
      \Pi_t( \textrm{KL}^2 + V_2 \left( p_{\theta_0, F_0}; p_{\theta, F} \right) < 8 \epsilon) \geq e^{c \log( \frac{1}{\epsilon})^3}.
    \]
\end{lem}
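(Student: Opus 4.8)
The plan is to follow the proof of \Cref{lem:FrailtyPriorMassBoundDP} step by step, replacing the Dirichlet-process small-ball estimates by their mixture-of-finite-mixtures analogues; only the prior-mass step changes structurally. Fix $\theta_0,F_0$. For $\|\theta-\theta_0\|<\epsilon\theta_0$, \Cref{lem:L1boundintheta} gives $\|p_{\theta_0,F_0}-p_{\theta,F_0}\|_1<2\epsilon$. By \Cref{lem:FrailtycompactApprox} there is a probability measure $F^\ast$ supported on a compact interval $[\underline m_\epsilon,\overline m_\epsilon]$ with $\log(\overline m_\epsilon/\underline m_\epsilon)\lesssim\log(1/\epsilon)$ and $\|p_{\theta_0,F_0}-p_{\theta_0,F^\ast}\|_1<4\epsilon$, and by \Cref{lem:FrailtyfiniteApproxLog} there is then a discrete $F_\epsilon=\sum_{j=1}^{N_\epsilon}w_j\delta_{z_j}$ with $N_\epsilon\lesssim\log(1/\epsilon)^2$ support points — which we may take to be pairwise $\delta_\epsilon$-separated with $\delta_\epsilon\asymp\underline m_\epsilon\epsilon$ — such that $\|p_{\theta_0,F^\ast}-p_{\theta_0,F_\epsilon}\|_1<\epsilon$.

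Next I would lower-bound $\Pi_t(\Theta_\epsilon\times\mathcal F_\epsilon)$, where
\[
\Theta_\epsilon\times\mathcal F_\epsilon=\Bigl\{(\theta,F):\|\theta-\theta_0\|<\epsilon\theta_0,\ F=\textstyle\sum_{j=1}^{N_\epsilon}W_j\delta_{Z_j},\ \max_j|Z_j-z_j|<\delta_\epsilon,\ \textstyle\sum_j|W_j-w_j|<\epsilon,\ \min_jW_j>\tfrac{\epsilon^2}{2}\Bigr\}.
\]
Under $\Pi_t=\mathrm{MFM}(G_t)\otimes\pi$ the relevant quantities are independent: (i) since $H$ is geometric, $\PP(K=N_\epsilon)\gtrsim e^{-cN_\epsilon}$; (ii) given $K=N_\epsilon$ the locations $Z_1,\dots,Z_{N_\epsilon}$ are i.i.d.\ $G_t$, and by \Cref{lem:perturbationinleastfavdirectiondensitybound} $g_t(z)\gtrsim(1+z)^{-\alpha}$ uniformly in $\|t\|<\theta_0$, so each $Z_j$ lands within $\delta_\epsilon$ of $z_j$ with probability $\gtrsim\delta_\epsilon(1+\overline m_\epsilon)^{-\alpha}\gtrsim\epsilon^{c}$, and by separation these $N_\epsilon$ events may be taken disjoint, giving $\gtrsim e^{-cN_\epsilon\log(1/\epsilon)}$; (iii) given $K=N_\epsilon$ the weights are $\mathrm{Dir}(N_\epsilon;M,\dots,M)$, and a Dirichlet small-ball estimate (the finite-dimensional counterpart of \cite[Lemma G.13]{ghosalFundamentalsNonparametricBayesian2017}) gives $\PP\bigl(\sum_j|W_j-w_j|<\epsilon,\ \min_jW_j>\tfrac{\epsilon^2}{2}\bigr)\gtrsim e^{-cN_\epsilon\log(1/\epsilon)}$; and independently $\pi(\|\theta-\theta_0\|<\epsilon\theta_0)\gtrsim\epsilon^d$. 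Multiplying and using $N_\epsilon\lesssim\log(1/\epsilon)^2$ yields $\Pi_t(\Theta_\epsilon\times\mathcal F_\epsilon)\geq e^{-\tilde c\log(1/\epsilon)^3}$, uniformly for $\|t\|<\theta_0$.

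On $\Theta_\epsilon\times\mathcal F_\epsilon$, \Cref{lem:L1boundinFMFM} applied to $F_\epsilon$ and $F$ (matching the first $N_\epsilon$ atoms) gives $\|p_{\theta_0,F_\epsilon}-p_{\theta_0,F}\|_1\lesssim\epsilon$, so by the triangle inequality and the approximation step $\|p_{\theta_0,F_0}-p_{\theta,F}\|_1\lesssim\epsilon$. Since $\min_jW_j>\tfrac{\epsilon^2}{2}$ and all atoms lie in $[\underline m_\epsilon,\overline m_\epsilon]$, we have $p_{\theta,F}(x,y)\geq\tfrac{\epsilon^2}{2}\,\underline m_\epsilon^2\,\theta\,e^{-\overline m_\epsilon(x+\theta y)}$, so the hypotheses of \Cref{lem:ExpectedLR} hold (with $\delta=6$, the two tail cases handled through the interval structure of \Cref{lem:FrailtyfiniteApproxLog}) and hence $\log P_{\theta_0,F_0}(p_{\theta_0,F_0}/p_{\theta,F})^{\delta}\lesssim\log(1/\epsilon)$. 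Then \cite[Lemma B.2.i]{ghosalFundamentalsNonparametricBayesian2017} turns the $L^1$ bound and this moment bound into $V_2(P_{\theta_0,F_0};P_{\theta,F})+\mathrm{KL}(P_{\theta_0,F_0};P_{\theta,F})\lesssim\epsilon\log(1/\epsilon)$, and reparametrising $\eta=C\epsilon\log(1/\epsilon)$ gives the stated bound, exactly as at the end of the proof of \Cref{lem:FrailtyPriorMassBoundDP}.

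The main obstacle is step (iii): obtaining a clean $e^{-cN_\epsilon\log(1/\epsilon)}$ lower bound for the probability that a $\mathrm{Dir}(N_\epsilon;M,\dots,M)$ vector lies in a prescribed $\ell^1$-ball around $(w_1,\dots,w_{N_\epsilon})$ while staying bounded away from the faces of the simplex, and coupling it with the joint placement of the $N_\epsilon$ i.i.d.\ location draws uniformly in $\|t\|<\theta_0$ — which is exactly where the uniform density lower bound of \Cref{lem:perturbationinleastfavdirectiondensitybound} is needed — while checking that the resulting exponent stays of order $\log(1/\epsilon)^3$. Everything downstream of the prior-mass step (the $L^1$ control via \Cref{lem:L1boundinFMFM}, the likelihood-ratio moment via \Cref{lem:ExpectedLR}, and the passage to $\mathrm{KL}$ and $V_2$) is identical to the Dirichlet-process case.
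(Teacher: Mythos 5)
Your proposal follows the same route as the paper's proof: reduce to the Dirichlet-process argument of \Cref{lem:FrailtyPriorMassBoundDP}, replace \Cref{lem:L1boundinF} by \Cref{lem:L1boundinFMFM}, and obtain the prior mass as a product of the geometric probability $\PP(K=N_\epsilon)$, the location probabilities under $G_t$, and a finite-dimensional Dirichlet small-ball bound for the weights, with everything downstream (the \Cref{lem:ExpectedLR} moment bound and the passage to $\mathrm{KL}+V_2$) unchanged. The step you flag as the main obstacle is exactly the estimate the paper also relies on (the finite-dimensional counterpart of \cite[Lemma G.13]{ghosalFundamentalsNonparametricBayesian2017}), so your write-up is, if anything, more detailed than the paper's.
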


\begin{proof}
    The proof goes along the same lines as the proof of \Cref{lem:FrailtyPriorMassBoundDP}.
    We start along the same lines, create an approximation $F_N = \sum_{j = 1}^N w_j \delta_{z_j}$, with $N \sim \log \frac{1}{\epsilon}^2$ such that $\| p_{\theta_0, F_n} - p_{\theta_0, F_0} \| \leq \epsilon$.
    The details that change is that we replace \Cref{lem:L1boundinF} by \Cref{lem:L1boundinFMFM}, define $N = N'$, set
    \begin{align*}
        \Theta_\epsilon \times \mathcal{F}_\epsilon &:= \{ \| \theta - \theta_0 \| \leq \epsilon, \sum_{j= 1}^N \| W_j - w_j \| \leq 2 \epsilon\\
        & \qquad , \min W_j \geq \frac{\epsilon^2}{2}, \| z'_j - z_j \| \leq \epsilon \}
    \end{align*}
    Then the prior mass of such an event is a product of the separate events.
    \[
        \PP(K = N') = p (1 - p)^{N'}
    \]
    The other bounds follow along the same lines as \Cref{lem:FrailtyPriorMassBoundDP}.
    Combining the bounds leads to
    \[
        \PP( \Theta_\epsilon \times \mathcal{F}_\epsilon ) \geq Ce^{ k (\log ( 1- p) + c \log (\frac{1}{\epsilon})} \geq C'e^{ - c' \log( \frac{1}{\epsilon})^3}.
    \]
    The rest of the proof remains the same.
\end{proof}

We need to give a uniform lower bound for the densities of the centre measures of the shifted Dirichlet processes,
The following lemma provides such a lower bound.
\begin{lem}\label{lem:perturbationinleastfavdirectiondensitybound}
    If $G_0$ is a distribution with a density $g$ such that $g (z) \geq C (1 + z)^{-\alpha}$ with $C > 0$. 
    Denote $\phi^{-1}_t $ the perturbation in the least favourable direction, i.e. $\phi^t_\theta(z) = \frac{2 \theta z}{2 \theta + t}$. 
    Then $G_t = G_0 \circ \phi^{-1}_t$ are distributions with densities $g_t$ that also satisfy $g_t (z) \geq C' (1 +z)^\alpha$ for some $C'> 0$, for all $t$ with $\|t \| < \theta_0$.
\end{lem}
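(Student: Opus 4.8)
The plan is to recognise that $\phi^t_\theta$ is simply multiplication by the positive scalar $c_t:=2\theta/(2\theta+t)$, so that $G_t$ is the law of $c_tZ$ when $Z\sim G_0$, and then to push the polynomial lower bound through this linear scaling.

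First I would record uniform two-sided bounds on the scaling constant: for $\|t\|<\theta_0$ (and $\theta$ ranging over a fixed small neighbourhood of $\theta_0$) the quantity $2\theta+t$ stays bounded away from $0$ and $\infty$, so there are constants $0<a\le b<\infty$, not depending on $t$, with $c_t\in[a,b]$. Consequently $G_t$ has density $g_t(w)=c_t^{-1}g(w/c_t)$, and by the hypothesis $g(z)\ge C(1+z)^{-\alpha}$,
\[
  g_t(w)=c_t^{-1}g(w/c_t)\ \ge\ c_t^{-1}C\,(1+w/c_t)^{-\alpha}=C\,c_t^{\alpha-1}\,(c_t+w)^{-\alpha}.
\]

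Next I would compare $(c_t+w)^{-\alpha}$ with $(1+w)^{-\alpha}$. Since $c_t+w\le(c_t\vee 1)(1+w)\le(b\vee 1)(1+w)$, we get $(c_t+w)^{-\alpha}\ge(b\vee 1)^{-\alpha}(1+w)^{-\alpha}$. Combining this with $c_t^{\alpha-1}\ge\min(a^{\alpha-1},b^{\alpha-1})>0$, which is uniform in $t$, yields $g_t(w)\ge C'(1+w)^{-\alpha}$ with $C':=C\min(a^{\alpha-1},b^{\alpha-1})(b\vee 1)^{-\alpha}>0$ independent of $t$, which is exactly the claim.

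There is no genuine obstacle here; the only point needing a little attention is the uniformity in $t$, which is already taken care of by the uniform bounds $c_t\in[a,b]$ established in the first step. If one prefers to work with $\phi_t^{-1}$ in place of $\phi_t$, that map is still multiplication by a constant enjoying the same uniform bounds, so the computation is unchanged.
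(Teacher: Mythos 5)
Your proof is correct and follows essentially the same route as the paper: identify the perturbation as multiplication by a scalar that is uniformly bounded above and below for $\|t\|<\theta_0$, write the transformed density via the change of variables, and absorb the scaling into the constant in front of $(1+z)^{-\alpha}$. The paper simply plugs in the explicit bounds $\tfrac23<\tfrac{2\theta_0}{2\theta_0-t}<2$ where you keep generic constants $a,b$; the substance is identical.
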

\begin{proof}
    The densities $g_t (z)$ are given by $z \mapsto \frac{ 2 \theta_0}{2 \theta_0 - t} g_0(\frac{2 \theta_0 z}{2 \theta_0 - t})$.
    Using the bound on $g_0(z)$ gives
    \begin{align*}
        g_t(z) & \geq \frac{2 \theta_0}{2 \theta_0 - t} C (1 + \frac{ 2 \theta_0 z}{2 \theta_0 - t}) \\
            \intertext{When $\| t \| \leq \theta_0$ we have $\frac{2}{3} < \frac{2 \theta_0}{2 \theta_0 - t} < 2$, hence}
            & \geq \frac{2 }{3} C (1 + 2 z)^{-\alpha} \\
            & \geq 2^{-\alpha} \frac{2}{3} C (1 + z)^{-\alpha}
    \end{align*}
    Which gives the bound.
\end{proof}

\subsubsection{Technical lemmas}

\begin{lem}\label{lem:TVExponentials}
    Let $X, Y$ be two exponential distributions with rates $\theta, \eta$. 
    Then
    \[
        d_{TV}(X, Y) \leq 2 \frac{ \| \theta - \eta \|}{ \eta}.
    \]
    
\end{lem}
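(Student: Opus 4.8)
The plan is to reduce the total-variation distance to an explicit integral and then bound that integral directly. Recall that for two densities $p, q$ on $(0,\infty)$ one has $d_{TV}(X,Y) = \thalf \int_0^\infty |p(u) - q(u)|\,du = \int_0^\infty (p(u) - q(u))_+\,du$. Here $p(u) = \theta e^{-\theta u}$ and $q(u) = \eta e^{-\eta u}$, so I would first write
\[
d_{TV}(X,Y) = \thalf \int_0^\infty \bigl| \theta e^{-\theta u} - \eta e^{-\eta u} \bigr|\,du.
\]
Without loss of generality assume $\theta \le \eta$ (the bound we must prove is not symmetric, but the larger case $\eta \le \theta$ gives the better constant $2|\theta-\eta|/\theta \le 2|\theta-\eta|/\eta$, so the worst case is exactly $\theta \le \eta$, which is the one to treat carefully).

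Next I would make the dependence on $|\theta - \eta|$ explicit by writing $\theta e^{-\theta u} - \eta e^{-\eta u} = \int_\eta^\theta \frac{\partial}{\partial s}\bigl(s e^{-s u}\bigr)\,ds = -\int_\eta^\theta (1 - su) e^{-su}\,ds$ (with the orientation of the integral absorbing the sign), so that
\[
d_{TV}(X,Y) \le \thalf \int_0^\infty \int_{\min(\theta,\eta)}^{\max(\theta,\eta)} \bigl| 1 - su \bigr| e^{-su}\,ds\,du.
\]
Swapping the order of integration by Tonelli, the inner integral becomes $\int_0^\infty |1 - su| e^{-su}\,du$, which after the substitution $v = su$ equals $\frac{1}{s}\int_0^\infty |1-v| e^{-v}\,dv$. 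The constant $\int_0^\infty |1-v|e^{-v}\,dv = 2e^{-1}$ is an elementary computation (split at $v=1$). Hence
\[
d_{TV}(X,Y) \le \thalf \int_{\min(\theta,\eta)}^{\max(\theta,\eta)} \frac{2e^{-1}}{s}\,ds = e^{-1}\,\bigl|\log\theta - \log\eta\bigr|.
\]
Finally I would bound $|\log\theta - \log\eta| \le |\theta-\eta|/\min(\theta,\eta)$ by the mean value theorem; since $e^{-1} \le 1$ and, in the relevant regime, $\min(\theta,\eta)$ can be as small as is controlled by $\eta$ up to the factor $2$ appearing in the statement (or one simply keeps $\min(\theta,\eta)$ and notes the claimed bound $2|\theta-\eta|/\eta$ dominates it whenever $\eta \le 2\min(\theta,\eta)$, i.e. $\theta \ge \eta/2$, which is the only case that matters since otherwise the trivial bound $d_{TV} \le 1 \le 2|\theta-\eta|/\eta$ already suffices), we obtain $d_{TV}(X,Y) \le 2|\theta-\eta|/\eta$.

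The only mild obstacle is bookkeeping around the asymmetry of the target bound: the clean symmetric estimate one naturally derives is $d_{TV} \le |\log\theta - \log\eta|$, and one must check that this is always $\le 2|\theta-\eta|/\eta$. This splits into the case $\theta \ge \eta/2$ (where $\min(\theta,\eta) \ge \eta/2$ gives it directly from the mean value theorem) and the case $\theta < \eta/2$ (where $|\theta - \eta| > \eta/2$ makes the right-hand side exceed $1$, and $d_{TV} \le 1$ trivially). Everything else is routine calculus, so this is a short proof; the author's version (using the substitution and the constant $2e^{-1} < 1$) is essentially the same argument, possibly with the even cruder bound $|1 - su| e^{-su} \le e^{-su} + su\,e^{-su}$ to avoid splitting the integral.
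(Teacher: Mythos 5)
Your argument is correct as a proof of the inequality under the normalisation $d_{TV}=\frac{1}{2}\int|p-q|$, and it takes a genuinely different route from the paper's: you differentiate the density in the rate parameter, apply Tonelli, compute $\int_0^\infty|1-v|e^{-v}\,dv=2e^{-1}$ to obtain the symmetric bound $d_{TV}\le e^{-1}\,|\log\theta-\log\eta|$, and then convert to the asymmetric target via the mean value theorem plus a case split at $\theta=\eta/2$. The paper's proof is shorter and exact: it decomposes $\theta e^{-\theta x}-\eta e^{-\eta x}=\theta(e^{-\theta x}-e^{-\eta x})+(\theta-\eta)e^{-\eta x}$, notes that each summand has constant sign in $x$, and evaluates both $L_1$ norms exactly, each equal to $|\theta-\eta|/\eta$; no case analysis is needed and the bound is obtained for the full $L_1$ distance.

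The convention is not a cosmetic point here, because the paper reads $d_{TV}$ as the $L_1$ distance itself (its proof opens with exactly that identification) and applies the bound to $\|\theta e^{-\theta y}-\eta e^{-\eta y}\|_1$ in Lemmas \ref{lem:L1boundintheta}, \ref{lem:L1boundinF} and \ref{lem:L1boundinFMFM}. Your chain of estimates does not deliver that stronger statement: doubling it gives $\|p-q\|_1\le 2e^{-1}|\log(\theta/\eta)|$ together with the trivial bound $\|p-q\|_1\le 2$, and for $\theta<\eta/2$ neither is below $2(\eta-\theta)/\eta$ (take $\theta/\eta=e^{-3}$: the logarithmic bound is $6/e\approx 2.21$ and the trivial bound is $2$, while the target is about $1.90$). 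So if the lemma is to be used as the paper uses it, your proof has a genuine gap in the regime $\theta\ll\eta$, which the paper's exact triangle-inequality decomposition avoids entirely.
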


\begin{proof}
    The total variation distance is equal to the $L_1$ distance of the corresponding densities. 
    Hence, it follows that
    \[
        d_{TV}(X, Y) = \| \theta e^{-\theta x} - \eta_1 e^{-\eta x} \|_1.
    \]
    By adding and subtract $\theta e^{- \eta x}$ and using the triangle inequality we find
    \begin{align*}
        d_{TV}(X,Y) & \leq \| \theta \left( e^{- \theta y} - e^{- \eta y}\right) \|_1 
         + \|\left( \theta - \eta \right) e^{- \eta y} \|_1 \\
        &= \theta \| \int_0^\infty e^{-\theta x} - e^{-\eta x} \dd x\|
        + \|\theta - \eta \| \int_0^\infty e^{- \eta y} dd y \\
        &= \theta \| \frac{1}{\theta} - \frac{1}{\eta} \|+ \frac{\| \theta - \eta \|}{\eta} \\
        &= 2 \frac{ \| \theta - \eta \|}{\eta}.
    \end{align*}
    
\end{proof}

\section{Errors in Variables}\label{sec:AppendixErrorsInVars}
In this appendix we have collected all the results for the errors in variables model.
\subsection{LAN expansion for the errors in variables model}

Here we prove the LAN expansion using \Cref{Lem:LANByDonsker} and the arguments in~\cite{murphyLikelihoodInferenceErrorsVariables1996}.

We want to be able to apply \Cref{Lem:LANByDonsker}.
For this we need to compute the gradient and the Hessian of the log likelihood of the perturbed model.
We use \Cref{lem:ExpressionsForGradiantHessian} to compute the gradient and the Hessian, which then gives all the expressions we need.
The mixing kernel is given by
\[
  p_{\theta}((X, Y) \given Z) = \frac{1}{\sigma} \phi\left( \frac{X - Z}{\sigma} \right) \frac{1}{\tau} \phi\left( \frac{Y - \alpha - \beta Z}{\tau} \right).
\]
This means that 
\[
    h_{\theta}(t,Z) = -\log (\sigma )-\log (\tau )-\frac{(X-Z)^2}{2 \sigma ^2}-\frac{(-\beta +Y-\alpha  Z)^2}{2 \tau ^2}-\log (2)-\log (\pi ).
\]
The least favourable submodel is given by $\phi_{t, \theta} (Z) = \frac{\beta  \text{t1}}{\beta ^2+1}+Z \left(1-\frac{\beta  \text{t2}}{\beta ^2+1}\right)$.
We can compute the gradient and the Hessian.
The Gradient is given by
\[
    \nabla_t h_{\theta}(t,z) =
    \begin{pmatrix}
        & a_1 + a_2 Z + a_3 X + a_4 Y + a_5 YZ + a_6 Z^2\\
        & a_7 + a_8 Z + a_9Y a_{10} XZ + a_{11} YZ + a_{12} Z^2
    \end{pmatrix}
\]
with the constants given by
\begin{align*}
    a_1 &= \frac{\beta  \left(-\beta  \tau ^2 t_1-\left(\sigma ^2 (\alpha +2 t_1) \left(\beta ^3+\beta +\beta  t_1 (\alpha +t_1)+\beta ^2 t_2+t_2\right)\right)\right)}{\left(\beta ^2+1\right)^2 \sigma ^2 \tau ^2},\\
    a_2 &= -\frac{(\beta  (t_2-\beta )-1) \left(\beta  \tau ^2+\sigma ^2 \left(\beta ^3+\beta( 1 +(\alpha +t_1) (\alpha +3 t_1)+\beta t_2)+t_2\right)\right)}{\left(\beta ^2+1\right)^2 \sigma ^2 \tau ^2},\\
    a_3 &=  \frac{\beta  }{\left(\beta ^2+1\right) \sigma ^2},\\
    a_4 &= \frac{\beta  (\alpha +2 t_1)}{\left(\beta ^2+1\right) \tau ^2}, \\
    a_5 &= \frac{\left(1-\frac{\beta  t_2}{\beta ^2+1}\right)}{\tau ^2},\\
    a_6 &= \frac{(\alpha +t_1) \left(\beta ^2-\beta  t_2+1\right)^2}{\left(\beta ^2+1\right)^2 \tau ^2},\\
    a_7 &= -\frac{\beta +\frac{\beta  t_1 (\alpha +t_1)}{\beta ^2+1}+t_2}{\tau ^2},\\
    a_8 &= \frac{ \left(\beta ^2 \tau ^2 t_1+\sigma ^2 (\alpha +t_1) \left(\beta ^2 (t_1 (\alpha +t_1)-1)+2 \left(\beta ^3+\beta \right) t_2-1\right)\right)}{\left(\beta ^2+1\right)^2 \sigma ^2 \tau ^2},\\
    a_9 &= \frac{1}{\tau ^2},\\
    a_{10}&=-\frac{\beta (\alpha +t_1)}{\left(\beta ^2+1\right) \tau ^2},\\
    a_{11}&=-\frac{\beta}{\left(\beta ^2+1\right) \sigma ^2},\\
    a_{12}&=\frac{\beta \left(\beta ^2-\beta  t_2+1\right) \left(\tau ^2+\sigma ^2 (\alpha +t_1)^2\right)}{\left(\beta ^2+1\right)^2 \sigma ^2 \tau ^2}.
\end{align*}

The hessian is given by \[ H_t h_{\theta}(t,Z) =
\left(
\begin{array}{cc}
c_1 + c_2 Z + c_3 Y + c_4 Z^ 2 & c_5 + c_6 Z + c_7 YZ + c_8 Z^2\\
d_1 + d_2 Z + d_3 YZ + d_4 Z^2 & d_5 + d_6 Z + d_7 Z^2
\end{array}
\right)          
\]

with the constants $c$ given by
\begin{align*}
  c_1 &=\frac{t_2 \left(-\left(\sigma ^2 \left(2 \beta +t_2 \left(6 \alpha ^2+t_1^2+6 \alpha  t_1+2 t_2 (\beta +t_2)+2\right)\right)\right)-\tau ^2 t_2\right)}{\sigma ^2 \tau ^2 \left(t_2^2+1\right)^2}\\
  c_2 &= -\frac{2 t_2 (3 \alpha +2 t_1) \left(t_2^2-\beta  t_2+1\right)}{\tau ^2 \left(t_2^2+1\right)^2}\\
  c_3 &= \frac{2 t_2 }{\tau ^2 \left(t_2^2+1\right)} \\
  c_4 &= -\frac{\left(t_2^2-\beta  t_2+1\right)^2}{\tau ^2 \left(t_2^2+1\right)^2}\\
  c_5 &= -\frac{t_2 (2 \alpha +t_1)}{\tau ^2 \left(t_2^2+1\right)} \\
  c_6 &= \frac{ \left(\sigma ^2 (t_2 (2 \beta +t_2 ((\alpha +t_1) (3 \alpha +t_1)+2 \beta  t_2-1))-1)+\tau ^2 t_2^2\right)}{\sigma ^2 \tau ^2 \left(t_2^2+1\right)^2} \\
  c_7 &= -\frac{t_2}{\tau ^2 \left(t_2^2+1\right)} \\
  c_8 &= \frac{2 t_2 (\alpha +t_1) \left(t_2^2-\beta  t_2+1\right)}{\tau ^2 \left(t_2^2+1\right)^2}
\end{align*}
and $d$ given by
\begin{align*}
  d_1 &= -\frac{t_2 (2 \alpha +t_1)}{\tau ^2 \left(t_2^2+1\right)} \\
  d_2 &= \frac{ \left(\sigma ^2 (t_2 (2 \beta +t_2 ((\alpha +t_1) (3 \alpha +t_1)+2 \beta  t_2-1))-1)+\tau ^2 t_2^2\right)}{\sigma ^2 \tau ^2 \left(t_2^2+1\right)^2}\\
  d_3 &= -\frac{t_2 }{\tau ^2 \left(t_2^2+1\right)} \\
  d_4 &= \frac{2 t_2 (\alpha +t_1) \left(t_2^2-\beta  t_2+1\right)}{\tau ^2 \left(t_2^2+1\right)^2}\\
  d_5 &= -\frac{1}{\tau ^2}\\
  d_6 &= \frac{2 t_2 (\alpha +t_1)}{\tau ^2 \left(t_2^2+1\right)}\\
  d_7 &= -\frac{t_2^2 \left(\tau ^2+\sigma ^2 (\alpha +t_1)^2\right)}{\sigma ^2 \tau ^2 \left(t_2^2+1\right)^2} \\
\end{align*}

\begin{lem}\label{lem:LanErrorsInVars}
  Assume that $\sigma^2 > 0$, $\tau = c \sigma$ for some known $c > 0$, $\alpha, \beta \in \RR$ and $F_0(z^{7 +\delta}) < \infty$ for some $\delta > 0$, then 
    \Cref{EqRemainders} holds for $P_{\theta_0, F_0}$.
\end{lem}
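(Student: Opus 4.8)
The plan is to follow the same route as \Cref{lem:FrailtyModelLAN}: verify the hypotheses of \Cref{Lem:LANByDonsker}, importing the relevant analytic estimates from~\cite{murphyLikelihoodInferenceErrorsVariables1996}. By the posterior consistency established earlier it is enough to work over neighbourhoods $\Theta_n\times\F_n$ of $(\theta_0,F_0)$ shrinking to the truth, so every ``uniformly in $(\theta,F)\in\Theta_n\times\F_n$'' requirement only has to be checked locally. The twice continuous differentiability of $t\mapsto\ell(t;\theta,F)(x)$ near $0$ is immediate from the explicit gradient and Hessian formulas displayed before the lemma, which come from \Cref{lem:ExpressionsForGradiantHessian} together with the smoothness of the Gaussian kernel $h_\theta(x,y\given z)$ in $(\theta,z)$ and of the affine least favourable perturbation $\phi_{t,\theta}$ of \Cref{rem:LeastFavSubmodelErrorsInVars} in $(t,\theta)$.

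Next I would supply the two analytic inputs. First, the classes $\{\dot\ell(t/\sqrt n;\theta,F):\|t\|<1,\ (\theta,F)\in\Theta_n\times\F_n\}$ are contained in a fixed $P_0$-Donsker class, and the gradient has a square-integrable envelope and the Hessian an integrable envelope under $P_{\theta_0,F_0}$; both are essentially available from~\cite{murphyLikelihoodInferenceErrorsVariables1996}, where such classes and envelopes are the basis of the MLE efficiency proof. The envelopes are polynomials in $(x,y,z)$ of modest degree --- the Hessian of $h_\theta$ is degree two in $z$ and the term $D_{t,\theta}D_{t,\theta}^T$ of \Cref{lem:ExpressionsForGradiantHessian} pushes this to degree four --- weighted against the Gaussian kernel and divided by the mixture density; integrating $(x,y)$ against the Gaussian (which has all moments) and $z$ against $F_0$, finite moments of $F_0$ up to order $7+\delta$ suffice, which is the hypothesis. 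With these in hand \Cref{lem:LimitsOfExpectationsLogLik} applies: $h_\theta(x,y\given z)$ is twice continuously differentiable in $(\theta,z)$, $\phi_{t,\theta}(z)$ is twice continuously differentiable in $(t,\theta)$, and for each fixed $(x,y)$ the kernel times any polynomial in $z$ is bounded since Gaussian decay in $z$ beats polynomial growth; hence $P_0\|\dot\ell(t_n/\sqrt n;\theta_n,F_n)-\tilde\ell_0\|^2\to 0$ and $\|P_0\ddot\ell(t_n/\sqrt n;\theta_n,F_n)-\tilde I_0\|\to 0$, the remaining convergence conditions in \Cref{Lem:LANByDonsker}.

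Finally I would verify the displayed no-bias condition in \Cref{Lem:LANByDonsker}. Since the least favourable submodel of \Cref{rem:LeastFavSubmodelErrorsInVars} is exact, $\dot\ell(0;\theta,F)=\tilde\ell_{\theta,F}$, and as explained in the discussion following \Cref{Lem:LANByDonsker} the awkward remainder $P_{\theta,F_0}\tilde\ell_{\theta,F}$ vanishes identically in a mixture model because $\tilde\ell_{\theta,F}$ is orthogonal to the nuisance scores $p_{\theta,G}/p_{\theta,F}-1$; what survives, $(P_{\theta_0,F_0}-P_{\theta,F_0})\tilde\ell_{\theta,F}$, equals $-\tilde I_0(\theta-\theta_0)+o(\|\theta-\theta_0\|+n^{-1/2})$ uniformly in $F\in\F_n$, by a first-order expansion in $\theta$ and the identity $P_{\theta_0,F_0}\tilde\ell_{\theta_0,F_0}\dot\ell_{\theta_0,F_0}^T=\tilde I_0$. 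Assembling these verifications, \Cref{Lem:LANByDonsker} yields \eqref{EqRemainders} for $P_{\theta_0,F_0}$.

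The main obstacle is the envelope bookkeeping in the second step: the gradient and Hessian of $\log p_{\theta+t,F_t}$ are ratios of $F$-integrals whose numerators carry polynomial weights of degree up to four in $z$, and one must control the denominator (the mixing density can be small in the tails, just as in \Cref{lem:ExpectedLR} for the frailty model) to turn these into honest $P_{\theta_0,F_0}$-integrable envelopes. This computation is what pins down the requirement $F_0(z^{7+\delta})<\infty$, and it is where one leans hardest on the estimates of~\cite{murphyLikelihoodInferenceErrorsVariables1996}; checking that the score classes all lie inside a single Donsker class over the shrinking neighbourhoods is a close second.
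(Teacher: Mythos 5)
Your proposal is correct and follows essentially the same route as the paper's proof: restrict to shrinking neighbourhoods via consistency, import the Donsker-class containment and the square-integrable/integrable envelopes from Murphy and van der Vaart, apply \Cref{lem:LimitsOfExpectationsLogLik} for the convergence of the gradient and Hessian expectations, and dispose of the no-bias condition via the exactness of the least favourable submodel and the orthogonality discussion following \Cref{Lem:LANByDonsker}. Your accounting of the polynomial degrees behind the $F_0(z^{7+\delta})<\infty$ hypothesis is more explicit than the paper's, which simply cites \cite[Lemma 7.1--7.3]{murphyLikelihoodInferenceErrorsVariables1996}.
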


\begin{proof}
    By consistency, we can restrict to small neighbourhoods $\theta_0 \in \Theta, F_0 \in \mathcal{F}$.
    Next, by~\cite[Lemma 7.1 till 7.3]{murphyLikelihoodInferenceErrorsVariables1996}, and consistency we see that
    
    \[
        \{ \dot\ell(t/\sqrt{n}; \theta, F) : \theta \in \Theta, F \in \mathcal{F} \}
    \]
    is a Donsker class.
    By~\cite[Lemma 7.1]{murphyLikelihoodInferenceErrorsVariables1996} we also show that the first derivative has a square integrable envelope and the second derivative has an integrable envelope. 
    Furthermore, in the notation of \Cref{lem:LimitsOfExpectationsLogLik}, $h_{\theta} (x \given z)$ is twice differentiable in $\theta$ and $z$, and $\phi_{t, \theta} (z)$ is twice differentiable in $\theta, t$. 
    Moreover, the gradient and the hessian remain bounded.
    Thus, we can apply \Cref{lem:LimitsOfExpectationsLogLik}.
    Together with the Donsker condition and the discussion after \Cref{Lem:LANByDonsker} we can verify all the assumptions of \Cref{Lem:LANByDonsker} and conclude that \Cref{EqRemainders} holds.
\end{proof}

\subsection{Verifying the change of measure condition}
\begin{lem}\label{lem:ErrorsInVarsChangeOfMeasure}
  Under the conditions of \Cref{lem:BvMErrorsInVars} \eqref{EqChangeOfMeasure} holds. 
\end{lem}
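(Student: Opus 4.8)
The plan is to verify the hypotheses of \Cref{lem:ChangeOfMeasure} for the errors-in-variables model, following the same route as \Cref{lem:FrailtyChangeOfMeasure}. The least favourable submodel is recorded in \Cref{rem:LeastFavSubmodelErrorsInVars}: the perturbation acts on the latent variable through the affine map $\phi_{t,\theta}(z) = z\bigl(1 - t_2\beta/(1+\beta^2)\bigr) + t_1\beta/(1+\beta^2)$, and $p_{\theta+t,F_t}(x) = \int p_{\theta+t}(x\mid\phi_{t,\theta}(z))\,dF(z)$. This is exactly the structure demanded by \Cref{ass:ChangeOfVariables}, so that assumption is immediate. It then remains to check \Cref{ass:Smoothness}, \Cref{ass:SmallNumberClusters} and \Cref{ass:Consistency}.

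For \Cref{ass:Consistency} I would invoke posterior consistency of the perturbed model: \Cref{lem:ErrorsInVarsPriorMassEpsDP,lem:ErrorsInVarsPriorMassEpsMFM} supply positive prior mass on every Kullback--Leibler neighbourhood of the truth under the shifted Dirichlet process and the mixture-of-finite-mixtures priors, and \Cref{lem:posteriorconsistency} together with \Cref{remark:ConsistencyPerturbedPosterior} (and the consistency argument of~\cite{murphyLikelihoodInferenceErrorsVariables1996}) then yields the claim. For \Cref{ass:Smoothness} I treat the two conditions separately. Condition~\eqref{ass:smoothnessinh} on $h$ follows from \Cref{lem:SmoothnessInH}, since $h$ is continuously differentiable by hypothesis. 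Condition~\eqref{ass:smoothnessinG} on $G$ follows from \Cref{lem:SmoothnessInGSufficient}: here $z\in\RR$ and $\phi_{t,\theta}$ has the required form $\phi_{t,\theta}(z) = z + z^{T}\alpha_\theta t + \beta t$ with $\alpha_\theta = (0,-\beta/(1+\beta^2))$ and shift coefficient $(\beta/(1+\beta^2),0)$, both continuous in the model parameter $\beta$ and hence ranging over compact sets $\mathcal{A},\mathcal{B}$ over a neighbourhood of $\theta_0$; the hypotheses imposed on $g$ in \Cref{lem:BvMErrorsInVars} (boundedness of $\nabla\log g$ and of $z\mapsto z\,\nabla\log g(z)$, which hold for densities with polynomial lower bound $|z|^{-\delta}$) supply the remaining requirements. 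Chaining \Cref{lem:SmoothnessInGSufficient} into \Cref{lem:SmoothnessInG} then gives~\eqref{ass:smoothnessinG}.

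The remaining point is \Cref{ass:SmallNumberClusters}, which I would verify via the remaining-mass theorem~\cite[Theorem 8.20]{ghosalFundamentalsNonparametricBayesian2017}, separately for the two priors, taking $k_n = n^{1/3} = o(\sqrt n)$ and $\epsilon_n = n^{-1/2}$. In the Dirichlet process case \Cref{lem:tailbound_distinct_obs_DP} bounds $\PP(K_n > n^{1/3}) \le e^{-Cn^{1/3}\log n}$, while \Cref{lem:ErrorsInVarsPriorMassEpsDP} gives a prior-mass lower bound of order $e^{-c(\log(1/\epsilon_n))^{3}}$ on the corresponding KL-ball; the ratio is $o(e^{-2n\epsilon_n^2}) = o(1)$ because $(\log n)^{3} - n^{1/3}\log n \to -\infty$. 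In the mixture-of-finite-mixtures case the geometric tail of $K$ gives $\PP(K \ge k_n)\lesssim e^{-cn^{1/3}}$, which dominates the prior-mass bound of \Cref{lem:ErrorsInVarsPriorMassEpsMFM} in the same way. With \Cref{ass:ChangeOfVariables,ass:Smoothness,ass:SmallNumberClusters,ass:Consistency} established, \Cref{lem:ChangeOfMeasure} delivers~\eqref{EqChangeOfMeasure}.

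The main obstacle, as in the frailty case, is the interplay in \Cref{ass:SmallNumberClusters}: one must ensure that the prior-mass bounds \Cref{lem:ErrorsInVarsPriorMassEpsDP,lem:ErrorsInVarsPriorMassEpsMFM}, which depend on the available moments of $F_0$ (the reason for the hypothesis $F_0(z^{7+\delta})<\infty$), are large enough to beat the cluster-tail bounds --- this is exactly what fixes the rate $k_n = n^{1/3}$. A secondary technical point is checking carefully that the polynomially-lower-bounded centre-measure density satisfies the gradient conditions of \Cref{lem:SmoothnessInGSufficient} in this two-dimensional-observation setting, but this is routine once the affine form of $\phi_{t,\theta}$ is in hand.
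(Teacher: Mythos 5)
Your architecture is exactly the paper's: verify \Cref{ass:ChangeOfVariables} from the affine form of $\phi_{t,\theta}$ in \Cref{rem:LeastFavSubmodelErrorsInVars}, get \Cref{ass:Consistency} from the prior-mass lemmas plus \Cref{lem:posteriorconsistency} and \Cref{remark:ConsistencyPerturbedPosterior}, get \Cref{ass:Smoothness} from \Cref{lem:SmoothnessInH} and \Cref{lem:SmoothnessInGSufficient} chained into \Cref{lem:SmoothnessInG}, and finish \Cref{ass:SmallNumberClusters} with the remaining-mass theorem. All of that matches.

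The one genuine error is in the remaining-mass computation. You quote \Cref{lem:ErrorsInVarsPriorMassEpsDP} as giving a prior-mass lower bound of order $e^{-c(\log(1/\epsilon_n))^{3}}$ and then compare $(\log n)^3$ against $n^{1/3}\log n$. That is the \emph{frailty} bound (\Cref{lem:FrailtyPriorMassBoundDP}); the errors-in-variables lemma gives the much weaker bound $e^{-C\epsilon_n^{-1/(1+\xi)}\log(\epsilon_n^{-1})}$, which with $\epsilon_n=n^{-1/2}$ is $e^{-C'n^{1/(2(1+\xi))}\log n}$ --- polynomial in $n$ in the exponent, not polylogarithmic. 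The correct comparison is therefore $n^{\beta}\log n$ versus $n^{1/(2(1+\xi))}\log n$, which forces $k_n=n^{\beta}$ with $\beta>\tfrac{1}{2(1+\xi)}$; this is why the paper takes $\beta=\tfrac{2+\xi}{4(1+\xi)}$ rather than a fixed exponent. Your choice $k_n=n^{1/3}$ happens to be admissible here only because the hypothesis $F_0(z^{7+\delta})<\infty$ gives $\xi\ge 6$, so $\tfrac{1}{2(1+\xi)}\le\tfrac{1}{14}<\tfrac13$ --- but that step needs to be said, and your stated justification does not follow from the lemma you cite. (Your closing paragraph even acknowledges that the prior-mass bound is moment-dependent, which is inconsistent with the moment-free $(\log(1/\epsilon_n))^3$ rate you used.) A minor secondary point: a polynomial lower bound $g(z)\gtrsim|z|^{-\delta}$ does not by itself imply boundedness of $\nabla\log g$ and of $z\mapsto z\,\nabla\log g(z)$; the paper imposes the hypotheses of \Cref{lem:SmoothnessInGSufficient} on $g$ as a separate assumption in \Cref{lem:BvMErrorsInVars}, so you should cite them as assumed rather than derived.
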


\begin{proof}
  We want to verify the assumptions of \Cref{lem:ChangeOfMeasure}.
  The least favourable submodel is given by \Cref{rem:LeastFavSubmodelErrorsInVars}.
  For our work, we will use the perturbation 
  \[
    \phi_{t, \theta}(Z) = Z\left( 1 - \frac{t_2 \beta}{1 + \beta^2} \right) + \frac{t_1 \beta}{1 + \beta^2}.
  \]
  Thus, \Cref{ass:ChangeOfVariables} is verified.
  We can verify posterior consistency in the perturbed model.
  By \Cref{lem:ErrorsInVarsPriorMassEpsDP} and \Cref{lem:ErrorsInVarsPriorMassEpsMFM} each neighbourhood of the truth has positive mass under the DP prior and the MFM prior respectively.
  Hence, by \Cref{lem:posteriorconsistency} and \Cref{remark:ConsistencyPerturbedPosterior} and the consistency proof in~\cite{murphyLikelihoodInferenceErrorsVariables1996,vandervaartEfficientMaximumLikelihood1996} we satisfy \Cref{ass:Consistency}.

  Next we verify \Cref{ass:Smoothness}.

  First we can verify the conditions on $h$. We have assumed that $h$ continuously differentiable in a neighbourhood of $\theta_0$. 
  By applying \Cref{lem:SmoothnessInH} we verify the conditions on $h$.
  
  We want to verify the assumptions of \Cref{lem:SmoothnessInGSufficient}.
  The least favourable submodel satisfies the assumption.
  Moreover, we assumed the density of the centre measure satisfy the requirements of the lemma.
  Hence, by \Cref{lem:SmoothnessInGSufficient} gives us the conditions to apply \Cref{lem:SmoothnessInG}.
  This means that \Cref{ass:smoothnessinG} is satisfied.

  Finally, we need to verify \Cref{ass:SmallNumberClusters}. For both models we will verify this assumption using the remaining mass theorem.
  \textbf{Dirichlet process case}
  
  Pick $\epsilon_n = n^{-\alpha}$ for $a = \frac{1}{2}$.
  By \Cref{lem:ErrorsInVarsPriorMassEpsDP} we know that
  \begin{align*}
    \pi_t \otimes \textrm{DP}_t(B_{n,2} (p_{\theta, F}; p_{\theta_0, F_0}, \epsilon_n)) &\leq= \pi \otimes \textrm{DP}_t( \textrm{KL} + V_2 \left( p_{\theta, F}; p_{\theta_0, F_0} \right) \leq \epsilon_n^2 )\\
      &\geq e^{-C \epsilon_n^{-\frac{1}{( 1 + \xi)}} \log( \epsilon_n^{-1})} \\
      &= e^{ - C' n^{\frac{\alpha}{(1 + \xi)}} \log n}
  \end{align*}
  Let $\frac{1}{2(1 + \xi)} < \beta < \frac{1}{2}$, for example, $\beta = \frac{ \frac{1}{2 (1 + \xi)} + \frac{1}{2}}{2} = \frac{2 + \xi}{4(1 + \xi)}$.
  By \Cref{lem:tailbound_distinct_obs_DP} we find that
  \[
    \PP( K > n^\beta) \lesssim e^{- C n^\beta \log n}.
  \]
     
  Hence, we want to show that
  \[
    e^{C n^{ \frac{\alpha}{2( 1 + \xi)}} \log n} e^{- c \sqrt{n} \log n} e^{2 n^{1 - 2 \alpha}} = o(1)
  \]
  It suffices to show that $n^\beta \log(n)$ dominates both other terms.
  $n \epsilon_n^2 = n^{1 - 2 \alpha} = 1$ is much smaller than $n^{\beta}\log(n)$.

  Now we need to show that $n^\beta \log n$ also dominates $n^{\frac{\alpha}{ (1 + \xi)}} \log(n)$.
  This holds true as soon as $\frac{1}{2(1 + \xi)} < \beta$, which is true per our choice of $\beta$.
  Thus, by~\cite[Theorem 8.20]{ghosalFundamentalsNonparametricBayesian2017} we have verified \Cref{ass:SmallNumberClusters} for the Dirichlet process case.
  
  \textbf{Finite mixture case}
  
  Pick $\epsilon_n = n^{-\alpha}$ for $a = \frac{1}{2}$.
  By \Cref{lem:ErrorsInVarsPriorMassEpsMFM} we know that
  \begin{align*}
        \pi_t \otimes \textrm{DP}_t(B_{n,2} (p_{\theta, F}; p_{\theta_0, F_0}, \epsilon_n)) &\leq= \pi \otimes \textrm{DP}_t( \textrm{KL} + V_2 \left( p_{\theta, F}; p_{\theta_0, F_0} \right) \leq \epsilon_n^2 )\\
            &\geq e^{-C \epsilon_n^{-\frac{1}{( 1 + \xi)}} \log( \epsilon_n^{-1})} \\
            &= e^{ - C' n^{\frac{\alpha}{(1 + \xi)}} \log n}
    \end{align*}
    Let $\frac{1}{2(1 + \xi)} < \beta < \frac{1}{2}$, for example, $\beta = \frac{ \frac{1}{2 (1 + \xi)} + \frac{1}{2}}{2} = \frac{2 + \xi}{4(1 + \xi)}$.
    By \Cref{lem:tailbound_distinct_obs_DP} we find that
    \[
      \PP( K > n^\beta) \lesssim e^{- C n^\beta \log n}.
    \]
     
    Hence, we want to show that
    \[
        e^{C n^{ \frac{\alpha}{2( 1 + \xi)}} \log n} e^{- c \sqrt{n} \log n} e^{2 n^{1 - 2 \alpha}} = o(1)
    \]
    It suffices to show that $n^\beta \log(n)$ dominates both other terms.
    $n \epsilon_n^2 = n^{1 - 2 \alpha} = 1$ is much smaller than $n^{\beta}\log(n)$.

    Now we need to show that $n^\beta \log n$ also dominates $n^{\frac{\alpha}{ (1 + \xi)}} \log (n)$.
    This holds true as soon as $\frac{1}{2(1 + \xi)} < \beta$, which is true per our choice of $\beta$.
    Thus, by~\cite[Theorem 8.20]{ghosalFundamentalsNonparametricBayesian2017} we have verified \Cref{ass:SmallNumberClusters} for the Finite mixture case.

\end{proof}

\subsubsection{Approximation in total variation}
The density of the observations is given by
\[
    p_{\theta, \eta} (x,y) = \int \frac{1}{\sigma} \phi\left( \frac{x - z}{\sigma}\right) \frac{1}{\tau} \phi \left( \frac{y - \alpha - \beta z}{\tau}\right) \dd F(z)
\]
where $\theta = (\alpha, \beta)$ and $\eta = (\sigma, F)$.
We presume $\frac{\tau}{\sigma}$ to be known for now.
Without this, the model is not identifiable.
We can maybe generalise to general $\Sigma$, instead of assuming some diagonal matrix. 
In that case, we need to assume that the matrix $\Sigma$ is known up to some scalar factor, again for identifiability reasons.

First we show that we can approximate well if we vary the parametric part of the parameter.
\begin{lem}\label{lem:ErrorsInVarsParametricApprox}
    Suppose that $F_0(|z|) < \infty$. Suppose that
    \begin{align*}
        \frac{3 \| \sigma^2 - \sigma_0^2 \|}{2\sigma_0^2} & \leq \epsilon \\
        \frac{3 \| \tau^2 - \tau_0^2 \|}{2 \tau_0^2 } & \leq \epsilon \\
        \frac{ \| \alpha - \alpha_0 \|}{ \tau_0} & \leq \epsilon \\
        \frac{ \| \beta - \beta_0 \|}{ \tau_0 } \int |z| \dd F_0(z) & \leq \epsilon
    \end{align*}
    then
    \[
        \| p_{\theta, F_0} - p_{\theta_0, F_0} \|_1 \leq 4 \epsilon.
    \]
\end{lem}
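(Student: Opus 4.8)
The plan is to bound $\|p_{\theta,F_0}-p_{\theta_0,F_0}\|_1$ by passing from the mixture density back to the conditional densities $p_\theta(x,y\mid z)=\phi_\sigma(x-z)\phi_\tau(y-\alpha-\beta z)$, since the total variation distance of mixtures is dominated by the supremum (or integral) of the total variation distances of the kernels: by the triangle inequality,
\[
\|p_{\theta,F_0}-p_{\theta_0,F_0}\|_1
\le\int\|p_\theta(\cdot\mid z)-p_{\theta_0}(\cdot\mid z)\|_1\,dF_0(z).
\]
So it suffices to control, for each fixed $z$, the $L_1$-distance between two bivariate densities that are products of a Gaussian in $x$ (with mean $z$, variance $\sigma^2$ versus $\sigma_0^2$) and a Gaussian in $y$ (with mean $\alpha+\beta z$, variance $\tau^2$ versus $\tau_0^2$).

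First I would invoke the product bound, e.g. the analogue of \cite[Lemma B.8.i]{ghosalFundamentalsNonparametricBayesian2017} used elsewhere in the paper, to write
\[
\|p_\theta(\cdot\mid z)-p_{\theta_0}(\cdot\mid z)\|_1
\le \|\phi_\sigma(\cdot-z)-\phi_{\sigma_0}(\cdot-z)\|_1
+\|\phi_\tau(\cdot-\alpha-\beta z)-\phi_{\tau_0}(\cdot-\alpha_0-\beta_0 z)\|_1 .
\]
The first term is translation-invariant and reduces to the $L_1$-distance of two centred Gaussians with variances $\sigma^2,\sigma_0^2$, which is bounded by a constant times $\|\sigma^2-\sigma_0^2\|/\sigma_0^2$; the hypothesis $3\|\sigma^2-\sigma_0^2\|/(2\sigma_0^2)\le\epsilon$ makes this at most $\epsilon$. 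For the second term I would split it further with the triangle inequality through the intermediate density $\phi_{\tau_0}(\cdot-\alpha-\beta z)$: the variance part contributes $\le\epsilon$ as before, and the mean-shift part gives $\|\phi_{\tau_0}(\cdot-\alpha-\beta z)-\phi_{\tau_0}(\cdot-\alpha_0-\beta_0 z)\|_1$, the $L_1$-distance of a Gaussian and its translate by $(\alpha-\alpha_0)+(\beta-\beta_0)z$, which is bounded by (a constant times) $(\|\alpha-\alpha_0\|+\|\beta-\beta_0\|\,|z|)/\tau_0$ using the standard Lipschitz bound $\|\phi_s(\cdot)-\phi_s(\cdot-a)\|_1\le |a|/s$ (up to a universal constant absorbed into the $4$). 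Integrating this last piece against $dF_0(z)$ and using $F_0(|z|)<\infty$ together with the two hypotheses on $\alpha$ and $\beta$ yields at most $2\epsilon$ from the mean shift.

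Summing the four contributions — two variance terms at $\epsilon$ each and the two mean-shift pieces at $\epsilon$ each after integration — gives the claimed bound $\|p_{\theta,F_0}-p_{\theta_0,F_0}\|_1\le 4\epsilon$. The one genuinely model-specific computation is the elementary estimate for the $L_1$-distance of two Gaussians differing only in variance, $\|\phi_s-\phi_{s_0}\|_1\lesssim |s^2-s_0^2|/s_0^2$ for $s$ near $s_0$; the factor $3/2$ in the hypotheses is precisely what is needed to control the constant in this estimate uniformly on the relevant neighbourhood, and this is the step I expect to require the most care. Everything else is the triangle inequality, translation invariance of total variation, the product-of-kernels bound, and Fubini to interchange the $F_0$-integral with the $(x,y)$-integral defining the $L_1$-norm.
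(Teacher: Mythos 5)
Your proposal follows essentially the same route as the paper: Jensen/triangle inequality to pass the $L_1$-norm inside the $F_0$-integral, the product-of-kernels bound, and then elementary Gaussian total-variation estimates for the variance and mean-shift parts (the paper packages the last step as a single two-parameter Gaussian TV bound, \Cref{lem:GaussianTVHellingerBounders}, rather than splitting through an intermediate density, but this is the same argument). The only nitpick is that there is no slack to absorb "a universal constant" into the $4$ — the four terms must each be bounded by $\epsilon$ with exactly the constants appearing in the hypotheses, which the standard Gaussian estimates do deliver.
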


\begin{proof}
    By using the definition of $p_{\theta, F}$, Jensen's inequality,~\cite[Lemma B.4]{ghosalFundamentalsNonparametricBayesian2017}, and \Cref{lem:GaussianTVHellingerBounders} we find the claimed result of the lemma:
    \begin{align*}
        \| p_{\theta, F_0} - p_{\theta_0, F_0} \|_1 & = \| \int p_{\theta}((x,y) \given z) - p_{\theta_0}( (x,y) \given z) \dd F_0(z)\|_1  \\
        & \leq \int \| p_{\theta}( (x,y) \given z) - p_{\theta_0}( (x,y) \given z) \|_1 \dd F_0(z)\\
        & \leq \int \| \phi_\sigma(x - z) - \phi_{\sigma_0}( x - z) \|_1 \\
        & \qquad  + \| \phi_\tau(y - \alpha - \beta z) - \phi_{\tau_0}( y - \alpha_0 - \beta_0 z) \|_1 \dd F_0(z) \\
        & \leq \frac{3 \| \sigma^2 - \sigma_0^2 \|}{2\sigma_0^2} + \frac{3 \| \tau^2 - \tau_0^2 \|}{2 \tau_0^2 } \\
        & \qquad + \frac{ \| \alpha - \alpha_0 \|}{ \tau_0} +  \frac{ \| \beta - \beta_0 \|}{ \tau_0 } \int |z| \dd F_0(z) \\
        & \leq 4 \epsilon 
    \end{align*}
\end{proof}

Next we show that we approximate every mixture by looking at a set $A$ such that the complement of $A$ has small probability under the mixing measure.
\begin{lem}\label{lem:ErrorsInVarsRestrictingAndNormalisingErrorControl}
    Denote $p_{\theta, F} = \int p_\theta (x \given z) \dd F (Z)$, where $p_\theta(x|z)$ is a family of probability densities for every $\theta, Z$.
    Let $\epsilon > 0$ and pick $A$ a set such that $F (A^c) < \epsilon$.
    Denote $p_{\theta, F_{|A}} = \frac{1}{F (A)} \int_A p_\theta (x \given z) \dd F (z)$.
    Then
    \[
        \| p_{\theta, F} - p_{\theta, F_{|A}} \|_1 \leq 2 \epsilon.
    \]
\end{lem}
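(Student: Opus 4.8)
The plan is to bound the $L^1$ distance directly, exploiting that $p_{\theta, F_{|A}}$ differs from $p_{\theta, F}$ in two ways: the missing mass outside $A$, and the renormalisation factor $1/F(A)$. First I would write
\[
p_{\theta, F}(x) - p_{\theta, F_{|A}}(x)
= \int_{A^c} p_\theta(x \given z)\,dF(z)
+ \Bigl(1 - \tfrac{1}{F(A)}\Bigr)\int_A p_\theta(x \given z)\,dF(z).
\]
Integrating the absolute value over $x$ and using the triangle inequality splits the bound into two pieces. For the first piece, Fubini and the fact that each $p_\theta(\cdot \given z)$ is a probability density give $\int \int_{A^c} p_\theta(x\given z)\,dF(z)\,dx = F(A^c) < \epsilon$. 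For the second piece, the same Fubini argument shows $\int \int_A p_\theta(x \given z)\,dF(z)\,dx = F(A)$, so that term contributes $|1 - 1/F(A)| \cdot F(A) = |F(A) - 1| = F(A^c) < \epsilon$. Adding the two bounds yields $\| p_{\theta, F} - p_{\theta, F_{|A}} \|_1 \le 2\epsilon$, which is exactly the claim.

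There is essentially no obstacle here; the only thing to be slightly careful about is the case $F(A) = 0$, which is excluded implicitly since $F(A^c) < \epsilon < 1$ forces $F(A) > 0$, so the renormalised measure $F_{|A}$ is well defined. The argument is a direct mirror of the computations already used in \Cref{lem:FrailtycompactApprox} and \Cref{lem:L1boundinF}, and requires no new ideas beyond Fubini's theorem and the normalisation of the kernels $p_\theta(\cdot \given z)$.
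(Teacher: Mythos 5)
Your proof is correct and follows essentially the same route as the paper's: the same decomposition of $p_{\theta,F}-p_{\theta,F_{|A}}$ into the mass outside $A$ plus the renormalisation term, followed by the triangle inequality and Fubini, with both contributions equal to $F(A^c)$. No gaps.
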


\begin{proof}
    By writing out the definitions and Jensen's inequality
    \begin{align*}
        \| p_{\theta, F} - p_{\theta, F_{|A}} \|_1 &= \| \int \left( 1 - \frac{1}{F(A)} \II_A \right) p_{\theta}(x \given z) \dd F(z) \|_1 \\
        & \leq \| \int_{A^c} p_{\theta}(x \given z) \dd F(z) \|_1 + \| \int_A \left(1 - \frac{1}{F(A)} \right) p_\theta( x \given z) \dd F(z) \|_1 \\
        &\leq \int_{A^c} \| p_\theta(x \given z) \|_1 \dd F(z) + \left( \frac{1}{F(A)} - 1 \right) \int_A \| p_\theta( x \given z) \|_1 \dd F(z) \\
        &= F(A^c) + \left( \frac{1}{F(A)} - 1\right) F(A) = 2 F(A^c) \leq 2 \epsilon
    \end{align*}
    since $\| p_\theta(x \given z) \|_1 = 1$.
\end{proof}

Finally, we show that every such mixture can be well approximated using a mixing distribution which has only a small number of support point.
\begin{lem}\label{lem:ErrorsInVarsFiniteApproximation}
    Let $\epsilon > 0$, suppose that
    \begin{align*}
        \frac{3 \| \sigma^2 - \sigma_0^2 \|}{2\sigma_0^2} & \leq 1 \\
        \frac{3 \| \tau^2 - \tau_0^2 \|}{2 \tau_0^2 } & \leq 1\\
        \frac{ \| \alpha - \alpha_0 \|}{ \tau_0} & \leq 1\\
        \frac{ \| \beta - \beta_0 \|}{ \tau_0 } \int |z| \dd F_0(z) & \leq 1
    \end{align*}
    Then, there exists a $C> 0$ such that for every probability measure $F$ on $[-A, A]$ there exists a discrete distribution $F^*$ with at most $C A \log \epsilon^{-1}$ support points, where $C$ is a universal constant, such that
    \[
        \| p_{\theta, F} - p_{\theta, F^*} \|_1 < \epsilon.
    \]
\end{lem}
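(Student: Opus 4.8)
The plan is to follow the frailty argument (\Cref{lem:FrailtyfiniteApproxLin,lem:FrailtyfiniteApproxLog}): cover $[-A,A]$ by $O(A)$ short intervals and, on each, replace $F$ by a discrete measure that matches enough moments. The structural fact we exploit is that, for fixed $(x,y)$, the kernel $z\mapsto g_{x,y}(z):=\phi_\sigma(x-z)\phi_\tau(y-\alpha-\beta z)$ is, up to a factor not depending on $z$, a Gaussian density in $z$ with a \emph{fixed} variance $s^2:=(\sigma^{-2}+\beta^2\tau^{-2})^{-1}$. Completing the square in the quadratic $Q(z):=\tfrac{(x-z)^2}{2\sigma^2}+\tfrac{(y-\alpha-\beta z)^2}{2\tau^2}$ gives $g_{x,y}(z)=C(x,y)\,e^{-(z-m(x,y))^2/(2s^2)}$, where $C(x,y)=(2\pi\sigma\tau)^{-1}e^{-Q_0(x,y)}$ and $Q_0(x,y)\ge 0$, $m(x,y)$ are the minimum value and the minimiser of $Q$; here $m$ is an affine function of $(x,y)$ and $Q_0$ is a positive multiple of the squared transverse distance of $(x,y)$ to the line $\{(t,\alpha+\beta t)\}$, hence a function of one linear combination of $(x,y)$ only. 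The four displayed inequalities keep $\sigma,\tau$ bounded away from $0$ and $\infty$ and $\alpha,\beta$ bounded (using $\int|z|\,dF_0>0$), so all constants below are uniform over admissible $\theta$.

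\textbf{Construction.} Fix $L>0$ (a constant depending only on $s$), partition $[-A,A]$ into $N\le\lceil 2A/L\rceil$ intervals $I_1,\dots,I_N$ of length $\le L$ with centres $z_0^{(i)}$, and write $F=\sum_i F_i$ with $F_i=F(\,\cdot\cap I_i)$, $w_i=F(I_i)$, $\tilde F_i=w_i^{-1}F_i$ (for $w_i>0$). For a $K$ to be chosen, use the moment-matching result invoked in the proof of \Cref{lem:FrailtyfiniteApproxLin} to pick a probability measure $\tilde F_i^*$ supported on $I_i$ with at most $K$ atoms and $\int(z-z_0^{(i)})^j\,d\tilde F_i^*=\int(z-z_0^{(i)})^j\,d\tilde F_i$ for $j=0,\dots,K-1$. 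Set $F^*:=\sum_i w_i\tilde F_i^*$, which has at most $NK$ atoms.

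\textbf{Error bound.} Since $\tilde F_i$ and $\tilde F_i^*$ agree on polynomials of degree $<K$, Taylor-expanding $g_{x,y}$ around $z_0^{(i)}$ gives $|\int g_{x,y}\,d(\tilde F_i-\tilde F_i^*)|\le \tfrac{(L/2)^K}{K!}\sup_{z\in I_i}|g_{x,y}^{(K)}(z)|$. From $\tfrac{d^k}{dz^k}e^{-(z-m)^2/(2s^2)}=(-s)^{-k}He_k(\tfrac{z-m}{s})e^{-(z-m)^2/(2s^2)}$ and Cramér's bound $|He_k(u)|\le c\sqrt{k!}\,e^{u^2/4}$ one gets
\[
\bigl|g_{x,y}^{(k)}(z)\bigr|\le c\,\frac{\sqrt{k!}}{s^k}\,C(x,y)\,e^{-(z-m(x,y))^2/(4s^2)}\le c\,\frac{\sqrt{k!}}{s^k}\,C_{L,s}\,C(x,y)\,e^{-(z_0^{(i)}-m(x,y))^2/(16s^2)}\quad(z\in I_i).
\]
Integrating over $(x,y)$ via the linear change of variables $(x,y)\mapsto(m,v)$ that diagonalises $Q_0$ (under which $C(x,y)\,dx\,dy$ becomes a constant multiple of $e^{-c_vv^2}\,dm\,dv$) yields $\iint C(x,y)\,e^{-(z_0-m(x,y))^2/(16s^2)}\,dx\,dy\le C_1$, a constant independent of $z_0$ and of $F$. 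Hence
\[
\|p_{\theta,\tilde F_i}-p_{\theta,\tilde F_i^*}\|_1\le \frac{(L/2)^K}{K!}\cdot c\,\frac{\sqrt{K!}}{s^K}\,C_{L,s}C_1= c\,C_{L,s}C_1\Bigl(\tfrac{e(L/2)^2}{Ks^2}\Bigr)^{K/2}\le C_{L,s}C_1\,2^{-K}
\]
once $K$ exceeds a fixed multiple of $(L/s)^2$. Taking $K$ the least integer with $2^{-K}<\epsilon/(C_{L,s}C_1)$, i.e. $K\asymp\log\epsilon^{-1}$, and summing $\|p_{\theta,F_i}-p_{\theta,w_i\tilde F_i^*}\|_1=w_i\|p_{\theta,\tilde F_i}-p_{\theta,\tilde F_i^*}\|_1$ over $i$ (with $\sum_i w_i=1$) gives $\|p_{\theta,F}-p_{\theta,F^*}\|_1<\epsilon$, with $F^*$ having at most $NK\le C A\log\epsilon^{-1}$ atoms.

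\textbf{Main obstacle.} The delicate point is the spatial integral: $\iint C(x,y)\,dx\,dy=\infty$, so the Taylor remainder cannot be integrated crudely, and one must use the Gaussian decay of $C(x,y)$ in the direction transverse to the regression line to make $\iint C(x,y)e^{-(z_0-m)^2/(16s^2)}\,dx\,dy$ finite and, crucially, uniformly bounded in the interval centre $z_0$; this uniformity is what keeps the number of atoms proportional to $A\log\epsilon^{-1}$ (for $A$ bounded below). A cruder alternative is to discard the region $\|(x,y)\|>B$ with $B\asymp A+\sqrt{\log\epsilon^{-1}}$, whose $p_{\theta,F}$- and $p_{\theta,F^*}$-mass is $<\epsilon$ by a Gaussian tail bound, and bound $C$ by $(2\pi\sigma\tau)^{-1}$ on the complement; this is simpler but costs a slightly larger $K$. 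The only role of the four displayed inequalities is to make $s$ and the change-of-variables constants uniform over admissible $\theta$.
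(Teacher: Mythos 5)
Your proof is correct, and its skeleton — partition $[-A,A]$ into $O(A)$ intervals of fixed length, match $O(\log\epsilon^{-1})$ moments on each via the Ghosal--van der Vaart moment-matching lemma, and sum the errors with weights $w_i$ — is the same as the paper's. Where you diverge is in how the Taylor remainder is integrated over $(x,y)$. The paper expands $e^{w}$ in powers of $w=g(x,y,z)$ (a quadratic in $z$, so the truncated series is a polynomial in $z$ killed by moment matching), and then controls the remainder by restricting $(x,y)$ to a box of side $T\asymp\sqrt{\log\epsilon^{-1}}$ around $(\xi,\alpha+\beta\xi)$, discarding the complement via Gaussian tail bounds; this costs the extra truncation step and makes the required order $k\gtrsim T^2\asymp\log\epsilon^{-1}$ appear indirectly. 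You instead expand the kernel $z\mapsto g_{x,y}(z)$ directly, bound $g_{x,y}^{(K)}$ by Cram\'er's Hermite inequality, and integrate the resulting bound over \emph{all} of $(x,y)$ by exploiting the exact factorisation $C(x,y)\,dx\,dy \propto e^{-c_v v^2}\,dm\,dv$ (the Jacobian of $(x,y)\mapsto(m,v)$ is in fact $1$), which yields a bound uniform in the interval centre with no spatial truncation. This is cleaner and makes the uniformity in $z_0$ explicit, at the price of being specific to Gaussian kernels, whereas the paper's expand-the-exponential-and-truncate device is the one it reuses for the frailty model. Two shared blemishes, both also present in the paper: the count is really $C(A+O(1))\log\epsilon^{-1}$ rather than $CA\log\epsilon^{-1}$ unless $A$ is bounded below (you flag this), and the "universal" constant in fact depends on $\sigma_0,\tau_0,\beta_0$ through the admissible range of $\theta$, with the $\beta$-bound requiring $\int|z|\,dF_0>0$ (you flag this too).
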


\begin{proof}
    First we reduce the problem to the case where $F$ is supported on an interval of length of at most $1$.
    By partitioning $[-A, A]$ into intervals $I_i$ of length $1$, suppose we have the approximations $F^*_i$ on interval $I_i$ with at most $D \log(\epsilon^{-1})$ support points. 
    Then $F^* = F (I_i) p_{\theta, F^*_i}$ has at most $2 (A+2) D \log (\epsilon^{-1})$ support points and satisfies
    \[
        \| p_{\theta, F^*} - p_{\theta, F} \| \leq \epsilon.
    \]
    Hence, it suffices to show that the approximation holds for $F$ supported on an interval of size $1$, say $[\xi, \xi+1]$.

    A Taylor expansion of the exponential function gives,
    \[
        e^w = \sum_{l = 0}^{k-1} \frac{w^l}{l!} + R_k(w),
    \]
    with $|R_k (w)| \leq \frac{|w|^k}{k!}$ when $w < 0$.
    Furthermore, let us define $g (x,y,z) =  - \frac{1}{2} \left(\left(\frac{x - z}{\sigma} \right)^2 + \left(\frac{y - \alpha - \beta z}{\tau} \right)^2 \right)$.
    Note that
    \[
        p_{\theta}( (x,y) \given z) = \frac{1}{2 \pi \sigma \tau} e^{g(x,y,z)}.
    \]
    Pick $F^*$ a probability measure on $[\xi, \xi+1]$ such that
    \[
        \int z^l \dd F(z) = \int z^l \dd F^*(z).
    \]
    This can always be done, since by~\cite[Lemma L.1]{ghosalFundamentalsNonparametricBayesian2017} we can match $K$ moments using $K$ support points.
    If we apply the Taylor expansion to $p_{\theta} ((x,y) \given z)$ and integrate this with respect to $F - F^*$, this only leaves the integral over 
    \[
        R_k\left(g (x,y,z) \right).
    \]
    In the view of $k!> k^k e^{-k}$, we find that $R_k (w) \leq \left(\frac{e w}{k} \right)^k$.
    Let $T \geq \max (2, 2\beta)$.
    When $|x - \xi | \leq T$, we find $|x - z| \leq | x - \xi - (z - \xi)| \leq T + 1 \leq 2T$. 
    Similarly, when $|y - \alpha - \beta \xi | \leq T$, we find $y - \alpha - \beta z| \leq | y - \alpha - \beta \xi - \beta (z - \xi) | \leq (1 + \beta) T$.
    Therefore, when $|x - \xi | \leq T, |y - \alpha - \beta \xi| \leq T$, we can bound
    \[
        g(x,y,z) \leq - \frac{1}{2} \left( \frac{4} {\sigma^2} + \frac{ (1 + \beta)^2}{\tau^2} \right) T^2.
    \]
    Using the bound on $R_k$ we find that then also
    \[
        R_k (g (x,y,z)) \leq \left( \frac{2 e \left( \frac{4}{\sigma^2} + \frac{ (1 + \beta)^2}{\tau^2} \right)^2 }{k} \right)^k.
    \]
    Conversely, when $|x - \xi | \geq T$, we know that $|x - z| = |x - \xi - (z - \xi)| \geq \frac{ |x - \xi|}{2}$. This implies $\phi_\sigma\left(\frac{x - z}{\sigma} \right) \leq e^{-\frac{x^2}{8 \sigma^2}}$.
    We can use this to bound
    \begin{align*}
        \int_{|x- \xi| \geq T} &\int p_{\theta, F}(x,y) \dd x \dd y\\
        &= \int_{|x - \xi| \geq T}\int \int \phi_\sigma\left( x - z\right) \phi_\tau\left( y - \alpha - \beta z \right) \dd F(z) \dd y \dd x\\
        &= \int_{|x - \xi| > T} \int \phi_\sigma \left( x - z \right) \dd x \int \phi_\tau\left( y - \alpha - \beta z \right) \dd y \dd F(z) \dd x\\
        &=\int_{|x - \xi| > T} \phi_\sigma \left( x - z \right) \dd F(z) \dd x\\
        & \leq \int_{|x - \xi| > T} \frac{1}{\sqrt{ 2 \pi} \sigma} e^{- \frac{ \left( x- \xi\right)^2}{8 \sigma^2}} \\
        &\leq e^{- \frac{T^2}{8 \sigma^2}}.
    \end{align*}
    Similarly, when $|y - \alpha - \beta \xi| \geq T$, then $|y - \alpha - \beta z| \geq \frac{| y - \alpha - \beta \xi}{2}$.
    Hence, by the same argument as before
    \[
        \int_{| y - \alpha - \beta \xi| \geq T} \leq e^{- \frac{T^2}{8 \tau^2}}.
    \]
    Combining these bounds yield the upper bound
    \[
        \| p_{\theta, F} - p_{\theta, F^*} \|_1 \leq e^{- \frac{T^2}{8 \sigma^2}} e^{ - \frac{T^2}{8\tau^2}}+ T^2\left( \frac{ 2 e \left( \frac{4}{\sigma^2} + \frac{(1 + \beta)^2}{\tau^2} \right) T^2}{k} \right)^k.
    \]
    If we choose $T = 4 \max(\sigma, \tau) \sqrt{\log e^{-1}}$ we reduce the first two terms to be less than $\epsilon$.
    Next, we pick $k \geq 4 e \left(\frac{4}{\sigma^2} + \frac{(1 + \beta)^2}{\tau^2} \right) T^2$ so that
    \[
        T^2 \left( \frac{ 2 e \left( \frac{4}{\sigma^2} + \frac{(1 + \beta)^2}{\tau^2} \right) T^2}{k} \right)^k \leq T^2 2^{-k}.
    \]
    To make this eventually smaller than $\epsilon$ we must pick $k \geq 2\log_2 (T) \geq 4 \log \log(\epsilon)$.
    The choice $k \sim 64 e \max(\sigma, \tau)^2 \left(4 + (1 + \beta)^2 \right)\log \epsilon^{-1}$ suffices.
    Thus, there exists an $F^*$ with at most $128 (A + 2) \max \left(4 + (1 + \beta)^2 \right) \log \epsilon^{-1}$ support points such that
    \[
         \| p_{\theta, F} - p_{\theta, F^*} \|_1 \leq 3 \epsilon.
    \]
    By using the bounds on $\sigma, \tau, \beta$ and changing $\epsilon$, we can find the universal constant $C$.
\end{proof}

\begin{lem}\label{lem:ErrorsInVarsPartitionsCloseInTV}
    Let $\cup_{i = 0}^N A_i$ be a partition, $F_N = \sum_{j= 1}^N w_j \delta_{z_j}$ a probability measure with $z_j \in A_j$ for $j = 1,\dots,N$. Then
    \[
        \| p_{\theta, F} - P_{\theta, F_N} \|_1 \leq 2 \max_{1 \leq j \leq N} \frac{(1 + \frac{\beta}{\gamma})\text{diam } A_j}{\sigma} + \sum_{j = 1}^N | F(A_j) - w_j| + F(A_0).
    \]
\end{lem}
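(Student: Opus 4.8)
The plan is to mirror the structure of the analogous frailty estimate in \Cref{lem:L1boundinF}, decomposing the difference $p_{\theta,F}-p_{\theta,F_N}$ over the cells of the partition. First I would write
\[
\int p_\theta((x,y)\mid z)\,d(F-F_N)(z)
= \sum_{j=1}^N \int_{A_j}\bigl[p_\theta((x,y)\mid z)-p_\theta((x,y)\mid z_j)\bigr]\,dF(z)
+ \sum_{j=1}^N p_\theta((x,y)\mid z_j)\bigl(F(A_j)-w_j\bigr)
+ \int_{A_0} p_\theta((x,y)\mid z)\,dF(z),
\]
and then bound the $L_1$-norm of each of the three groups separately using the triangle inequality. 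For the last two groups, Fubini's theorem together with the fact that each $p_\theta((x,y)\mid z)$ is a probability density immediately gives the bounds $\sum_{j=1}^N|F(A_j)-w_j|$ and $F(A_0)$, exactly as in the proof of \Cref{lem:L1boundinF}.

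The only substantive term is the first group, for which it suffices to bound $\|p_\theta((x,y)\mid z)-p_\theta((x,y)\mid z')\|_1$ for $z,z'$ in a common cell $A_j$ and then integrate $dF$ over $A_j$. Here $p_\theta((x,y)\mid z)=\phi_\sigma(x-z)\phi_\tau(y-\alpha-\beta z)$ is a product of two Gaussian densities, so by \cite[Lemma B.8.i]{ghosalFundamentalsNonparametricBayesian2017} the total variation distance between the two products is at most the sum of the total variation distances of the marginals: $\|\phi_\sigma(x-z)-\phi_\sigma(x-z')\|_1$, a Gaussian location shift by $|z-z'|$ with scale $\sigma$, and $\|\phi_\tau(y-\alpha-\beta z)-\phi_\tau(y-\alpha-\beta z')\|_1$, a Gaussian location shift by $|\beta||z-z'|$ with scale $\tau$. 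Using the standard bound $\|\phi_s(\cdot)-\phi_s(\cdot-a)\|_1\le |a|/s$ (or $\le 2|a|/s$; the constant can be tracked through \Cref{lem:GaussianTVHellingerBounders}), each cell contributes at most $\bigl(1+|\beta|/\gamma\bigr)\operatorname{diam}A_j/\sigma$ up to the universal constant $2$, where I use $\tau=c\sigma$ so that $|\beta|/\tau = |\beta|/(c\sigma)$; matching the statement's notation I write this ratio as $|\beta|/\gamma$ scaled appropriately. Since $\sum_j F(A_j)\le 1$, integrating over the cells and taking the maximum over $j$ yields $2\max_{1\le j\le N}(1+\beta/\gamma)\operatorname{diam}A_j/\sigma$. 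Adding the three bounds gives the claim.

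The main obstacle is bookkeeping rather than conceptual: getting the Gaussian location-shift constant and the precise dependence on $\sigma,\tau,\beta$ right, and making sure the notation $\gamma$ in the statement is consistently identified with the relevant scale parameter (presumably $\tau=c\sigma$, written via $\gamma$). Everything else is a direct transcription of the frailty argument with exponential kernels replaced by Gaussian kernels, so I do not anticipate any genuine difficulty once the elementary Gaussian TV bound is invoked from \Cref{lem:GaussianTVHellingerBounders}.
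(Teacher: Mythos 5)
Your proposal matches the paper's proof essentially line for line: the same three-term decomposition over the partition cells, the same Fubini/probability-density argument for the weight-mismatch and $A_0$ terms, and the same appeal to Lemma B.8.i of Ghosal--van der Vaart plus the Gaussian location-shift bound $\|\phi_s(\cdot)-\phi_s(\cdot-a)\|_1\le |a|/s$ to control the within-cell term, with $\gamma$ identified as the known ratio $\tau/\sigma$. No gaps; the only remaining work is the constant bookkeeping you already flag.
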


\begin{proof}
    \begin{align*}
        \int p_{\theta}(x \given z) \dd (F - F_N)(z) &= \sum_{j = 1}^N \int_{A_j} p_{\theta}(x \given z) - p_{\theta}(x \given z_j) \dd F(z) \\
        &+ \sum_{j = 1}^N p_\theta( x \given z_j) ( F(A_j) - w_j) \\
        &= \int_{A_0} p_\theta(x \given z) \dd F(z) 
    \end{align*}
    We begin with the last two terms. By Fubini, positivity and $p_\theta(x \given z)$ being a probability density, it follows that
    \[
        \| \int_{A_0} p_\theta(x \given z) \dd F(z) \|_1 \leq \int_{A_0} \| p_\theta(x \given z) \|_1 \dd F(z) \leq F(A_0).
    \]
    Secondly, it also follows that
    \[
        \| \sum_{j = 1}^N p_\theta(x \given z) (F(A_j) - w_j) \|_1 \leq \sum_{j = 1}^n | F(A_j) - w_j|.
    \]
    So we only need to conclude that
    \[
        \| \sum_{i = 1}^N \int A_i p_{\theta}(x \given z) - p_{\theta}(x \given z_i) \dd F(z)\|_1 \leq 2\max_{1 \leq j \leq N} \frac{ \text{diam } A_j}{\sigma}.
    \]
    First, we use the triangle inequality to get
    \[
        \| \sum_{j = 1}^N \int_{A_j} p_\theta(x \given z) - p_\theta( x \given z_j) \dd F(z) \|_1 \leq \sum_{j = 1}^N \int_{A_j} \| p_\theta(x \given z) - p_\theta(x \given z_j) \|_1 \dd F(z).
    \]
    By applying~\cite[Lemma B.8.i]{ghosalFundamentalsNonparametricBayesian2017}, the definition of $p_\theta(x \given z)$, and the fact that the distance between $\frac{1}{\sigma} \phi(\frac{x - \mu}{\sigma})$ and $\frac{1}{\sigma} \phi(\frac{x}{\sigma})$ is bounded by $\frac{|\mu|}{\sigma}$, we get that
    \[
        \|p_\theta(x \given z) - p_\theta(x \given z_j) \|_1 \leq \frac{z - z_j}{\sigma} + \frac{ \beta | z - z_j|}{\tau} = \frac{ 1 + \frac{\beta}{\gamma}}{\sigma} | z - z_j |.
    \]
    Hence,
    \[
        \sum_{j = 1}^N \int_{A_j} \| p_\theta(x \given z) - p_\theta(x \given z_j) \|_1 \dd F(z) \leq 2 \frac{1 + \frac{\beta}{\gamma}}{\sigma} \max_{1 \leq j \leq N} \text{diam } A_j.
    \]
\end{proof}

\begin{lem}\label{lem:ErrorsInVarsPairsCloseInTV}
  Let $F_N = \sum_{j= 1}^N w_j \delta_{z_j}$ and $F'_{N'} = \sum_{j = 1}^{N'} w_j' \delta_{z'_j}$ be probability measures. Denote $K = \min(N, N')$. Then
    \[
      \| p_{\theta, F_N} - P_{\theta, F'_{N'}} \|_1 \leq 2 \max_{1 \leq j \leq K} (1 + \frac{\beta}{\gamma})|z_j - z'_j| + \sum_{j = 1}^K | w_j - w'_j| + \sum_{j = K + 1}^N w_j + \sum_{j= K + 1}^{N'} w'_j.
    \]
\end{lem}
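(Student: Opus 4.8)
The plan is to follow the proof of \Cref{lem:L1boundinFMFM} essentially verbatim, replacing the exponential mixture kernel by the Gaussian product kernel $p_\theta((x,y)\given z)=\phi_\sigma(x-z)\,\phi_\tau(y-\alpha-\beta z)$ of the errors-in-variables model. Writing $p_{\theta,F_N}=\sum_{j=1}^N w_j\,p_\theta(\cdot\given z_j)$ and likewise for $F'_{N'}$, I would match the first $K=\min(N,N')$ atoms and decompose
\begin{align*}
p_{\theta,F_N}-p_{\theta,F'_{N'}}
&=\sum_{j=1}^K w_j\bigl(p_\theta(\cdot\given z_j)-p_\theta(\cdot\given z'_j)\bigr)
+\sum_{j=1}^K (w_j-w'_j)\,p_\theta(\cdot\given z'_j)\\
&\quad+\sum_{j=K+1}^N w_j\,p_\theta(\cdot\given z_j)-\sum_{j=K+1}^{N'} w'_j\,p_\theta(\cdot\given z'_j).
\end{align*}
Taking $\|\cdot\|_1$ and applying the triangle inequality then reduces the estimate to four pieces.

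For the two tail sums, Fubini, positivity, and the fact that each $p_\theta(\cdot\given z)$ integrates to one give $\|\sum_{j=K+1}^N w_j\,p_\theta(\cdot\given z_j)\|_1\le\sum_{j=K+1}^N w_j$ and the analogue with the $w'_j$. The same reasoning gives $\|\sum_{j=1}^K(w_j-w'_j)\,p_\theta(\cdot\given z'_j)\|_1\le\sum_{j=1}^K|w_j-w'_j|$ for the weight-mismatch sum. For the location-mismatch sum, since $\sum_{j=1}^K w_j\le 1$, I would bound $\|\sum_{j=1}^K w_j\bigl(p_\theta(\cdot\given z_j)-p_\theta(\cdot\given z'_j)\bigr)\|_1\le\max_{1\le j\le K}\|p_\theta(\cdot\given z_j)-p_\theta(\cdot\given z'_j)\|_1$. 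It then remains to control the $L^1$ distance between two kernels differing only in the latent value: since the kernel is a product of two Gaussians, \cite[Lemma B.8.i]{ghosalFundamentalsNonparametricBayesian2017} bounds it by the sum of the $L^1$ distances of the two factors; the first factor has means $z$ and $z'$ and common scale $\sigma$, the second means $\alpha+\beta z$ and $\alpha+\beta z'$ and common scale $\tau$, so the elementary estimate that shifting $\sigma^{-1}\phi(\cdot/\sigma)$ by $\mu$ costs at most $|\mu|/\sigma$ in $L^1$ yields $\|p_\theta(\cdot\given z)-p_\theta(\cdot\given z')\|_1\le\bigl(1/\sigma+\beta/\tau\bigr)|z-z'|=\frac{1+\beta/\gamma}{\sigma}|z-z'|$ with $\gamma=\tau/\sigma$. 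Collecting the four bounds reproduces the claimed inequality (up to the universal constant in its statement).

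I do not expect any genuine obstacle here: this is the same total-variation bookkeeping as in \Cref{lem:L1boundinFMFM,lem:ErrorsInVarsPartitionsCloseInTV}, and both required ingredients — the product-measure total-variation inequality and the single-Gaussian shift bound — are already available in the excerpt. The only point needing a little care is the atom matching when $N\ne N'$, making sure the unmatched atoms contribute only their aggregate weight, which is exactly what the displayed decomposition arranges.
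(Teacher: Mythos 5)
Your proposal matches the paper's own proof essentially step for step: the same four-term decomposition after matching the first $K$ atoms, the same aggregate-weight bounds for the tails and the weight mismatch, and the same use of the product total-variation inequality plus the Gaussian shift bound to control the location mismatch. The small constant discrepancy you flag (the derivation yields $\frac{1+\beta/\gamma}{\sigma}$ while the statement displays $2(1+\beta/\gamma)$) is present in the paper's own proof as well, so nothing is missing from your argument.
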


\begin{proof}
  
    \begin{align*}
      \int p_{\theta_0}(x \given z) \dd (F_N - F'_{N'}) &= \sum_{j = 1}^K w_j(p_{\theta_0}(x \given z_j) - p_{\theta_0}(x \given z'_j)) \\
                                                        &+ \sum_{j = 1}^K (w_j - w'_j)p_{\theta_0}(x \given z'_j) \\
                                                        &+ \sum_{j = K + 1}^N w_j p_{\theta_0}(x \given z_j) + \sum_{j= K + 1}^{N'} p_{\theta_0}(x \given z'_j)
    \end{align*}
    We again begin with the last three terms. By Fubini, positivity and $p_{\theta_0} (x \given z)$ being a probability density it follows that
    \[
        \| \sum_{j = K + 1}^N w_j p_{\theta_0}(x \given z_j) \|_1 = \sum_{j = K+ 1}^N w_j
    \]
    and similarly
    \[
      \| \sum_{j = K + 1}^{N'} w'_j p_{\theta_0}(x \given z'_j) \|_1 = \sum_{j = K + 1}^{N'} w'_j.
    \]
    It also follows that
    \[
      \| \sum_{j = 1}^K (w_j - w'_j) p_{\theta_0}(x \given z'_j) \|_1 = \sum_{j = 1}^K | w_j - w'_j|.
    \]
    So it suffices to show that 
    \[
      \| \sum_{j = 1}^K w_j(p_{\theta_0}(x \given z_j) - p_{\theta_0}(x \given z'_j)) \|_1 \leq 2( 1 = \frac{\beta}{\gamma}) \max_{ 1 \leq j \leq K} | z_j - z'_j | 
    \]
    By the triangle inequality
    \[
      \| \sum_{j = 1}^K w_j(p_{\theta_0}(x \given z_j) - p_{\theta_0}(x \given z'_j)) \|_1 \leq  \sum_{j = 1}^K w_j \| p_{\theta_0}(x \given z_j) - p_{\theta_0}(x \given z'_j) \|_1 
    \]
    By applying~\cite[Lemma B.8.i]{ghosalFundamentalsNonparametricBayesian2017}, the definition of $p_\theta(x \given z)$, and the fact that the distance between $\frac{1}{\sigma} \phi(\frac{x - \mu}{\sigma})$ and $\frac{1}{\sigma} \phi(\frac{x}{\sigma})$ is bounded by $\frac{|\mu|}{\sigma}$, we get that
    \[
        \|p_\theta(x \given z_j) - p_\theta(x \given z'_j) \|_1 \leq \frac{z_j - z'_j}{\sigma} + \frac{ \beta | z_j - z'_j|}{\tau} = \frac{ 1 + \frac{\beta}{\gamma}}{\sigma} | z_j - z'_j |.
    \]
    Hence,
    \[
      \sum_{j = 1}^K w_j \| p_{\theta_0}(x \given z_j) - p_{\theta_0}(x \given z'_j) \|_1 \leq \sum_{j = 1}^K w_j \frac{ 1 + \frac{\beta}{\gamma}}{\sigma} | z_j - z'_j |.
    \]
    Since $\sum_{j= 1}^K w_j \leq 1$, it follows that
    \[
      \sum_{j = 1}^K w_j \frac{ 1 + \frac{\beta}{\gamma}}{\sigma} | z_j - z'_j | \leq \frac{ 1 + \frac{\beta}{\gamma}}{\sigma}  \max_{1 \leq j \leq K} | z_j - z'_j |.
    \]
\end{proof}

\subsubsection{Likelihood ratio bounds}
\begin{lem}\label{lem:ErrorsInVarsCloseInKL}
    Let $\epsilon > 0$, $N_\epsilon > 0$. 
    Let $U_i$ be a partition of $\RR$.
    Suppose that $F$ is a probability measure such that $F (U_i) \geq \epsilon^2$.
    Furthermore, assume that $d_H (p_{\theta, F}, p_{\theta_0, F_0}) \leq \epsilon$.
    Then
    \[
        V_2 + \text{KL} (p_{\theta, F}; p_{\theta_0, F_0}) \leq \epsilon (\log \epsilon)^2.
    \]
\end{lem}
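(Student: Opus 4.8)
The plan is to upgrade the Hellinger bound $d_H(p_{\theta,F},p_{\theta_0,F_0})\le\epsilon$ (hence $d_H^2\le\epsilon^2$) to control of the Kullback--Leibler divergence and its quadratic version $V_2$ via \cite[Lemma~B.2]{ghosalFundamentalsNonparametricBayesian2017}. Fix once and for all a small constant $\lambda\in(0,1)$ and set $M:=P_{\theta_0,F_0}(p_{\theta_0,F_0}/p_{\theta,F})^{\lambda}$. That lemma then yields, up to a constant depending only on $\lambda$,
\[
V_2(p_{\theta_0,F_0};p_{\theta,F})+\mathrm{KL}(p_{\theta_0,F_0};p_{\theta,F})\ \lesssim\ d_H^2\,\bigl(\log\tfrac1{d_H}+\log M\bigr)^{2}.
\]
So everything reduces to bounding $M$ by a power of $1/\epsilon$: then the right-hand side is $\lesssim\epsilon^{2}(\log\epsilon)^{2}\le\epsilon(\log\epsilon)^{2}$ for every $\epsilon\le1$, which is the claim.

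The first step is a pointwise lower bound for the mixture density coming from the partition hypothesis. Fix $(x,y)$ and let $U_{i_0}$ be the partition cell lying closest to $x$ (equivalently, containing the value of $z$ for which $p_\theta((x,y)\given z)$ is largest). Because $F(U_{i_0})\ge\epsilon^{2}$,
\[
p_{\theta,F}(x,y)\ \ge\ \int_{U_{i_0}}p_\theta((x,y)\given z)\,dF(z)\ \ge\ \epsilon^{2}\inf_{z\in U_{i_0}}\phi_\sigma(x-z)\,\phi_\tau(y-\alpha-\beta z).
\]
Since the Gaussian kernel varies slowly over a cell of small diameter and the cell sits under $(x,y)$, this infimum is bounded below by a constant when $(x,y)$ ranges over the region covered by the partition and decays only like $\exp(-c\,\mathrm{dist}^{2})$ in the distance of $(x,y)$ to that region; uniformly over $\theta$ in a fixed neighbourhood of $\theta_0$ this gives $p_{\theta,F}(x,y)\gtrsim\epsilon^{2}\exp(-c(x^{2}+y^{2}))$ with $c$ depending only on that neighbourhood. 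Requiring \emph{every} cell to have mass at least $\epsilon^{2}$ is exactly what makes the choice of $U_{i_0}$, and hence $c$, independent of $(x,y)$.

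Next I would estimate $M$. By convexity of $u\mapsto u^{1+\lambda}$ and Jensen's inequality against the probability measure $F_0$, as in Step~1 of the proof of \Cref{lem:ExpectedLR},
\[
M=\iint\frac{p_{\theta_0,F_0}(x,y)^{1+\lambda}}{p_{\theta,F}(x,y)^{\lambda}}\,dx\,dy\ \le\ \iint\frac{\int p_{\theta_0}((x,y)\given z)^{1+\lambda}\,dF_0(z)}{p_{\theta,F}(x,y)^{\lambda}}\,dx\,dy.
\]
Substituting the lower bound from the previous step into $p_{\theta,F}^{\lambda}$, using Fubini and carrying out the Gaussian integrals in $(x,y)$ reduces matters to integrating a $z$-dependent factor against $F_0$, and this works out provided $\lambda$ is small enough that the inflation constant $c$ does not overcome the Gaussian decay of $p_{\theta_0}((x,y)\given z)^{1+\lambda}$. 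The delicate point, and the main obstacle, is that a naive inflated-variance lower bound produces a factor growing like $e^{cz^{2}}$, which $F_0$ (having only polynomial moments) cannot integrate; this is circumvented by first truncating $F_0$ to a common compact interval supporting both mixing measures --- as in \Cref{lem:ErrorsInVarsRestrictingAndNormalisingErrorControl}, which costs only $O(\epsilon)$ in total variation because $F_0(z^{7+\delta})<\infty$ --- and then splitting the $(x,y)$-integral into a bounded part and tails, bounding the latter exactly as in Steps~3--4 of the proof of \Cref{lem:ExpectedLR}. The outcome is $M\lesssim\epsilon^{-2\lambda}$ up to constants that are at worst polynomial in $1/\epsilon$.

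Finally, inserting $d_H^2\le\epsilon^{2}$ and this bound on $M$ into the displayed estimate from \cite[Lemma~B.2]{ghosalFundamentalsNonparametricBayesian2017} gives $V_2+\mathrm{KL}\lesssim\epsilon^{2}(\log(1/\epsilon))^{2}$, which is at most $\epsilon(\log\epsilon)^{2}$ for $\epsilon\le1$, as claimed. All the work is in the moment estimate of the previous paragraph; the pointwise lower bound and the final arithmetic are routine.
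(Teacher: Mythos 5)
Your skeleton is the same as the paper's: the hypothesis $F(U_i)\ge\epsilon^2$ is used to manufacture a pointwise lower bound $p_{\theta,F}(x,y)\gtrsim\epsilon^2e^{-c(x^2+y^2)}$ (the paper does this by exactly the case analysis you sketch, with $\phi_\sigma(2x)\phi_\tau(2y)$ tail factors off the range of the partition), and the Hellinger hypothesis is then upgraded to a $\mathrm{KL}/V_2$ bound via \cite[Lemma B.2]{ghosalFundamentalsNonparametricBayesian2017}. The gap is in which version of that lemma you invoke. You route everything through the fractional moment $M=P_{\theta_0,F_0}(p_{\theta_0,F_0}/p_{\theta,F})^{\lambda}$. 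Feeding your own lower bound into $p_{\theta,F}^{-\lambda}$ gives $M\lesssim\epsilon^{-2\lambda}P_{\theta_0,F_0}e^{\lambda c(x^2+y^2)}$, and under $P_{\theta_0,F_0}$ we have $x=z+\sigma_0\xi_1$ with $z\sim F_0$, so finiteness of this expectation requires $\int e^{\lambda c z^2}\,dF_0(z)<\infty$. The model only supplies polynomial moments of $F_0$, so $M=\infty$ for heavy-tailed $F_0$; this is precisely why the fractional-moment route works for the frailty model in \Cref{lem:ExpectedLR} (exponential kernels absorb the factor $e^{\delta M(x+\theta y)}$) but not for Gaussian kernels.

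You see this obstacle, but your fix does not repair it. Truncating $F_0$ replaces $P_0$ itself, i.e.\ changes the divergence being bounded, and $\mathrm{KL}$ and $V_2$ are not continuous in total variation: a bound on $\mathrm{KL}(p_{\theta_0,F_0^{\mathrm{tr}}};p_{\theta,F})$ together with $\|p_{\theta_0,F_0}-p_{\theta_0,F_0^{\mathrm{tr}}}\|_1=O(\epsilon)$ gives no control of $\mathrm{KL}(p_{\theta_0,F_0};p_{\theta,F})$; controlling the discarded tail contribution to $\int p_0\log(p_0/p)$ is the original problem over again. The paper avoids the issue by using the variant of Lemma B.2 that needs only integrability of powers of the \emph{logarithm} of the ratio: from the pointwise bound, $\log(p_{\theta_0,F_0}/p_{\theta,F})\lesssim\log(1/\epsilon)+x^2+y^2$, whose square is $P_{\theta_0,F_0}$-integrable as soon as $F_0$ has a fourth moment, yielding $V_2+\mathrm{KL}\lesssim\epsilon(\log\epsilon)^2$ with the moments actually available. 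If you keep your structure, you must replace the $\lambda$-moment estimate by this square-integrability argument.
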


\begin{proof}
    Recall that
    \[
        p_{\theta, F} (x,y) = \int \phi_\sigma \left(\frac{ x - z}{\sigma} \right)^2 \phi_\tau \left(\frac{ y - \alpha - \beta z}{ \tau} \right)^2 \dd F(z)
    \]
    Denote $i (x)$ to be the $i$ such that $z \in U_{i (x)}$ implies $|\frac{ x - z}{\sigma}| \leq 2\epsilon$.
    Similarly, denote $j (y)$ to be the $j$ such that $z \in U_{j (y)}$ implies $|\frac{ y - \alpha - \beta}{\tau}| \leq 2 \epsilon$.
    Then we can construct 3 bounds for $\phi_\sigma (x - z)$ and $\phi_\tau (y - \alpha - \beta z)$.

    \begin{align*}
        \phi_\sigma(x - z) \gtrsim \begin{cases}
            1 & \text{ if } z \in U_{i(x)} \\
            e^{ - \frac{1}{2} \left( \frac{ x - z}{ \sigma} \right)^2 } & \text{ if } \max(|x|, |z|) < \sigma A_\epsilon \\
            \phi_\sigma(2x) & \text{ if } |x| \geq \sigma A_\epsilon, |z| < \sigma A_{\epsilon}
        \end{cases}
    \end{align*}
    Similarly for the $y$ component
    \begin{align*}
        \phi_\tau(y - \alpha - \beta z) \gtrsim 
        \begin{cases}
            1 & \text{ if } z \in U_{j(y)} \\
            e^{ - \frac{1}{2} \left( \frac{ y - \alpha - \beta z}{\tau} \right)^2} & \text{ if } \max( \frac{1}{\beta} | y - \alpha|, |z|) < \tau A_\epsilon \\
            \phi_{\tau}( 2y) & \text{ if } |z| < \sigma A_\epsilon, \frac{\tau}{\beta} | y - \alpha | \geq A_\epsilon
        \end{cases}
    \end{align*}
    Using these bounds we can give lower bounds for $p_{\theta, F}$, since $F (U_i) \geq \epsilon^2$ and $F (A) > 1 - \epsilon$
    \begin{align*}
        p_{\theta, F} \gtrsim \begin{cases}
           \epsilon^2 & \text{ if } |\sigma x| \leq A_{\epsilon}, \frac{\tau}{\beta} |y - \alpha| \leq A_{\epsilon}, i(x) = j(y)\\
           \epsilon^2 e^{ - \frac{1}{2} \left( \frac{y - \alpha - \beta \frac{x}{\sigma}}{\tau} \right)^2} & \text{ if } \max( |\sigma x|, \frac{\tau}{\beta} | y - \alpha|) < A_\epsilon\\
           \epsilon^2 \phi_\sigma(2x) & \text{ if } |\sigma x| > A_\epsilon, |y - \alpha| < \frac{\beta}{\tau} A_\epsilon \\
           \epsilon^2 \phi_\tau(2y) & \text{ if } |\sigma x| < A_\epsilon, |y - \alpha| > \frac{\beta}{\tau} A_\epsilon \\
           \phi_\sigma(2x) \phi_\tau (2y ) & \text{ otherwise.}
        \end{cases}
    \end{align*}
    Together with $\sup_{x,y} p_{\theta_0, F_0} (x,y) < \infty$, this implies that
    \[
        \log \frac{p_0}{p_{\theta, F}}(x,y) \lesssim \log( \epsilon) + x^2 + y^2
    \]
    Note that by Fubini and properties of the normal distribution $ \EE_{p_{\theta_0, F_0}}[ x^i, y^j] $ exists as soon as $\EE_{Z \sim F} [Z^{i + j}]$ exists.
    These thus our bound is square integrable under our assumptions.
    Hence, by~\cite[Lemma B.2.ii]{ghosalFundamentalsNonparametricBayesian2017} with (in that lemma) $\epsilon = 0.1$ and using that
    \[
        \EE\left[ \left( \log \frac{p_{\theta_0, F_0}}{p_{\theta, F}} (x,y) \right)^2 \right] \lesssim \log(\epsilon)^2
    \]
    Thus we get that
    \[
        V_2 + \text{KL} (P_{\theta, F}; P_{\theta_0, F_0}) \lesssim \epsilon ( \log \epsilon)^2.
    \]
\end{proof}

\subsubsection{Prior mass conditions}

Here we prove the prior mass conditions. We follow the arguments as in the lecture notes by Botond Szabo and Van der Vaart.

\begin{lem}\label{lem:ErrorsInVarsPriorMassEpsDP}
    Assume $F (|z|^{1 + \xi}) < \infty$ for some $\xi > 0$. 
    Assume that the centre measure $\alpha$ of the Dirichlet process admits a density $g$ such that $g (z) \gtrsim |z|^{-\delta}$ for some $\delta > 0$.
    Furthermore, assume that $\pi$ admits a density which is bounded from below on an interval around $\theta_0$.
    Then there exists a $C> 0$ such that for all $\epsilon > 0$ it holds that
    \[
        \Pi_t \left(\text{KL}+V_2\left( p_{\theta, F} ; p_{\theta_0, F_0} \right) \leq \epsilon \right) \geq e^{-C \epsilon^{-\frac{1}{(1 + \xi)}} \log(\epsilon^{-1})}.
    \]
\end{lem}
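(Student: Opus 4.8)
The plan is to follow the blueprint of \Cref{lem:FrailtyPriorMassBoundDP}, substituting the errors-in-variables total-variation approximation lemmas for their frailty counterparts and \Cref{lem:ErrorsInVarsCloseInKL} for the frailty Kullback--Leibler comparison. First I would construct a finitely supported approximation $F_\epsilon$ of $F_0$: the moment assumption $F_0(|z|^{1+\xi})<\infty$ together with Markov's inequality gives $F_0(\{|z|>A\})\lesssim A^{-(1+\xi)}$, so choosing the truncation level $A_\epsilon\sim\epsilon^{-1/(1+\xi)}$ (up to logarithmic factors, tuned below) makes this tail mass small, and \Cref{lem:ErrorsInVarsRestrictingAndNormalisingErrorControl} turns it into a total-variation error between $p_{\theta_0,F_0}$ and the renormalised restriction of $F_0$ to $[-A_\epsilon,A_\epsilon]$. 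Applying \Cref{lem:ErrorsInVarsFiniteApproximation} on $[-A_\epsilon,A_\epsilon]$ then yields a discrete $F_\epsilon$ with $N_\epsilon\lesssim A_\epsilon\log(\epsilon^{-1})$ atoms and $\|p_{\theta_0,F_0|[-A_\epsilon,A_\epsilon]}-p_{\theta_0,F_\epsilon}\|_1$ small; without loss of generality the atoms of $F_\epsilon$ are well separated.

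Second, I would fix a partition $A_0,A_1,\dots,A_{N_\epsilon}$ of $\RR$ in which $A_1,\dots,A_{N_\epsilon}$ each carry exactly one atom of $F_\epsilon$ and have diameter of order $\epsilon\sigma_0$, while $A_0$ is the remainder, and set
\[
\Theta_\epsilon\times\mathcal{F}_\epsilon=\Bigl\{(\theta,F):\ \|\theta-\theta_0\|<c_0\epsilon,\ \sum_{i=1}^{N_\epsilon}|F(A_i)-F_\epsilon(A_i)|<\epsilon,\ \min_{1\le i\le N_\epsilon}F(A_i)>\frac{\epsilon^2}{2}\Bigr\},
\]
with $c_0$ chosen so that \Cref{lem:ErrorsInVarsParametricApprox} applies. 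On this event, \Cref{lem:ErrorsInVarsPartitionsCloseInTV} (via the diameter bound) together with \Cref{lem:ErrorsInVarsParametricApprox} and the triangle inequality give a total-variation bound on $\|p_{\theta,F}-p_{\theta_0,F_0}\|_1$ of the desired order, hence control of $d_H(p_{\theta,F},p_{\theta_0,F_0})$; since on the event $F(A_i)\ge\epsilon^2/2$ for $1\le i\le N_\epsilon$, \Cref{lem:ErrorsInVarsCloseInKL}, whose hypotheses are exactly a lower bound on the cell masses and a Hellinger bound, converts this into $V_2+\mathrm{KL}(p_{\theta,F};p_{\theta_0,F_0})\le\epsilon$ after a harmless reparametrisation that absorbs the $(\log\epsilon^{-1})^2$ factor it produces.

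Third, I would lower bound $\Pi_t(\Theta_\epsilon\times\mathcal{F}_\epsilon)$. The $\theta$-factor contributes $\pi(\Theta_\epsilon)\gtrsim\epsilon^{d}$ because $h$ is bounded below near $\theta_0$. For the Dirichlet-process factor, the explicit affine form of $\phi_{t,\theta}$ in \Cref{rem:LeastFavSubmodelErrorsInVars} shows, exactly as in \Cref{lem:perturbationinleastfavdirectiondensitybound}, that the pushed-forward centre measures $G_t=G\circ\phi_{t,\theta}^{-1}$ retain the polynomial lower bound $g_t(z)\gtrsim|z|^{-\delta}$ uniformly over the relevant small $t$; hence $G_t(A_i)\gtrsim\epsilon^{c}$ for all $1\le i\le N_\epsilon$ and some constant $c$ depending on $\delta,\xi$. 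The standard Dirichlet small-ball estimate \cite[Lemma~G.13]{ghosalFundamentalsNonparametricBayesian2017} then gives, uniformly in $t$,
\[
\mathrm{DP}(MG_t)\Bigl(\sum_{i=1}^{N_\epsilon}|F(A_i)-F_\epsilon(A_i)|<\epsilon,\ \min_{1\le i\le N_\epsilon}F(A_i)>\frac{\epsilon^2}{2}\Bigr)\gtrsim e^{-c'N_\epsilon\log(\epsilon^{-1})}\gtrsim e^{-c''\epsilon^{-1/(1+\xi)}\log(\epsilon^{-1})},
\]
and multiplying by the $\theta$-factor, which is absorbed into the exponential, yields the claim.

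The only genuinely model-specific work is verifying the hypotheses of \Cref{lem:ErrorsInVarsCloseInKL} on the event above --- that the cells $U_i$ there may be taken to be the $A_i$ and that the Hellinger distance is controlled at the right scale --- and the uniform-in-$t$ polynomial lower bound for $g_t$. The main obstacle, such as it is, is purely the bookkeeping: one must choose the truncation radius $A_\epsilon$, the partition diameter, and the scale fed into \Cref{lem:ErrorsInVarsCloseInKL} so that all the error terms and the powers of $\log(\epsilon^{-1})$ combine to the stated exponent; the $\xi$-dependence of the Markov tail bound, $A_\epsilon\sim\epsilon^{-1/(1+\xi)}$, is precisely what produces the $\epsilon^{-1/(1+\xi)}$ rate, and once the supporting lemmas are in hand there is no further difficulty.
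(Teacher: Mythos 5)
Your proposal follows the paper's proof essentially step for step: the same Markov truncation at $A_\epsilon\sim\epsilon^{-1/(1+\xi)}$, the same chain of approximation lemmas (\Cref{lem:ErrorsInVarsParametricApprox,lem:ErrorsInVarsRestrictingAndNormalisingErrorControl,lem:ErrorsInVarsFiniteApproximation,lem:ErrorsInVarsPartitionsCloseInTV}), the same event $\Theta_\epsilon\times\mathcal{F}_\epsilon$, the conversion to Kullback--Leibler via \Cref{lem:ErrorsInVarsCloseInKL}, and the Dirichlet small-ball bound of \cite[Lemma G.13]{ghosalFundamentalsNonparametricBayesian2017} together with the uniform-in-$t$ lower bound on the perturbed centre measure (which the paper supplies as \Cref{lem:ErrorsInVarsTailCenterMeasureRemains}). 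The final reparametrisation absorbing the logarithmic factors matches the paper's choice $\eta^2=\epsilon\log\epsilon^{-1}$, so the argument is correct and not materially different.
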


\begin{proof}
    Define $A_\eta = \frac{\EE[ |z|^{1 + \xi}]^{1/ (1 + \xi)}}{ \eta^{1/ (1 + \xi)}}$.
    By Markov's inequality,
    \[
        \PP( |z| \geq A_\eta) = \PP( |z|^{1 + \xi} \geq A_\eta^{1 + \xi}) \leq \frac{ \EE[ |Z|^{1+ \xi}]}{A_\eta^{1 + \xi}} = \eta
    \]
    Then, by the triangle inequality and \Cref{lem:ErrorsInVarsParametricApprox,lem:ErrorsInVarsRestrictingAndNormalisingErrorControl,lem:ErrorsInVarsFiniteApproximation} there exists a $C > 0$ such that for all $\eta > 0$, whenever
    \begin{align*}
        3 \| \sigma^2 - \sigma_0^2 \| &\leq 2 \sigma_0 \eta \\
        \tau &= \gamma \sigma\\
        \| \alpha - \alpha_0 \| &\leq \gamma \sigma_0 \eta \\
        \| \beta - \beta_0 \| &\leq \frac{\gamma \sigma_0}{ \int |z | \dd F_0(z) } \eta,
    \end{align*}
    then there exists a discrete probability distribution $F^*$ with at most $N_\eta \sim C A_\eta \log \eta^{-1}$ support points such that
    \[
        \| p_{\theta_0, F_0} - p_{\theta, F^*} \|_1 \lesssim \eta.
    \]
    Call the support points $z_1, \dots, z_{N_\eta}$.
    Without loss of generality, we can take the $z_i$ to be $2\frac{1 + \frac{\beta_0}{\gamma}}{\sigma_0} \eta$ separated.
    Let $\left(A_i\right)_{i = 0}^{N_\eta}$ be a partition of $\RR$ such that $z_i \in A_i$ and $\textrm{Diam} A_i \sim 2\frac{1 + \frac{\beta_0}{\gamma}}{\sigma_0} \eta$.
    Define
    \begin{align*}
        \Theta_\eta \times \mathcal{F}_\eta := \{ &\| \sigma^2 - \sigma_0^2  \| \leq \frac{2}{3} \sigma_0^2 \eta,  \quad \| \alpha - \alpha_0 \| \leq \gamma \sigma_0 \eta, \\
        & \| \beta - \beta_0 \| \leq \frac{ \gamma \sigma_0}{ \int |z| \dd F_0(z) } \eta,  \sum | F(A_i) - F_0(A_i)| \leq \eta, \\
        &\min F(A_i) \geq \eta^2 \}.
    \end{align*}
    By \Cref{lem:ErrorsInVarsPartitionsCloseInTV,lem:ErrorsInVarsCloseInKL}, whenever $ (\theta, F) \in \Theta_\eta \times \mathcal{F}_\eta$, it holds that
    \[
        \text{KL} + V_2 \left( p_{\theta, F}; p_{\theta_0, F_0}\right) \lesssim \eta^2 (\log \eta^{-1})^2.
    \]
    We can now bound the prior mass of $\theta_\eta \times \mathcal{F}_\eta$.
    By the independence of the priors, this is $\pi(\Theta_\eta) \text{DP} (\mathcal{F}_\eta)$.
    We assumed that the prior $\pi$ on $\theta$ has a continuous density which is bounded from below.
    If we take a small enough perturbation, there still exists a neighbourhood around $\theta_0$ on which the density is bounded from below.
    Hence, $\pi_t (\Theta_\eta)$ is greater than $c \eta^3$ for some $c > 0$ for small enough $t$ and $\eta$.
    To bound the prior mass of $\mathcal{F}_\epsilon$ under $\textrm{DP}_t$ we use~\cite[Lemma G.13]{ghosalFundamentalsNonparametricBayesian2017}.
    By \Cref{lem:ErrorsInVarsTailCenterMeasureRemains} the tail behaviour of the centre measure remains unchanged.
    \[
        \alpha_t(A_i) \geq \textrm{Diam}(A_i) \inf_{A_i}(g_t(z)) \geq \textrm{Diam}(A_i) \inf_{A}(g_t(z)) \geq \textrm{Diam}(A_i) A_\eta^{-\delta} \gtrsim \eta A_\eta^{-\delta}
    \]
    Hence we can apply~\cite[Lemma G.13]{ghosalFundamentalsNonparametricBayesian2017} as soon as $A_\eta$ is bounded by a polynomial in $\eta$.
    This gives
    \[
        \textrm{DP}(\mathcal{F}_\eta) \gtrsim e^{- C A_\eta \log(\eta^{-1})}.
    \]
    Combining this gives
    \[
        \pi\otimes\textrm{DP}( \Theta_\eta \times \mathcal{F}_\eta) \gtrsim e^{- C'A_\eta \log(\eta^{-1})}.
    \]
    Putting $\eta^2 = \epsilon \log \epsilon^{-1}$ gives the final result.
\end{proof}

\begin{lem}\label{lem:ErrorsInVarsPriorMassEpsMFM}
    Assume $F (|z|^{1 + \xi}) < \infty$ for some $\xi > 0$. 
    Assume that the centre measure $\alpha$ of the finite mixture admits a density $g$ such that $g (z) \gtrsim |z|^{-\delta}$ for some $\delta > 0$.
    Furthermore, assume that $\pi$ admits a density which is bounded from below on an interval around $\theta_0$.
    Then there exists a $C> 0$ such that for all $\epsilon > 0$ it holds that
    \[
        \Pi_t \left(\text{KL}+V_2\left( p_{\theta, F} ; p_{\theta_0, F_0} \right) \leq \epsilon \right) \geq e^{-C \epsilon^{-\frac{1}{(1 + \xi)}} \log(\epsilon^{-1})}.
    \]
\end{lem}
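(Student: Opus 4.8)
The plan is to run exactly the argument of \Cref{lem:ErrorsInVarsPriorMassEpsDP}, replacing the Dirichlet-process ingredients by their mixture-of-finite-mixtures analogues, in the same way that \Cref{lem:FrailtyPriorMassBoundMFM} adapts \Cref{lem:FrailtyPriorMassBoundDP}. The deterministic part is identical: for $\eta>0$ put $A_\eta=\EE[|z|^{1+\xi}]^{1/(1+\xi)}\eta^{-1/(1+\xi)}$, so Markov gives $F_0(|z|\ge A_\eta)\le\eta$, and \Cref{lem:ErrorsInVarsParametricApprox,lem:ErrorsInVarsRestrictingAndNormalisingErrorControl,lem:ErrorsInVarsFiniteApproximation} supply, for every $(\sigma,\alpha,\beta)$ in an $O(\eta)$-neighbourhood of the truth with $\tau=\gamma\sigma$, a discrete $F^{*}=\sum_{j=1}^{N_\eta}w_j\delta_{z_j}$ with $N_\eta\sim CA_\eta\log\eta^{-1}$ support points and $\|p_{\theta_0,F_0}-p_{\theta,F^{*}}\|_1\lesssim\eta$; as before we may take the $z_j$ to be $O(\eta)$-separated and fix a partition $(U_i)$ of $\RR$ with $z_j\in U_j$ and $\mathrm{diam}\,U_j\sim\eta$.

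Next I would take the target set, in the spirit of \Cref{lem:FrailtyPriorMassBoundMFM}, to be $\Theta_\eta\times\mathcal F_\eta=\{\|\sigma^2-\sigma_0^2\|\lesssim\eta,\ \|\alpha-\alpha_0\|\lesssim\eta,\ \|\beta-\beta_0\|\lesssim\eta,\ K=N_\eta,\ \sum_{j\le N_\eta}|W_j-w_j|\le 2\eta,\ \min_j W_j\ge\eta^2,\ |Z_j-z_j|\le\eta\text{ for all }j\}$, where $F=\sum_j W_j\delta_{Z_j}$. On this set \Cref{lem:ErrorsInVarsPairsCloseInTV} bounds $\|p_{\theta,F}-p_{\theta,F^{*}}\|_1\lesssim\eta$, hence $\|p_{\theta,F}-p_{\theta_0,F_0}\|_1\lesssim\eta$ and so $d_H(p_{\theta,F},p_{\theta_0,F_0})\lesssim\eta$; since also $F(U_i)\ge\eta^2$, \Cref{lem:ErrorsInVarsCloseInKL} then yields $\mathrm{KL}+V_2(p_{\theta,F};p_{\theta_0,F_0})\lesssim\eta^2(\log\eta^{-1})^2$.

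The only genuinely new computation is the lower bound on $\Pi_t(\Theta_\eta\times\mathcal F_\eta)$, which is where I expect the bookkeeping to be the main obstacle. By independence this is $\pi_t(\Theta_\eta)$ times the MFM-mass of $\mathcal F_\eta$, and $\pi_t(\Theta_\eta)\gtrsim c\,\eta^3$ because $h$ is continuously differentiable and bounded below near $\theta_0$ (hence still bounded below after an $o(1)$ shift). For the MFM factor I would condition on $K=N_\eta$, which has geometric probability $\lambda(1-\lambda)^{N_\eta}$, itself $\gtrsim e^{-cN_\eta}$; given $K$, the weights are $\mathrm{Dir}(N_\eta;M,\dots,M)$, and the small-ball estimate of \cite[Lemma G.13]{ghosalFundamentalsNonparametricBayesian2017} (as used in the DP proof) gives $\mathrm{Dir}(\sum|W_j-w_j|\le2\eta,\ \min_j W_j\ge\eta^2)\ge e^{-cN_\eta\log\eta^{-1}}$; and given $K$ the $Z_j\overset{\mathrm{iid}}\sim G_t$ land in the prescribed $O(\eta)$-balls with probability at least $\eta\inf_{|z|\le A_\eta}g_t(z)\gtrsim\eta A_\eta^{-\delta}$ each, using \Cref{lem:ErrorsInVarsTailCenterMeasureRemains} to see the polynomial lower tail of $g$ survives the perturbation, so the product over the $N_\eta$ points contributes $\ge e^{-cN_\eta\log\eta^{-1}}$. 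Multiplying the three contributions yields $\Pi_t(\Theta_\eta\times\mathcal F_\eta)\gtrsim e^{-C'A_\eta\log\eta^{-1}}$ with $A_\eta\sim\eta^{-1/(1+\xi)}$, and finally choosing $\eta$ as in the DP proof (so that $\eta^2(\log\eta^{-1})^2\lesssim\epsilon$ and $A_\eta\log\eta^{-1}\lesssim\epsilon^{-1/(1+\xi)}\log\epsilon^{-1}$) gives the stated bound. Everything beyond the weight/location probability combinatorics is already carried out in \Cref{lem:ErrorsInVarsPriorMassEpsDP} and \Cref{lem:FrailtyPriorMassBoundMFM}, so no new idea is needed.
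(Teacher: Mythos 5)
Your proposal follows the paper's proof essentially verbatim: the same approximation lemmas to build the discrete $F^*$ with $N_\eta\sim CA_\eta\log\eta^{-1}$ atoms, the same target set $\Theta_\eta\times\mathcal F_\eta$, the same KL control via \Cref{lem:ErrorsInVarsPairsCloseInTV,lem:ErrorsInVarsCloseInKL}, and the same three-factor prior-mass bound (geometric mass of $\{K=N_\eta\}$, Dirichlet small ball via~\cite[Lemma G.13]{ghosalFundamentalsNonparametricBayesian2017}, and per-atom location mass $\gtrsim\eta A_\eta^{-\delta}$ via \Cref{lem:ErrorsInVarsTailCenterMeasureRemains}). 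The argument is correct and no new idea is missing.
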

 This proof works along the same lines as the proof of \Cref{lem:ErrorsInVarsPriorMassEpsDP}.

\begin{proof}
  Define $A_\eta = \frac{\EE[ |z|^{1 + \xi}]^{1/ (1 + \xi)}}{ \eta^{1/ (1 + \xi)}}$.
    By Markov's inequality,
    \[
        \PP( |z| \geq A_\eta) = \PP( |z|^{1 + \xi} \geq A_\eta^{1 + \xi}) \leq \frac{ \EE[ |Z|^{1+ \xi}]}{A_\eta^{1 + \xi}} = \eta
    \]
    Then, by the triangle inequality and \Cref{lem:ErrorsInVarsParametricApprox,lem:ErrorsInVarsRestrictingAndNormalisingErrorControl,lem:ErrorsInVarsFiniteApproximation} there exists a $C > 0$ such that for all $\eta > 0$, whenever
    \begin{align*}
        3 \| \sigma^2 - \sigma_0^2 \| &\leq 2 \sigma_0 \eta \\
        \tau &= \gamma \sigma\\
        \| \alpha - \alpha_0 \| &\leq \gamma \sigma_0 \eta \\
        \| \beta - \beta_0 \| &\leq \frac{\gamma \sigma_0}{ \int |z | \dd F_0(z) } \eta,
    \end{align*}
    then there exists a discrete probability distribution $F^*$ with at most $N_\eta \sim C A_\eta \log \eta^{-1}$ support points such that
    \[
        \| p_{\theta_0, F_0} - p_{\theta, F^*} \|_1 \lesssim \eta.
    \]
    Call the support points $z_1, \dots, z_{N_\eta}$.
    Without loss of generality, we can take the $z_i$ to be $2\frac{1 + \frac{\beta_0}{\gamma}}{\sigma_0} \eta$ separated.
   
    Define
    \begin{align*}
      \Theta_\eta \times \mathcal{F}_\eta := \{ (\theta, \sum_{j = 1}^{N_\eta} w'_j\delta{z'_j})&: \| \sigma^2 - \sigma_0^2  \| \leq \frac{2}{3} \sigma_0^2 \eta,  \quad \| \alpha - \alpha_0 \| \leq \gamma \sigma_0 \eta, \\
          & \| \beta - \beta_0 \| \leq \frac{ \gamma \sigma_0}{ \int |z| \dd F_0(z) } \eta,  \| z_j - z'_j \| < 2\frac{1 + \frac{\beta_0}{\gamma}}{\sigma_0} \eta\\ 
          &\sum_{j = 1}^ {N_\eta} | w_j - w'_j| \leq \eta, \min_{1 \leq j \leq N_\eta} w'_j \geq \eta^2 \}.
    \end{align*}
    By \Cref{lem:ErrorsInVarsPairsCloseInTV,lem:ErrorsInVarsCloseInKL}, whenever $ (\theta, F) \in \Theta_\eta \times \mathcal{F}_\eta$, it holds that
    \[
        \text{KL} + V_2 \left( p_{\theta, F}; p_{\theta_0, F_0}\right) \lesssim \eta^2 (\log \eta^{-1})^2.
    \]
    We can now bound the prior mass of $\theta_\eta \times \mathcal{F}_\eta$.
    By the independence of the priors, this is $\pi(\Theta_\eta) \text{DP} (\mathcal{F}_\eta)$.
    We assumed that the prior $\pi$ on $\theta$ has a continuous density which is bounded from below.
    If we take a small enough perturbation, there still exists a neighbourhood around $\theta_0$ on which the density is bounded from below.
    Hence, $\pi_t (\Theta_\eta)$ is greater than $c \eta^3$ for some $c > 0$ for small enough $t$ and $\eta$.
    To bound the prior mass of $\mathcal{F}_\epsilon$ under $\textrm{DP}_t$ we use~\cite[Lemma G.13]{ghosalFundamentalsNonparametricBayesian2017}.

    This gives
    \[
      \textrm{Dir}\left( \sum_{j = 1}^ {N_\eta} | w_j - w'_j| \leq \eta, \min_{1 \leq j \leq N_\eta} w'_j \geq \eta^2 \right) \gtrsim e^{-C K_\eta \log \eta^{-1}}
    \]
    Since $K \sim \textrm{geom} (\lambda)$,
    \[
      \PP(K = N_\eta) = \lambda (1 - \lambda)^{N_\eta} \gtrsim e^{-C' N_\eta} 
    \]
    
    To bound 
    \[
      G( \| z_j - z'_j \| < 2\frac{1 + \frac{\beta_0}{\gamma}}{\sigma_0} \eta )
    \]
    We use \Cref{lem:ErrorsInVarsTailCenterMeasureRemains}. Define $A_i = \{ z: \| z_j - z \| < 2\frac{1 + \frac{\beta_0}{\gamma}}{\sigma_0} \eta \}$. Then 
    \[
        \alpha_t(A_i) \geq \textrm{Diam}(A_i) \inf_{A_i}(g_t(z)) \geq \textrm{Diam}(A_i) \inf_{A}(g_t(z)) \geq \textrm{Diam}(A_i) A_\eta^{-\delta} \gtrsim \eta A_\eta^{-\delta}
    \]
    Because $A_\eta$ is a polynomial in $\eta$, we get
    \[
        \textrm{MFM}(\mathcal{F}_\eta) \gtrsim e^{- C A_\eta \log(\eta^{-1})}.
    \]
    Combining this gives
    \[
        \pi\otimes\textrm{MFM}( \Theta_\eta \times \mathcal{F}_\eta) \gtrsim e^{- C'A_\eta \log(\eta^{-1})}.
    \]
    Putting $\eta^2 = \epsilon \log \epsilon^{-1}$ gives the final result.

\end{proof}

\begin{lem}\label{lem:ErrorsInVarsTailCenterMeasureRemains}
    Suppose that $g (z) \gtrsim |z|^{-\delta}$ for all $|z|$ large enough, and that $g$ is a continuous and positive density, then also
    \[
        g_t(z) = g( \phi_t(z)) \gtrsim |z|^{-\delta}
    \]
    for all $|z|$ large enough and $g_t$ is continuous and positive.
\end{lem}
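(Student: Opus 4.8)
The plan is to exploit the explicit affine form of the perturbation. From \Cref{rem:LeastFavSubmodelErrorsInVars} we have $\phi_{t,\theta}(z) = a_t z + b_t$ with $a_t = 1 - \frac{t_2\beta}{1+\beta^2}$ and $b_t = \frac{t_1\beta}{1+\beta^2}$, so that $t\mapsto(a_t,b_t)$ is continuous and, over a fixed bounded neighbourhood $U$ of $0$, there are constants $0 < c_1 \le |a_t| \le c_2 < \infty$ and $|b_t| \le c_3 < \infty$, uniformly in $t\in U$. Continuity and positivity of $g_t = g\circ\phi_t$ are then immediate: $g_t$ is the composition of the continuous, everywhere-positive function $g$ with the continuous (affine, hence also bijective since $a_t\neq 0$) map $\phi_t$.

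For the tail bound, first I would observe that whenever $|z|$ is large, $c_1|z| - c_3 \le |\phi_t(z)| \le c_2|z| + c_3$, so in particular $|\phi_t(z)|\to\infty$ as $|z|\to\infty$, uniformly over $t\in U$. Consequently there is an $R>0$, not depending on $t$, such that for $|z|>R$ the hypothesis $g(w)\gtrsim |w|^{-\delta}$ may be applied at $w=\phi_t(z)$, which gives
\[
g_t(z) = g(\phi_t(z)) \;\gtrsim\; |\phi_t(z)|^{-\delta} \;\ge\; (c_2|z| + c_3)^{-\delta} \;\gtrsim\; |z|^{-\delta}
\]
for all $|z|>R$, with implied constants uniform in $t\in U$. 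This is precisely the claim.

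The argument is elementary and there is no genuine obstacle; the only points deserving a line of care are (i) keeping the constants $c_1,c_2,c_3$ and the threshold $R$ uniform over the perturbation neighbourhood $U$, which follows from continuity of $t\mapsto(a_t,b_t)$ and boundedness of $U$, and (ii) noting that if one wants the true Lebesgue density of the pushforward $G_t = G\circ\phi_t^{-1}$ rather than the expression $g\circ\phi_t$, the two differ only by the constant Jacobian factor $|\det\dot\phi_t|$, which is bounded above and below on $U$ and hence irrelevant to the $\gtrsim$ comparison; the same two displays apply verbatim with $\phi_t^{-1}$ in place of $\phi_t$, since $\phi_t^{-1}$ is affine of the same type.
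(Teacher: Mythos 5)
Your proof is correct and follows essentially the same route as the paper's: observe that $\phi_t$ is affine in $z$ with nonzero slope, so continuity and positivity of $g\circ\phi_t$ are immediate, and $|\phi_t(z)|$ grows linearly in $|z|$, so the polynomial tail bound transfers. The paper's own proof is a three-line version of exactly this; your added remarks on uniformity in $t$ and on the (bounded, hence harmless) Jacobian factor are sensible refinements but not a different argument.
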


\begin{proof}
    Recall that
    \[
        \phi_t(z) = z \left(1 + \frac{ t_2(\beta + t_2)}{1 + (\beta + t_2)^2} \right)^{-1} + \frac{t_1(\beta + t_2)}{1 + (\beta + t_2)^2}
    \]
    Hence $\phi_t (z)$ is continuous and thus $g (\phi_t (z))$ is continuous and positive.
    Moreover, $\phi_t (z) = a z + b$, and $g (az + b) \gtrsim |az + b|^{-\delta} \gtrsim |z|^{-\delta}$ when $|az| - |b| $ large enough.
\end{proof}

\subsubsection{Technical lemmas}

\begin{lem}\label{lem:GaussianTVHellingerBounders}
    For $\mu, \nu \in \RR$ and $\sigma, \tau > 0$ we have
    \[
        \textrm{TV}\left( N(\mu, \sigma^2), N(\nu, \tau^2) \right) \leq \frac{3 \| \sigma^2 - \tau^2 \|}{2 \sigma^2} + \frac{ \| \mu - \nu \|}{\sigma}
    \]
    and
    \[
        d_H(N(\mu, \sigma^2), N(\nu, \tau^2)) \leq \frac{ \| \mu - \nu \|}{\sigma + \tau} + 2 \frac{ \| \sigma - \tau \|}{\sigma + \tau }.
    \]
\end{lem}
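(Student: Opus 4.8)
The plan is to prove the two inequalities separately, in each case reducing to the standard $N(0,1)$ form and treating the location and scale perturbations one at a time.

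For the total variation bound I would first split by the triangle inequality,
$\textrm{TV}(N(\mu,\sigma^2),N(\nu,\tau^2))\le \textrm{TV}(N(\mu,\sigma^2),N(\nu,\sigma^2))+\textrm{TV}(N(\nu,\sigma^2),N(\nu,\tau^2))$,
so the first term is a pure shift of the mean at the fixed variance $\sigma^2$ and the second a pure change of scale. For the shift term, translation and scaling invariance of total variation reduce it to $\textrm{TV}(N(0,1),N(\delta,1))$ with $|\delta|=|\mu-\nu|/\sigma$, which equals $2\Phi(|\delta|/2)-1$; bounding $\Phi(|\delta|/2)-\Phi(0)$ by $(|\delta|/2)\,\phi(0)$ gives at most $|\mu-\nu|/\sigma$. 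For the scale term, scaling invariance reduces it to $\textrm{TV}(N(0,1),N(0,r))$ with $r=\tau^2/\sigma^2$, and it remains to show this is at most $\tfrac32|r-1|=\tfrac{3|\sigma^2-\tau^2|}{2\sigma^2}$. Here I would locate the two points $\pm x_0$ where the two centred densities cross, namely $x_0^2=\tfrac{r\log r}{r-1}$, express the total variation explicitly as a difference of two Gaussian tail probabilities evaluated at $x_0$ and $x_0/\sqrt r$, and bound it by the mean value theorem; when $r$ is far from $1$ the claimed right-hand side already exceeds the trivial bound on the total variation, so it is enough to carry out this estimate for $r$ in a fixed bounded interval around $1$, on which $x_0$ and $1-1/\sqrt r$ are under control.

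For the Hellinger bound I would use the exact Gaussian affinity identity: for $f$ the density of $N(\mu,\sigma^2)$ and $g$ that of $N(\nu,\tau^2)$ one has $\int\sqrt{fg}=B\,C$ with $B=\bigl(\tfrac{2\sigma\tau}{\sigma^2+\tau^2}\bigr)^{1/2}$ and $C=\exp\bigl(-\tfrac{(\mu-\nu)^2}{4(\sigma^2+\tau^2)}\bigr)$, so that $d_H^2=1-BC$. Since $B\le 1$, $1-BC=(1-B)+B(1-C)\le (1-B)+(1-C)$. Then $1-C\le \tfrac{(\mu-\nu)^2}{4(\sigma^2+\tau^2)}$ from $1-e^{-x}\le x$, and $1-B\le 1-B^2=\tfrac{(\sigma-\tau)^2}{\sigma^2+\tau^2}$. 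Taking square roots with $\sqrt{a+b}\le\sqrt a+\sqrt b$ gives $d_H\le \tfrac{|\sigma-\tau|}{\sqrt{\sigma^2+\tau^2}}+\tfrac{|\mu-\nu|}{2\sqrt{\sigma^2+\tau^2}}$, and inserting $\sqrt{\sigma^2+\tau^2}\ge (\sigma+\tau)/\sqrt2$ together with $\sqrt2<2$ and $1/\sqrt2<1$ yields precisely the stated inequality.

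The main obstacle is the scale part of the total variation bound: in contrast to the Hellinger distance there is no usable closed form, so one is forced to work with the explicit crossing points of the two centred Gaussian densities and argue somewhat carefully to recover the constant $3/2$, peeling off beforehand the easy regime in which $r=\tau^2/\sigma^2$ is bounded away from $1$. The remaining pieces are either exact algebra (the Hellinger affinity formula) or a one-line estimate on $\Phi$ (the location shift).
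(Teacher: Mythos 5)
Your proof is correct, but it is worth noting that the paper itself does not prove this lemma at all: its ``proof'' consists of citations to Devroye et al.\ (2018) for the total variation bound and to Lemma 6.9 of the Szab\'o--van der Vaart lecture notes for the Hellinger bound. What you have written is essentially a self-contained reconstruction of what those references contain, and every step checks out. The Hellinger part is exact algebra: with $\rho=\int\sqrt{fg}=BC$, the chain $1-BC\le(1-B)+(1-C)$, $1-B\le 1-B^2=(\sigma-\tau)^2/(\sigma^2+\tau^2)$, $1-C\le(\mu-\nu)^2/(4(\sigma^2+\tau^2))$ and $\sqrt{\sigma^2+\tau^2}\ge(\sigma+\tau)/\sqrt2$ gives exactly the stated constants; pleasantly, the same final bound also comes out under the normalisation $d_H^2=2(1-\rho)$ used in Ghosal--van der Vaart, since the extra $\sqrt2$ is absorbed by the slack in $\sqrt2\le 2$ and $1/\sqrt2\le1$. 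For the variance part of the TV bound your strategy (crossing points $x_0^2=r\log r/(r-1)$, mean value theorem with $\sup\phi=\phi(0)$, and the trivial bound outside $r\in[1/3,5/3]$) does deliver the constant $3/2$ with room to spare: for $r\ge1$ one has $x_0\le\sqrt r$ and $x_0(1-1/\sqrt r)\le\sqrt r-1\le(r-1)/2$, so the sup-norm TV is at most $\phi(0)(r-1)$, and the case $r\in[1/3,1]$ is a similarly short numerical check. One caveat you should make explicit: your argument, in particular the ``trivial bound'' step, uses $\mathrm{TV}\le1$, i.e.\ the convention $\mathrm{TV}(P,Q)=\sup_A|P(A)-Q(A)|=\tfrac12\|p-q\|_1$. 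Under the $L^1$ convention that the paper uses elsewhere (e.g.\ Lemma \ref{lem:TVExponentials}, and the application of the present lemma to $\|\cdot\|_1$ in Lemma \ref{lem:ErrorsInVarsParametricApprox}) the variance term with constant $3/2$ is actually false for $\tau\ll\sigma$ (the $L^1$ distance tends to $2$ while the right-hand side stays below $3/2$), so the lemma can only be read with the half-$L^1$ normalisation; this is a defect of the paper's statement rather than of your proof.
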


\begin{proof}
    See~\cite{devroyeTotalVariationDistance2018} and~\cite[Lemma 6.9]{szaboBayesianStatistics2017}.
\end{proof}

\end{document}